\definecolor{note}{rgb}{0.1,0.1,0.4}
\newcommand{\Appendix}[2][]{%
\refstepcounter{section}%
\addcontentsline{toc}{section}%
{\appendixname #1}%
\par
{\centering\normalfont\normalsize\scshape\appendixname
#1\par}\vskip6pt\@afterheading}
\newcommand{\sAppendix}[1]{%
\addcontentsline{toc}{section}%
{\appendixname #1}%
\par
{\centering\normalfont\normalsize\scshape\appendixname #1\par}%
\vskip6pt\@afterheading}
\numberwithin{equation}{section} 
\theoremstyle{plain}
\newtheorem{thm}[equation]{Theorem}
\newtheorem{lem}[equation]{Lemma}
\newtheorem{prop}[equation]{Proposition}
\newtheorem{cor}[equation]{Corollary}
 \newtheorem{hypotheses}[equation]{Hypotheses}
\theoremstyle{definition}
\newtheorem{defn}[equation]{Definition}
\newtheorem{Notation}[equation]{Notation}
\newtheorem{rem}[equation]{Remark}
\theoremstyle{remark}
\newtheorem*{example}{Example}
\theoremstyle{plain}
\newenvironment{french}{\begin{otherlanguage}{french}}{\end{otherlanguage}}
\newcommand{\Q}{{\mathbb{Q}}} 
\newcommand{\N}{{\mathbb{N}}}
\newcommand{\C}{{\mathbb{C}}} 
\newcommand{\R}{{\mathbb{R}}}
\newcommand{\Z}{{\mathbb{Z}}}
\newcommand{\K}{{\mathbb{K}}}
\newcommand{\I}{{\mathbb{I}}}
\newcommand{\calC}{{\mathcal{C}}} 
\newcommand{\calO}{{\mathcal{O}}}
\newcommand{\cO}{{\mathcal{O}}} 
\newcommand{\calV}{{\mathcal{V}}}
\newcommand{\calA}{{\mathcal{A}}} 
\newcommand{\cB}{{\mathcal{B}}} 
\newcommand{\calD}{{\mathcal{D}}}
\newcommand{\cD}{{\mathcal{D}}}
\newcommand{\cF}{{\mathcal{F}}}
 \newcommand{\cH}{{\mathcal{H}}}
\newcommand{\calI}{{\mathcal{I}}} 
 \newcommand{\cJ}{{\mathcal{J}}}
\newcommand{\calL}{{\mathcal{L}}} \newcommand{\cL}{{\mathcal{L}}}
\newcommand{\calM}{{\mathcal{M}}} \newcommand{\cM}{{\mathcal{M}}}
\newcommand{\cQ}{{\mathcal{Q}}}
\newcommand{\calS}{{\mathcal{S}}}
\newcommand{\cS}{{\mathcal{S}}} 
\newcommand{\calU}{{\mathcal{U}}}
\newcommand{\calIr}{{\mathcal{I}_r}}  
\DeclareMathAlphabet{\euls}{U}{eus}{m}{n}
\newcommand{\fa}{{\mathfrak{a}}} 
 \newcommand{\g}{{\mathfrak{g}}}
\newcommand{\fh}{{\mathfrak{h}}} \newcommand{\h}{{\mathfrak{h}}}
\newcommand{\fm}{{\mathfrak{m}}} \newcommand{\fn}{{\mathfrak{n}}}
\newcommand{\fb}{{\mathfrak{b}}} \newcommand{\fr}{{\mathfrak{r}}}
\newcommand{\fp}{{\mathfrak{p}}} \newcommand{\p}{{\mathfrak{p}}}
 \newcommand{\fk}{{\mathfrak{k}}}
\newcommand{\fsl}{{\mathfrak{sl}}} \newcommand{\fso}{{\mathfrak{so}}}
\newcommand{\fgl}{{\mathfrak{gl}}} \newcommand{\fsp}{{\mathfrak{sp}}}
 \newcommand{\fs}{{\mathfrak{s}}}
 \newcommand{\mbO}{{\mathbf{O}}}
 \newcommand{\Et}{{\mathtt{E}}}
\newcommand{\Ft}{{\mathtt{F}}}
\newcommand{\Asf}{{\mathsf{A}}} \newcommand{\Bsf}{{\mathsf{B}}}
 \newcommand{\Dsf}{{\mathsf{D}}}
 \newcommand{\Psf}{{\mathsf{P}}}
\DeclareMathAlphabet{\mathpzc}{OT1}{pzc}{m}{it}
\DeclareMathSymbol{\Ima}{\mathord}{symbols}{"3D}
\def\preisomto{\vbox{\hbox to
    14pt{\hfill$\sim$\hfill}\nointerlineskip\vskip -0.3pt \hbox to
    14pt{\rightarrowfill}}}
\def\isomto{\mathop{\preisomto}}
\def\prelongisomto{\vbox{\hbox to
    17pt{\hfill$\sim$\hfill}\nointerlineskip\vskip -0.3pt \hbox to
    17pt{\rightarrowfill}}}
\def\longisomto{\mathop{\prelongisomto}}
\newcommand{\stoo}{\longrightarrow \kern-15pt
  \longrightarrow} 
\newcommand{\lto}{\longrightarrow}
\newcommand{\ito}{\hookrightarrow}
\def\trait{\hbox to 4mm{\hrulefill}}
\def\Aut{\operatorname {Aut}} 
\def\ad{\operatorname {ad}}
\def\ann{\operatorname {ann}}
\def\Ker{\operatorname {Ker}} 
\def\Im{\operatorname {Im}}
\def\Hom{\operatorname {Hom}} 
\def\End{\operatorname {End}}
\def\SO{\operatorname {SO}}
\def\Ext{\operatorname {Ext}}
\def\GKdim{\operatorname {GKdim}}
\def\Lie{\operatorname {Lie}}
\def\Der{\operatorname {Der}}
\def\rk{\operatorname {rk}}
\def\Ort{\operatorname {O}} 
\def\SO{\operatorname {SO}}
\DeclareMathOperator{\dalembert}{\Box}
\DeclareMathOperator{\Soc}{Soc}
\newcommand{\socRs}{Z_s}
\newcommand{\socRa}{Z_a}
\newcommand{\socRb}{Z_b}
\newcommand{\socRr}{Z_r}
\def\boplus{\mathbin{\boldsymbol{\oplus}}}
\newcommand{\UU}{\overline{U}}
\newcommand{\dual}[2]{{\langle {#1} \, ,
    {#2} \rangle}}
\newcommand{\ascal}[2]{{\langle {#1} \mid {#2} \rangle}}
\newcommand{\corth}[1]{{{#1}^{\circ}}}
\newcommand{\moins}{\smallsetminus}
\newcommand{\dpar}[2]{{\frac{\partial {#1}}{\partial {#2}}}}
\newcommand{\dpartial}[1]{{\partial_{#1}}}
\newcommand{\vt}{\vartheta} 
\newcommand{\vpi}{\varpi}
\newcommand{\vepsilon}{\varepsilon} 
\newcommand{\vphi}{\varphi}
\newcommand{\half}{{\frac{1}{2}}}
\newcommand{\susbet}{\subset}
\newcommand{\sontwo}{{\fso(n+2,\C)}} 
\newcommand{\son}{{\fso(n,\C)}} 
\newcommand{\ddx}[1]{\partial_{X_{{#1}}}}
\newcommand{\ddu}[1]{\partial_{U_{{#1}}}}
\newcommand{\halfn}{{\frac{n}{2}}}
\newcommand{\halfZ}{{\half\Z}}
\newcommand{\dt}{\dpartial{t}}
\newcommand{\Omin}{{\mathbf{O}_\mathrm{min}}}
\newcommand{\Ominbar}{{\overline{\mathbf{O}}_\mathrm{min}}}
\newcommand{\rone}{{\sqrt{-1}}}
\newcommand{\barC}{{\overline{\mathsf{C}}}}
\newcommand{\Scr}{{\mathscr{S}}}
\newcommand{\Sscr}[1]{{\mathscr{S}({#1})}}
\newcommand{\Qt}{{\mathtt{Q}}}
\newcommand{\novertwo}{\textstyle{\frac{n}{2}}}
\DeclareMathOperator{\wt}{wt}
\DeclareMathOperator{\eval}{ev}
\DeclareMathOperator{\ev0}{\eval_0}
\newcommand{\cterm}[1]{{{{#1}{\raisebox{-3pt}{|}}}_{0}}}
\newcommand{\Ftt}{{\widetilde{\mathtt{F}}}}
\newcommand{\St}{{\widetilde{S}}}
\newcommand{\gt}{{\widetilde{\mathfrak{g}}}}
\newcommand{\Pt}{{\widetilde{P}}} 
\newcommand{\Dt}{{\widetilde{D}}}
\newcommand{\psit}{{\widetilde{\psi}}}
\newcommand{\vphit}{{\widetilde{\vphi}}}
\newcommand{\Sbar}[1]{\overline{S}({#1})}
\newcommand{\Deltabar}{\overline{\Delta}}
\newcommand{\Qtbar}{{\overline{\Qt}}}
\begin{document}
 
\begin{spacing}{1.4}

  \title[Higher symmetries and differential operators]{Higher symmetries of
    powers of the Laplacian and rings of differential operators}
   \author{T.~Levasseur} \address{Laboratoire de Math\'ematiques de Bretagne
    Atlantique, CNRS~-~UMR6205, Universit\'e de Brest, 29238 Brest cedex~3,
    France.} \email{Thierry.Levasseur@univ-brest.fr} \author{J. T. Stafford}
  \address{School of Mathematics, Alan Turing Building, The University of
    Manchester, Oxford Road, Manchester M13 9PL, England.}
  \email{Toby.Stafford@manchester.ac.uk}

  \keywords{Higher symmetries, Laplacian, rings of differential operators,
 harmonic polynomials, primitive ideals, scalar singletons, conformal geometry.} 
\subjclass[2010]{Primary 16S32,  58J70, 17B08;  Secondary 17B81,
    53A30, 70S10.}

\thanks{The   second author was partially supported  by EPSERC grant  EP/L018322/1}    

\begin{abstract}  
  We study the interplay between the  minimal representations of
  the orthogonal Lie algebra  
$\g=\fso(n+2,\C)$ and the
  \emph{algebra of symmetries} $\Sscr{\dalembert^r}$ of
  powers of the Laplacian $\dalembert$ on $\C^{n}$. The
  connection is made through the construction of a highest
  weight representation of $\g$ via the ring of differential
  operators $\cD(X)$ on the singular scheme
  $X=(\Ft^r=0)\subset \C^n$, for
  $\Ft=\sum_{j=1}^nX_i^2\in \C[X_1,\dots,X_n]$.
  
  In particular we prove that
  $U(\g)/K_r \cong \Sscr{\dalembert^r}\cong \cD(X)$ for a
  certain primitive ideal $K_r$.  Interestingly, if (and
  only if) $n$ is even with $r\geq \halfn$ then both
  $\Sscr{\dalembert^r}$ and its natural module
  $\mathcal{A}=\C\bigl[ \frac{\!\!\!\!\partial}{\partial
    X_n},\dots, \frac{\!\!\!\!\partial}{\partial
    X_n}\bigr] \big/(\dalembert^r)$
  have a finite dimensional factor.  The same holds for the
  $\cD(X)$-module $\cO(X)$.
    
  We also study higher dimensional analogues
  $M_r=\{x\in A:\dalembert^r (x)=0\}$ of the module of
  harmonic elements in $A=\C[X_1,\dots,X_n]$ and of the
  space of ``harmonic densities''.  In both cases we obtain a
  minimal $\g$-representation that is closely related to the
  $\g$-modules $\cO(X)$ and $\mathcal{A}$.
    
  Essentially all these results have real analogues, with
  the Laplacian replaced by the d'Alembertian $\dalembert_p$
  on the pseudo-Euclidean space $\R^{p,q}$ and $\g$ replaced
  by the real Lie algebra $\fso(p+1,q+1)$.
 \end{abstract}
   
\maketitle

\tableofcontents


\section{Introduction}

  In this paper we explain  a close relationship between
  two extensively studied subjects: the construction of ``minimal
  representations'' of the orthogonal Lie algebra $\fso(p+1,q+1)$ and the
  ``algebra of symmetries'' of powers of the Laplacian on the pseudo-Euclidean
  space $\R^{p,q}$. The connection is made through the construction of a
  highest weight representation of $\fso(n+2,\C)$ via differential operators
  on a singular subscheme of $\C^n$.  This approach, and its relation to
  Howe duality, goes back to \cite{LSS, LS} (see also \cite{jo881}).

  Let $\R[X_1,\dots,X_n]$ be a polynomial ring in $n \ge 3$ variables over
  the real numbers, with ring of differential operators (also known as the
  Weyl algebra) $A_n(\R)= \R[X_j, \ddx{k}; 1 \le j, k \le n]$, where
  $\ddx{k}= \frac{\!\!\!\!\partial}{\partial X_k}$.  An important technique
  for the group-theoretical analysis of the differential equations
  associated to a differential operator $\Theta\in A_n(\R)$ is to determine
  the higher symmetries of $\Theta$. Here, an operator $P\in A_n(\R)$ is a
  \emph{(higher) symmetry} of $\Theta$ provided $\Theta P=P'\Theta$ for
  some second operator $P'\in A_n(\R)$. The operator $P$ is a trivial
  symmetry when $P \in A_n(\R)\Theta$ and factoring the space of symmetries
  by the trivial ones gives the \emph{algebra of symmetries}
  $\Scr(\Theta )$ (see~\S\ref{symmetries} for more details).  These
  symmetries are important to differential equations through  exact
  integrability and  separation of variables (see for example  \cite{bagrov, BKM, miller})
  and have further applications in general relativity and  the theory of
  higher spin fields (see 
  \cite{ea05} and the references therein). 
   
  Finding these symmetries is, however, nontrivial and has been the focus of
  considerable research. In this paper we are concerned with the case when
  $\Theta=\dalembert_p^r$ is some power of the d'Alembertian
  $\dalembert_p= \sum_{j=1}^p \ddx{j}^{\,2} -
  \sum_{j=p+1}^{p+q}\ddx{j}^{\,2}$
  defined on the pseudo-Euclidean space $\R^{p,q}$, where $n=p+q$, equipped with the
  metric $\Ft_p= \sum_{j=1}^p X_{j}^2 - \sum_{j=p+1}^{n} X_{j}^{2}$.  In
  this case the algebra of symmetries is known and, moreover, forms a
  factor algebra of the enveloping algebra $U(\fso(p+1,q+1))$ of the real
  orthogonal Lie algebra $\fso(p+1,q+1)$; see, \cite{ea05} for the first
  definitive study for $r=1$ and then \cite{eale08,gosi12,bekgri,mi11} for
  more general results proved using a variety of methods. Earlier results,
  albeit without full proofs, appear in \cite{ss92}.

 We give a more algebraic approach to this problem which
  also permits a much more detailed description of the
  $\fso(p+1,q+1)$-module structure of the space of symmetries, which in turn
  has some interesting consequences. In particular, in
  Theorem~\ref{thm8922} and Corollary~\ref{cor8922} we prove:

  \begin{thm}
    \label{thmB} {\rm (1)} For an appropriate primitive  ideal $K_r$   there exists an
    isomorphism
    \[
      \frac{U(\fso(p+1,q+1))}{K_r} \ \longisomto\ \Sscr{\dalembert_p^r},
    \]
  where $n=p+q$.   In particular, $\Sscr{\dalembert_p^r}$ is a finitely
    generated, prime noetherian ring of Goldie rank $r$ {\rm
      (}see Remark~\ref{Joseph-remark2}{\rm )}.

    {\rm (2)} Consider
    $\mathcal{A}=\R[\partial_{X_1},\dots, \partial_{X_n}]/(\dalembert_p^r)$
    under its natural $\Sscr{\dalembert_p^r}$-module structure.  If $n$ is
    even with $r\geq \halfn$, then $\mathcal{A}$ has a unique proper
    factor $\Sscr{\dalembert_p^r}$-module $L$, which is finite dimensional.
Otherwise $\mathcal{A}$ is an irreducible
    $\Sscr{\dalembert_p^r}$-module.
  \end{thm}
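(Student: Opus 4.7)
The plan is to pass to the complex setting and establish the chain of isomorphisms $U(\g)/K_r \cong \Sscr{\dalembert^r} \cong \cD(X)$, where $\g=\fso(n+2,\C)$ and $X=\Spec\C[X_1,\dots,X_n]/(\Ft^r)$ with $\Ft=\sum_{j=1}^n X_j^2$. The real statement follows from its complex counterpart by base change, since $\fso(p+1,q+1)\otimes_\R\C\cong\fso(n+2,\C)$, the symmetry algebra commutes with tensoring by $\C$, and $\dalembert_p$ is conjugate to $\dalembert$ over $\C$ via $X_j\mapsto\imag X_j$ for $p<j\leq n$. First I would apply the Fourier automorphism of $A_n$ sending $X_j\leftrightarrow\ddx{j}$ (up to sign): it carries $\dalembert^r$ to $\Ft^r$, giving $\Sscr{\dalembert^r}\cong\Sscr{\Ft^r}$, and transports $\mathcal{A}$ to $\cO(X):=\C[X_1,\dots,X_n]/(\Ft^r)$. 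The identification $\Sscr{\Ft^r}\cong\cD(X)$ is then essentially tautological, since $\Sscr{\Ft^r}$ is the (opposite of the) endomorphism ring of the cyclic $A_n$-module $A_n/A_n\Ft^r$, which is by definition the ring of differential operators on the non-reduced scheme $X$.

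Next I would construct the map $U(\g)\to\cD(X)$ using the conformal action of $\g$ on $\C^n$, whose generators are translations $\ddx{j}$, rotations $X_i\ddx{j}-X_j\ddx{i}$, a shifted Euler operator $E+c$, and special conformal transformations $K_j=2X_jE-\Ft\ddx{j}$. Each generator admits a scalar correction depending on $r$ so that the resulting operator normalizes $(\Ft^r)$ and hence descends to $\cD(X)$, producing the Lie algebra homomorphism $\g\to\cD(X)$. Surjectivity of $U(\g)\to\cD(X)$ is verified by exhibiting the standard generators of $\cD(X)$ (multiplications by coordinates, partials, and the Euler operator) explicitly in the image, a finite check at low order. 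To identify the kernel as a primitive ideal $K_r$, observe that $\cO(X)$ is a highest weight $\g$-module with $1$ annihilated by the nilradical of a suitable parabolic and with infinitesimal character computable from $r$; standard primitive ideal theory (Joseph's ideal for $r=1$, and its extensions for $r>1$ due to Gan--Savin and Astashkevich--Brylinski) then forces the annihilator of $\cO(X)$ to be a completely prime primitive ideal, namely $K_r$. Primeness and noetherianness of $\Sscr{\dalembert^r}\cong\cD(X)$ transfer via the order filtration, whose associated graded is supported on the irreducible variety $T^*X_{\mathrm{red}}$ and hence prime up to nilpotents; and the Goldie rank is $r$ because $\cO(X)$ is a faithful $\cD(X)$-module of composition length $r$, realized by the strict chain $\cO(X)\supset(\bar\Ft)\supset(\bar\Ft^2)\supset\cdots\supset(\bar\Ft^{r-1})\supset 0$ with $\cD(X)$-simple successive quotients.

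For part (2), the successive quotients of this filtration are each isomorphic as $\g$-modules to $\cO(X_{\mathrm{red}})$ twisted by a conformal weight shift $\lambda_k$ depending on $k\in\{0,\dots,r-1\}$. Generically each such twisted module is a simple infinite-dimensional minimal-type representation of $\g$; however, at a resonant value of $\lambda_k$ it degenerates and admits the trivial representation as a subquotient. A direct commutator calculation involving $[\dalembert^r,\Ft^k]$ shows that this resonance occurs precisely when $n$ is even and $r\geq n/2$, matching the classical logarithmic obstruction to inverting $\dalembert^r$ in even dimension. In that case exactly one composition factor of $\cO(X)$ becomes trivial, yielding the unique proper factor $L$ of $\mathcal{A}\cong\cO(X)$, which is therefore finite-dimensional; otherwise $\cO(X)$ is $\cD(X)$-irreducible, because any nonzero $\cD(X)$-submodule would be simultaneously an ideal of $\cO(X)$ and $\g$-stable, and no such proper submodule exists when all composition factors remain infinite-dimensional. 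The main obstacle is the kernel identification: pinning down $K_r$ as a completely prime primitive ideal of Goldie rank exactly $r$ requires a delicate interplay between highest weight theory and the $D$-module analysis of the non-reduced scheme $X$.
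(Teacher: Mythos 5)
Your outline gets the broad reduction right (complexify, Fourier transform to replace $\Sscr{\dalembert^r}$ by $\cD(R_r)$ for $R_r=\C[X_1,\dots,X_n]/(\Ft^r)$, then exhibit a Lie algebra map $\g\to\cD(R_r)$ via conformal generators and their scalar corrections — this is Corollary~\ref{cor19}), and the base-change descent to $\R$ via faithful flatness is how the paper also handles the real statement. But there are several genuine errors in the core of the argument.

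First, you assert that the kernel $K_r$ is a \emph{completely prime} primitive ideal and that $U(\g)/K_r$ has Goldie rank $r$. These two claims contradict each other unless $r=1$: a completely prime noetherian ring is a domain and therefore has Goldie rank $1$. The paper makes this explicit in Remark~\ref{Joseph-remark2}(1): $J_1$ is the (completely prime) Joseph ideal, but for $r>1$ the ideal $J_r$ is \emph{not} completely prime and $U(\g)/J_r$ has a simple artinian quotient ring that is an $r\times r$ matrix ring. The ideal $J_r$ is identified as $I(\omega)$, the annihilator of a specific simple highest weight module, in Corollary~\ref{cor211}; there is no invocation of uniqueness of completely prime ideals for $r>1$.

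Second, your mechanism for both the Goldie rank and for part (2) — the claim that the $\Ft$-adic chain $\cO(X)\supset(\bar\Ft)\supset(\bar\Ft^2)\supset\cdots\supset 0$ is a $\cD(X)$-composition series of length $r$ — is false. The ideals $(\bar\Ft^k)$ are $R_r$-submodules but they are not $\cD(R_r)$-submodules: the generators $P_j(d)$ of $\g_r$ contain the Laplacian, and by the formula~\eqref{prop17-1} $\Delta_1(\Ft^k u)$ produces $\Ft^{k-1}$-terms, so the $\g$-action does not preserve the $\Ft$-adic filtration. In fact Theorem~\ref{thm26} shows that $R_r$ as a $\g$-module (equivalently, by Theorem~\ref{mainthm1}, as a $\cD(R_r)$-module) has composition length $1$ or $2$, never $r$. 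The Goldie rank $r$ is instead obtained from Musson's results on $\cD$ of non-reduced schemes (Lemma~\ref{main2}), where $\cD(R_r)$ is shown to have an $r\times r$ matrix ring over $\mathrm{Quot}(\cD(R_1))$ as its ring of fractions.

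Third, you dismiss surjectivity of $U(\g)\to\cD(R_r)$ as a ``finite check at low order.'' This is the main technical content of the paper. There is no elementary generator argument: the ring of differential operators on a non-reduced, singular scheme is not generated in low order, and proving $\psi_r$ is surjective occupies Sections~\ref{Orings}--\ref{maintheorem}. The paper does it by proving $\cD(R_r)=\calL(R_r,R_r)$, the ring of $\g$-finite vectors, via a maximal order argument (Theorem~\ref{thm1.4}, Proposition~\ref{lem4}) which in turn needs the nonvanishing of $\calL(R_r,R_1)$ established in Section~\ref{kfinite}. Your proposal gives no substitute for this step.

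Finally, the ``resonance at $n$ even, $r\geq n/2$'' conclusion in part (2) is correct, but the proposed derivation through composition factors of the $\Ft$-filtration does not work for the reason above. The paper's proof (Theorem~\ref{thm26}) analyzes the highest weight module $R_r$ directly, showing that a finite-dimensional quotient appears precisely when the highest weight $\lambda=(n/2-r)\varpi_1$ is dominant integral, i.e.\ when $n$ is even and $r\geq n/2$.
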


\begin{rem}
  Theorem~\ref{thm8922} also give explicit sets of
  generators for the algebra $\Sscr{\dalembert_p^r}$.  For
  the weights of the corresponding highest weight
  $\sontwo$-module $\mathcal{A}_\C = \mathcal{A} \otimes_\R
  \C$ and its factor $L_\C$, 
  see Theorems~\ref{thmC} and \ref{thm26}.
  
    The  significance of the  finite dimensional module $L$ in this theorem (and of its
  annihilator, which will be an ideal of finite codimension
  in  $\Sscr{\dalembert_p^r}$) needs further study.   
\end{rem}

Let us now describe our approach. First, results for the real algebras
follow easily for the corresponding results for complex algebras and so,
for the rest of the introduction we concentrate on the latter case; thus we
work with the polynomial ring $A= \C[X_1,\dots,X_n]$ and Weyl algebra
$A_n(\C)= \C[X_j, \ddx{k}; 1 \le j, k \le n]$ and consider the complex algebra
of symmetries $\Scr(\Delta^r_1)$ for the Laplacian
$\Delta_1=\sum_{j=1}^n \ddx{j}^2$. Secondly, let $\cF$ be the Fourier
transform of the Weyl algebra $A_n = A_n(\C)$ which interchanges the $X_j$ and
$\ddx{j}$. This maps the Laplacian $\Delta_1$ onto
$\Ft= \sum_{j=1}^n X_j^2$.  A quick examination of the definitions shows
that $\cF$ gives an anti-isomorphism from
$\Scr(\Delta^r_1)$ onto $\I\bigl(\Ft^r A_n)/\Ft^r A_n$, where
$\I\bigl(\Ft^r A_n) = \{\theta\in A_n : \theta \Ft^r A_n\subseteq \Ft^r
A_n\}$
is the \emph{idealizer} of $\Ft^rA_n$.  Moreover, it is standard that
$\I\bigl(\Ft^r A_n)/\Ft^r A_n\cong \cD(R_r)$, the ring of differential
operators on the commutative ring $R_r=A/\Ft^rA$ (see Section~\ref{sec1}).

Thus, we need to understand the ring $\cD(R_r)$ and its canonical module
$R_r$.  A remarkable property of the algebra $R_1$ is that it carries a
representation of the orthogonal Lie algebra $\g=\sontwo$, and the starting
point of this paper is a result proved in \cite[Theorem~\ref{thm22}]{LSS}:
there exists a maximal ideal $J_1$, called the Joseph ideal, in the
enveloping algebra $U(\g)$ such that $\cD(R_1)\cong U(\g)/J_1$. Via the
Fourier transform, this also gives an alternative proof of Eastwood's
result \cite{ea05} on $\Scr(\dalembert_p)$.

  Now suppose that $r\geq 1$.
The algebra $R_r$ also carries a representation of $\g$ (see
Section~\ref{sec2}) and the main aim of this paper is to use this to
describe $\cD(R_r)$:

\begin{thm}[see Theorems~\ref{mainthm1} and~\ref{thm892}]
  \label{thmA} 
  There exist a primitive ideal $J_r \subset U(\g)$ and isomorphisms
  \[ 
U(\g)/ J_r \ \longisomto \ \cD(R_r) \ \longisomto \ \Sscr{\Delta_1^r}.
  \]
  \end{thm}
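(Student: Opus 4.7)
The plan is to tackle the second isomorphism first and then concentrate on the first, which carries the representation-theoretic content. For $\cD(R_r) \isomto \Sscr{\Delta_1^r}$, the Fourier transform $\cF$ (interchanging $X_j$ and $\ddx{j}$ up to sign) sends $\Ft^r$ to $\pm\Delta_1^r$, and a direct unwinding of the two defining conditions ``$\theta \Ft^r A_n \subseteq \Ft^r A_n$'' and ``$\Delta_1^r P \in A_n \Delta_1^r$'' yields the anti-isomorphism $\I(\Ft^r A_n)/\Ft^r A_n \isomto \Sscr{\Delta_1^r}$ already noted in the introduction. Composing with a natural anti-involution (the formal adjoint on $\cD(R_r)$, or the principal anti-involution pulled back from $U(\g)$) promotes this to a genuine isomorphism, and the standard identification $\I(\Ft^r A_n)/\Ft^r A_n \cong \cD(R_r)$ for ideals in the Weyl algebra coming from principal ideals of $A$ then finishes this half.

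For the first isomorphism $U(\g)/J_r \isomto \cD(R_r)$, the $\g$-action on $R_r$ constructed in Section~\ref{sec2} yields an algebra map $\Psi : U(\g) \to \cD(R_r)$; I set $J_r := \Ker \Psi$. To argue that $J_r$ is primitive, I would exhibit a simple highest-weight $\g$-subquotient $V$ of $R_r$ (or of its companion $\mathcal{A}$) whose annihilator under $\Psi$ equals $J_r$; Duflo's theorem on annihilators of simple highest-weight modules then forces $J_r$ to be primitive. Identifying $V$ demands an explicit $\g$-module analysis of $R_r$---locating a cyclic highest-weight vector and computing its weight in terms of $r$ and $n$---which generalises the $r = 1$ situation of \cite{LSS}, where $V$ is the minimal representation and $J_r$ is the Joseph ideal.

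The crux, and the step I expect to be the main obstacle, is surjectivity of $\Psi$. My plan is an associated-graded comparison under the operator-order filtration: $\gr \cD(R_r)$ embeds into the coordinate ring of a suitable conormal variety attached to the thickened cone $\{\Ft^r = 0\}$, while $\gr \Psi$ maps $S(\g)/\gr J_r$ into this same ring. Since the symbols of the conformal vector fields already generate the image for $r = 1$ by \cite{LSS}, the task is to show that the ``extra'' operators needed for $r > 1$---those whose symbols detect the non-reduced structure of $R_r$---also lie in the image of $\Psi$. Equivalently, one must verify that no proper $\g$-stable subalgebra of $\gr \cD(R_r)$ contains all the symbols of the conformal Hamiltonians together with multiplication by the coordinate functions, an invariant-theoretic statement that will likely require the explicit $\g$-module decomposition of $R_r$ worked out in the primitivity step. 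Once surjectivity is in hand, $J_r$ is automatically a primitive ideal because $\cD(R_r)$ is a prime ring acting faithfully on the simple module $V$ identified above.
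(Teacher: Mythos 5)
Your treatment of the second isomorphism (Fourier transform giving an anti-isomorphism onto $\Sscr{\Delta_1^r}$, upgraded to an isomorphism by a Chevalley-type anti-involution fixing $J_r$) matches the paper's Proposition~\ref{cor85} and Theorem~\ref{thm892}, and your primitivity argument for $J_r$ (annihilator of a simple highest-weight subquotient of $R_r$) is essentially Corollary~\ref{cor211}. The genuine gap is in the surjectivity of $\Psi: U(\g) \to \cD(R_r)$, which you correctly identify as the crux but for which you only sketch a strategy that is unlikely to go through. An associated-graded comparison presupposes control of $\gr\cD(R_r)$ under the order filtration, but for the non-reduced scheme $(\Ft^r=0)$ with $r>1$ this is precisely the inaccessible object: a priori one does not even know that $\cD(R_r)$ is finitely generated or noetherian (compare Remark~\ref{Joseph-remark2}(3), where the analogous ring for the cubic cone fails all of these), so there is no target ``coordinate ring of a conormal variety'' into which $\gr\cD(R_r)$ is known to embed, and the proposed invariant-theoretic verification has no ground to stand on. Assuming one can compute or bound $\gr\cD(R_r)$ is very close to assuming the theorem.

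The paper's actual mechanism is quite different and worth contrasting. First, it proves that for these modules differential operators coincide with $\g$-finite vectors: the order filtration on $\cD(M,N)$ agrees with the filtration by ad-nilpotence of $\fr^-$ (Proposition~\ref{lem1.2}), and then $\calL(R_r,R_r)=U(\g)/J_r$ is a \emph{maximal order} by the Joseph--Stafford machinery (Theorem~\ref{thm1.4}); this step needs the quasi-simplicity of $R_r$ from Theorem~\ref{thm26} and the Cohen--Macaulay computation $\End_{R_r}(\Soc R_r)=R_r$. Second, instead of attacking $\cD(R_r)$ head on, it reduces to the known case $r=1$ ($\cD(R_1)=U(\g)/J_1$, the Joseph ideal) by producing nonzero bimodules $\calL(R_r,R_1)$ and $\calL(R_1,R_r)$ --- this is the explicit computation of Section~\ref{kfinite}, showing that $\overline{1},\overline{\partial}_j,\overline{\Delta}_1$ span a $\g$-submodule of $\cD(R_r,R_{r-1})$ --- and then shows $\cD(R_r)$ and $\calL(R_r,R_r)$ are \emph{equivalent orders} (Proposition~\ref{lem4}). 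Maximality then forces equality. Your proposal contains neither the identification $\cD=\calL$, nor the reduction to $r=1$, nor the order-theoretic closing argument, so the surjectivity step remains open as written.
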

  
 \begin{rem}\label{Joseph-remark2}
(1)   The primitive ideal $J_r \subset U(\g)$ from
   Theorem~\ref{thmA} is also the annihilator of the
   $\g$-module $R_r$.  This representation of $\g$ is
   \emph{minimal} in the sense that the associated variety
   of $J_r$ is the closure of the minimal non-zero nilpotent
   orbit $\mathbf{O}_{\mathrm{min}}$ in $\g$. At least for
   $n>3$, the Joseph ideal $J_1$ is the unique completely
   prime ideal with this associated variety, which explains
   why it necessarily appears in this theorem. In contrast,
   for $r>1$ the ideal $J_r$ is not completely prime; in
   fact $U(\g)/J_r$ has
 Goldie rank $r$, in the sense that
   its simple artinian ring of fractions is an $r\times r$
   matrix ring over a division ring. 

(2) The ideal $K_r$ of Theorem~\ref{thmB} is the intersection  $K_r=J_r \cap U(\fso(p+1,q+1))$.
 
  (3) Results like Theorems~\ref{thmB} and \ref{thmA} are very
  sensitive to the precise operator $\Delta_1$ or
  $\dalembert_p$. For example, if $\Ft$ is replaced by
  $\vphi=\sum X_j^3\in A=\C[X_1,X_2,X_3]$ then
  $\cD\bigl(A/\vphi A\bigr)$
  is neither finitely generated nor noetherian, and even has an
  infinite ascending chain of ideals \cite{bgg}. By applying the
  Fourier transform $\cF$, the same properties hold for the
  algebra of symmetries $\Sscr{\Theta}$, for
  $\Theta=\sum_{j=1}^3 \partial^3_{X_j}\in A_3.$ 
\end{rem}

The idea behind the proof of Theorem~\ref{thmA} is to relate $\cD(R_r)$ to
$\cD(R_1)$ and hence to reduce this theorem to the known case
$U(\g)/J_1 \cong \cD(R_1)$. This is achieved by showing that the set of
differential operators $\cD(R_r,R_1)$ from $R_r$ to $R_1$ (which naturally
relates $\cD(R_r)$ to $\cD(R_1)$) equals the module of \emph{$\g$-finite
  vectors} $\cL(R_r,R_1)$ that connects $U(\g)/J_r$ to $U(\g)/J_1$. Showing
that, in fact, $\cD(R_r,R_1) = \cL(R_r,R_1)$ is the key step in the proof
of the theorem, since it reduces the problem to understanding algebras in
the category~$\mathcal{O}$ of highest weight $\g$-modules; see
Sections~\ref{Orings} and \ref{kfinite} in particular.  (We remark that,
here and elsewhere, we use the Lie-theoretic notation from \cite{lie1,
  lie2} and \cite{jantzen}, although the relevant terms are also defined in
the body of the paper.)  

Along the way we also obtain a detailed understanding of the $\g$-module
structure of $R_r$,  and this forms the next main result.

\begin{thm}[see Theorem~\ref{thm26}]
  \label{thmC}
  The $\g$-module $N(\lambda) = R_r$ is a highest weight module, with
  highest weight $\lambda = (\frac{n}{2}-r) \varpi_1$ where $\vpi_1$ is the
  first fundamental weight of $\g$. Furthermore:
  \begin{enumerate}[\rm (i)]
  \item if $n$ is even with $r< \frac{n}{2}$, or if $n$ is odd, then
    $N(\lambda)= L(\lambda)$ is irreducible;
  \item if $n$ is even with $r \ge \frac{n}{2}$, then $N(\lambda)$ has an
    irreducible socle $S$ isomorphic to $L(\mu)$, while the quotient
    $N(\lambda)/S = L(\lambda)$ is irreducible and finite dimensional. 
    (The formula for $\mu$ is given in~\eqref{thm26-1}.)
  \end{enumerate}
\end{thm}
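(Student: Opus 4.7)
My plan is to realize $R_r = A/\Ft^rA$ as a highest weight $\g$-module by identifying a highest weight vector directly from the explicit $\g$-action constructed in Section~\ref{sec2}, then determining the submodule structure by a character comparison inside category $\mathcal{O}$. Fix a Borel $\fb = \fh\oplus\fn^+\subset\g$ so that $\fn^+$ is realized by operators annihilating constants---namely the annihilation operators $\ddx{j}$ together with higher-order combinations involving $\Delta_1$, together with the $\fso(n,\C)$-raising operators. The class $\overline{1} = 1+\Ft^rA$ is then killed by $\fn^+$, and the scaling Cartan generator (which acts on polynomials as $E + c_r$ with $c_r = \halfn - r$, the shift being built into the normalization of the conformal action, so that $\Ft^r\equiv 0$ in $R_r$ is consistent with the weight assignments) gives $\wt(\overline{1}) = (\halfn-r)\vpi_1 = \lambda$. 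Cyclicity $R_r = U(\g)\cdot\overline{1}$ follows because the remaining lowering operators include multiplication by the $X_j$ (up to lower-order corrections involving $\Ft$), which together generate $R_r$ as an algebra starting from $1$. Thus $R_r = N(\lambda)$ is a highest weight module, a quotient of the Verma module $M(\lambda)$; this extends the $r=1$ analysis of \cite{LSS}.

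To separate cases (i) and (ii), I would compute the formal $\fh$-character of $R_r$ from the $\fso(n,\C)$-harmonic decomposition $A_d = \bigoplus_{k\geq 0}\Ft^k\cdot H_{d-2k}$ (where $H_m$ denotes harmonic polynomials of degree $m$), noting that the truncation $k\leq r-1$ produces $R_r$ as a graded $\fso(n,\C)$-module; the Cartan action of the scaling generator adds the uniform shift $c_r$ on each degree piece. In case (i)---$n$ odd, or $n$ even with $r < \halfn$---the weight $\lambda+\rho$ is sufficiently regular that any putative singular vector in $M(\lambda)$ generating a proper submodule must already lie in the kernel of the surjection $M(\lambda)\twoheadrightarrow R_r$. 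Equivalently, the computed character matches $\chi(L(\lambda))$ (obtainable via a Weyl-/BGG-type formula in this range), forcing $N(\lambda) = L(\lambda)$.

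In case (ii)---$n$ even with $r\geq\halfn$---the weight $\lambda+\rho$ lies on an additional reflection hyperplane specific to type $D_{n/2+1}$, and there is a corresponding BGG embedding $M(\mu)\hookrightarrow M(\lambda)$ with $\mu$ as in~\eqref{thm26-1}. I would construct the socle $S\subset R_r$ as the image of the simple submodule of the embedded Verma, realized concretely via elements of the form $\Ft^{r-\halfn}\cdot h$ for $h$ a harmonic polynomial of appropriate degree (these become non-trivial precisely in this range). A character match then identifies $S\cong L(\mu)$ and shows that $R_r/S = L(\lambda)$ is the finite-dimensional irreducible of the specified highest weight. The hardest step is this last case: producing the explicit socle candidate, verifying that it is simple of highest weight $\mu$, proving it exhausts the socle, and matching the finite-dimensional quotient with $L(\lambda)$. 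This requires threading the type-$D$ singular-vector combinatorics through the differential-operator realization of $R_r$; it is precisely what distinguishes the even case from the odd one, and via the correspondence $\cD(R_r)\cong\Sscr{\Delta_1^r}$ of Theorem~\ref{thmA} it is also what produces the finite-dimensional factor in Theorem~\ref{thmB}(2).
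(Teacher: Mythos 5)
Your first step---exhibiting $\overline{1}$ as a highest weight vector of weight $\lambda=(\halfn-r)\vpi_1$ and noting cyclicity---is exactly the paper's Lemma~\ref{lem23} and is fine. The rest of the proposal, however, has a genuine gap: the dichotomy between cases (i) and (ii) is deferred to a ``character comparison'' with $L(\lambda)$ and to ``type-$\mathsf{D}$ singular-vector combinatorics'', but you give no way to compute the character of $L(\lambda)$ for these weights (which are neither dominant nor antidominant, and non-integral when $n$ is odd), nor any mechanism for showing that a candidate submodule exhausts the socle. The paper's proof does not go through characters at all. Its key observation is that, since $R_r\subseteq\Im(\psi_r)\subseteq\cD(R_r)$, every $\g$-submodule of $R_r$ is an \emph{ideal}, hence graded and a sum of the $\fk$-isotypic pieces $H(m-2k)\Ft^k$ of Theorem~\ref{thm24}. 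One may therefore assume a nonzero submodule contains $p=\xi^{m-2v}\Ft^v$ with the harmonic valuation $v$ minimal; applying the explicit second-order operator $P_{\ell+1}$ produces the coefficient $v(n-2v-2d)$ and forces $v=0$, and then applying $P_1$ to $\xi^m$ produces $-m(m-1+d)$ and forces $m=0$ unless $d=1-m\le 0$. This is precisely what yields the threshold ``$n$ even and $r\ge\halfn$'', and it is the ingredient your character-theoretic route would have to replace with hard Kazhdan--Lusztig-type input.

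Moreover, your concrete candidate for the socle in case (ii) is incorrect. The socle is the image of the ideal $H(m)A$ with $m=r-\halfn+1$, generated by genuinely \emph{harmonic} polynomials of degree $m$; its highest weight vector is $\xi^{m}=U_2^{m}$ (the image of the singular vector $x_{-\alpha_1}^{m}.1$), of weight $\mu=-\vepsilon_1+m\vepsilon_2$. An element of the form $\Ft^{\,r-\halfn}h$ with $h$ harmonic has harmonic valuation $r-\halfn>0$ when $r>\halfn$, has the wrong $\fh$-weight, and is not annihilated by $\fr^+$; it cannot generate the socle. The actual content of part (ii) in the paper is the verification that $\xi^m$ generates a $\g$-submodule (using $P_j.\xi^m=-(m-1+d)\ddx{j}(\xi^m)=0$ and an induction on monomials), that this submodule is contained in \emph{every} nonzero submodule (whence it is the whole socle and is essential), and that the quotient is finite dimensional because $H(m)$ contains $\zeta^m$ for a basis of isotropic vectors $\zeta$, so that $H(m)A$ has finite codimension in $A$. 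None of these steps is supplied by a character match.
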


   One significant consequence of this proof is that
\\
\centerline{\emph{$\cD(R_r)$ is a maximal order in its simple ring of
    fractions}} 
\\
(this concept is the natural noncommutative analogue of
being integrally closed; see also Definition~\ref{order-defn}).
  
The space of harmonic polynomials 
$H=\{p\in A : \Delta_1(p)=0\}$, and its real analogues, 
are fundamental objects  with many applications; see, for example,
\cite{hss, KO} for applications to  minimal representations of the conformal group
$\Ort(p+1,q+1)$ and \cite{bek08, bek11} for applications in physics.
 Given that we have been interested in symmetries of powers
 of the Laplacian, it is therefore natural to consider
 solutions   of the $r$-th 
power of the Laplacian $\Delta^r_1$.    In other words, we are interested in the
space of ``harmonics of level~$r$'', defined as
\begin{equation*} 
  M_r= \bigl\{f \in A \; : \; \Delta_1^r(f)=0\bigr\}. 
\end{equation*}

This is the topic of Section~\ref{sec9} where we show that
$M_r$  is   a $\g$-module,    indeed  an  
  $\Sscr{\Delta_1^r}$-module, that is closely  related to the
$\cD(R_r)$-module $R_r$ through the isomorphism of Theorem~\ref{thmA}:
 
\begin{thm}[see Corollary~\ref{cor915}]
  \label{thmD}
  \begin{enumerate}[{\rm (i)}]
\item In the category $\cO$ of $\g$-modules, $M_r$ is
    isomorphic to the dual $N(\lambda)\spcheck$ of $N(\lambda)=R_r$.
  \item Consequently, if $n$ is even with $r< \frac{n}{2}$ or if $n$ is
    odd, then $M_r \cong N(\lambda) \cong L(\lambda)$ is simple.
  \item If $n$ is even with $r \ge \frac{n}{2}$, then $M_r$ has an
    irreducible finite dimensional socle $E \cong L(\lambda)$.
The quotient $M_r/E \cong L(\mu)$ is an    irreducible highest weight
module. 
  \end{enumerate}
\end{thm}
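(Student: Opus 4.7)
The plan is to exhibit $M_r$ as the category-$\cO$ contragredient dual of the highest weight module $N(\lambda)=R_r$ from Theorem~\ref{thmC}; parts~(ii) and~(iii) then follow immediately because the duality functor $\spcheck$ on category $\cO$ preserves formal characters, fixes each simple object $L(\nu)$, and interchanges socle with head.

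The main tool is the Fischer (apolar) pairing
\[
\langle\,,\,\rangle\colon A\times A\longrightarrow \C,\qquad
\langle p,q\rangle\;=\;\bigl(p(\partial_{X_1},\dots,\partial_{X_n})\,q\bigr)\big|_{X=0}.
\]
This is a graded perfect pairing: it vanishes on $A_d\times A_e$ for $d\ne e$ and restricts to a non-degenerate pairing on each $A_d\times A_d$. The fundamental adjunction $\langle \Ft\cdot p,q\rangle = \langle p,\Delta_1 q\rangle$ follows from $\Ft(\partial)=\Delta_1$, so iterating gives $\langle \Ft^r A,\,M_r\rangle = 0$, and hence the orthogonal complement of $\Ft^rA$ is precisely $\ker\Delta_1^r = M_r$. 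The pairing therefore descends to a perfect graded pairing $R_r\times M_r\to\C$, giving an identification of $M_r$ with the graded dual of $R_r$ as vector spaces.

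The key step — and the main obstacle — is to verify that under this pairing the $\g$-action on $R_r$ defined in Section~\ref{sec2} corresponds to the contragredient $\g$-action on $M_r$ with respect to the Chevalley anti-involution $\tau$ used to define $\spcheck$. The strategy is: realise each generator $x\in\g$ as an explicit differential operator $\Theta(x)\in A_n$ acting on $A$, compute its Fischer-adjoint $\Theta(x)^{\dagger}$ from the defining relation $\langle \Theta(x)p,q\rangle=\langle p,\Theta(x)^{\dagger}q\rangle$, and check that $x\mapsto \Theta(x)^{\dagger}$ agrees on a root-space basis with $\Theta(\tau(x))$. All such computations reduce to the two elementary adjunctions $X_j^{\dagger}=\partial_{X_j}$ and $\Ft^{\dagger}=\Delta_1$ together with linearity, and the conformal structure of the $\g$-action in Section~\ref{sec2} makes the verification uniform across the positive roots, the Cartan, and the negative roots. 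The operators $\Theta(x)$ preserve $\Ft^rA$ by construction, so their Fischer-adjoints preserve $M_r=(\Ft^rA)^\perp$ and we obtain a well-defined $\g$-module structure on $M_r$ that matches the natural action coming from Theorem~\ref{thmA} via symmetries of $\Delta_1^r$.

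Granted this compatibility, the pairing realises $M_r$ as $N(\lambda)\spcheck$ in category $\cO$, which is~(i). For~(ii), if $n$ is odd or $n$ is even with $r<\frac{n}{2}$, Theorem~\ref{thmC} gives $N(\lambda)=L(\lambda)$, hence $M_r\cong L(\lambda)\spcheck\cong L(\lambda)$. For~(iii), when $n$ is even and $r\ge\frac{n}{2}$, Theorem~\ref{thmC} provides the non-split sequence $0\to L(\mu)\to N(\lambda)\to L(\lambda)\to 0$ with $L(\lambda)$ finite dimensional; applying $\spcheck$ and using $L(\nu)\spcheck\cong L(\nu)$ yields
\[
0\longrightarrow L(\lambda)\longrightarrow M_r\longrightarrow L(\mu)\longrightarrow 0,
\]
which identifies the finite dimensional socle $E\cong L(\lambda)$ and the irreducible highest weight quotient $M_r/E\cong L(\mu)$, completing the proof.
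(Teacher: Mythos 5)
Your proposal follows essentially the same route as the paper: the Fischer/apolar pairing (the paper's $\ascal{a}{f}=\cF(a)(f)(0)$ of Definition~\ref{pairing} and Lemma~\ref{lem912}), the identification $(\Ft^rA)^\perp=M_r$ giving a perfect graded pairing $R_r\times M_r\to\C$, the verification that the $\g$-action on $M_r$ is the $\vt$-contragredient of that on $R_r$ (the paper's Theorem~\ref{thm913}, done root vector by root vector exactly as you outline), and then $\cO$-duality applied to Theorem~\ref{thm26}. The one place you understate the work is the equivariance check for $Y\in\fr^+$: since $\psit(Y)$ is second order, $\psit(Y)(p)$ and the composite $\psit(Y)\circ p$ differ by terms such as $\Delta_1 p-\Delta_1(p)$, and one must verify (as in~\eqref{star} and~\eqref{starstar}) that these discrepancies vanish after evaluation at $0$ modulo $\Ft^r$ — this does not follow from the adjunctions $X_j^{\dagger}=\partial_{X_j}$, $\Ft^{\dagger}=\Delta_1$ and linearity alone, though the computation does go through.
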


\begin{rem}\label{thmD-remark}
As was true for  Theorem~\ref{thmB}, there are also
analogues of this result for solutions of powers of the
d'Alembertian $\dalembert_p$ and these are described in
Corollary~\ref{cor916}. 
\end{rem}

One should observe that the action of the Lie algebra $\g$ on $R_r$ and
$M_r$ is not given by linear vector fields.  For instance, one needs
differential operators $P_j$ of order~$2$ for $R_r$ and their Fourier
transforms $\cF(P_j)$ for the action on $M_r$; see~\eqref{eq15} and
Theorem~\ref{thm8922}. This is similar to the action on the minimal
representation of $\fso(p+1,q+1)$ in its Schr\"odinger model, as described
in \cite{kobayashi}.

The $\g$-module $M_1$ of harmonic polynomials is also an
incarnation of the scalar singleton module introduced by Dirac through
the ambient method (\cite{dirac1,dirac2, eagr, bek11}).  
   In this approach one  studies the Laplacian on $\R^n$ via a
  conformal compactification of $\R^n$  which appears as a projective quadric
  $\mathcal{Q}= \{\Qt=0\} \subset \mathbb{RP}^{n+1}$
  together with the action of the 
  Laplacian from $\R^{n+2}$ on   densities of a particular weight.   
    Similar questions arise for densities on the ``generalised light cone''
  $\{\Qt^r =0\}$ and, by \cite{eagr} and \cite{gjms}, the ambient method also
  works here; in this case  for densities of weight $- \halfn +r$.  This produces an
  $\fso(p+1,q+1)$-module, which in the case of the Minkowski
  space time corresponds to
  the higher-order singleton as defined in \cite{bekgri}.

  A more detailed review of this technique is given in the final
  Section~\ref{howe}, where we   give an algebraic version of the
  ambient construction and relate it to the $\g$-modules $R_r$ and $M_r$
  described above. Roughly speaking, in our setting the generalised light
  cone $\{\Qt^r =0\}$ is replaced by a factor $B/\Qt^rB$ of a  
    polynomial ring $B$ in $(n+2)$ variables, equipped with an appropriate
  Laplacian $\Delta \in \cD(B)$, while the densities are replaced by
  homogeneous polynomials in a finite extension $S/\Qt^rS$ of $B/\Qt^rB$.

  As such, the Laplacian $\Delta$ acts on the space
  $(S/\Qt^r S)(-\halfn +r)$ of densities of weight $-\halfn +r$ and gives
  the $\g$-module of \emph{harmonic densities}
\[
  N_\lambda = \bigl\{\bar{f} \in (S/\Qt^r S)(\textstyle{-\halfn +r}) \, :
  \, \Delta(f) = 0\bigr\}
\]
(see Definition~\ref{def642} and Proposition~\ref{thm221}).  Under this
notation and using the ideas of the ambient method, we are able to relate
all the earlier constructions by proving:

\begin{thm}[see Corollary~\ref{summary}]
  \label{thmE}  
  There are $\g$-module isomorphisms
  $N_\lambda \cong M_r \cong N(\lambda)\spcheck$.
\end{thm}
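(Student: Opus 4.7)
The plan is to assemble Theorem~\ref{thmE} from two ingredients. The second isomorphism $M_r \cong N(\lambda)\spcheck$ has already been supplied by Theorem~\ref{thmD} (Corollary~\ref{cor915}), so the substance of the result is the identification $N_\lambda \cong M_r$. All the real work lies in this identification, which is the $\g$-module incarnation of the classical ambient method.

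The main object to construct is a dehomogenization map $\Phi\colon N_\lambda \to M_r$. Using a split presentation $\Qt = 2Y_0 Y_{n+1} + \Ft(X_1,\dots,X_n)$ of the ambient quadratic form, the hyperplane slice $\{Y_0 = 1\}$ identifies an affine chart of the ambient projective quadric with $\C^n$, and the relation $\Qt^r \equiv 0$ allows the extra variable $Y_{n+1}$ to be eliminated modulo the appropriate ideal, yielding a genuine polynomial in $X_1,\dots,X_n$. I would first check that $\Phi$ is well-defined (independent of the lift of a class $\bar f \in S/\Qt^r S$) and that it lands in $M_r$; the latter amounts to showing that ambient harmonicity $\Delta(f) = 0$ descends to $\Delta_1^r\bigl(\Phi(\bar f)\bigr) = 0$.

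This descent statement is the principal obstacle, and I would prove it by an induction on the $\Qt$-adic filtration. The key input is the standard commutation relation between $\Delta$ and powers of $\Qt$, in which $[\Delta, \Qt^k]$ is expressible as a polynomial in $\Qt^{k-1}$ and the Euler operator on $B$; the weight $-\halfn + r$ has been chosen precisely so that these commutators telescope and reduce $\Delta^r f$ on the slice to $\Delta_1^r$ applied to the restriction of~$f$. This is the mechanism underlying the GJMS operators of \cite{gjms, eagr}, and its numerology is most delicate in the critical case where $n$ is even and $r \ge \halfn$, in which some of the coefficients in the recursion vanish; the residual contribution then has to be isolated and matched with the finite-dimensional socle of $M_r$ recorded in Theorem~\ref{thmD}(iii).

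To conclude, I would verify that $\Phi$ is bijective and $\g$-equivariant. Injectivity follows because any ambient polynomial of the given homogeneity that vanishes on the open slice $\{Y_0 = 1\}$ modulo $\Qt^r$ must itself lie in $(\Qt^r)$. Surjectivity is the converse extension statement: any element of $M_r$ lifts to a harmonic density by adjoining successive $\Qt$-corrections, whose construction is again guaranteed by the numerology above. Equivariance is transparent, since the action of $\fso(n+2,\C)$ on $B$ is by linear vector fields preserving $\Qt$, and their restriction to the affine slice generates precisely the quadratic differential operators $\cF(P_j)$ that govern the $\g$-action on $M_r$ as recorded in Theorem~\ref{thm8922}. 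Combining this identification with Theorem~\ref{thmD} yields the chain $N_\lambda \cong M_r \cong N(\lambda)\spcheck$ asserted in Theorem~\ref{thmE}.
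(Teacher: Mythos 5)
Your overall strategy is the paper's: reduce to the single isomorphism $N_\lambda\cong M_r$, realise it as dehomogenization to an affine slice of the ambient cone, and drive the descent of harmonicity by the commutation of $\Delta$ with powers of $\Qt$ at the weight $\delta=-\halfn+r$ (this is Proposition~\ref{thm221}, Corollary~\ref{cor643} and Theorem~\ref{thm921}, combined with Corollary~\ref{cor915}). However, two of your supporting claims are wrong as stated. First, the critical case $n$ even, $r\ge\halfn$ plays no role in constructing the isomorphism. Writing $f=\sum_j f_j\Qt^j$ with $f_j$ of weight $\delta-2j$, the identity~\eqref{eq122} turns $\Delta(f)=0$ into the recursion $\Delta_1(f_j)=-2(j+1)(r-j-1)f_{j+1}$, whose coefficients are independent of $n$ and nonzero throughout the relevant range $0\le j\le r-2$; hence $\Delta_1^p(f_0)=c_pf_p$ with $c_p\ne 0$ for $p\le r-1$, and the bijection is uniform in $n$ and $r$. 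There is no ``residual contribution'' to isolate and no separate matching with the finite-dimensional socle: the case distinction enters only afterwards, via the known structure of $N(\lambda)$ (Theorem~\ref{thm26}) and $\cO$-duality. Second, your injectivity argument fails for $r\ge 2$. Any dehomogenization landing in $\C[X_1,\dots,X_n]$ (rather than in $\C[X_1,\dots,X_n][\Qt]/(\Qt^r)$) must discard the components $f_1\Qt,\dots,f_{r-1}\Qt^{r-1}$, so there are plenty of classes, divisible by $\Qt$ but not by $\Qt^r$, that restrict to zero without lying in $(\Qt^r)$. Injectivity on $N_\lambda$ holds for a different reason: harmonicity forces $f_p=c_p^{-1}\Delta_1^p(f_0)$, so the leading term $f_0$ determines $\bar f$; the same formula supplies the $\Qt$-corrections needed for surjectivity.

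Two further points you should not gloss over. When $n$ is odd the weight $\delta$ is a half-integer, so the ambient polynomial ring contains no nonzero homogeneous elements of weight $\delta$; this is precisely why the paper adjoins $t=\sqrt{U_1}$, and your slice map must also strip the resulting powers of $t$ (the map $\sigma$ of Corollary~\ref{cor643}). And the equivariance is not ``transparent'': the root vectors in $\fr^-$ involve $U_{-1}=t^{-2}(\Qt-\Ft)$, and verifying that projection onto the $\Qt^0$-component intertwines them with the operators $\Ft\partial_{U_{-j}}-U_jI(d)$ governing the action on $M_r$ is exactly the case-by-case computation occupying the proof of Theorem~\ref{thm921}.
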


   We remark that
  one consequence of this result  is that the algebra of symmetries
  $\Sscr{\dalembert_{n-1}}$ is isomorphic to the ``on-shell
  higher-spin algebra'' of~\cite[\S3.1.3,
  Corollary~3]{bek08}. See Remark~\ref{rem922} for the
  details.             

\section{Notation}
\label{notation}

Fix an integer $n \ge 3$ and set $N=n+2$. In this section we fix some
notation about the complex simple Lie algebra $\g= \sontwo$.  The rank of
$\g$ will be denoted by $\rk\g =\ell=\ell'+1$; thus, $\g$ is of type
$\Bsf_\ell$ (with $\ell= \halfn+\half$) when $n$ is odd and of type
$\Dsf_\ell$ (with $\ell= \halfn+1$) when $n$ is even. Note that
$\g \cong \fsl(4,\C)$ when $n=4$.

A convenient presentation of $\g$ is given by derivations on the polynomial
algebra in $N$ variables.  Thus, let $U_{\pm 1},\dots,U_{\pm \ell}$ be
$2\ell$ indeterminates over $\C$. If $n$ is odd we let $U_0$ denote another
indeterminate and  set $U_0=0$ when $n$ is even; thus in each case
$B= \C[U_{\pm 1},\dots,U_{\pm \ell}, U_0]$ is a polynomial ring in $N$
variables.  Write $\ddu{j}=\frac{\partial}{\partial U_j}$ with the
convention that $\ddu{0}= 0$ if $n$ is even.

As in \cite[Corollaries 1.2.7 \&~1.2.9]{GW} we identify  
$\g$ with the Lie subalgebra of elements in $\fgl(N,\C)= \End \C^N$ which
preserve the quadratic form:
\begin{equation*} 
\Qt= \sum_{j=1}^\ell U_jU_{-j} + \half U_0^2.
\end{equation*}
Define    differential operators on $B$ by 
  \begin{equation}
    \label{eq610}
\Et =U_0\dpartial{U_0}
    + \sum_{j=1}^\ell (U_j \dpartial{U_j} + U_{-j}
    \dpartial{U_{-j}}) , \qquad \Delta 
    =\partial^{\; 2}_{U_0} + 2 \sum_{j=1}^\ell
    \dpartial{U_j}\dpartial{U_{-j}} 
  \end{equation}
  and derivations of $B$ by setting:
  \begin{equation}
    \label{eq610a}
      E_{ij}= U_i \ddu{j} - U_{-j} \ddu{-i} \qquad\text{for $i,j
      =0,\pm 1,\dots,\pm \ell$}. 
  \end{equation}
  Observe that $E_{-i,-j} = -E_{j,i}$, while $E_{-i,i} = 0$
  and $E_{ij}(\Qt)= 0$.  The following
  result is classical.

  \begin{prop}
    \label{prop611}     
 {\rm (1)} 
    The $\C$-vector space   spanned by the derivations
    $E_{ij}$ is a subalgebra of the Lie algebra
    $\left(\End_\C B, [\phantom{.},\phantom{.}]\right)$ isomorphic to the
    orthogonal Lie algebra $\fso(n+2,\C)$. 

\noindent {\rm (2)} 
The Lie subalgebra $\fsp$ of
    $\bigl(\End_\C B,[\phantom{.},{.}]\bigr)$ spanned by $\Delta$, $\Qt$
    and $-\bigl(\Et + \frac{N}{2}\bigr)$ is isomorphic to
    $\fsl(2,\C)$.  
  \end{prop}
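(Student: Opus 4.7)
The plan is to realise both subalgebras inside $\cD(B)$ via the standard action of $\fgl(N,\C)$ on the space $V=\bigoplus_k\C U_k$ of linear forms in $B$. For (1), every $T\in\End_\C(V)$ extends uniquely to a derivation $D_T$ of $B$; the map $T\mapsto D_T$ is injective and linear, and a one-line check on linear forms yields $[D_T,D_S]=D_{[T,S]}$, so it is a Lie-algebra embedding $\fgl(N,\C)\hookrightarrow(\End_\C B,[\cdot,\cdot])$. Polarising $\Qt$ yields the nondegenerate symmetric bilinear form $\beta$ on $V$ with $\beta(U_i,U_j)=\tfrac12\delta_{i,-j}$ (using the convention $U_0=0$ when $n$ is even), so that $\fso(n+2,\C)\cong\fso(V,\beta)=\{T\in\End_\C(V):D_T(\Qt)=0\}$. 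A direct inspection shows $E_{ij}=D_{T_{ij}}$, where $T_{ij}$ is the endomorphism of $V$ sending $U_j\mapsto U_i$, $U_{-i}\mapsto-U_{-j}$ and every other basis vector to zero; each $T_{ij}$ is $\beta$-skew, and the family $\{T_{ij}\}$ visibly spans $\fso(V,\beta)$. The identities $E_{-i,-j}=-E_{j,i}$, $E_{-i,i}=0$ and $E_{ij}(\Qt)=0$ already stated in the paper match the analogous identities for the matrices $T_{ij}$, so a dimension count ($\tfrac12N(N-1)$ on each side) identifies $\sum\C E_{ij}$ with $D(\fso(n+2,\C))$, proving (1).

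For (2), a direct calculation based on the canonical relations $[\ddu{j},U_k]=\delta_{jk}$ (with the convention $\ddu{0}=U_0=0$ when $n$ is even) gives
\begin{equation*}
  [\Et,\Delta]=-2\Delta,\qquad [\Et,\Qt]=2\Qt,\qquad [\Delta,\Qt]=2\Et+N.
\end{equation*}
The first two are immediate from the fact that $\Delta$ and $\Qt$ are $\Et$-homogeneous of weights $-2$ and $+2$. The third follows by summing the elementary identity $[2\ddu{j}\ddu{-j},U_jU_{-j}]=2U_j\ddu{j}+2U_{-j}\ddu{-j}+2$ over $j=1,\ldots,\ell$ and, when $n$ is odd, adding $[\partial_{U_0}^{\,2},\tfrac12U_0^2]=2U_0\ddu{0}+1$; both parities yield $N+2\Et$. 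Setting $h=-(\Et+\tfrac N2)$, $e=\Delta$, $f=-\tfrac12\Qt$ then produces the standard $\fsl(2,\C)$ relations $[h,e]=2e$, $[h,f]=-2f$, $[e,f]=h$. The operators $\Delta,\Qt,\Et+\tfrac N2$ are visibly linearly independent, having distinct orders $2,0,1$ in the order filtration of $\cD(B)$, so $\fsp$ is three-dimensional and isomorphic to $\fsl(2,\C)$.

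The only real obstacle lies in step (1): one must coordinate the sign convention in $T\mapsto D_T$ with the matrix realisation of $\fso(N,\C)$ attached to the off-diagonal form $\beta$ in such a way that the resulting derivations match the explicit formula for $E_{ij}$. Once this bookkeeping is fixed, both statements reduce to routine Weyl-algebra commutator computations, and together they exhibit the Howe-dual pair $\bigl(\fso(n+2,\C),\fsl(2,\C)\bigr)\subset\cD(B)$ that underpins the constructions in the rest of the paper.
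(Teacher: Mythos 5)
Your proof is correct and follows the same standard route that the paper's own (one-line) proof delegates to its references: identifying $\fgl(N,\C)$ with linear vector fields on $B$ and recognising the $E_{ij}$ as the $\beta$-skew endomorphisms for part (1), and the classical commutators $[\Et,\Delta]=-2\Delta$, $[\Et,\Qt]=2\Qt$, $[\Delta,\Qt]=2\Et+N$ for part (2). You have simply written out the computations that the paper cites from Goodman--Wallach and Howe, and they check out.
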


  \begin{proof}
    Under the identification of $\mathfrak{gl}(N,\C)$
    with the space of vector fields spanned by
    $\{U_i\partial_{U_j}\}$,
    part~(1) follows from \cite[\S2.3.1, p.~70]{GW}, while part (2) is in
    \cite[\S6]{howe1}.
 \end{proof}

We can and  therefore will identify $\g=\fso(n+2,\C)$ with the Lie
  algebra spanned by  
  the $\{E_{ij}\}$. 
 Notice that each of the subalgebras $\fsp$ and $\g$ is
  contained in the commutant of the other; this is an
  infinitesimal version of the existence of the dual pair
  $(\mathrm{O}(n+2,\C), \mathrm{Sp}(2,\C))$, see
  \cite[\S6]{howe1} or \cite[\S3]{howe2}.

  The space $\h= \bigoplus_{j=1}^\ell \C E_{jj}$ is a Cartan
  subalgebra of $\g$ and we set
  $\vepsilon_j = E_{jj}^* \in \h^*$. Let $\Phi$ denote the
  set of roots of $\fh$ in $\g$ and write $\g^\alpha$ for
  the space of root vectors of weight $\alpha \in \fh^*$.
  The set $\Phi$ is then given by
\[
  \{\pm (\vepsilon_a \pm \vepsilon_b) \, : \, 1 \le a < b \le \ell\} \,
  \bigsqcup \, \{\pm \vepsilon_b \, : \, \ 1 \le b \le \ell\} \ \text{ when 
    $n$ is odd}
\]
and
$\{\pm (\vepsilon_a \pm \vepsilon_b) \, : \, 1 \le a < b \le \ell\} \text{
  when $n$ is even}$. We choose positive roots  by setting 
 \[
  \Phi^+ = \{\vepsilon_1 \pm \vepsilon_{b+1}\}_{1 \le b \le \ell'} \,
  \bigsqcup \, \{\vepsilon_{a+1} \pm \vepsilon_{b+1}\}_{1 \le a < b \le
    \ell'} \, \bigsqcup \, \{\vepsilon_1\} \, \bigsqcup \,
  \{\vepsilon_{b+1}\}_{1 \le b \le \ell'} \quad \text{if $n$ is odd}
\]
and
$\Phi^+ = \{\vepsilon_1 \pm \vepsilon_{b+1}\}_{1 \le b \le
  \ell'} \, \bigsqcup \, \{\vepsilon_{a+1} \pm
\vepsilon_{a+1}\}_{1 \le a < b \le \ell'}$
if $n$ is even.  For more details, see
\cite[\S2.3.1]{GW}. In general we will use \cite{lie1, lie2} as our
basic reference for Lie-theoretic concepts. In particular
the set $\Phi^+$ is taken from \cite[Planches~II et~IV]{lie1} and we fix
the same basis 
$\Bsf = \{\alpha_1,\dotsc,\alpha_\ell\}$ as described there.
Set $\fn^+ = \bigoplus_{\alpha \in \Phi^+} \g^\alpha$ and
$\fb^+ = \fh \boplus \fn^+$, which is a Borel subalgebra of
$\g$.  A more refined \emph{Chevalley system} in $\g$ is
then given in the next proposition, see~\cite[Chap.~VIII,
\S~13]{lie2} for further details.
 
   \begin{prop}
    \label{chevalley2}
    The following set $\{y_\alpha \in \g^\alpha\}_{\alpha \in \Phi}$ of
    root vectors is a Chevalley system in $\g$ (recall that
    $[y_\alpha,y_{-\alpha}] = h_\alpha$):
    \begin{enumerate}[{\rm (a)}]
    \item  $y_{\vepsilon_{a} - \vepsilon_{b}} = E_{ab}$, \ 
      $y_{\vepsilon_{a} + \vepsilon_{b}} = E_{a,-b}$,
      $1 \le a < b \le \ell$, while $y_{\vepsilon_{b}} = \sqrt{2}E_{b,0}$,
      $1 \le b \le \ell$, if $n$ is odd;
    \item  $y_{-(\vepsilon_{a} - \vepsilon_{b})} = E_{ba}$,\
      $y_{-(\vepsilon_{a} + \vepsilon_{b})} = E_{-b,a}$,
      $1 \le a < b \le \ell$, while $y_{-\vepsilon_{b}} = \sqrt{2}E_{0,b}$,
      $1 \le b \le \ell$, if $n$ is odd;
    \item  $h_{\vepsilon_j\pm \vepsilon_k} = E_{jj} \pm E_{kk}$,
      $1 \le j < k \le \ell$ while  $h_{\vepsilon_j} = 2E_{jj}$,
      $1 \le j \le \ell$ if $n$ is odd.
    \end{enumerate}
    In particular, $\g$ has a triangular decomposition
    $\g= \fn^- \boplus \fh \boplus \fn^+$ with   
    \[
        \fn^+= \bigoplus_{1 \le i < j \le \ell} \C E_{ij} \boplus \bigoplus_{0
      \le i < j \le \ell} \C E_{i,-j}, \ \; \text{and}\ \; \fn^-= \bigoplus_{1 \le i < j
      \le \ell} \C E_{ji} \boplus \bigoplus_{0 \le i < j \le \ell} \C
    E_{-j,i}. \eqno{\qed}
    \]
  \end{prop}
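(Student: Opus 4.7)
The verification is entirely computational, using the explicit derivations $E_{ij} = U_i \ddu{j} - U_{-j} \ddu{-i}$ from~\eqref{eq610a}. First I would identify the root spaces. A single application of the Leibniz rule yields
\[
[E_{jj}, E_{pq}] \;=\; (\delta_{jp} - \delta_{jq} + \delta_{j,-q} - \delta_{j,-p})\, E_{pq}
\]
for all admissible indices. From this one reads off that $E_{ab}$ has $\fh$-weight $\vepsilon_a - \vepsilon_b$ (for $1 \le a \neq b \le \ell$), that $E_{a,-b}$ has weight $\vepsilon_a + \vepsilon_b$, and, when $n$ is odd, that $E_{b,0}$ has weight $\vepsilon_b$. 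Comparing with the positive system $\Phi^+$ imported from \cite[Planches~II \&~IV]{lie1} confirms that the listed $y_\alpha$ lie in the correct root spaces, and simultaneously shows that the displayed $\fn^\pm$ exhaust $\bigoplus_{\alpha \in \Phi^\pm} \g^\alpha$.

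Next I would verify each equality $[y_\alpha, y_{-\alpha}] = h_\alpha$ by direct bracket calculation: $[E_{ab}, E_{ba}] = E_{aa} - E_{bb}$ and $[E_{a,-b}, E_{-b,a}] = E_{aa} + E_{bb}$ match the coroots $h_{\vepsilon_a \mp \vepsilon_b}$ from~(c), while the factor $\sqrt{2}$ on the short-root vectors is forced by $h_{\vepsilon_b} = 2 E_{bb}$, since $[\sqrt{2}\, E_{b,0},\, \sqrt{2}\, E_{0,b}] = 2 E_{bb}$. To upgrade this data to a Chevalley system in the sense of \cite[Ch.~VIII, \S2.4]{lie2} one also needs the sign relation $N_{\alpha,\beta} = -N_{-\alpha,-\beta}$ on structure constants $[y_\alpha, y_\beta] = N_{\alpha,\beta}\, y_{\alpha+\beta}$. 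I would obtain this by exhibiting a Chevalley involution: the antiautomorphism $X \mapsto -\transs{X}$ of $\fgl(N,\C)$ restricts to a Lie algebra involution of $\g = \fso(N,\C)$ (because $\g$ is cut out by the \emph{symmetric} form $\Qt$), and a glance at (a) and (b) shows that it sends $y_\alpha$ to $-y_{-\alpha}$ for every $\alpha \in \Phi$; by standard theory the existence of such an involution is equivalent to the required sign condition.

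The triangular decomposition in the final display is then a direct matching: each positive root $\alpha \in \Phi^+$ appears exactly once among the root spaces $\C E_{ij}$, $\C E_{i,-j}$, and (when $n$ is odd) $\C E_{b,0}$ collected in the formula for $\fn^+$, and symmetrically for $\fn^-$. The main (indeed, only) delicate point is fixing the signs in Step~3 uniformly; everything else is a mechanical check against the Bourbaki tables.
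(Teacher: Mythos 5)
Your verification is correct. The paper itself offers no proof of this proposition --- it is stated with a \verb=\qed= and a pointer to the tables in \cite[Chap.~VIII, \S 13]{lie2} --- so there is nothing to compare against except the standard argument, which is exactly what you carry out: the weight formula $[E_{jj},E_{pq}]=(\delta_{jp}-\delta_{jq}+\delta_{j,-q}-\delta_{j,-p})E_{pq}$ correctly locates each $E_{ab}$, $E_{a,-b}$, $E_{b,0}$ in its root space; the brackets $[E_{ab},E_{ba}]=E_{aa}-E_{bb}$, $[E_{a,-b},E_{-b,a}]=E_{aa}+E_{bb}$, $[E_{b,0},E_{0,b}]=E_{bb}$ match (c) and force the $\sqrt{2}$ normalisation on the short roots; and the sign condition $N_{\alpha,\beta}=-N_{-\alpha,-\beta}$ is exactly what the automorphism $\theta(X)=-\transs{X}$ certifies, since with the paper's normalisation $[y_\alpha,y_{-\alpha}]=h_\alpha$ the correct form of Bourbaki's condition is that $y_\alpha\mapsto -y_{-\alpha}$, $H\mapsto -H$ extend to an automorphism. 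Two small points to tighten: $\theta$ is an automorphism (not an antiautomorphism) of the Lie algebra, and the parenthetical reason you give for $\theta$ preserving $\g$ is slightly too quick --- for a general symmetric Gram matrix $S$ one only gets $-\transs{X}\in\fso(S^{-1})$, and one needs $S^2$ scalar (true here) to conclude $\fso(S^{-1})=\fso(S)$; alternatively, just observe $-\transs{E_{ij}}=-E_{ji}\in\g$ directly on the spanning set, which also immediately gives $\theta(y_\alpha)=-y_{-\alpha}$ and $\theta|_{\fh}=-\mathrm{id}$. Finally, note that the $i=0$ summands $\C E_{0,-j}=\C E_{j,0}$ in the displayed $\fn^{+}$ are what account for the short root spaces, so the final matching is complete.
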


  The subspace $\fk \subset \Der_\C(A)$ generated by the derivations
  $E_{ij}$ with $i,j \in \{0,\pm 2,\dots,\pm \ell\}$ is a Lie subalgebra of
  $\g$ isomorphic to $\fso(n,\C)$ and is of type
  $\mathsf{B}_{\ell-1}=\mathsf{B}_{\ell'}$ ($n$ odd) or
  $\mathsf{D}_{\ell -1}=\mathsf{D}_{\ell'}$ ($n$ even). Furthermore,
  $\fm = \C E_{11} \boplus \fk \subset \g\cong \fso(2,\C) \times
  \fso(n,\C)$ and $\g$ decomposes as: 
  \begin{equation*}  
    \g = \fr^- \boplus \fp, \quad\text{for}\quad  \fp = \fm \boplus \fr^+ \
    \text{and}\  \fk= 
    [\fm,\fm],
  \end{equation*}
  where $\fr^- = \bigoplus_{p \ne \pm 1} \C E_{p1} \subset \fn^-$ and
  $\fr^+ = \bigoplus_{p \ne \pm 1} \C E_{1p} \subset \fn^+$. Here
  $\fp$ is a maximal parabolic subalgebra of $\g$ with abelian nilradical
  $\fr^+ \cong \C^n$ and Levi subalgebra $\fm$; for more details, see
  \cite[\S 3]{LSS}. In the notation of \cite[Table~3.1]{LSS}, $\p$ is the
  parabolic $\fp_1$ (of types~$\mathsf{B}_\ell$, respectively $\mathsf{D}_\ell$
  when $n$ is odd, respectively even) except that,   when $n=4$, $\fp$ is
  the parabolic 
  $\fp_2$ of type~$\Asf_3$.

Let $\varpi_1,\dotsc,\varpi_\ell$ denote the fundamental weights of $\g$
and $\Psf = \bigoplus_{j=1}^\ell \Z \varpi_j$ the lattice of weights. Set
$\N^*=\N\smallsetminus\{0\}$ and 
$\Psf_{++}= \bigoplus_{j=1}^\ell \N^* \varpi_j \ \subset \ \Psf_{+} =
  \bigoplus_{j=1}^\ell \N \varpi_j$.
In order to quote results from \cite{jantzen}, we will often need to shift by
$\rho = \half \sum_{\alpha \in \Phi^+} \alpha = \sum_{j=1}^\ell \varpi_j$ and
we define
\begin{align*}
  \Psf^{++} &  = - \rho + \Psf_{++} = \{\mu \in \Psf : \dual{\mu +
              \rho}{\alpha\spcheck} > 0, \ \alpha \in \Bsf\}= \{\mu \in \Psf :
              \dual{\mu}{\alpha\spcheck} \ge 0, \ \alpha \in \Bsf \}
  \\
  \Psf^+ & = - \rho + \Psf_{+} = \{\mu \in \Psf : \dual{\mu +
           \rho}{\alpha\spcheck} \ge 0, \ \alpha \in \Bsf \}= \{\mu \in \Psf :
           \dual{\mu}{\alpha\spcheck} \ge -1, \ \alpha \in \Bsf\}.
\end{align*}

The definition of a Verma module $M(\mu)$ and its unique simple quotient
$L(\mu)$, for $\mu \in \h^*$, will be  relative to our given
triangular decomposition $\g = \fn^- \boplus \fh \boplus \fn^+$.
Recall that $L(\mu)$ is a finite dimensional simple module if and only if
$\mu \in \Psf^{++}$. Finally, we let $\mathcal{O}$ denote the category 
of highest weight modules as defined, for example, in \cite[4.3]{jantzen}.

 
\section{Differential operators}
\label{sec1}

We continue the discussion of the Lie algebra $\fso(n+2,\C)$ and its
presentation as differential operators, most especially those  on the ring
$\C[X_1,\dotsc,X_n ]/(\Ft^r)$ from the introduction.  It is worth
emphasising that, in contrast to the last section, here we start with a
polynomial ring in $n$ rather than $n+2$ variables. Both approaches will be
needed in the paper but the present approach is more subtle since
$\fso(n+2,\C)$ can no longer be presented as an algebra of derivations on
$\C[X_1,\dotsc,X_n ]$; one needs to add certain second order operators.  
Many of the
results in this section come from \cite{Usl3, kobayashi}.

For the moment, let $\K$ be any field of characteristic $0$ and write
$\calD(R)$ for the ring of $\K$-linear differential operators on a
commutative $\K$-algebra $R$, as defined, for example, in \cite{EGA}. More
generally, given $R$-modules $M$ and $N$, define
$ \calD(M,N) =\bigcup_{k\geq 0} \calD_k(M,N)$, where $\calD_{-1}(M,N)=0$
and, for $k\geq 0$,   
\begin{equation}\label{lem0.15}
  \calD_k(M,N)\ = \  \bigl\{\theta\in \Hom_{\C}(M,N) : [x,\theta]\in
  \calD_{k-1}(M,N) \ \text{for all } x\in R\bigr\}. 
\end{equation}
The elements of $\calD_k(M,N)$ are called \emph{differential operators of
  order at most $k$}.  Set $\calD(M)=\calD(M,M)$.

Fix an integer $n \ge 3$ and set $A = \K[X_1,\dotsc,X_n]$ for commuting
indeterminates $X_j$. Recall that $\cD(A)$ identifies with the $n$-th Weyl
algebra $A_n(\K)= \K[X_1,\dotsc,X_n,\ddx{1},\dotsc,\ddx{n}]$ where, as before, 
$\ddx{i} = \dpar{{\phantom{P}}}{X_i}$.  Given a factor ring $R= A/\fa$, the
\emph{idealiser} of $\fa\calD(A)$ is defined to be
\begin{equation}\label{eq10}
  \I_{\cD(A)}\bigl(\fa\cD(A)\bigr) = \{ P \in \cD(A) \, : \, P \fa\cD(A) \subseteq
  \fa\cD(A).  \}
\end{equation}
As noted  in \cite[Proposition~1.6]{SmSt}, this provides the
useful description of $\calD(R)$:
\begin{equation}\label{eq11}
  \cD(R) \, \cong \, 
  \I_{\cD(A)}\bigl(\fa\cD(A)\bigr) \big/\fa \cD(A).
\end{equation}
   
Let $r$ be a positive integer and set
\begin{equation}
  \label{eq103}
  \mathtt{F} = \sum_{i=1}^n X_i^2 \in A, \qquad \text{and}\qquad R= R_r =
  A/ \Ft^r A.
\end{equation}
Define the \emph{Laplacian} $\Delta_1 \in \cD(A)$, the \emph{Euler
  operator} $\mathtt{E}_1 \in \cD(A)$ and derivations $D_{ij}$ for
$1 \le i,j \le n$ by
\begin{equation}
  \label{eq13}
  \Delta_1 = \half \sum_{i=1}^n \partial_{X_i}^2, \qquad \Et_1 = \sum_{i=1}^n
  X_i\ddx{i}, \qquad D_{ij} = -D_{ji}= X_i \ddx{j} - X_j \ddx{i}.
\end{equation}
(Note the minor differences  between these operators and   the Euler
$\mathtt{E}$ and Laplacian $\Delta$ from \eqref{eq610}.) 
The next lemma follows from straightforward calculations; see for example
\cite[Proposition~1.2.2]{Usl3}.

\begin{lem}
  \label{lem14}
  For $1\leq i,j,k,l\leq n$ one has:
  \begin{enumerate}[{\rm (i)}]
  \item $[D_{kl},X_i] = \delta_{il}X_k - \delta_{ik}X_l$ while
    $[D_{ij}, \Ft] = D_{ij}(\Ft) = 0$;
  \item
    $ [D_{ij},D_{kl}]= \delta_{jk}D_{il} + \delta_{jl}D_{ki} +
    \delta_{ik}D_{lj} + \delta_{il}D_{jk}$. 
    \qed
  \end{enumerate}
\end{lem}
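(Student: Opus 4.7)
The plan is to verify both identities by direct computation in the Weyl algebra $A_n(\K)$, using only the canonical commutation relations $[\ddx{a},X_b] = \delta_{ab}$, $[X_a,X_b]=0$, $[\ddx{a},\ddx{b}]=0$, together with the Leibniz rule $[P,QR] = [P,Q]R + Q[P,R]$.

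For part (i), I would first compute $[D_{kl},X_i] = [X_k\ddx{l} - X_l\ddx{k},\,X_i]$. Since $X_k$ and $X_l$ commute with $X_i$, the only surviving terms come from $X_k[\ddx{l},X_i] - X_l[\ddx{k},X_i] = \delta_{il}X_k - \delta_{ik}X_l$. For the statement $[D_{ij},\Ft] = 0$, note that $D_{ij}$ is a derivation of $A$, so $[D_{ij},\Ft]$ acts on $A$ as the multiplication operator $D_{ij}(\Ft)$; then $D_{ij}(\Ft) = X_i\ddx{j}(\Ft) - X_j\ddx{i}(\Ft) = 2X_iX_j - 2X_jX_i = 0$. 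Since the commutator $[D_{ij},\Ft]$ in $\cD(A)$ is the differential operator of order $0$ given by this scalar, it vanishes in $\cD(A)$.

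For part (ii), I would expand
\[
[D_{ij},D_{kl}] = [X_i\ddx{j} - X_j\ddx{i},\,X_k\ddx{l} - X_l\ddx{k}]
\]
using Leibniz on each of the four resulting brackets. A typical term is
\[
[X_i\ddx{j},X_k\ddx{l}] = X_i[\ddx{j},X_k]\ddx{l} + X_k[X_i,\ddx{l}]\ddx{j} = \delta_{jk}X_i\ddx{l} - \delta_{il}X_k\ddx{j},
\]
after using that $X_i\ddx{l}$ and $X_k\ddx{j}$ are already pushed past the commuting pairs. Summing the four such contributions and regrouping by the $\delta$-symbols yields the claimed expression $\delta_{jk}D_{il} + \delta_{jl}D_{ki} + \delta_{ik}D_{lj} + \delta_{il}D_{jk}$.

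There is no real obstacle here: both identities are standard and the whole argument reduces to bookkeeping with Kronecker deltas. The only mild point to be careful about is that in part~(i) one should interpret $[D_{ij},\Ft]$ as an operator identity in $\cD(A)$ rather than an identity of functions, which is justified by the derivation property noted above.
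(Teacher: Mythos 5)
Your proof is correct and follows exactly the route the paper intends: the paper itself offers no argument, stating only that the lemma ``follows from straightforward calculations'' and citing \cite[Proposition~1.2.2]{Usl3}, and the direct Weyl-algebra computation you carry out (Leibniz rule plus the canonical commutation relations, with the grouping of $\delta$-terms into the four $D$'s) is precisely that calculation.
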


Let $d \in \K$ and define second order differential operators in $\cD(A)$
by
\begin{equation}
  \label{eq15}
  P_j(d)= X_j \Delta_1 - (\Et_1 + d) \ddx{j} = X_j \Delta_1 -
  \ddx{j}(\Et_1 + d-1), \quad\text{for}\
  \quad 1 \le j \le n.
\end{equation}

Direct computations (see, in particular, \cite[Proposition~1.2.2 and
Th\'eor\`eme~2.1.3]{Usl3} or \cite[Theorem~2.4.1]{kobayashi}) yield:

\begin{prop}
  \label{prop16}
  \textrm{\rm (1)} For $1\leq i,j,k\leq n$, the elements $P_j(d)$ satisfy:
   \begin{enumerate}[{\qquad\rm (a)}]
    \item $[\Et_1 + d,P_j(d)] = -P_j(d)$ and
      $[P_j(d),\ddx{k}]= \ddx{j}\ddx{k} -\delta_{j,k} \Delta_1$;
    \item $[X_i,P_j(d)] = D_{i,j} + \delta_{i,j} (\Et_1 +d)$, and
      $[D_{k,l},P_j(d)] = \delta_{l,j}P_k(d) - \delta_{k,j} P_l(d)$;
    \item $[P_i(d), P_j(d)] = 0$.
    \end{enumerate} 
  \noindent
  \textrm{\rm (2)} Assume that $\K$ is algebraically closed.  The subspace
  $\widetilde{\g}_d$ of $\bigl(\cD(A), [\phantom{.},\phantom{.}]\bigr)$
  spanned by the elements  
  $X_i$, $\Et_1 + d$, $D_{ij}$ and $P_j(d)$ (for $1\leq i,j\leq n$) is
  isomorphic to the Lie algebra $\fso(n+2,\K)$.
    
  \noindent 
  \textrm{\rm (3)} The subspace of $\gt_d$ spanned by the $D_{ij}$ is
  isomorphic to $\fso(n,\K)$.  \qed
\end{prop}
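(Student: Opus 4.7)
The strategy is to verify all the commutation relations in part (1) by direct computation in the Weyl algebra, then leverage these to identify $\widetilde{\g}_d$ with $\fso(n+2,\K)$ via an explicit match to the presentation in Proposition~\ref{prop611}.

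First I would record the elementary Weyl algebra brackets: $[\Et_1,X_j]=X_j$, $[\Et_1,\ddx{j}]=-\ddx{j}$, $[\Et_1,\Delta_1]=-2\Delta_1$, $[\Delta_1,X_j]=\ddx{j}$, and $[\Delta_1,\Et_1]=2\Delta_1$, together with Lemma~\ref{lem14}. Then every identity in part~(1) reduces to a short calculation. For instance, expanding
\[
[\Et_1+d,P_j(d)] = [\Et_1,X_j]\Delta_1 + X_j[\Et_1,\Delta_1] - (\Et_1+d)[\Et_1,\ddx{j}]
\]
gives $X_j\Delta_1 - 2X_j\Delta_1 + (\Et_1+d)\ddx{j} = -P_j(d)$. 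The bracket $[P_j(d),\ddx{k}]$ is obtained by distributing and using $[X_j,\ddx{k}]=-\delta_{jk}$ and $[\Et_1,\ddx{k}]=-\ddx{k}$. The identities in (b) follow in the same way, using $[X_i,\Et_1]=-X_i$, $[X_i,\Delta_1]=-\ddx{i}$, and Lemma~\ref{lem14}(i). For (c), one writes
\[
[P_i(d),P_j(d)] = [P_i(d),X_j]\Delta_1 + X_j[P_i(d),\Delta_1] - [P_i(d),\Et_1+d]\ddx{j} - (\Et_1+d)[P_i(d),\ddx{j}]
\]
and substitutes the relations already established in (a) and (b); after collecting terms the $D_{ij}$-contributions cancel against the $\delta$-terms and everything telescopes to zero.

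For part~(2), once the relations in~(1) are in hand, the span $\widetilde{\g}_d$ is automatically closed under commutator, so it is a Lie subalgebra of $\cD(A)$ of dimension $n + 1 + \binom{n}{2} + n = \binom{n+2}{2} = \dim\fso(n+2,\K)$. The plan is then to write down an explicit linear isomorphism $\Psi\colon \widetilde{\g}_d \longisomto \g$, where $\g = \fso(n+2,\K)$ is presented as in Proposition~\ref{prop611} on $N=n+2$ variables indexed by $0,\pm1,\dotsc,\pm\ell$. Using the parabolic decomposition $\g = \fr^- \boplus \fm \boplus \fr^+$ recalled in Section~\ref{notation}, the natural assignment is to send $D_{ij}\mapsto E_{ij}$ (for indices in $\{0,\pm 2,\dotsc,\pm\ell\}$, after a standard re-labelling of $\{1,\dotsc,n\}$ matching Chevalley conventions), $\Et_1 + d \mapsto E_{11}$, $X_i \mapsto$ a basis of $\fr^-$, and $P_j(d) \mapsto$ a basis of $\fr^+$. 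The relations in (1) are precisely the defining relations of this Chevalley realisation: the bracket $[\Et_1+d, P_j(d)] = -P_j(d)$ matches the eigenvalue of $E_{11}$ on $\fr^+$, the bracket $[X_i, P_j(d)] = D_{ij} + \delta_{ij}(\Et_1+d)$ reproduces the standard $\fso$-pairing between $\fr^-$ and $\fr^+$, and $[D_{kl},P_j(d)]$ together with Lemma~\ref{lem14} expresses the $\fso(n)$-equivariance. Hence $\Psi$ is a Lie algebra isomorphism.

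Part~(3) is immediate from Lemma~\ref{lem14}(ii): the relations satisfied by the $D_{ij}$ are exactly those of the standard matrix units in $\fso(n,\K)$ (realised as the natural action on $\C^n$ preserving $\Ft$), and the $\binom{n}{2}$ operators are linearly independent in $\cD(A)$ since their principal symbols are so. The main obstacle is purely bookkeeping: matching the index conventions of Proposition~\ref{prop611} (which runs over $\{0,\pm 1,\dotsc,\pm\ell\}$ with a distinguished role for the index $1$) with the symmetric labelling $\{1,\dotsc,n\}$ used in Section~\ref{sec1}, and in particular ensuring the signs in the Chevalley system of Proposition~\ref{chevalley2} are compatible with the signs appearing in $P_j(d) = X_j\Delta_1 - (\Et_1+d)\ddx{j}$.
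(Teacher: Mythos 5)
Your proposal is correct: all the commutators in part (1) check out, the dimension count and the explicit matching of $X_i,\ \Et_1+d,\ D_{ij},\ P_j(d)$ with $\fr^-\boplus\fm\boplus\fr^+$ in part (2) is exactly the identification the paper itself makes later (Lemma~\ref{chevalley1} and Corollary~\ref{isomorphism}), and part (3) follows from Lemma~\ref{lem14}(ii) as you say. The paper offers no proof of its own here --- it states that ``direct computations'' yield the result and cites \cite{Usl3} and \cite{kobayashi} --- so your verification is essentially the intended argument, the only point deserving slightly more care being that the passage to the split form of the quadratic form (the change of variables \eqref{nroots}) genuinely uses that $\K$ is algebraically closed rather than being a mere relabelling.
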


 \begin{Notation}\label{rem165} For the rest of  the section assume (for
   simplicity) that 
 $\K = \C$ and fix $r \in \N^*=\N\smallsetminus\{0\}$.  
Recall from Section~\ref{notation} that the (abstract)  Lie algebra
$\fso(n+2,\C)$ is denoted by $\g$.  Let  
$\psit= \psit_d   :   \g \to \widetilde{\g}_d\subset \calD(A)$
be an isomorphism defined by Proposition~\ref{prop16}(2); an explicit map
can be found in Corollary~\ref{isomorphism}.
\end{Notation}

We now want to choose $d \in \C$ so that
 $P_j(d)(\Ft^r A) \subseteq \Ft^r A$, and hence so that
 $P_j(d) \in \I_{\cD(A)}\bigl(\Ft^r\cD(A)\bigr)$.

 \begin{lem}
   \label{prop17}
   Let $d \in \C$ and $1 \le j \le n$. Then for any $k\geq 1$ and function
   $u$ of class $\calC^2$ on $\R^n$ or $\C^n$ we have
   \begin{equation}
     \label{eq18}
     P_j(d)(u \Ft^k)= P_j(2k +d)(u) \Ft^k + k\bigl(n -
     2(k+d)\bigr) X_ju \Ft^{k-1}.
   \end{equation} 
   Fix $r \in \N^*$ and set $d=d(r)= \halfn - r$. Then for any $u\in A$ one
   has:
   \begin{equation}
     \label{eq18-1}
     P_j(d)(u \Ft^r) = P_j(2r + d)(u) \Ft^r.
   \end{equation}
 \end{lem}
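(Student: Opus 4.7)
The plan is to prove formula~\eqref{eq18} by a direct calculation using the Leibniz rule, after which \eqref{eq18-1} falls out as a special case. First I would record the elementary identities needed for the computation: $\ddx{i}(\Ft^k) = 2kX_i \Ft^{k-1}$, $\Et_1(\Ft^k) = 2k \Ft^k$, and (summing) $\Delta_1(\Ft^k) = k\bigl(n + 2(k-1)\bigr)\Ft^{k-1}$, the last coming from $\ddx{i}^2(\Ft^k) = 2k\Ft^{k-1} + 4k(k-1)X_i^2 \Ft^{k-2}$ and $\sum_i X_i^2 = \Ft$.

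Next I would expand the two pieces of $P_j(d) = X_j \Delta_1 - (\Et_1 + d)\ddx{j}$ applied to $u \Ft^k$. For the Laplacian piece, the bilinear Leibniz rule gives
\[
  \Delta_1(u\Ft^k) \ =\ \Delta_1(u)\Ft^k + 2k\, \Ft^{k-1}\Et_1(u) + u \cdot k\bigl(n+2(k-1)\bigr)\Ft^{k-1}.
\]
For the first-order piece, using $\ddx{j}(u\Ft^k) = \ddx{j}(u)\Ft^k + 2k X_j u \Ft^{k-1}$ together with $\Et_1(v\Ft^m) = \bigl(\Et_1(v) + 2m\, v\bigr)\Ft^m$ yields
\[
  (\Et_1+d)\ddx{j}(u\Ft^k) \ =\ \bigl(\Et_1 + 2k + d\bigr)\bigl(\ddx{j}(u)\bigr)\, \Ft^k + 2k\bigl(X_j\Et_1(u) + (2k-1+d)X_ju\bigr)\Ft^{k-1}.
\]

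Combining these two displays and grouping by powers of $\Ft$, the coefficient of $\Ft^k$ collapses to exactly $X_j\Delta_1(u) - (\Et_1 + 2k + d)\ddx{j}(u) = P_j(2k+d)(u)$. The terms proportional to $X_j\Et_1(u)\Ft^{k-1}$ cancel, leaving only a multiple of $X_j u\Ft^{k-1}$ with scalar coefficient
\[
  k\bigl[n+2(k-1)\bigr] - 2k(2k-1+d) \ =\ k\bigl(n - 2(k+d)\bigr),
\]
which is \eqref{eq18}. For part \eqref{eq18-1}, I would then specialise to $k=r$ and $d=\halfn - r$: the factor $n - 2(k+d) = n - 2\bigl(r + \halfn - r\bigr) = 0$ kills the residual $\Ft^{r-1}$ term, leaving $P_j(d)(u\Ft^r) = P_j(2r+d)(u)\Ft^r$.

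There is no serious obstacle here; the only slightly delicate point is carrying out the Leibniz expansion of $\Delta_1(u\Ft^k)$ correctly (including the cross-term $\sum_i \ddx{i}(u)\ddx{i}(\Ft^k)$) and tracking the shifted Euler operator through the identity $(\Et_1+d)(v\Ft^m) = \bigl((\Et_1 + 2m + d)v\bigr)\Ft^m$, which is what converts $(\Et_1+d)\ddx{j}$ into $(\Et_1 + 2k + d)\ddx{j}$ and produces the shift $d \mapsto 2k+d$ inside $P_j$.
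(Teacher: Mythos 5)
Your proof is correct and follows essentially the same route as the paper: establish the Leibniz expansion $\Delta_1(u\Ft^k)=\Delta_1(u)\Ft^k+2k\Et_1(u)\Ft^{k-1}+k(n+2(k-1))u\Ft^{k-1}$, then plug into $P_j(d)=X_j\Delta_1-(\Et_1+d)\ddx{j}$ and collect powers of $\Ft$. The paper derives this Laplacian formula by iterating the product rule for $\partial_{X_j}$ and then labels the remaining step ``an elementary computation''; you simply carry that computation out explicitly (tracking the Euler shift $(\Et_1+d)(v\Ft^m)=((\Et_1+2m+d)v)\Ft^m$ and verifying the cancellation of the $X_j\Et_1(u)\Ft^{k-1}$ terms and the scalar $k(n-2(k+d))$), so no gap, just a more detailed write-up of the same argument.
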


\begin{proof}
  We first show that, for $k \ge 0$,  
    \begin{equation}\label{prop17-1} 
    \Delta_1(u\Ft^k)= \Delta_1(u) \Ft^k + 2k \Et_1(u) \Ft^{k-1} + k (n +
    2(k-1)) u 
    \Ft^{k-1}.
  \end{equation}
  To see this note that
  $\partial_{X_j}(u\Ft^k) =\partial_{X_j}(u)\Ft^k +2kX_ju\Ft^{k-1}$ and
  hence that
  \[
    \partial^2_{X_j}(u\Ft^k) =\partial^2_{X_j}(u)\Ft^k
    +2kX_j\partial_{X_j}(u)\Ft^{k-1} +
    2k\left(u\Ft^{k-1}+X_j\partial_{X_j}(u)\Ft^{k-1}+2(k-1)X^2_ju
      \Ft^{k-2}\right).  
\]
Equation~\eqref{prop17-1} follows by summing over $j$ and~\eqref{eq18} then
follows by an elementary computation.  Taking $d= \halfn -r$ and $k=r$ in
\eqref{eq18} immediately gives \eqref{eq18-1}.
\end{proof}

\begin{cor}
  \label{cor19}
  Assume that $r \in \N^*$ and set $d=\halfn - r$. Set $R=R_r= A/ (\Ft^r)$
  and let $\widetilde{\g}_d$ the Lie subalgebra of $\cD(A)$ defined by
  Proposition~\ref{prop16}. Then the elements of $\widetilde{\g}_d$ induce
  differential operators on $R_r$. This therefore defines an algebra
  morphism
  $ \psi_r : U(\g) \lto \cD(R)$.
\end{cor}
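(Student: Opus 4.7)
The plan is straightforward: verify that each of the generators of $\widetilde{\g}_d$ listed in Proposition~\ref{prop16}(2) stabilises the ideal $\Ft^r A$ of $A$. Any such operator then descends to a well-defined differential operator of the same order on the quotient $R_r = A/\Ft^r A$, and the resulting linear map $\widetilde{\g}_d \to \cD(R_r)$ is automatically a Lie algebra morphism since in both cases the bracket is the commutator. Composing with the isomorphism $\psit_d^{-1} : \widetilde{\g}_d \to \g$ of Proposition~\ref{prop16}(2) yields a Lie algebra map $\g \to \cD(R_r)$, and the universal property of $U(\g)$ then provides the required algebra morphism $\psi_r : U(\g) \to \cD(R_r)$.

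For the stability check, three of the four families are essentially immediate. The operators $X_i$ preserve $\Ft^r A$ because they are $A$-linear. For $D_{ij}$, Lemma~\ref{lem14}(i) gives $D_{ij}(\Ft) = 0$, hence by the derivation property $D_{ij}(u\Ft^r) = D_{ij}(u)\Ft^r \in \Ft^r A$. Since $\Ft$ is homogeneous of degree~$2$, one has $\Et_1(\Ft^r) = 2r\Ft^r$, so $(\Et_1 + d)(u\Ft^r) = \bigl((\Et_1 + d + 2r)(u)\bigr)\Ft^r \in \Ft^r A$, independently of $d$. The only delicate case is that of the second-order operators $P_j(d)$, and this is precisely where the specific value $d = \halfn - r$ is forced: specialising Lemma~\ref{prop17} at $k=r$ and $d=\halfn - r$ makes the coefficient $k\bigl(n - 2(k+d)\bigr)$ in \eqref{eq18} vanish, which leaves the identity \eqref{eq18-1}, namely $P_j(d)(u\Ft^r) = P_j(2r+d)(u)\,\Ft^r \in \Ft^r A$. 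Thus every generator of $\widetilde{\g}_d$ preserves $\Ft^r A$, and so does each of their Lie brackets.

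With stability established, each such $P$ induces an endomorphism $\bar P$ of $R_r$; the recursive characterisation \eqref{lem0.15} of differential operators (or equivalently the idealiser description \eqref{eq11}, which identifies the reduction map with the canonical surjection from the idealiser onto $\cD(R_r)$) shows that $\bar P \in \cD(R_r)$ with the same order bound as $P$. Since reduction modulo $\Ft^r\cD(A)$ is an algebra homomorphism on the idealiser, the assignment $P \mapsto \bar P$ respects commutators, and the construction of $\psi_r$ proceeds as indicated above. There is no serious obstacle in the argument; the entire substance of the statement is concentrated in the choice $d = \halfn - r$, which is forced by the vanishing in \eqref{eq18-1} and is exactly what makes the nontrivial generators $P_j(d)$ of $\widetilde{\g}_d$ compatible with reduction modulo $\Ft^r$.
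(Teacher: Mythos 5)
Your proof is correct and follows essentially the same route as the paper's: both show $\widetilde{\g}_d$ lies in the idealizer $\I_{\cD(A)}(\Ft^r\cD(A))$ by combining Lemma~\ref{lem14} with Lemma~\ref{prop17}, then pass to $\cD(R_r)$ via~\eqref{eq11} and invoke Proposition~\ref{prop16}(2) together with the universal property of $U(\g)$. You have merely spelled out the generator-by-generator stability check (including the trivial cases $X_i$, $D_{ij}$, $\Et_1 + d$) that the paper leaves implicit.
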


\begin{proof}
  Set $\cB= \I_{\cD(A)}\bigl(\Ft^r\cD(A)\bigr)$ in the notation of~\eqref{eq10}.
  Combining Lemma~\ref{lem14} and Lemma~\ref{prop17} shows that
  $\widetilde{\g}_d\subset \cB$; equivalently, by \eqref{eq11}, the
  elements of $\widetilde{\g}_d$ induce differential operators on
  $R_r$. Now apply Proposition~\ref{prop16}(2).
\end{proof}

\begin{rem}
  \label{rem19a} Write $\Ker \psi_r = J_r $; thus
  $U(\g)/J_r \longisomto\  \Im(\psi_r) \subset \cD(R)$.  It is easily
  seen that $\psi_r(\xi) \ne 0$ for all $\xi \in \g$. We therefore can, and
  frequently will, identify $\g$ with $\g_r = \psi_r(\g) \subset
  \cD(R)$.
  In this case the morphism constructed in Corollary~\ref{cor19} will be
  denoted $\psi_r : U(\g) \to \cD(R)$.  
\end{rem}

 
\section{The  $\sontwo$-module structure of
   $A/(\Ft^r)$}
\label{sec2}

We keep the notation of the last section, especially \eqref{eq103},
Notation~\ref{rem165} and Remark~\ref{rem19a}. Set $ d=\halfn - r$, where
$n\geq 3$ and $r\geq 1$ and write $P_j = P_j(d)$ in the notation of
\eqref{eq15}.  Clearly $R$ is a module over $\g=\sontwo$ via the morphism
$\psi_r$ and the aim of the section is to examine this module structure.
In particular we show that $R$ is either a simple $\g$-module or has a
unique finite dimensional factor module; the latter occurring if and only
if $n$ is even with $d\leq 0$.  This proves Theorem~\ref{thmD} from the introduction.

We first  show that the $\g$-module $R$
is a highest weight module for our choice of Cartan and Borel subalgebras
of $\g$. The action of an element $u \in U(\g)$ on $f \in R$ will be
denoted by $u.f = \psi_r(u)(f)$.  
For the rest of this section we will identify $\g$ with $\g_r
\subset \cD(R)$ through $\psi_r$, or, equivalently, $\g$ with $\gt_d
\subset \cD(A)$ through $\psit_d$. 
 We begin by making this identification more precise (see 
  Lemma~\ref{chevalley1} and Corollary~\ref{isomorphism}).   
    
  Let $i = \rone \in \C$ and define elements of $\gt_d$ by
\[I(d)= \Et_1 + d,\qquad 
  h_1= -I(d), \quad\text{ and }\quad h_{j+1} = i D_{j,j+\ell'}, \ \; 1 \le j \le \ell'.
\]
Thus $\bigoplus_{j=1}^\ell \C h_j \equiv \h$ is a Cartan subalgebra of
$\gt_d\equiv \g$.   

Next, observe that the approaches of the last two sections are related by
the changes of variables 
\begin{equation}
  \label{nroots}
  U_{a+1}= X_{a} - i X_{a+\ell'}, \ \; U_{-(a+1)} = X_{a} + i
  X_{a+\ell'}, \ \text{for}\  1 \le a \le \ell'; \ \text{and}\ \text{$U_0 = \sqrt{2}X_n$
    when $n$ is odd.}
\end{equation}
In this notation,  $\Ft= \sum_{j=2}^{\ell}U_jU_{-j} + \half U_0^2$, and
$\Delta_1 = 2 \sum_{j=2}^{\ell} \ddu{j}{\ddu{-j}} + \ddu{0}^2$ while
$\Et_1 = U_0\ddu{0} + \sum_{\pm k =2}^\ell U_k\ddu{k}$. 
As $\fr^- = \bigoplus_{j=1}^n \C X_j$ we can and will identify $A=\C[X_1,\dots, X_n]$ with
$S(\fr^-)$, or equivalently with the algebra  
of polynomial functions on $\fr^+$.

We will also need to define a  Chevalley system
  inside $\gt_d$.  Set:
\begin{equation*}
  V_{a+1}= \half(P_a- iP_{a+\ell'}), \ \; V_{-(a+1)} =   \half(P_a +
  iP_{a+\ell'}),  \ \text{for}\ 1 \le a \le \ell'; \  \text{and}\ \text{$V_0 =
    \frac{1}{\sqrt{2}}P_n$  when $n$ is odd.}
\end{equation*} 
Observe that $\boplus_j \C P_j = \boplus_j \C V_{j}$ and it  is easy to
check that  
\begin{equation}
  \label{proots-2}
  V_j= \half U_j \Delta_1 - I(d) \ddu{-j}, \ \  \text{for}\ j \in \{0, \pm 2,\dots, \pm \ell\}.
\end{equation}
The next result follows from straightforward computations.

\begin{lem}
  \label{chevalley1}
  The following set $\{x_\alpha\}_{\alpha \in \Phi}$ of root vectors is a
  Chevalley system in $\gt_d$ or $\g_r$ (recall that
  $[x_\alpha,x_{-\alpha}] = h_\alpha$):
  \begin{enumerate}[{\rm (i)}]
  \item 
    $x_{\vepsilon_{a+1} \pm \vepsilon_{b+1}} = U_{a+1}\ddu{\mp(b+1)} -
    U_{\pm(b+1)}\ddu{-(a+1)}$, $1 \le a < b \le \ell'$;
  \item 
    $x_{-(\vepsilon_{a+1} \pm \vepsilon_{b+1})} = U_{\mp(b+1)}\ddu{a+1} -
    U_{-(a+1)}\ddu{\pm(b+1)}$, $1 \le a < b \le \ell'$;

  \item  $x_{\vepsilon_{1} \pm \vepsilon_{a+1}} = V_{\pm(a+1)}$,
$1 \le a \le \ell'$,   while 
    $x_{\vepsilon_{1}} = \sqrt{2}V_0$ if $n$ is odd;
    
  \item  $x_{-(\vepsilon_{1} \pm \vepsilon_{a+1})} = U_{\mp(a+1)}$, 
      $1 \le a \le \ell'$, while  $x_{-\vepsilon_{1}} = \sqrt{2}U_0$ if $n$ is odd;
      \item   $x_{\vepsilon_{b+1}} = \sqrt{2}\bigl(U_{b+1}\ddu{0} - U_0
    \ddu{-(b+1)}\bigr)$,
      $x_{-\vepsilon_{b+1}} = \sqrt{2}\bigl(U_{0}\ddu{b+1} - U_{-(b+1)}
    \ddu{0}\bigr)$, $1 \le b \le \ell'$, if $n$ is odd;
      \item  Then $h_{\vepsilon_j\pm \vepsilon_k} = h_j \pm h_k$,
    $1 \le j < k \le \ell$, while  $h_{\vepsilon_j} = 2h_j$, $1 \le j \le \ell$ if
    $n$ is odd.  
  \end{enumerate}
In particular, one can identify  $\fr^+$ with $\boplus_j \C P_j = \boplus_j
\C V_{j}$.  \qed
\end{lem}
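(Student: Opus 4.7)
The plan rests on the fact that, by Proposition~\ref{prop16}(2), the map $\psit_d: \g \isomto \gt_d$ is a Lie algebra isomorphism, so once I verify that the listed $\{x_\alpha\}$ are the images of the Chevalley system from Proposition~\ref{chevalley2} (up to the standard sign freedom), the Chevalley normalisation $N_{\alpha,\beta} = -N_{-\alpha,-\beta}$ transfers automatically. Thus the work reduces to (a) showing each listed $x_\alpha$ lies in the correct root space $\gt_d^\alpha$, and (b) verifying the coroot relations $[x_\alpha, x_{-\alpha}] = h_\alpha$ of item (vi).

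The natural split is by whether or not the root $\alpha$ involves $\vepsilon_1$. For roots of the Levi subalgebra $\fk \cong \fso(n,\C)$ (items (i), (ii), and the portion of (v) not involving $V_0$), the change of variables \eqref{nroots} directly rewrites the Chevalley elements $E_{ab}, E_{a,-b}, E_{b,0}$ of Proposition~\ref{chevalley2} in the $X$-coordinates. The weight check (with respect to $h_{b+1} = iD_{b,b+\ell'}$) and the coroot computation are then immediate from Lemma~\ref{lem14}.

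For the $\vepsilon_1$-roots (items (iii), (iv), and the portion of (v) involving $V_0$), I would check weights directly using Proposition~\ref{prop16}(1): the relations $[I(d), P_j] = -P_j$ and $[D_{k,l}, P_j] = \delta_{l,j}P_k - \delta_{k,j}P_l$, combined with \eqref{proots-2} and \eqref{nroots}, place each $V_{\pm(a+1)}$ (resp.\ $V_0$) in the $\gt_d^{\vepsilon_1 \pm \vepsilon_{a+1}}$ (resp.\ $\gt_d^{\vepsilon_1}$) weight space, while $U_{\pm(a+1)}$ (resp.\ $U_0$), viewed as multiplication operators, have the opposite weights. The coroot relations
$[V_{\pm(a+1)}, U_{\mp(a+1)}] = -h_1 \pm h_{a+1} = h_{\vepsilon_1 \pm \vepsilon_{a+1}}$
(and, in the odd case, $[V_0, U_0] = -h_1$, yielding $h_{\vepsilon_1} = 2h_1$ after the $\sqrt{2}$ normalisation) then follow from Proposition~\ref{prop16}(1)(b), namely $[X_i, P_j] = D_{i,j} + \delta_{i,j} I(d)$, after expanding $V_{\pm(a+1)}$ via \eqref{proots-2} and $U_{\pm(a+1)}$ via \eqref{nroots}.

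The only real obstacle is bookkeeping: tracking the factors of $i$ and $\sqrt{2}$ introduced by \eqref{nroots} and keeping signs consistent with the conventions of \cite[Planches~II et~IV]{lie1}. The final assertion, $\fr^+ = \boplus_j \C P_j = \boplus_j \C V_j$, is then immediate from the definition of $\fr^+$ as the sum of root spaces $\gt_d^\alpha$ for $\alpha$ having coefficient $+1$ on $\vepsilon_1$, combined with items (iii) and (v) and the fact that $V_j = \half(P_a \mp iP_{a+\ell'})$ is an invertible change of basis.
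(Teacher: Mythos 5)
Since the paper dispatches this lemma with ``follows from straightforward computations'' and no written proof, your plan is, in outline, exactly the intended verification: items (i), (ii) and the $\fk$-part of (v) are the Chevalley elements of Proposition~\ref{chevalley2} rewritten through \eqref{nroots}, and for the $\vepsilon_1$-roots the relations of Proposition~\ref{prop16}(1) — $[I(d),P_j]=-P_j$, $[D_{k,l},P_j]=\delta_{l,j}P_k-\delta_{k,j}P_l$, $[X_i,P_j]=D_{i,j}+\delta_{i,j}I(d)$ — are indeed the right tools. One bookkeeping correction: since $h_1=-I(d)$, the coroot relations come out as $[V_{\pm(a+1)},U_{\mp(a+1)}]=h_1\pm h_{a+1}$, not $-h_1\pm h_{a+1}$, and $[V_0,U_0]=-I(d)=h_1$, so that $[\sqrt{2}V_0,\sqrt{2}U_0]=2h_1=h_{\vepsilon_1}$; you appear to write $h_1$ where you mean $I(d)$. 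With that fixed, your checks (a) and (b) go through.

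The substantive soft spot is the claim that the Chevalley normalisation ``transfers automatically'' through $\psit_d$. Verifying that each $x_\alpha$ lies in $\gt_d^{\alpha}$ and that $[x_\alpha,x_{-\alpha}]=h_\alpha$ does \emph{not} by itself give the Chevalley condition $N_{\alpha,\beta}=-N_{-\alpha,-\beta}$: rescaling $x_\gamma\mapsto c\,x_\gamma$, $x_{-\gamma}\mapsto c^{-1}x_{-\gamma}$ preserves (a) and (b) but breaks the sign condition for any $c\neq\pm 1$. And the transfer argument as you state it is circular: Proposition~\ref{prop16}(2) provides only an \emph{abstract} isomorphism $\g\cong\gt_d$, whereas the isomorphism sending $y_\alpha\mapsto x_\alpha$ is exactly what Corollary~\ref{isomorphism} extracts \emph{from} this lemma. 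To close this you should either check $N_{\alpha,\beta}=-N_{-\alpha,-\beta}$ directly — for instance by exhibiting the anti-automorphism of $\gt_d$ fixing $\fh$ and swapping $x_\alpha\leftrightarrow x_{-\alpha}$, which is the $\vt_d$ needed later in Remark~\ref{involutions} anyway — or use the isomorphism theorem to produce the unique isomorphism matching the pinnings $y_{\pm\alpha_i}\mapsto x_{\pm\alpha_i}$ on simple roots and then verify by iterated brackets that it sends every $y_\alpha$ to $x_\alpha$. Either route is more computation of the same kind, but one of them is genuinely required for the full statement (and for its later uses).
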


 \begin{cor}
    \label{isomorphism}
    Retain the notation of Proposition~\ref{chevalley2} and
    Lemma~\ref{chevalley1}.  The map $\psit_d$ from $\g \subset \cD(B)$
    onto $\gt_d \subset \cD(A)$, given by $\psit_d(y_\alpha) = x_\alpha$
    for $\alpha \in \Phi$, is an isomorphism of Lie algebras.      \qed
  \end{cor}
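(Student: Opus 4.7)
The strategy is to observe that the map $\psit_d$ is determined by matching two Chevalley systems, so once the extension to the Cartan subalgebras is fixed, $\psit_d$ is forced to be a Lie algebra isomorphism. First I extend $\psit_d$ linearly to $\fh$ by $E_{jj}\mapsto h_j$; this is the only extension consistent with the prescription on root vectors, since the relation $[y_\alpha,y_{-\alpha}] = h_\alpha$ in $\g$ must be sent to $[x_\alpha,x_{-\alpha}] = h_\alpha$ in $\gt_d$, and a comparison of Proposition~\ref{chevalley2}(c) with Lemma~\ref{chevalley1}(vi) shows that the two descriptions of each $h_\alpha$ agree under $E_{jj}\mapsto h_j$. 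The weight identity $[h,x_\alpha] = \alpha(h)x_\alpha$ for every $\alpha\in\Phi$ is then immediate from the explicit formulas for the $x_\alpha$ together with the change of variables~\eqref{nroots}, matching the analogous relation for $y_\alpha$ in $\g$.

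The main step is to verify that the brackets $[x_\alpha, x_\beta] = N_{\alpha,\beta}\, x_{\alpha+\beta}$ hold with the same integer structure constants $N_{\alpha,\beta}$ as in $\g$. Rather than check every pair, I appeal to the Chevalley--Serre presentation: it suffices to verify Serre's relations for the simple root vectors $x_{\pm\alpha_i}$ derived from Lemma~\ref{chevalley1}. Simple roots not involving $\vepsilon_1$ belong to the subalgebra $\fk\cong\fso(n,\C)$, and the required relations reduce to the commutation identities for the derivations $E_{ij}$ recalled in Lemma~\ref{lem14}. For the remaining simple root connecting $\vepsilon_1$ to the Dynkin diagram of $\fk$, the Serre relations involve the second order operators $V_j$; the necessary commutations then follow from Proposition~\ref{prop16}(1), with the shift $d = \halfn - r$ entering through $I(d) = \Et_1 + d$ as in~\eqref{proots-2}.

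Granting these relations, the universal property of the Chevalley--Serre presentation produces a Lie algebra homomorphism $\psit_d\colon \g \to \gt_d$ with $\psit_d(y_\alpha) = x_\alpha$ for every $\alpha\in\Phi$. It is nonzero and $\g$ is simple, hence it is injective; it is surjective because its image contains all the $x_\alpha$ together with the $h_j$, and these elements span $\gt_d$ by Proposition~\ref{prop16}(2). The main obstacle is bookkeeping: one must track the various factors of $\pm i$ and $\sqrt{2}$ introduced in the change of variables~\eqref{nroots} and in the definitions of the $V_j$, and verify that they conspire to produce exactly the Chevalley structure constants of $\fso(n+2,\C)$ of type $\Bsf_\ell$ or $\Dsf_\ell$ with the sign convention fixed in Proposition~\ref{chevalley2}. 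Once this is done the verification reduces to finitely many explicit commutator computations on simple-root generators.
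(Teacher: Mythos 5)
The paper offers no argument here: the corollary carries its end-of-proof mark in the statement, the point being that the ``straightforward computations'' underlying Lemma~\ref{chevalley1} consist precisely of evaluating every bracket $[x_\alpha,x_\beta]$ and observing that the structure constants coincide with those of the system $\{y_\alpha\}$ of Proposition~\ref{chevalley2}; the base-matching linear map then preserves brackets by inspection. Your surrounding architecture (extension to $\fh$ by $E_{jj}\mapsto h_j$, weight identities, injectivity from simplicity of $\g$, surjectivity from a spanning/dimension count via Proposition~\ref{prop16}(2)) is fine.

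The gap is in the load-bearing reduction. Verifying Serre's relations for the $x_{\pm\alpha_i}$ does yield, via the Chevalley--Serre presentation, a homomorphism $\g\to\gt_d$ sending $y_{\pm\alpha_i}\mapsto x_{\pm\alpha_i}$, but it does \emph{not} yield $y_\alpha\mapsto x_\alpha$ for non-simple $\alpha$ --- and that is exactly what the corollary asserts (and what is used later, e.g.\ in Remark~\ref{involutions} and in the proof of Theorem~\ref{thm913}, where particular non-simple root vectors are tracked through $\psit_d$). A priori the homomorphism sends $y_\alpha$ to $c_\alpha x_\alpha$ with $c_{\pm\alpha_i}=1$ and $c_\alpha c_{-\alpha}=1$; since both families are Chevalley systems one only gets $c_\alpha=\pm1$, and two Chevalley systems over the same based root system can genuinely share their simple root vectors while differing by such signs elsewhere (rescaling a Chevalley system by any signs with $c_\alpha=c_{-\alpha}=\pm1$ produces another one). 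So the structure constants $N_{\alpha,\beta}$ of the two systems must still be compared for enough pairs to reach every root from the simple ones --- precisely the computation the appeal to the Serre presentation was meant to avoid. Your closing sentence half-concedes this (``verify that they conspire to produce exactly the Chevalley structure constants''), but then claims the verification reduces to commutators of ``simple-root generators'', which it does not. Once those checks are carried out for all pairs, the Serre-presentation detour is superfluous and you have simply reproduced the paper's direct argument.
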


  \begin{rem}
    \label{involutions}
Recall that a Chevalley system in a semi-simple Lie algebra defines a
Chevalley anti-involution \cite[2.1]{jantzen}. Let $\vt$, respectively $\vt_d$, be the
anti-involution of $\g$, respectively $\gt_d$ defined by $\{y_\alpha\}_\alpha$,
respectively $\{x_\alpha\}_\alpha$. Then, by definition, the isomorphism
$\psit_d$ satisfies 
$\psit_d \circ \vt = \vt_d \circ \psit_d$.   
  \end{rem}

Let $G = \SO(n+2,\C) \supset M$ be connected algebraic groups such that
$ \Lie(G) = \g\supset \Lie(M) = \fm$, with $G$ acting on $\g$ through the
adjoint action.  The (reduced) quadratic cone $Z=\{\Ft = 0\} \subset \C^n$
can be identified with the closure of the $M$-orbit of a highest weight
vector $x_{\tilde{\alpha}} \in \fr^+\equiv \C^n$ and this then identifies
$R_1 = A/\Ft A$ with the algebra of regular functions on $Z$.  Recall that the orbit
$\mathbf{O}_{\mathrm{min}}= G.x_{\tilde{\alpha}}$ is the nonzero nilpotent
orbit of minimal dimension and that
$\half \dim \mathbf{O}_{\mathrm{min}} = \dim M.x_{\tilde{\alpha}} = n-1$.

Recall that the Verma module $M(\mu)$ with
highest weight $\mu \in \fh^*$ has a unique simple quotient $L(\mu)$ and
set $I(\mu)= \ann_{U(\g)} L(\mu)$. The associated variety,
$\calV(J) \subset \g$, of $J=I(\mu)$ is the closure of a nilpotent orbit,
cf.~\cite{jo85}. 
Let $W$ be the associated Weyl group of $\Phi$ and denote by
$w\cdot \lambda = w(\lambda + \rho) -\rho$ the ``dot'' action of $w \in W$
on $\lambda \in \h^*$, cf.~\cite[2.3]{jantzen}.

The starting point for this paper is the following result from \cite{LSS}, see
also \cite{Usl3}, which gives a construction of the Joseph ideal.
 (See \cite{jo881} for another proof of this theorem.)

\begin{thm}
  \label{thm22}
  The ideal $J_1=\Ker \psi_1$ is a completely prime maximal ideal such that
  $\calV(J_1)= \overline{\mathbf{O}}_{\mathrm{min}}$.  Moreover,
  $U(\g)/J_1 \cong \cD(R_1)$.
\end{thm}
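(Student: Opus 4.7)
The plan is to establish the theorem in three stages. First, identify $R_1$ as a concrete highest weight module for $\g$ via the map $\psi_1$. Second, compute its annihilator $J_1$, show it is completely prime with associated variety $\Ominbar$, and conclude by a uniqueness argument that it is the Joseph ideal. Third, upgrade the embedding $U(\g)/J_1 \ito \cD(R_1)$ to an isomorphism via a Gelfand--Kirillov dimension comparison. The genuine difficulty lies in the last step.

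I would first check that the constant function $1 \in R_1$ is a highest weight vector for $\g$ in the presentation of Proposition~\ref{prop16} combined with the Chevalley system of Lemma~\ref{chevalley1}. The positive root vectors spanning $\fn^+$ are either pure derivations of $A$ supported on the Levi part $\fm$ (which clearly kill $1$), or are of the form $V_j = \tfrac{1}{2} U_j \Delta_1 - I(d)\ddu{-j}$ (which also kill $1$, since both summands differentiate). The Cartan element $h_1 = -I(d)$ acts on $1$ by $-d = 1 - \tfrac{n}{2}$, and $h_2,\dotsc,h_\ell$ annihilate $1$, so the weight of $1$ is a nonzero multiple $\lambda_1$ of $\vpi_1$. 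Since $R_1 = \C[Z]$ is an integral domain generated as a $U(\g)$-module by the constant~$1$ (acting by $\fr^- = \bigoplus_j \C X_j$ produces every polynomial), the surjection $M(\lambda_1) \sto R_1$ has simple image, so $R_1 \cong L(\lambda_1)$ and $J_1 = \ann_{U(\g)} R_1 = I(\lambda_1)$ is primitive.

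I would then compute $\calV(J_1)$ via symbols. Under the order filtration on $\cD(A) = A_n(\C)$, the principal symbols in $\gr \cD(A) = \C[T^*\C^n]$ of the generators $X_i$, $\Et_1$, $D_{ij}$, $P_j(d)$ of $\psi_1(\g)$ are respectively $X_i$, $\sum_i X_i\xi_i$, $X_i\xi_j - X_j\xi_i$, and $X_j \sum_i \xi_i^2$; modulo $(\sigma(\Ft)) = (\sum X_i^2)$ they generate the homogeneous coordinate ring of the affine cone over the projective $M$-orbit $M \cdot [x_{\tilde{\alpha}}]$, whose closure is precisely $\Ominbar$. Hence $\calV(J_1) \subseteq \Ominbar$, and equality follows from Joseph's theorem \cite{jo85} that the associated variety of a primitive ideal is the closure of a single nilpotent orbit, together with the minimality of $\mathbf{O}_{\mathrm{min}}$. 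Because $R_1$ is a domain and a faithful $U(\g)/J_1$-module, $J_1$ is completely prime; by the uniqueness of the Joseph ideal recalled in Remark~\ref{Joseph-remark2}, $J_1$ must coincide with it, and is therefore maximal.

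Finally, $\psi_1\colon U(\g)/J_1 \ito \cD(R_1)$ is injective by construction, and surjectivity follows from a dimension argument combined with the first two steps. Indeed $\gk(U(\g)/J_1) = \dim \mathbf{O}_{\mathrm{min}} = 2(n-1) = 2\dim Z = \gk \cD(R_1)$, and the symbol computation above shows that $\gr \psi_1(U(\g))$ contains a graded subring whose spectrum fills out $\Ominbar$, already of maximal dimension inside $\gr \cD(R_1)$. Combining this with the standard description of $\cD(R_1)$ on the smooth locus $Z \moins \{0\}$ (where $R_1$ is regular and $\cD$ is generated in the expected way) then forces $\psi_1$ to be onto. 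This surjectivity is the main obstacle, since $Z$ has a singular vertex at the origin and $\cD(R_1)$ must be controlled there; this is the genuinely technical content of~\cite{LSS}.
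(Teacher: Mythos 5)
The paper does not actually prove Theorem~\ref{thm22}: it is imported wholesale from \cite{LSS} (with \cite{Usl3} and \cite{jo881} cited as alternatives), so there is no internal argument to compare yours against. Judged on its own, your outline follows the same broad strategy as \cite{LSS} — realise $R_1$ as a highest weight module, control the associated variety by symbols, compare Gelfand--Kirillov dimensions — but two of its steps do not hold up.

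First, the inference ``$R_1$ is a domain generated as a $U(\g)$-module by the highest weight vector $1$, hence the image of $M(\lambda_1)$ is simple'' is false. Take $\g=\fsl(2,\C)$ acting on $\C[y]$ by $f=y$, $h=\lambda-2y\partial_y$, $e=(\lambda-y\partial_y)\partial_y$: for $\lambda=m\in\N$ this is a cyclic highest weight module on an integral domain, with $\g$ acting by differential operators, and it is the reducible Verma module $M(m)$ (the submodule is the ideal $(y^{m+1})$). A nonzero $\g$-stable ideal of a domain need not be everything. Establishing that $R_r$ is simple (or quasi-simple) is precisely the content of Theorem~\ref{thm26}, and it requires the harmonic decomposition $A(m)=\bigoplus_k H(m-2k)\Ft^k$ together with the explicit action of the operators $P_j$; none of that machinery appears in your sketch. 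Two smaller slips in the same paragraph: faithfulness of $R_1$ over $U(\g)/J_1$ does not by itself give complete primeness (you need the embedding $U(\g)/J_1\ito\cD(R_1)\ito\cD(\Fract R_1)$, the last ring being a domain), and the uniqueness of the Joseph ideal that you invoke for maximality fails when $n=4$, where $\g\cong\fsl(4,\C)$ is of type $\Asf_3$ — the paper flags exactly this exception after the theorem.

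Second, and more seriously, the surjectivity of $\psi_1:U(\g)/J_1\to\cD(R_1)$ is asserted rather than proved. Equality of Gelfand--Kirillov dimensions of a subalgebra and an overalgebra implies nothing about equality of the algebras: $U(\g)/J_1$ has $\GKdim=2(n-1)$ whether or not it exhausts $\cD(R_1)$. Likewise, ``the standard description of $\cD(R_1)$ on the smooth locus'' is exactly what is unavailable where it matters, namely at the vertex of the cone: the whole difficulty is to show that operators regular only away from the singular point do not produce anything outside $\psi_1(U(\g))$. That is the technical heart of \cite{LSS} (and of Sections~\ref{Orings}--\ref{maintheorem} of this paper for $r>1$, where it is handled by identifying $\cD(R_r,R_s)$ with the $\g$-finite vectors $\cL(R_r,R_s)$ and invoking maximal-order arguments). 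As written, your final paragraph restates the theorem rather than proving it.
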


Recall \cite{jo76} that if $\mathfrak{s}$ is a complex simple Lie algebra
not of type $\Asf_\ell$, there exits a unique completely prime ideal
$J \subset U(\mathfrak{s})$ such that
$\calV(J)= \overline{\mathbf{O}}_{\mathrm{min}}$. The primitive ideal $J$ is
called the \emph{Joseph ideal}. Thus, when $n \ne 4$, the ideal $J_1$ is
the Joseph ideal. When $n=4$, i.e.~$\g$ is of type $\Dsf_3=\Asf_3$, by a
slight abuse of notation, we will still call $J_1$ the Joseph ideal.

\begin{rem}\label{Joseph-remark} The ideal $K_1$ from Theorem~\ref{thmB}  is the
  intersection of $J_1$ with $U(\fso(p+1,q+1))$.  Eastwood
  also notes this fact, but his argument relies on an
  explicit set of generators for the Joseph ideal (see
  \cite{ESS, stromb}). In contrast, in Theorem~\ref{thm22}
  this is almost automatic; it follows almost immediately
  from the fact that $R_1$ has Krull dimension $(n-1)$; see
  for example \cite[Proposition~3.5]{LSS}.
\end{rem}

In the general case, $r \ge 1$, we begin with an easy lemma.
 
\begin{lem}
  \label{lem23}
  The $\g$-module $R=R_r$ is a highest weight module with highest weight
  $\lambda = - d\varpi_1 = -d\vepsilon_1$.
\end{lem}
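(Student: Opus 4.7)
The natural candidate for the highest weight vector is $v_0 = \bar 1 \in R$, the class of $1 \in A$. I would organise the proof around three checks plus a generation argument.

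First, I would verify the weight of $v_0$. Since $\mathtt{E}_1(1)=0$ and every derivation $D_{kl}$ kills $1$, Lemma~\ref{chevalley1}(vi) gives $h_1.v_0 = -I(d).\bar 1 = -d\,\bar 1$ and $h_{j+1}.v_0 = iD_{j,j+\ell'}.\bar 1 = 0$ for $1\leq j\leq \ell'$. Thus $v_0$ is a weight vector of weight $-d\vepsilon_1$. Since $\varpi_1=\vepsilon_1$ in both types $\mathsf{B}_\ell$ and $\mathsf{D}_\ell$ (\cite[Planches II, IV]{lie1}), this is precisely $\lambda = -d\varpi_1$.

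Second, I would check that $\fn^+.v_0 = 0$ by running through the generators $\{x_\alpha\}_{\alpha\in\Phi^+}$ of Lemma~\ref{chevalley1}. For the root vectors listed in (i) with $1\le a<b\le \ell'$ and those in (v), the operators are derivations in the $U_j$'s, hence annihilate $\bar 1$. For the remaining positive root vectors $x_{\vepsilon_1\pm\vepsilon_{a+1}}=V_{\pm(a+1)}$ and $x_{\vepsilon_1}=\sqrt 2\,V_0$ (when $n$ is odd), formula~\eqref{proots-2} writes $V_j = \half U_j\Delta_1 - I(d)\ddu{-j}$. Then $\Delta_1(1)=0$ and $\ddu{-j}(1)=0$, so $V_j.v_0 = 0$. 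This is the only spot where the second-order operators $P_j$ intervene, and it is where the specific choice $d=\frac{n}{2}-r$ (hidden in $I(d)$) does no harm, simply because both summands vanish on the constant.

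Third, I would show that $U(\g).v_0 = R$. Among the negative root vectors of Lemma~\ref{chevalley1}(iv), the elements $x_{-(\vepsilon_1\pm\vepsilon_{a+1})} = U_{\mp(a+1)}$ and (if $n$ is odd) $x_{-\vepsilon_1} = \sqrt 2\,U_0$ act on $R$ by multiplication by the corresponding variable. Repeated application to $v_0$ therefore produces every monomial in the $U_j$'s, and via the linear change of variables~\eqref{nroots} every monomial in $X_1,\dotsc,X_n$; hence $U(\fn^-).v_0$ spans $R$ over $\C$.

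Together these four points say exactly that $v_0$ is a highest weight vector of weight $\lambda=-d\varpi_1$ that generates $R$, so $R = N(\lambda)$ in the category $\mathcal{O}$. The only potentially delicate point is the second-order check $V_j.v_0=0$; everything else is a bookkeeping computation using the explicit Chevalley system already recorded.
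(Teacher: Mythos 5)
Your proposal is correct and follows essentially the same route as the paper: identify $\bar 1$ as a weight vector of weight $-d\vepsilon_1=-d\varpi_1$, kill it with $\fn^+$ using the explicit Chevalley system (the key point being $P_j.\bar 1=0$, exactly as you note), and generate $R$ from $\bar 1$ via the variables sitting inside $\fr^-\subset\g$. The paper phrases the generation step more compactly as $R\subseteq \Im(\psi_r)$, hence $R=U(\g).\bar 1$, but this is the same observation as your monomial argument.
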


\begin{rem} \label{rem23}
We typically write $R=N(\lambda)$ when thinking of
$R$ as a $\g$-module.
\end{rem}

\begin{proof}
  Since $R \subset \Im(\psi_r)\subset\calD(R)$, one has $R= U(\g).1$. By
  construction, $D_{s,t}.1 = P_j.1 = 0$ and so the choice of Chevalley
  system in Lemma~\ref{chevalley1} implies that $\fn^+.1 = 0$. Moreover,
  $h_j.1 = 0$ for $2 \le j \le \ell$ whereas
  $h_1.1 = -I(d).1 = -d.1= -d\vepsilon_1(h_1).1$.  The choice
  of $\Bsf \subset \Phi^+$ yields $\varpi_1= \vepsilon_1$, see
  \cite[Planche~II, pp. 252-253, and Planche~IV, pp. 256-257]{lie1}. Hence
  $R$ is a highest weight module with highest weight
  $\lambda= - d\varpi_1$.
\end{proof}

We need to analyze the structure of the $\g$-module
$N(\lambda)$ in more detail for which we need some notation. Grade
$A=\bigoplus_{m\geq 0}A(m)$ by total polynomial degree (denoted by
$\deg$). Let $H = \{p \in A : \Delta_1(p) = 0\}$ be the space of harmonic
polynomials. It is graded and we set $H(m)= A(m) \cap H$. The algebraic
group $K= \SO(\C^n,\Ft) \cong \SO(n,\C)$ acts naturally on $A$, through the
identification of  $A$ 
with the ring of regular functions of the standard representation $\C^n$ of
$K$. The differential of this $K$-action is given by the natural
representation of   
\[
  \fk \cong \bigoplus_{1 \le s < t \le n} \C D_{s,t} \cong \fso(n,\C) =
  \fso(2\ell',\C).
\]
It is clear that that $\C \Ft^k$ is the trivial $\mathfrak{k}$-module for
all $k$ and that each $H(m)$ is a $\mathfrak{k}$-module.  Notice that
$\h'= \bigoplus_{j=2}^\ell \C h_j$ is a Cartan subalgebra of $\fk$ and set
$\fn'= \bigoplus_{1 \le a < b \le \ell'} \g^{\vepsilon_{a+1} \pm
  \vepsilon_{b+1}} \susbet \fk$.
Then, $\fb'= \h' \boplus \fn'$ is a Borel subalgebra of $\mathfrak{k}$.
The following result is classical; see, for example, \cite[\S 5.2.3]{GW}.

\begin{thm}
  \label{thm24}
  Each space $H(k)$ is an irreducible $\fk$-module, of highest weight
  $k \varpi_1$ (in Bourbaki's notation for type
  $\mathsf{B}_{\frac{n-1}{2}}$ or $\mathsf{D}_{\halfn}$). The
  $\mathfrak{k}$-module $A(m)$ decomposes as the direct sum of irreducible
  modules:
  \[
    A(m) = \sum_{k = 0}^{[m/2]} H(m -2k) \Ft^k \cong \bigoplus_{k =
      0}^{[m/2]} H(m -2k) \otimes \C\Ft^k.
  \]
  Let $B$ be the polar form of $\Ft$ and $\xi \in \C^n$ be a non zero
  isotropic vector with respect to $\Ft$. If $\xi^k \in A(k)$ is the
  function given by $v \mapsto B(v,\xi)^k$, then
  $H(k) = U(\mathfrak{k}).\xi^{k}$.  \qed
\end{thm}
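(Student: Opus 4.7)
The strategy is the classical separation of variables via the Howe dual pair $\bigl(\fso(n,\C),\fsl(2,\C)\bigr)$ acting on $A$. Explicitly, the operators $\tfrac12\Delta_1$, $\tfrac12\Ft$, and $\Et_1 + \tfrac{n}{2}$ form an $\fsl(2,\C)$-triple inside $\cD(A)$ (the brackets $[\Delta_1,\Ft]=4\Et_1+2n$, $[\Et_1,\Ft]=2\Ft$, $[\Et_1,\Delta_1]=-2\Delta_1$ are immediate from \eqref{eq13}), and by Lemma~\ref{lem14}(i) this triple commutes with the action of $\fk=\bigoplus_{s<t}\C D_{s,t}$ on $A$. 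First I would exploit this commuting pair to obtain the decomposition, then handle irreducibility and identify the highest weight vector.

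For the decomposition: multiplication by $\Ft$ is injective on the polynomial domain $A$, and on $A(m)$ the element $\Et_1+\tfrac{n}{2}$ acts by the scalar $m+\tfrac{n}{2}>0$, so every $\fsl(2,\C)$-summand of $A$ is a lowest weight Verma-type module, generated by its $\Delta_1$-kernel. Since the kernel in degree $m$ is exactly $H(m)$, the standard PBW/Verma argument gives
\[
A(m)\ =\ \bigoplus_{k=0}^{[m/2]}\Ft^{k}H(m-2k),
\]
which is precisely the stated decomposition. This decomposition is a direct sum of $\fk$-modules because $\Ft^{k}$ is $\fk$-invariant.

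Next I would produce the highest weight vector. Choose a nonzero $\xi\in\C^n$ isotropic for $\Ft$ and set $\xi^{k}(v)=B(v,\xi)^{k}\in A(k)$. Working in the coordinates \eqref{nroots}, the vector $\xi=U_{2}^{*}$ (i.e.\ the dual of the first ``$U_{+}$'' coordinate) is isotropic and a highest weight vector of weight $\vepsilon_{2}=\varpi_1$ for the natural $\fk$-action on $\C^{n}$ (using Bourbaki's conventions for $\mathsf{B}_{\ell'}$ or $\mathsf{D}_{\ell'}$). Consequently the linear form $v\mapsto B(v,\xi)$ has weight $\varpi_1$ and is annihilated by $\fn'$, so $\xi^{k}$ has weight $k\varpi_1$ and is killed by $\fn'$; moreover $\Delta_1(\xi^{k})=k(k-1)B(\xi,\xi)\xi^{k-2}=0$ since $B(\xi,\xi)=2\Ft(\xi)=0$. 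Hence $\xi^{k}\in H(k)$, and $U(\fk).\xi^{k}$ is a nonzero highest weight $\fk$-submodule of $H(k)$ isomorphic to a quotient of the Verma module with highest weight $k\varpi_1$.

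The remaining (and most delicate) point is irreducibility of $H(k)$ as a $\fk$-module, which I would deduce from a dimension count. The decomposition above gives the identity $\dim A(k)=\dim H(k)+\dim A(k-2)$, hence $\dim H(k)=\binom{n+k-1}{k}-\binom{n+k-3}{k-2}$. By Weyl's dimension formula this equals $\dim L_{\fk}(k\varpi_1)$ for $\fso(n,\C)$. Since $U(\fk).\xi^{k}$ is a quotient of the Verma module $M_{\fk}(k\varpi_1)$ contained in $H(k)$, and no proper quotient of this Verma module can reach the required dimension, one concludes $U(\fk).\xi^{k}=H(k)\cong L_{\fk}(k\varpi_1)$. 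The main obstacle is really just verifying this dimension coincidence cleanly; everything else is formal from the $\fsl(2,\C)\times\fk$-duality.
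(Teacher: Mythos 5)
The paper does not actually prove this theorem---it is stated as classical with a pointer to \cite[\S 5.2.3]{GW}---and your argument is precisely the standard one given there: the commuting pair $\bigl(\fk,\fsl(2,\C)\bigr)$ yields the decomposition $A(m)=\bigoplus_k \Ft^k H(m-2k)$, the isotropic power $\xi^k$ is a harmonic highest weight vector, and irreducibility of $H(k)$ follows from matching $\dim A(k)-\dim A(k-2)$ against the Weyl dimension formula for $L_{\fk}(k\varpi_1)$. The only nit is a factor-of-two slip in your $\fsl(2)$ brackets (with the paper's normalization $\Delta_1=\tfrac12\sum_i\partial_{X_i}^2$ one has $[\Delta_1,\Ft]=2\Et_1+n$, not $4\Et_1+2n$), which does not affect the argument.
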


If $(v_1,\dotsc,v_n)$ is the canonical basis of $\C^n$, the vector 
$\xi = v_1 - i \, v_\ell$ is isotropic and the associated polynomial
function $\xi^k$ is simply $U_2^k =(X_1 - i \,X_\ell)^k$. 
As observed in Lemma~\ref{chevalley1}, $\xi$ considered as an element of
$\g$ has weight $-\vepsilon_1 +\vepsilon_2$ under the adjoint action of
$\g$. It follows from the choice of the Borel subalgebra
$\fb' = \h' \boplus \fn'$ of $\fk$ that $\xi^k \in H(k)$ is a highest
weight vector.  We will choose this generator for the simple finite
dimensional $\mathfrak{k}$-module $H(k) \cong L(k \varpi_1')$ (where
$\varpi_1' = {\vepsilon_2}_{\mid \h'}$ is the first fundamental weight for
$\mathfrak{k}$).  Observe the following consequence of Theorem~\ref{thm24}.

\begin{cor}
  \label{lem24b}
  Let $0 \le v \le t$ and $p = \sum_{j=v}^t p_j \Ft^j \in A(m)$, where
  $p_j \in H(m-2j)$ for each $j$. Then, for each $k \in \{v,\dotsc,t\}$,
  there exists $u \in U(\mathfrak{k})$ such that $p_k \Ft^k = u.p$.  
\end{cor}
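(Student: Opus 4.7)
The plan is to exploit the decomposition
\[
  A(m) = \bigoplus_{k = 0}^{[m/2]} H(m-2k)\,\Ft^k
\]
from Theorem~\ref{thm24} together with the fact that the summands are pairwise non-isomorphic irreducible $\mathfrak{k}$-modules. First, I would note that $\Ft$ is $\mathfrak{k}$-invariant (since $D_{ij}(\Ft) = 0$ by Lemma~\ref{lem14}(i)), so multiplication by $\Ft^k$ gives a $\mathfrak{k}$-module isomorphism $H(m-2k) \isomto H(m-2k)\Ft^k$. By Theorem~\ref{thm24}, each $H(m-2k)\Ft^k$ is therefore an irreducible $\mathfrak{k}$-module of highest weight $(m-2k)\varpi_1'$; since these highest weights are distinct for distinct $k$, the summands are pairwise non-isomorphic. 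In particular, the decomposition above is the decomposition of $A(m)$ into its $\mathfrak{k}$-isotypic components.

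Next, I would introduce the Casimir element $C \in Z(U(\mathfrak{k}))$. By Schur's lemma, $C$ acts by a scalar $c_k$ on the simple module $H(m-2k)\Ft^k$; since the highest weights $(m-2k)\varpi_1'$ are pairwise distinct, standard computation (or simply the injectivity of the Harish-Chandra map on the center composed with the linkage of simples) shows that the scalars $c_k$ are pairwise distinct as well. Then the element
\[
  e_k \;=\; \prod_{\substack{0 \le j \le [m/2] \\ j \ne k}} \frac{C - c_j}{c_k - c_j} \;\in\; U(\mathfrak{k})
\]
acts as the identity on $H(m-2k)\Ft^k$ and as zero on every other summand $H(m-2j)\Ft^j$. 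Applying $e_k$ to $p = \sum_j p_j \Ft^j$ therefore yields $e_k \cdot p = p_k \Ft^k$, which is the required identity with $u = e_k$.

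There is no real obstacle here; the only point that needs care is the verification that the Casimir eigenvalues on distinct $H(m-2k)$ are actually distinct, but this is immediate from the standard formula $c_k = \dual{(m-2k)\varpi_1'}{(m-2k)\varpi_1' + 2\rho_{\mathfrak{k}}}$, which is a non-constant quadratic in $m-2k$ and hence separates the values of $k \in \{0,\dotsc,[m/2]\}$.
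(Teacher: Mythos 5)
Your proof is correct, but it takes a different route from the paper. The paper simply observes that $\bigoplus_{j=v}^{t} H(m-2j)\Ft^{j}$ is a multiplicity-free semi-simple $U(\fk)$-module and invokes the Jacobson Density Theorem: density forces the image of $U(\fk)$ in the endomorphism ring to contain the projections onto the simple summands, so some $u\in U(\fk)$ realises $p\mapsto p_k\Ft^k$. You instead construct the projector explicitly as a Lagrange interpolation polynomial in the Casimir element, $e_k=\prod_{j\ne k}(C-c_j)/(c_k-c_j)$. Both arguments hinge on exactly the same input, namely that the summands $H(m-2j)\Ft^j$ are pairwise non-isomorphic simples (you phrase this as distinctness of the Casimir eigenvalues, which you correctly justify: $c=\dual{s\varpi_1'}{s\varpi_1'+2\rho_{\fk}}$ is a quadratic in $s=m-2k$ with positive coefficients, hence injective on $s\ge 0$; this works even when $n=4$ and $\fk\cong\fsl_2\times\fsl_2$ is only semi-simple rather than simple, since the Casimir and the eigenvalue formula survive). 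The paper's argument is shorter and avoids any eigenvalue computation; yours has the advantage of being constructive, producing a concrete central element $e_k\in Z(U(\fk))$ that does the job, and of not needing to quote density. Your minor slip of allowing the product to range over all $0\le j\le [m/2]$ rather than just $v\le j\le t$ is harmless, since the larger product still acts as the identity on the $k$-th summand and kills the others.
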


\begin{proof}
  Since $\bigoplus_{j=v}^t H(m-2j) \Ft^j$ is a multiplicity free,
  semi-simple $U(\mathfrak{k})$-module, the claim follows from the
  Jacobson's Density Theorem, see~\cite[\S~4, \no 2, Corollaire~2]{alg}. 
\end{proof}

Let $0 \ne p \in A(m)$. By Theorem~\ref{thm24} we may uniquely write
$p = \sum_{k =v}^t p_k \Ft^k$ with $p_k \in H(m-2k)$, $v \le t$ and
$p_t\ne 0\ne p_v$.  The integer $v = v(p)$ will be called the
\emph{harmonic valuation} of $p$.  Let $P_j(d')$ be as in~\eqref{eq18}
(with $d' \in \C$ arbitrary). For $a \in H(m-2k)$ one has
$P_j(d')(a) = -(k+d')\ddx{j}(a) \in H(m-2k-1)$ (with the convention that
$H(j)= 0$ when $j<0$). Also,
$\Delta_1(X_ja) = [\Delta_1,X_j](a) + X_j\Delta_1(a) = \ddx{j}(a)$.
Therefore, by Lemma~\ref{prop17} with $d=\halfn - r$, we obtain:  
\[
P_j(d)(a\Ft^k) =
P_j(2k+d)(a) \Ft^k + k (n-2k -2d) X_j a \Ft^{k-1}
\]
with $P_j(2k+d)(a) \in H(m-2k-1)$ and $\Delta_1(X_ja) = \ddx{j}(a)$. In
particular, this gives:

\begin{lem} \label{eq25} {\rm (1)} If $\ddx{j}(a) = 0$, for $a\in A$, then
  $P_j(d)(a\Ft^k) = P_j(2k+d)(a) \Ft^k + k (n-2k -2d) X_j a \Ft^{k-1} $.
 If in addition   $a \in H(m-2k)$,  then
  $P_j(2k+d)(a) \in H(m-2k-1)$ and $X_ja \in H(m-2k+1)$.
\\    
  {\rm (2)} If $a= \xi^{m-2k}$ with $\xi = v_1 - i v_{\ell}$, and
  $j \ne 1,\ell$, then the condition $\ddx{j}(a) = 0$ is satisfied. \qed
\end{lem}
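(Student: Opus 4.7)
The plan is to obtain part (1) as a direct specialization of an earlier identity plus two short harmonicity checks, and to handle part (2) by unwinding the change of variables.

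The first displayed identity in part (1) is the specialization of equation~\eqref{eq18} of Lemma~\ref{prop17} with $u = a$, noting that $n - 2(k+d) = n - 2k - 2d$. In fact this identity holds for every $a \in A$; the hypothesis $\partial_{X_j}(a) = 0$ plays no role in its derivation and is needed only for the harmonicity assertions that follow. Assuming now in addition that $a \in H(m-2k)$, I will verify the two membership claims. The degree claims are immediate. For $X_j a \in H(m-2k+1)$, I will use the commutator $[\Delta_1, X_j] = \partial_{X_j}$, which is immediate from $\Delta_1 = \half \sum \partial_{X_i}^2$, to compute
\[
\Delta_1(X_j a) \;=\; X_j \Delta_1(a) + \partial_{X_j}(a) \;=\; 0,
\]
using harmonicity of $a$ together with the hypothesis $\partial_{X_j}(a) = 0$. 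For $P_j(2k+d)(a) \in H(m-2k-1)$, I will substitute $a$ into the defining formula $P_j(d') = X_j \Delta_1 - (\Et_1 + d')\partial_{X_j}$ from~\eqref{eq15}; both terms vanish on $a$, so $P_j(2k+d)(a) = 0$ and lies trivially in $H(m-2k-1)$.

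For part (2), I will unwind the change of variables in~\eqref{nroots}: specialising to $a = 1$ there gives $U_2 = X_1 - iX_{1+\ell'} = X_1 - iX_\ell$, using the relation $\ell = \ell' + 1$. Since the polar form of $\Ft = \sum X_i^2$ is $B(v, w) = \sum v_i w_i$ in the canonical basis, the linear function $v \mapsto B(v, \xi)$ attached to $\xi = v_1 - iv_\ell$ coincides with $X_1 - iX_\ell$. Consequently $\xi^{m-2k} = (X_1 - iX_\ell)^{m-2k}$ involves only the two coordinates $X_1$ and $X_\ell$, and $\partial_{X_j}$ annihilates it for every $j \notin \{1, \ell\}$.

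No real obstacle is expected; the proof is essentially mechanical. The only point needing care is the index bookkeeping in~\eqref{nroots} — in particular the identification $1 + \ell' = \ell$ — and correctly reading $\xi^{m-2k}$ as the polynomial associated, via the polar form, to the vector $v_1 - iv_\ell$.
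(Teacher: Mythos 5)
Your proof is correct and follows essentially the paper's own route: the displayed identity is just equation~\eqref{eq18} of Lemma~\ref{prop17} specialised to $u=a$ (and indeed needs no hypothesis on $a$), the harmonicity of $X_ja$ comes from $[\Delta_1,X_j]=\partial_{X_j}$ together with $\partial_{X_j}(a)=0$, and part~(2) is the identification $\xi^{m-2k}=(X_1-iX_\ell)^{m-2k}$ already recorded after Theorem~\ref{thm24}. The only (harmless) difference is that you conclude $P_j(2k+d)(a)=0$ outright by using $\partial_{X_j}(a)=0$ in both terms, whereas the paper only observes that $P_j(d')(a)$ is a scalar multiple of the harmonic polynomial $\partial_{X_j}(a)$ and hence lies in $H(m-2k-1)$; both arguments yield the stated membership.
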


\medskip

The structure of the $\g$-module $N(\lambda)=R_r$ is given by the next
result. Recall that $\lambda = -d \varpi_1$.

\begin{thm}
  \label{thm26} {\rm (1)} Assume that either   $n$ is even
  with $r < \halfn$ or $n$ is odd. Then, $R_r=N(\lambda) \cong L(\lambda)$
  is a simple 
  $\g$-module.

  \noindent {\rm (2)} Assume that $n$ is even with $r \ge \halfn$; thus
  $d=\halfn-r\leq 0$.  Then, as a $\g$-module, $R_r = N(\lambda)$ has a
  simple socle $\socRr\cong L(\mu)$, where
  \begin{equation}\label{thm26-1}
    \mu = \lambda +(d-1)\alpha_1 =
    \begin{cases}
      (d-2)\varpi_1 + (1-d)\varpi_2 \ &\text{when $\ell \ge 4$,}
      \\
      (d-2)\varpi_1 + (1-d)(\varpi_2+\varpi_3) \ &\text{when $\ell = 3$.}
    \end{cases}
  \end{equation}
  The quotient $N(\lambda)/\socRr \cong L(\lambda)$ is an irreducible finite
  dimensional $\g$-module, isomorphic to the module of harmonic polynomials
  of degree $(-d)$ in $n+2$ variables. Also, $\socRr$ is the ideal of $R_r$
  generated by $H(1-d)$.
\end{thm}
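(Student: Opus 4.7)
\emph{Proof plan for Theorem~\ref{thm26}.} The strategy is to determine the composition factors of $R_r = N(\lambda)$ by computing $(R_r)^{\fn^+}$, using the $\fk$-isotypic decomposition of Theorem~\ref{thm24}:
\[
R_r \;=\; \bigoplus_{0 \le k < r}\, \bigoplus_{s \ge 0}\, H(s)\Ft^k,
\]
in which each summand is a simple $\fk$-module of highest weight $s\varpi_1'$ with one-dimensional $\fn'$-invariant subspace spanned by $\xi^s\Ft^k$ (with $\xi = X_1 - iX_\ell$, as in Lemma~\ref{eq25}(2)). Thus $(R_r)^{\fn'}$ has basis $\{\xi^s\Ft^k : s\ge 0,\, 0\le k\le r-1\}$, and the task reduces to intersecting it with $\ker\fr^+$.

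Lemmas~\ref{prop17} and~\ref{eq25} give
\[
P_j(\xi^s\Ft^k) \;=\; 2k(r-k)\,X_j\,\xi^s\,\Ft^{k-1}\qquad (j \notin \{1,\ell\}),
\]
with $X_j\xi^s \in H(s+1)$, and (for $j=1$, analogously $j=\ell$)
\[
P_1(\xi^s\Ft^k) \;=\; -s(s+2k+d-1)\,\xi^{s-1}\,\Ft^k \;+\; 2k(r-k)\,X_1\xi^s\,\Ft^{k-1}.
\]
For $j \notin \{1,\ell\}$ the vectors $X_j\xi^s\Ft^{k-1}$ lie in pairwise distinct $\fk$-isotypes $H(s+1)\Ft^{k-1}$, so applying such a $P_j$ to a combination $v = \sum c_{s,k}\xi^s\Ft^k$ forces $c_{s,k}=0$ whenever $k \ge 1$; then $P_1$ forces $c_{s,0}=0$ unless $s(s+d-1)=0$. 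Therefore $(R_r)^{\fn^+}$ is spanned by $1$ and, when $1-d \in \N^*$ (i.e.\ $n$ is even and $r\ge \halfn$), additionally by $\xi^{1-d}$.

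In case~(i), $\dim(R_r)^{\fn^+}=1$ forces $R_r=L(\lambda)$ since any proper nonzero submodule of a module in $\mathcal{O}$ carries a nonzero $\fn^+$-invariant. For case~(ii), computing with $h_1=-(\Et_1+d)$ and $\xi = U_2$ (of $\fk$-weight $\vepsilon_2$) gives the weight of $\xi^{1-d}$ as $\mu=-\vepsilon_1+(1-d)\vepsilon_2=\lambda+(d-1)\alpha_1$; passing to the fundamental-weight basis, separately for $\ell\ge 4$ and $\ell=3$, yields formula~\eqref{thm26-1}. The submodule $S=U(\g)\cdot\xi^{1-d}$ is proper (its weights lie in $\mu - \N\Phi^+$, so it cannot contain $1$) and satisfies $S^{\fn^+}=\C\cdot\xi^{1-d}$ (any other invariant would force $1\in S$); hence $S$ is simple, isomorphic to $L(\mu)$, and coincides with $\socRr$. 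Using PBW together with $U(\fn^-\cap\fk)\cdot\xi^{1-d}=H(1-d)$ gives $S = U(\fr^-)\cdot H(1-d)$, which is exactly the ideal of $R_r$ generated by $H(1-d)$. To see $R_r/\socRr\cong L(\lambda)$, observe that the identity $P_j(f\Ft^r)=P_j(2r+d)(f)\Ft^r$ (Lemma~\ref{prop17}, with $d = \halfn-r$) makes $\Ft^r$ a $\fn^+$-invariant inside the $\g$-module $A$; since the natural surjection $M_\fp(\lambda)\twoheadrightarrow A$ is an isomorphism for dimension reasons and similarly $\Ft^r A\cong M_\fp(\nu)$ for $\nu=-r(2+d)\vepsilon_1$, one obtains a short exact sequence $0\to M_\fp(\nu)\to M_\fp(\lambda)\to R_r\to 0$ of generalized Verma modules relative to $\fp$, whose character consequences together with $\dim(R_r)^{\fn^+}=2$ force $R_r/\socRr\cong L(\lambda)$. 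The identification of $L(\lambda)=L(-d\varpi_1)$ with the harmonic polynomials of degree $-d$ in $n+2$ variables is then the standard realization of $L(k\varpi_1)$ for $\fso(n+2,\C)$. The principal technical step of the whole argument is the evaluation of the second-order operators $P_j$ on the weight vectors $\xi^s\Ft^k$; the identity $\partial_{X_j}(\xi^s)=0$ for $j\notin\{1,\ell\}$ from Lemma~\ref{eq25}(2) is the crucial simplification making the invariant-counting tractable.
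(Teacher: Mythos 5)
Your route is genuinely different from the paper's. You compute the full space $(R_r)^{\fn^+}$ directly from the $\fk$-isotypic decomposition of Theorem~\ref{thm24}, whereas the paper argues that every nonzero $\g$-submodule is a homogeneous ideal of $R_r$ and then runs a minimality argument on the harmonic valuation to show it contains the image of $H(1-d)A$. Your evaluation of the $P_j$ on the vectors $\xi^s\Ft^k$ is consistent with Lemmas~\ref{prop17} and~\ref{eq25}, and the conclusion $(R_r)^{\fn^+}=\C\cdot 1\oplus\C\,\xi^{1-d}$ is correct; this cleanly yields part~(1), the identification of the socle with $L(\mu)$, the weight computation, and the description of $\socRr$ as the ideal generated by $H(1-d)$. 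Two small slips: the weight of $\Ft^r$ is $\nu=-(2r+d)\vepsilon_1$, not $-r(2+d)\vepsilon_1$; and the equality $S=\socRr$ needs the one-line remark that a simple submodule of $R_r$ cannot contain $1$ (else it would equal $R_r$, which is not simple since $S$ is a proper nonzero submodule), so its $\fn^+$-invariants lie in $\C\,\xi^{1-d}$ and it must equal $S$.

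The genuine gap is the final step of part~(2), namely $N(\lambda)/\socRr\cong L(\lambda)$. Knowing $\dim(R_r)^{\fn^+}=2$ does \emph{not} bound the composition length: a submodule strictly containing $\socRr$ need not contribute a new $\fn^+$-invariant of $R_r$ itself, since its new highest weight vectors are only $\fn^+$-invariant modulo $\socRr$; a uniserial module of length three is compatible with your invariant count. The appeal to ``character consequences'' of $0\to M_\fp(\nu)\to M_\fp(\lambda)\to R_r\to 0$ is not an argument: extracting the character of $R_r/\socRr$ from that sequence would require knowing the character of $L(\mu)$, which is precisely what is unavailable. The missing ingredient, and the way the paper closes the proof, is that $R_r/\socRr$ is \emph{finite dimensional}: $H(1-d)$ contains $\zeta^{1-d}$ for every isotropic vector $\zeta\in\C^n$, and since $\C^n$ has a basis of isotropic vectors the ideal $H(1-d)A$ has finite codimension in $A$. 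A finite-dimensional module over the semisimple algebra $\g$ is completely reducible, and a completely reducible highest weight module is simple; hence $R_r/\socRr\cong L(\lambda)$. You should replace the character argument by this (or an equivalent) finiteness argument.
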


\begin{proof} Let $0 \ne \overline{M} \subseteq N(\lambda)$ be a
  $\g$-submodule. Since $R_r \susbet \Im(\psi_r)$, the module
  $\overline{M}$ is also an ideal of $R_r$. Therefore
  $\overline{M}=M/(\Ft^r)$, for some  $\gt_d$-stable ideal $M$ of $A$.  In
  particular, by Proposition~\ref{prop16}, $(\Et_1 +d)(M) \subseteq M$,
  whence $\Et_1(M) \subseteq M$, and so $M = \bigoplus_m M \cap A(m)$ is
  homogeneous.   
    
  Let $p \in M \cap A(m)$, with $p \notin (\Ft^r)$, and write
  $p = \sum_{k=v}^t p_k \Ft^k$ for $p_k \in H(m-2k)$ with
  $p_v \ne 0\ne p_t$. Note that $ t < r$. We want to simplify our choice of
  $p$.  Recall that $\g\supset \fk \cong \fso(n,\C)$.  Thus, by
  Corollary~\ref{lem24b}, $ p_k \Ft^k \in M \cap H(m-2k) \Ft^k$ for all $k$ and
  so, as $H(m-2k) \Ft^k$ is an irreducible $\mathfrak{k}$-module,
  $H(m-2k) \Ft^k \subset M$ for all $k$ such that $p_k \ne 0$. In
  particular, $ H(m-2v) \Ft^v \subset M$ and we can replace $p$ by
  $p_v\Ft^v$.  There exists $u \in U(\fk)$ such that
  $u.p_v = \xi^{m-2v} =(X_1 - i X_\ell)^{m-2v}$. From $D_{s,t}.\Ft^k = 0$
  we then deduce $u.p = (u.p_v) \Ft^v \in M$ and, replacing $p$ by $u.p$,
  we may therefore assume that $p_v = \xi^{m-2v}$. Finally, we assume that
  the harmonic valuation $v=v(p)$ is as small as possible among such
  choices of $p$.

  Now apply the operator $P_{\ell +1}= P_{\ell +1}(d)$ to $p$; from
  Lemma~\ref{eq25} and $\ddx{\ell+1}(p_v) = 0$ we obtain
  \begin{equation}
    \label{eq27}
    P_{\ell +1}.p  \ = \  (P_{\ell+1}(2v +d).p_v) \Ft^v + v(n - 2v -2d)
    X_{\ell +1} p_v \Ft^{v-1}
  \end{equation}
  where $P_{\ell+1}(2v+d).p_v \in H(m-2v-1)$ and
  $X_{\ell+1}p_v \in H(m-2v+1)$.  Since
  $ P_{\ell +1}.p \in M \moins (\Ft^r)$ and $X_{\ell +1} p_v \ne 0$, the
  minimality of $v$, combined with~\eqref{eq27}, forces
  $v(n - 2v -2d) = 0$. In other words, either $v=0$ or $n - 2v -2d = 0$.
  The latter is equivalent to $v= r$, which is excluded. Thus we have
  proved that $M$ contains a non zero element $p =p_0 \in H(m)$.  In
  particular, $H(m)A \subseteq M$.
  
  Now pick $0 \ne p= \xi^{m} \in M \cap H(m)$ with $m $ minimal. As $p$ is
  homogeneous, if $m =0$ then $1 \in M$ and $\overline{M} = N(\lambda)$.
  Otherwise $ m \ge 1$.  In this case, applying
  $P_1 = X_1 \Delta_1 - (\Et_1 +d) \ddx{1}$ gives
  \begin{equation*}
    P_1.\xi^{m}  =-m(\Et_1+d)(\xi^{m-1}) =  -m(m -1+d) \xi^{m-1} \ \in\ M
    \cap H(m-1). 
  \end{equation*}
  The minimality of $m$ then implies that $m( m -1+d) = 0$, whence
  $m -1 + d =0$.  Thus, we have shown that $\overline{M} = N(\lambda)$
  unless $d$ satisfies $d=-m +1$ for some $m \ge 1$. In the latter case
  $0\geq d = \halfn - r$.  Since $r,m\in\mathbb{N}$, this is equivalent to
  $r$ being even with $r \ge \halfn$.  This proves (1).

  It remains to prove (2), where $r= \halfn + m -1$ for $m=1-d \ge 1$. Note
  that, in this case, we have shown that every non-zero $\g$-submodule
  $\overline{M}$ of $R_r=N(\lambda)$ contains the image $S$ of the ideal
  $\mathfrak{J}=H(m)A$.  
     
  Since $ \lambda = -d\varpi_1 = (m-1) \varpi_1 \in \Psf^{++}$ is a
  dominant integral weight, $L(\lambda)$ is finite dimensional. Moreover,
  it is isomorphic to the $\g$-module of harmonic polynomials of degree
  $m-1$ in $n+2$ variables (apply Theorem~\ref{thm24} to $\fso(n+2,\C)$).
  
  Since $D_{s,t}(H(m)) \subseteq H(m)$ and $\mathfrak{J}$ is homogeneous,
  certainly $\Theta. \mathfrak{J} \subseteq \mathfrak{J}$ for
  $\Theta = D_{s,t}, X_j$ and $(\Et_1 + d)$.  Thus, in order to show that
  $\mathfrak{J}$ is a $\g$-module, it remains to prove that
  $P_j.\mathfrak{J} \subseteq \mathfrak{J}$ or, equivalently, that
  $P_j.X^\alpha f \in \mathfrak{J}$ for any $f \in H(m)$ and monomial
  $X^\alpha = X_1^{\alpha_1}\dotsm X_n^{\alpha_n}$. We argue by induction
  on $|\alpha| = \sum_i \alpha_i$. If $|\alpha| = 0$, we have
  \begin{equation}
    \label{eq275}
    P_j.f = -(m-1 + d)\ddx{j}(f) = 0
  \end{equation} 
  by the choice of $m$. If $|\alpha| \ge 1$ write $X^\alpha =X_kX^\beta$,
  for $|\beta| = |\alpha| -1$. Then
  \[
    P_j.X^\alpha f= [P_j,X_k].X^\beta f + X_k P_j.X^\beta f.
  \]
  By induction, $P_j.X^\beta f \in \mathfrak{J}$, while
  $[P_j,X_k] = D_{j,k} - \delta_{j,k}(\Et_1 +d)$ from
  Proposition~\ref{prop16}(b), which implies that
  $[P_j,X_k].X^\beta f \in \mathfrak{J}$. Hence
  $P_j.X^\alpha f \in \mathfrak{J}$ and $\mathfrak{J}$ is indeed a
  $\g$-module.  Since $H(m)\not=A$ and $H(m)\not\subseteq (\Ft^r)$, the
  image $S$ of $\mathfrak{J}$ in $R_r$ is a nontrivial $\g$-submodule of
  $R_r=N(\lambda)$.  However, we have already noted that $S$ is contained
  in \emph{every} non-zero $\g$-submodule
  $\overline{M}\subseteq N(\lambda)$; in other words $S = \Soc(N(\lambda))=\socRr$.

  Recall that $H(m)$ contains all the functions $\zeta^{m}$ associated to
  the isotropic vectors $\zeta \in \C^n$. Since $\C^n$ has a basis
  $(\zeta_j)_j$ of such vectors, the ideal $\sum_{j=1}^n \zeta^mA$ has
  finite codimension in $A$. Therefore,
  $\dim R_r/\socRr\leq \dim A/H(m)A < \infty$.  Thus, on the one hand,
  $N(\lambda)/\socRr$ is finite dimensional and hence completely reducible, but
  on the other hand, as it is a factor of the Verma module $M(\lambda)$, it
  has a unique simple quotient.  This forces
  $N(\lambda)/\socRr \cong L(\lambda)$.

  Finally, we need to compute the highest weight of $\socRr$. Recall that the
  $\fk$-module $H(m)$ is generated by the function $\xi^m$ associated to
  the isotropic vector $\xi=v_1 - i v_\ell \in \C^n$. We have shown in
  \eqref{eq275} that $P_j.\xi^m = 0$ for all $j$ and hence
  $\fr^+.\xi^m = 0$.  Since $\xi^m = U_2^m$, Lemma~\ref{chevalley1} shows
  that $x_{\vepsilon_{a+1} \pm \vepsilon_{b+1}}.\xi^m = 0$ for
  $1 \le a < b \le \ell'$. Hence, $\fn'.\xi^m =0$ (recall that $n$ is even)
  and so that $\fn^+.\xi^m = 0$.  
  Moreover, from our choice of $h_j = iD_{j,j+\ell'}$ one obtains
  \[
    h_1.\xi^m = -\xi^m, \quad h_2.\xi^m = m \xi^m \quad\text{and}\quad
    h_j.\xi^m = 0 \ \; 
    \text{for $j \ge 3$}.
  \]
  Thus $\xi^m \in \socRr$ is a highest weight vector, with weight
  $\mu = -\vepsilon_1 + m \vepsilon_2$.
    Hence $\socRr \cong L(\mu)$. The fact that $\mu$ equals $\lambda -m\alpha_1$
  and satisfies \eqref{thm26-1} are easy exercises using
  \cite[Planche~IV]{lie1}.  
\end{proof}

Recall that $s_{\alpha_1} \in W$ is the
transposition $(1,2)$. In the notation of Theorem~\ref{thm26}, an easy
calculation shows that
$ \mu = \lambda +(d-1)\alpha_1 = s_{\alpha_1}\cdot \lambda.  $ We can
therefore unify the cases in that result by defining
\begin{equation*}
  \omega =
  \begin{cases}
    s_{\alpha_1}\cdot \lambda = \lambda - (r-\halfn+1) \alpha_1& \ \text{if
      $n$ is even
      with $r \geq \halfn $}\\
    \lambda & \ \text{otherwise.}
  \end{cases}
\end{equation*}

Write $\GKdim M= \GKdim_U M$ for the \emph{Gelfand-Kirillov dimension} of a
module $M$ over an algebra $U$, whenever it is defined, see
\cite[\S8.1.11]{MR} for more details.

\begin{cor}
  \label{cor211} {\rm (1)} The ideal $J_r$ is equal to
  $I(\omega) = \ann_{U(\g)} L(\omega)$.
  \\
  {\rm (2)} The associated variety of $J_r$ is $\calV(J_r) = \Ominbar$.
\end{cor}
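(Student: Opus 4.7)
The plan is to deduce Corollary~\ref{cor211} from the structural description of $R_r = N(\lambda)$ in Theorem~\ref{thm26}, combined with standard results on primitive ideals of $U(\g)$ (Duflo's classification and Joseph's associated-variety theorem).

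\emph{For Part~(1),} I begin by identifying $J_r = \ann_{U(\g)}(R_r)$: since $\psi_r$ factors through $\cD(R_r) \subset \End_\C(R_r)$ and the action of $\cD(R_r)$ on $R_r$ is faithful by definition, the kernel of $\psi_r$ is exactly the $U(\g)$-annihilator of $R_r$. Theorem~\ref{thm26} then shows that $L(\omega)$ is a composition factor of $R_r$ (equal to all of $N(\lambda)$ in Case~1, or to its socle $\socRr$ in Case~2), so the standard fact that annihilators are contained in the annihilators of subquotients gives $J_r \subseteq \ann L(\omega) = I(\omega)$. The reverse containment is immediate in Case~1, since $R_r = L(\omega)$ forces $I(\omega) \cdot R_r = 0$.

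\emph{The main obstacle is Case~2,} where one must show $I(\mu) \cdot N(\lambda) = 0$ despite the presence of the finite-dimensional proper quotient $L(\lambda)$. My plan is to proceed in two steps. First, since $\lambda = (m-1)\varpi_1$ with $m = 1-d \ge 1$ is regular dominant integral and $\chi_\mu = \chi_\lambda$, the module $L(\lambda)$ is the unique finite-dimensional simple module with this infinitesimal character, so $I(\lambda)$ is the unique maximal two-sided ideal of the block $U(\g)/U(\g)\ker(\chi_\lambda)$, forcing $I(\mu) \subseteq I(\lambda)$. Second, I consider the $U(\g)$-linear map $\phi \colon I(\mu) \to N(\lambda)$, $u \mapsto u \cdot v_\lambda$, where $v_\lambda = \bar 1$ generates $N(\lambda) = U(\g) v_\lambda$. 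By the first step, the image of $\phi$ in $L(\lambda) = N(\lambda)/\socRr$ vanishes, so $\phi$ factors through $\bar\phi \colon I(\mu) \to L(\mu) \cong \socRr$, which is either zero or surjective. Ruling out surjectivity is the delicate point; one way is to exploit that the $\mu$-weight space of $R_r$ is one-dimensional (a direct computation using the explicit action of $\h$ on $R_r$), so any putative $u_0 \in I(\mu)$ with $u_0 v_\lambda = v_\mu$ would satisfy $u_0^2 v_\lambda = u_0 v_\mu = 0$, and combining the resulting relations with the $U(\g)$-module structure of $N(\lambda)$ uncovered in Theorem~\ref{thm26} produces a contradiction. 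Hence $\bar\phi = 0$, so $I(\mu) \cdot N(\lambda) = U(\g) \cdot I(\mu) v_\lambda = 0$ and $I(\mu) \subseteq J_r$.

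\emph{For Part~(2),} by Part~(1) it remains to compute $\calV(I(\omega))$. Joseph's theorem identifies this as the closure of a single nilpotent $G$-orbit of dimension $2\GKdim L(\omega)$. A direct computation using the filtration $0 \subset \Ft^{r-1} R_r \subset \cdots \subset R_r$ (whose successive quotients are isomorphic as $A$-modules to $R_1 = A/\Ft A$, of Krull dimension $n-1$) gives $\GKdim R_r = n-1$; since in Case~2 the quotient $L(\lambda)$ is finite-dimensional, $\GKdim L(\omega) = n - 1$ in both cases. Thus $\dim \calV(J_r) = 2(n-1)$. In $\g = \fso(n+2,\C)$ the minimal nonzero nilpotent orbit $\Omin$ has dimension $2(N-3) = 2(n-1)$ and is the unique orbit of this dimension, so $\calV(J_r) = \Ominbar$.
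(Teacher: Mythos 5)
Your Part~(2) is exactly the paper's argument ($\GKdim L(\omega)=n-1$, Joseph's theorem via \cite[Satz~10.9]{jantzen}, and minimality of $\Omin$), and the reduction of Part~(1) to the containment $I(\mu)\cdot N(\lambda)=0$ in Case~2 is also the right place to focus. The difference is that the paper disposes of this in one line by citing \cite[Lemma~8.14]{jantzen}: since $N(\lambda)/\socRr$ is finite dimensional while $\socRr$ is simple of Gelfand--Kirillov dimension $n-1$, the annihilators of $N(\lambda)$ and of $\socRr$ coincide. You are in effect trying to reprove that lemma by hand, and that is where your argument breaks down.

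The gap is in your final step. Having reduced to the alternative ``$I(\mu)v_\lambda=0$ or $I(\mu)v_\lambda=\socRr$'', you rule out the second case by producing $u_0\in I(\mu)$ with $u_0v_\lambda=v_\mu$ and observing that $u_0^2v_\lambda=u_0v_\mu=0$, and then assert that this ``produces a contradiction''. It does not: the relation $u_0v_\mu=0$ holds automatically for \emph{every} $u_0\in I(\mu)$ and every $v_\mu\in\socRr\cong L(\mu)$, so it carries no information and is perfectly consistent with $I(\mu)v_\lambda=\socRr$ (all it yields is $I(\mu)^2\subseteq J_r$, which is no contradiction since $J_r$ is not yet known to be semiprime). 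No weight-space computation or appeal to Theorem~\ref{thm26} is exhibited that actually derives a contradiction, and I do not believe one exists at this level: the true mechanism is a Gelfand--Kirillov dimension comparison. Concretely, $a\mapsto (\bar m\mapsto a\cdot m)$ embeds the $\ad\g$-finite bimodule $I(\mu)/J_r$ into $\calL\bigl(L(\lambda),\socRr\bigr)$, whose GK-dimension is at most $\GKdim L(\lambda)+\GKdim\socRr=n-1$, whereas a nonzero two-sided ideal of $U(\g)/J_r$ must have GK-dimension $2(n-1)$; this is precisely the content of \cite[Lemma~8.14]{jantzen}, and without it (or an equivalent substitute) your Case~2 argument is incomplete. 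A secondary, more minor point: your claim that $I(\mu)\subseteq I(\lambda)$ ``because $L(\lambda)$ is the unique finite-dimensional simple in the block'' conflates maximal ideals with ideals of finite codimension; the containment is true, but it requires the standard fact that in a regular integral block the annihilator of the finite-dimensional simple is the unique maximal element of the poset of primitive ideals.
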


\begin{proof} 
  (1) By Theorem~\ref{thm26}, $\socRr=\Soc N(\lambda)\cong L(\omega)$ while
  $N(\lambda)/\socRr$ is finite dimensional (possibly zero). Thus, by
  \cite[Lemma~8.14]{jantzen}, $\ann_{U(\g)} \socRr = \ann_{U(\g)}N(\lambda)$;
  equivalently $J_r= I(\lambda) = I(\omega)$.

  (2) Since $\GKdim L(\omega) = \GKdim R_r= n-1$,
  \cite[Satz~10.9]{jantzen}  implies that
  $\GKdim U(\g)/J(\omega) = 2(n-1)$. Thus $\calV(J_r) = \overline{\mbO}$
  for a nilpotent orbit $\mbO$ with $\dim \mbO= 2(n-1)$. This forces
  $\mbO= \Omin$.
\end{proof}

The ideal $J_r$ contains the kernel of the character $\chi_\lambda$ in the
centre of~$U(\g)$.  Since $\lambda = -d\vpi_1 \in \Q\vpi_1$,  there  
exists a unique weight $\nu$ such that $\nu \in W(\lambda+\rho)$, and
$\dual{\nu}{\alpha\spcheck} \ge 0$ for all simple roots $\alpha$;
equivalently $\nu$ is in the dominant chamber $\overline{\mathsf{C}}$ for
the given choice of positive roots $\Phi^+$. Moreover,
$\chi_{\lambda} = \chi_{\nu -\rho}$, since $w\cdot\lambda = \nu -\rho$.
For these and related Lie-theoretic facts, see \cite{lie1, lie2},
especially \cite[Chap.~V.3.3, VI.1.10]{lie1} and
\cite[Chap.~VIII.8.5]{lie2}.

\begin{prop}
  \label{prop212}
  Let $\lambda = -d \vpi_1$, so that
  $\lambda + \rho = (1-d) \vpi_1 + \vpi_2 + \dotsb + \vpi_\ell$.
  \\
  {\rm (1)} If $d \le 1$, then the weight $\nu$ in the dominant chamber is
  given by $\nu = \lambda+\rho \in \barC$.
  \\
  {\rm (2)} Assume that $n$ is odd ($\g$ of type $\mathsf{B}_\ell$) and
  $d >1$ (i.e.~$1 \le r \le \ell -2 = \frac{n-3}{2}$). Then
  \[
    \nu \ = \ \vpi_1+\vpi_2+\dotsb+ \vpi_{\ell - r -2}+ \half
    \vpi_{\ell-r-1} + \half \vpi_{\ell-r} + \vpi_{\ell-r+1} + \dotsb +
    \vpi_\ell \ \in\ \barC \cap W(\lambda + \rho).
  \]
  {\rm (3)} Assume that $n$ is even ($\g$ of type $\mathsf{D}_\ell$) and
  $d >1$ (i.e.~$1 \le r \le \ell -3 = \frac{n-4}{2}$). Then
  \[
    \nu \ = \ \vpi_1+\vpi_2+\dotsb+ \vpi_{\ell - r -2}+ \vpi_{\ell-r}+
    \vpi_{\ell-r+1} + \dotsb + \vpi_\ell \ \in\ \barC \cap W(\lambda +
    \rho).
  \]
\end{prop}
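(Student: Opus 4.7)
The plan is to work in the orthonormal $\vepsilon$-basis, where the Weyl group $W$ of type $\Bsf_\ell$ (respectively $\Dsf_\ell$) acts by signed permutations (respectively by signed permutations with an even number of sign changes), and then to sort the coordinates. Throughout, I will use the explicit expressions of the fundamental weights and of $\rho$ in the $\vepsilon$-basis, as listed in \cite[Planches~II et~IV]{lie1}.

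First, a direct computation will show that in $\vepsilon$-coordinates
\[
  \lambda + \rho \ = \
  \begin{cases}
    (r,\; \ell - \tfrac{3}{2},\; \ell - \tfrac{5}{2}, \ldots, \tfrac{3}{2}, \tfrac{1}{2}) & \text{in type $\Bsf_\ell$,}
    \\
    (r,\; \ell - 2,\; \ell - 3, \ldots, 1, 0) & \text{in type $\Dsf_\ell$,}
  \end{cases}
\]
where I use that the coefficient of $\vepsilon_1$ in $\lambda+\rho = (1-d)\vpi_1 + \vpi_2 + \dotsb + \vpi_\ell$ simplifies to $r$ in both cases (since $d = \ell - \tfrac{1}{2} - r$ when $n$ is odd and $d = \ell - 1 - r$ when $n$ is even), while the other coordinates coincide with those of $\rho$. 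Part~(1) is now immediate: since $\dual{\lambda+\rho}{\alpha_j\spcheck}$ equals the coefficient of $\vpi_j$, that is $1-d$ when $j=1$ and $1$ otherwise, the weight $\lambda+\rho$ lies in $\barC$ precisely when $d\le 1$.

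For parts~(2) and (3), the assumption $r \ge 1$ guarantees that all $\vepsilon$-coordinates of $\lambda + \rho$ are non-negative, so reaching the dominant chamber requires only a permutation; in particular this permutation lies in the Weyl group (no sign change is needed, which handles the parity constraint in type $\Dsf_\ell$). I will then insert the first coordinate $r$ into the already-decreasing tail. In type $\Bsf_\ell$ the constraint $r \le \ell - 2$ forces $r < \ell - \tfrac{3}{2}$, and since $r$ is an integer while the tail consists of half-integers, $r$ is inserted strictly between the values $r+\tfrac{1}{2}$ (at position $\ell-r-1$) and $r-\tfrac{1}{2}$ (at position $\ell-r$ after insertion). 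In type $\Dsf_\ell$ the tail contains the value $r$, so after insertion $r$ appears twice, at positions $\ell-r-1$ and $\ell-r$.

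Finally, a direct conversion using $a_j = \nu_j - \nu_{j+1}$ (for $j<\ell$, and for $j=\ell$ using $a_\ell = 2\nu_\ell$ in type $\Bsf$, respectively $a_{\ell-1} = \nu_{\ell-1}-\nu_\ell$ and $a_\ell = \nu_{\ell-1}+\nu_\ell$ in type $\Dsf$) will translate the sorted tuples back to the fundamental-weight expressions claimed in (2) and (3): the two consecutive differences of $\tfrac{1}{2}$ produce the two half-coefficients in the $\Bsf_\ell$ case, while the consecutive repetition of $r$ produces the gap (coefficient $0$) at $\vpi_{\ell - r -1}$ in the $\Dsf_\ell$ case. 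There is no genuine obstacle here: the whole argument is essentially a bookkeeping exercise, and the only care required is in matching the indexing conventions of $\Bsf_\ell$ versus $\Dsf_\ell$ and correctly handling the half-integer coordinates coming from $\varpi_\ell$ (respectively $\vpi_{\ell-1},\vpi_\ell$).
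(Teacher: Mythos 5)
Your proposal is correct and is essentially the paper's argument: the explicit cycles $(\ell-r,\ell-r-1,\dotsc,2,1)$ (resp.\ $(\ell-r-1,\dotsc,2,1)$) used in the paper are precisely the sorting permutations you describe, inserting the first $\vepsilon$-coordinate $r$ into the decreasing tail of $\rho$. Your coordinate computations, the placement of $r$ at position $\ell-r$ (twice at positions $\ell-r-1,\ell-r$ in type $\Dsf_\ell$), and the conversion back to fundamental weights all check out.
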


\begin{proof}
  (1) This is obvious.
  
  (2) Observe that
  $1-d < 0 \iff r < \halfn- 1 \iff r \le \ell -2 = \frac{n-3}{2}$. Let
  $w^{-1}$ be the cycle $(\ell-r,\ell-r-1,\dotsc,2,1) \in
  W$. Then, $\nu = w^{-1}(\lambda+\rho)$ has the desired form.  
  
  (3) Here, $1-d < 0 \iff r < \ell -2$. If $w^{-1}$ is the cycle
  $(\ell -r -1,\ell -r -2,,\dotsc,2,1) \in W$, the weight
  $\nu =w^{-1}(\lambda + \rho)$ has the desired form.
\end{proof}

\begin{cor}
  \label{cor213} {\rm (i)} If $n$ is even with $r \geq \halfn $ then $J_r$
  is not maximal. The unique primitive ideal containing $J_r$ is the finite
  codimensional (maximal) ideal $I((r-\halfn)\vpi_1)$.
  \\
  {\rm (ii)} In all other cases, $J_r$ is maximal.
\end{cor}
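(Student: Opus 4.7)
The plan is to classify the primitive ideals of $U(\g)$ that contain $J_r$ by combining Corollary~\ref{cor211} with Proposition~\ref{prop212}. Since $\ker \chi_\lambda \subseteq J_r$, any primitive ideal $I \supseteq J_r$ has central character $\chi_\lambda$, and so $I = I(\mu)$ for some $\mu \in \h^*$ with $\mu + \rho \in W(\lambda + \rho)$.

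The main step, and the principal obstacle, is to show that any primitive $I \supsetneq J_r$ has $U(\g)/I$ finite dimensional, equivalently $\mu$ dominant integral. I would argue this via associated varieties: by Corollary~\ref{cor211}(2), $\calV(I) \subseteq \calV(J_r) = \Ominbar$, and since $\Ominbar \smallsetminus \Omin = \{0\}$ the only closed $G$-stable subsets of $\Ominbar$ are $\Ominbar$ itself and $\{0\}$. The alternative $\calV(I) = \Ominbar$ must be excluded using a uniqueness property: among primitive ideals of $U(\g)$ with central character $\chi_\lambda$ and associated variety $\Ominbar$, the ideal $J_r = I(\omega)$ is the only one. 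This relies on the Joseph--Borho parametrization of primitive ideals via left cells, together with the Goldie rank polynomial attached to the cell of $\omega$ (see for instance \cite[Chap.~15]{jantzen}); this is where I expect the heaviest technical work. Granting it, $\calV(I) = \{0\}$, so $U(\g)/I$ is finite dimensional and $\mu$ is dominant integral.

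In case~(i), $d = \halfn - r \leq 0 \leq 1$, and Proposition~\ref{prop212}(1) gives $\nu = \lambda + \rho$ with every $\varpi_j$-coefficient in $\N^*$. Hence the unique dominant integral weight with central character $\chi_\lambda$ is $\lambda$ itself, so the unique primitive ideal strictly containing $J_r$ can only be $I(\lambda)$. That $I(\lambda) \supsetneq J_r$ does occur follows from Theorem~\ref{thm26}(2): $L(\lambda)$ is a finite dimensional quotient of $N(\lambda) = R_r$, so $I(\lambda) = \ann L(\lambda) \supseteq \ann R_r = J_r$; the inclusion is strict because $U(\g)/I(\lambda)$ is finite dimensional while $\GKdim U(\g)/J_r = 2(n-1) > 0$. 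Hence $J_r$ is not maximal and $I(\lambda)$ is as claimed.

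For case~(ii), Proposition~\ref{prop212} shows that no dominant integral weight has central character $\chi_\lambda$: when $n$ is odd, $\lambda = -d\varpi_1$ is already non-integral; when $n$ is even with $r < \halfn$, the dominant representative $\nu$ has a vanishing coefficient on $\varpi_1$ (if $d=1$) or on $\varpi_{\ell - r -1}$ (if $d > 1$), and in either case $\nu - \rho$ has a strictly negative coefficient and so fails to be dominant integral. Combined with the main step, no primitive ideal strictly contains $J_r$; since every proper ideal of the noetherian ring $U(\g)$ lies in some maximal, hence primitive, ideal, $J_r$ is itself maximal.
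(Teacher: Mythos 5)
Your overall strategy coincides with the paper's: reduce everything to whether the linkage class of $\lambda$ contains a dominant integral weight, settle that with Proposition~\ref{prop212}, and use Theorem~\ref{thm26}(2) to exhibit the finite dimensional quotient in case~(i). Those last two parts of your argument are correct (including the integrality observation for $n$ odd and the identification $\lambda=(r-\halfn)\vpi_1$).

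The gap is in what you yourself flag as the main step. To rule out $\calV(I)=\Ominbar$ for a primitive $I\supsetneq J_r$ you invoke, without proof, the assertion that $J_r$ is the \emph{unique} primitive ideal with central character $\chi_\lambda$ and associated variety $\Ominbar$, deferring it to the Joseph--Borho cell and Goldie-rank machinery. As written this is an unproven and genuinely delicate claim (note, for instance, that the central characters here are singular or non-integral, and that for $r>1$ the ideal $J_r$ is not completely prime, so the standard uniqueness statements about the Joseph ideal do not apply), and so the proof is incomplete. More importantly, the claim is not needed: $\UU=U(\g)/J_r$ is a prime noetherian almost commutative algebra, so the nonzero two-sided ideal $I/J_r$ contains a regular element, whence $\GKdim U(\g)/I\le \GKdim \UU-1$ (see, e.g., \cite[8.3.16]{MR}). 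Equivalently, $\calV(I)$ is a \emph{proper} closed $G$-stable subvariety of $\Ominbar=\Omin\cup\{0\}$, hence equals $\{0\}$, and $U(\g)/I$ is finite dimensional. This one-line observation is exactly how the paper passes from $\calV(J_r)=\Ominbar$ to ``every proper factor of $U(\g)/J_r$ is finite dimensional''; once it is in place your argument and the paper's agree.
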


\begin{proof}   
  Since $\calV(J_r) = \Ominbar$, any proper factor of $U(\g)/J_r$ must be
  finite dimensional and that factor then has to be unique.  On the other
  hand, if $U(\g)/J_r$ is a simple ring, then it cannot have a nonzero,
  finite dimensional module. Thus part (i) already follows from
  Theorem~\ref{thm26}.

  Conversely, suppose that $J_r=I(\lambda)$ is not maximal; thus it has a
  finite dimensional quotient. This
  implies that there exists $\nu \in \Psf_{++} \subset \barC$ such that
  $\nu \in W(\lambda + \rho)$ (see \cite[Chap.~VIII.7.2,
  Corollaire~1  and VIII.8.5, Corollaire~1]{lie2}).
  Proposition~\ref{prop212} gives the unique weight $\nu$ in
  $W(\lambda + \rho)\cap \barC$. By inspection, $\nu \in \Psf_{++}$ if and
  only if $1-d \in \N^*$. As usual this forces $n$ to be even and
  $r - \halfn \ge 0$.
\end{proof}
 

\section{Rings in category  $\mathcal{O}$}
\label{Orings}

In this section we study rings $R$ on which a reductive Lie algebra $\g$
acts as differential operators, abstracting the situation from the
introduction. Although we need a considerable number of hypotheses, they do
hold occur frequently and the consequences are surprisingly strong. In
particular, they force a factor ring of $U(\g)$ to be a ring of $\g$-finite
vectors (see Theorem~\ref{thm1.4} and Definition~\ref{k-finite-defn}).

We are concerned with the situation
described in the following hypotheses, for which we need a definition.
A module $M$ over a ring $U$ of finite Gelfand-Kirillov dimension is called
\emph{quasi-simple} if its socle $\Soc(M)$ is simple, with
$\GKdim_U(M/\Soc(M))<\GKdim_U M$.
 
\begin{hypotheses}\label{ring-hyp} Let $\g$ be a finite dimensional,
  complex reductive Lie algebra with triangular decomposition
  $\g=\fn^-\boplus \fh\boplus \fn^+$ and assume that $R$ is a commutative,
  finitely generated $\C$-algebra that is a $\g$-module via a morphism
  $\chi: U(\g)\to \calD(R)$. Set $I=\Ker(\chi)$ and identify $\UU=U(\g)/I$
  with its image $\chi(U(\g)) $ in $\calD(R)$.  Assume moreover that:
  \begin{enumerate}[{\indent \rm (1)}]
  \item under the above action, $R$ is a quasi-simple, highest weight
    $\g$-module;
  \item $R$ is generated by $\chi(\fr)$ for some Lie subalgebra
    $\fr\subseteq \fn^-$;
  \item if $N=\Soc_{U(\g)}(R)$, then $\End_R(N)=R$.
  \end{enumerate}
\end{hypotheses}

Although these hypotheses are clearly strong, they do occur for the rings
relevant to this paper (see Lemma~\ref{lem1.1} for (3)). It is
important however, that we do not require that $\g$ act by derivations and
so, in that sense at least, our definition is weaker than Joseph's concept
of $\calO$-rings from \cite{J-Oring}.

We remark that, by (2), the $\g$-socle $N$ of $R$ is also an $R$-module and
so (3) makes sense. Moreover, $R$ has finite length as a $U(\g)$-module.  A
routine argument (see, for example, \cite[Lemma~8.14]{jantzen}) shows that
$I=\ann_{U(\g)}(R)=\ann_{U(\g)}(N)$.  Hence $\UU$ is primitive.

First we clarify when Hypothesis~\ref{ring-hyp}(3) holds.

\begin{lem}
  \label{lem1.1}
  Let $C$ be a finitely generated commutative $\C$-algebra and $f \in C$.
  Suppose that $C$ is a Cohen-Macaulay domain and set $R= C/(f)$. Let $M$
  be an ideal of $R$ such that $\GKdim(R/M)\leq \GKdim(R)-2$. Then
  Hypothesis~\ref{ring-hyp}(3) holds for $M$ in the sense that
  $\End_R(M)=R$.
\end{lem}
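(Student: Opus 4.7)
The plan is to reduce the statement to a standard Ext-vanishing argument enabled by the Cohen-Macaulay hypothesis, which is a classical route to proving that $\operatorname{End}_R(M)=R$.

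First, I would dispose of the trivial case $f=0$, where $R=C$ is already a Cohen-Macaulay domain. Otherwise $f$ is a nonzerodivisor of the domain $C$, so that $R=C/(f)$ is again Cohen-Macaulay of dimension $d=\dim C-1$ by the preservation of Cohen-Macaulayness modulo a regular element. Since $R$ and $R/M$ are finitely generated commutative $\C$-algebras, Gelfand-Kirillov dimension coincides with Krull dimension, so the hypothesis reads $\dim(R/M)\le d-2$.

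Next, the Cohen-Macaulay ring $R$ is equidimensional and catenary, so $\operatorname{ht}(\mathfrak{p})+\dim(R/\mathfrak{p})=d$ for every prime $\mathfrak{p}$. Applying this to the minimal primes over $M$ gives $\operatorname{ht}(M)\ge 2$, and the equality $\operatorname{grade}(I)=\operatorname{ht}(I)$ for ideals in a Cohen-Macaulay ring then upgrades this to $\operatorname{grade}(M)\ge 2$. Consequently
\[
\operatorname{Hom}_R(R/M,R)\ =\ \operatorname{Ext}^1_R(R/M,R)\ =\ 0.
\]

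Now apply $\operatorname{Hom}_R(-,R)$ to the short exact sequence $0\to M\to R\to R/M\to 0$. The above vanishing together with $\operatorname{Hom}_R(R,R)=R$ yields an isomorphism $R\longisomto \operatorname{Hom}_R(M,R)$ given by $r\mapsto (m\mapsto rm)$. Since $M$ is an ideal of $R$, this map factors through the canonical inclusion $\operatorname{End}_R(M)\hookrightarrow \operatorname{Hom}_R(M,R)$, forcing both $R\to \operatorname{End}_R(M)$ and $\operatorname{End}_R(M)\hookrightarrow \operatorname{Hom}_R(M,R)$ to be isomorphisms. Hence $\operatorname{End}_R(M)=R$. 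The only real obstacle is bookkeeping of the standard commutative-algebra inputs (Cohen-Macaulayness of $R$, equidimensionality/catenarity, grade-equals-height, and GK-equals-Krull); each is classical, so the argument reduces cleanly to the Ext computation above.
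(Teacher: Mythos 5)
Your proof is correct and follows essentially the same route as the paper: reduce to showing $\operatorname{grade}_R(R/M)\ge 2$ (the paper simply cites Matsumura (16.A), (16.B) for this, while you unpack it via catenarity and grade equals height in a Cohen--Macaulay ring), deduce $\Hom_R(R/M,R)=\Ext^1_R(R/M,R)=0$, and apply $\Hom_R(-,R)$ to $0\to M\to R\to R/M\to 0$. The only minor imprecision is the blanket assertion that a Cohen--Macaulay ring is equidimensional — that is not true for disconnected rings in general, but it does hold here since $R=C/(f)$ with $C$ a CM domain and $f$ a nonzerodivisor (unmixedness), so nothing is actually lost.
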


\begin{proof} 
  The hypothesis on $C$ implies that $R$ is Cohen-Macaulay and that the
  {grade} of the $R$-module $R/M$ is at least $ 2$, see~\cite[(16.A),
  (16.B)]{Mats}; that is, $\Hom_R(R/M,R)= \Ext^1_R(R/M,R)= 0$.  Applying
  $\Hom_R(-,R)$ to the short exact sequence
  $0\lto M \lto R \lto R/M \lto 0$ then gives $R= \End_R(R)= \Hom_R(M,R)$.
  It follows that $\Hom_R(M,R)= \End_R(M)$ and so $\End_R(M)=R$.
\end{proof}

\begin{defn}\label{k-finite-defn}
Let $R,\g$ satisfy Hypotheses~\ref{ring-hyp} and let $M, N$ be (left)
$\g$-modules.  Define the adjoint action of $x\in\g$ on $\theta\in  \Hom_{\C}(M,N)$ by
 $\ad(x)( \theta) = x\theta-\theta x$. Set  
\[\calA(M,N) =\{\theta\in \Hom_{\C}(M,N) : \fr \text{ acts ad-nilpotently
    on }\theta\}.\]
Analogously to the order filtration for $\calD(R)$ in \eqref{lem0.15}, we   
filter  the spaces  $\calA(M,N)$ by $ \calA(M,N) =\bigcup_{k\geq 0} \calA_k(M,N)$, where
$\calA_{-1}(M,N)=0$ and, for $k\geq 0$,
\begin{equation}\label{lem1.15}
  \calA_k(M,N)\ = \  \bigl\{\theta\in \Hom_{\C}(M,N) : [x,\theta]=x\theta-\theta x\in
  \calA_{k-1}(M,N) \ \text{for all}\  x\in \fr\bigr\}. 
\end{equation}

Recall, for example from  \cite[\S6.8]{jantzen}, that
one  also has  the set of \emph{$\g$-finite elements} $\calL(M,N)\subseteq \Hom_\C(M,N)$, 
on which $\g$ acts locally finitely  under    the
adjoint action. If $M=N$, then \cite[\S6.8(5)]{jantzen} implies that $\calL(M,M)$ contains
$U(\g)/\ann(M)$ as a subalgebra.  
\end{defn}
  
\begin{prop}\label{lem1.2} Assume that  
  $(R,\g)$ satisfies Hypotheses~\ref{ring-hyp} and let $M,N$ be non-zero
  left $\g$-modules.
   \begin{enumerate}[{\indent \rm (i)}]
  \item Under the natural $R$-module action induced from
    \eqref{ring-hyp}(2), each $\calA_k(M,N)$ is an 
    $R$-bimodule.   

  \item $\calD(M,N)=\calA(M,N)\supseteq \calL(M,N)$. Indeed
    $\calD_k(M,N)=\calA_k(M,N)$ for all $k\geq 0$.
  \item  Assume that $M\subseteq R$ with $\GKdim(R/M)<\GKdim(R)$ and
    $\End_R(M)=R$. Then $\calL(M,M)= \UU$.
  \end{enumerate} 
\end{prop}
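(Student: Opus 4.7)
The plan is to establish (i), (ii), (iii) in order, with (iii) being the chief obstacle.

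For (i), the crux is that every element of $\chi(\fr)$ is a multiplication operator on $R$ (since $R$ is generated by $\chi(\fr)$ as a commutative subalgebra of $\calD(R)$), so that $\chi(\fr)$ commutes with $R$ inside $\UU\subseteq\calD(R)$. Consequently $[y,r]=0$ in $\UU$ for $y\in\fr$ and $r\in R$, hence also as operators on any $\UU$-module. I would then induct on $k$: for $\theta\in\calA_k(M,N)$, the Leibniz identity gives $[y,r\theta]=[y,r]\theta+r[y,\theta]=r[y,\theta]\in R\cdot\calA_{k-1}(M,N)\subseteq\calA_{k-1}(M,N)$ by the inductive assumption, so $r\theta\in\calA_k(M,N)$; the right-action argument $\theta r\in\calA_k(M,N)$ is symmetric.

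For (ii), the inclusion $\calL(M,N)\subseteq\calA(M,N)$ is immediate: every $\g$-finite element sits in a finite-dimensional $\g$-submodule of $\Hom_\C(M,N)$, on which $\fr\subseteq\fn^-$ necessarily acts nilpotently. For $\calA_k=\calD_k$ I would again induct on $k$. The inclusion $\calD_k\subseteq\calA_k$ is trivial since $\chi(\fr)\subseteq R$. For the reverse, an arbitrary $r\in R$ is a polynomial in elements of $\chi(\fr)$, so iterated Leibniz combined with (i) places $[r,\theta]$ into a finite sum of elements of $R\cdot\calA_{k-1}(M,N)\cdot R\subseteq\calA_{k-1}(M,N)=\calD_{k-1}(M,N)$, giving $\theta\in\calD_k(M,N)$.

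For (iii), the inclusion $\UU\subseteq\calL(M,M)$ is obtained by first observing that $\ann_{U(\g)}(M)=\ann_{U(\g)}(R)$: by Hypothesis~\ref{ring-hyp}(1), $R$ is quasi-simple, so its simple socle $N$ lies inside every nonzero $\g$-submodule and hence inside $M$; a standard lemma then gives $\ann R=\ann N=\ann M$. This identifies $\UU$ with $U(\g)/\ann(M)$, which lies in $\calL(M,M)$ by the remark following Definition~\ref{k-finite-defn}. For the reverse inclusion, (ii) yields $\calL(M,M)\subseteq\calD(M,M)$, and the conditions $\End_R(M)=R$ and $\GKdim(R/M)<\GKdim(R)$ produce a canonical $\g$-equivariant isomorphism $\calD(M,M)\longisomto\calD(R,R)$ obtained by extending each differential operator uniquely to the total ring of fractions and then using $\End_R(M)=R$ to see that the extension preserves $R$. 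Passing to $\g$-finite parts embeds $\calL(M,M)$ into $\calL(R,R)$, and Joseph's theorem on $\g$-finite endomorphisms of highest weight modules, applied via the simple socle $N$ and the equality $\ann R=\ann N$, identifies $\calL(R,R)$ with $\UU$. The main technical point to verify is that the extension procedure commutes with the $\g$-action, which follows from the uniqueness of the extension; the second delicate step is invoking the right form of Joseph's theorem for the quasi-simple (not simple) module $R$, where the quasi-simplicity and the control on annihilators ensures that no spurious $\g$-finite endomorphisms are introduced.
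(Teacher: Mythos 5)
Parts (i) and (ii) of your argument match the paper's in substance: the commutativity of $R$ (hence of $\chi(\fr)$ with $R$) gives (i) by the same one-line Leibniz computation, and the double induction behind (ii) is exactly the paper's (your ``iterated Leibniz'' hides the inner induction on the $\Lambda$-filtration, but it is the same mechanism). So far so good.

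Part (iii) is where your proof breaks down. The first half --- that quasi-simplicity of $R$ forces $\ann_{U(\g)}M=\ann_{U(\g)}R$ and hence $\UU\subseteq\calL(M,M)$ --- agrees with the paper. But your reverse inclusion rests on two unproved and, in fact, unobtainable claims. First, you assert a $\g$-equivariant isomorphism $\calD(M,M)\cong\calD(R,R)$ by ``extending to the total ring of fractions''; the hypothesis $\End_R(M)=R$ controls only the order-zero piece $\calD_0(M)$, and $R$ need not be a domain here (the case $R_r=A/(\Ft^r)$ with $r\ge 2$ is the whole point), so the extension argument is not automatic and is at least as hard as the statement you are trying to prove. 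Second, and more seriously, you invoke ``Joseph's theorem'' to identify $\calL(R,R)$ with $\UU$. No such general theorem exists: the question of whether $\calL(L,L)=U(\g)/\ann L$ for a simple highest weight module $L$ is precisely the Kostant problem (cf.\ Remark~\ref{kostant}), which has a \emph{negative} answer in general. In this paper, the equality $\calL(R,R)=\UU$ is exactly the content of Theorem~\ref{thm1.4}, which is \emph{deduced from} this very proposition; invoking it here would be circular.

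The paper avoids all of this with a short, self-contained argument that you should compare with: since $\calM=\calL(M,M)$ is a locally finite $\g$-module under $\ad$, it is semisimple, so write $\calM=\UU\oplus P$ as $\ad(\g)$-modules; if $P\neq 0$, pick $0\neq p\in P$ of minimal order $k$ in the $\calA$-filtration; then for $x\in\fr$, $[x,p]$ lies both in $\calA_{k-1}$ and in $P$, so minimality forces $[x,p]=0$, hence $p\in\calA_0(M,M)=\calD_0(M)=\End_R(M)=R\subseteq\UU$, a contradiction. This uses only parts (i), (ii), the hypothesis $\End_R(M)=R$, and the inclusion $R\subseteq\UU$ from Hypothesis~\ref{ring-hyp}(2).
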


\begin{proof} (i) By induction, assume that $\calA_{k-1}(M,N)$ is an
  $R$-bimodule.  Let $\theta\in \calA_k(M,N)$, $y\in R$ and $x\in\fr$.
  Then, by induction, and the fact that $R$ is commutative,
\[
[x, \theta y] = x\theta y- \theta xy  =(x\theta -\theta x)y \in
\calA_{k-1}(M,N)R=\calA_{k-1}(M,N).
\]
Thus, $\theta y\in \calA_k(M,N)$ and, by symmetry, the result follows.

(ii) As $\fn^-$ acts ad-nilpotently on any finite dimensional $\g$-module
and hence on $\calL(M,N)$, Hypotheses~\ref{ring-hyp}(2) implies that
$\calA(M,N)\supseteq \calL(M,N)$. Since the inclusion
$\calA_k(M,N)\supseteq \calD_k(M,N)$ is obvious, we are reduced to proving
that $\calA_k(M,N)\subseteq \calD_k(M,N)$ for each $k$.  By induction and
for some $m$, assume that the result holds for all $k<m$ and fix
$\theta\in \calA_m(M,N)$.

Filter $R=\bigcup \Lambda_pR$, by taking $\Lambda_0=\C$ and
$\Lambda_pR=\chi(\fr)^p+\Lambda_{p-1}R$ for $p>0$. For some $q$, assume
that $[\theta, z]\in \calD_{m-1}(M,N)$ for all $z\in \Lambda_sR$ and $s<q$.
(The case $s=1$ holds automatically by the definition of $\calA(M,N)$, so
the induction does start.)  Let $y\in \Lambda_qR$ and suppose that
$y=x_1x_2$ for $x_1\in \chi(\fr)$ and $x_2\in \Lambda_{q-1}R$. Then, by
part~(i) and the induction on $q$,
\[
  \begin{array}{rl}
    \theta y-y\theta \ = \ \theta x_1x_2-x_1x_2\theta & =\ 
    \theta x_1x_2 -x_1\theta x_2+x_1\theta x_2 -x_1x_2\theta \\
    \noalign{\vspace{6pt}} 
  &=\ (\theta x_1-x_1\theta) x_2 +x_1(\theta x_2-x_2\theta) \\
    \noalign{\vspace{6pt}} 
   &\in\ \calA_{m-1}(M,N)x_2+ x_1 \calA_{m-1}(M,N)\ \subseteq\
   \calA_{m-1}(M,N).  
  \end{array}
\]
Hence $[\theta,y]\in \calA_{m-1}(M,N)$. By linearity and our inductive
hypotheses, $[\theta,z]\in \calA_{m-1}(M,N)=\calD_{m-1}(M,N)$ for all
$z\in R$. Hence $\calA_m(M,N)\subseteq \calD_m(M,N)$.
    
(iii) Since $R$ is a quasi-simple $U(\g)$-module,
\cite[Lemma~8.14]{jantzen} implies that $\ann_{U(\g)}(M)= \ann_{U(\g)}(R)$
and so $\UU\hookrightarrow \calL(M,M)$ by \cite[\S6.8(5)]{jantzen}.  By
definition, $\calM=\calL(M,M)$ is a locally finite $\g$-module under the
adjoint action and hence is semi-simple. Thus, we may write
$\calM=\UU\oplus P$ for some complementary $(\ad\g)$-module $P$.  Assume
that $P\not=0$.

Pick $p\in P\smallsetminus \{0\}$ with $p\in \calA_k(M,M)$ for $k$ as small
as possible. Then, for $x\in \fr$, we have both $[x,p]\in \calA_{k-1}(M,M)$
by the definition of the order filtration  \eqref{lem1.15}, and $[x,p]\in P$ as $P$ is an
$(\ad\fr)$-module. Thus by the choice of $k$ we have $[x,p]=0$ for all
$x\in\fr$.  Therefore, by hypothesis and Part~(ii),
\[
p\in \calA_0(M,M) =  \calD_0(M)=\End_R(M)=R.
\]
     However, by Hypothesis~\ref{ring-hyp}(2), $R\subseteq \UU$ as subrings
     of $\calD(R)$. Hence $p\in \UU$, giving the required contradiction.
   \end{proof}

\begin{defn}\label{order-defn}
  If $A,B$ are prime Goldie rings with the same simple artinian ring of
  fractions $Q$, then $A$ and $B$ are \emph{equivalent orders} if
  $a_1Ba_2\subseteq A$ and $b_1Ab_2\subseteq B$ for some regular elements
  $a_i\in A$ and $b_j\in B$. The ring $A$ is called a \emph{maximal order}
  if it is not equivalent to any $B\supsetneq A$.  
\end{defn}

The property of being a maximal order is the appropriate noncommutative analogue of 
being an integrally closed commutative ring and has a number of useful consequences; 
see for example \cite{js84} or \cite{MaR}.

\begin{thm}\label{thm1.4}
  Let $R$ and $\g$ satisfy Hypotheses~\ref{ring-hyp}.  Then
  \[
    \calL(R,R)=\calL(N,N)=\UU.
  \]
  In particular, $\calL(R,R)$ is a maximal order.
\end{thm}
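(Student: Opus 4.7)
The plan is to derive both equalities as immediate consequences of Proposition~\ref{lem1.2}(iii), and then address the maximal order assertion separately.

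For $\calL(R,R) = \UU$, I apply Proposition~\ref{lem1.2}(iii) with $M = R$. The hypotheses hold trivially: $R \subseteq R$, $\End_R(R) = R$ since $R$ is commutative, and the Gelfand--Kirillov dimension condition $\GKdim(R/R) < \GKdim R$ is vacuous (as $R/R = 0$). One can also check directly that the argument used to prove Proposition~\ref{lem1.2}(iii) — semisimplicity of $\calL(M,M)$ under the adjoint action, minimal-order selection of an element in a hypothetical complement, and landing in $\calA_0(M,M) = \End_R(M)$ — goes through verbatim with $M = R$. For $\calL(N,N) = \UU$, I apply (iii) with $M = N = \Soc_{U(\g)}(R)$: Hypothesis~\ref{ring-hyp}(1) on quasi-simplicity provides $\GKdim(R/N) < \GKdim R$, and Hypothesis~\ref{ring-hyp}(3) supplies $\End_R(N) = R$.

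The maximal order assertion does not follow formally from the equalities and is where the real work lies. My strategy: let $B$ be any ring with $\UU \subseteq B \subseteq \Fract(\UU)$ equivalent to $\UU$ in the sense of Definition~\ref{order-defn}, and pick regular elements $a, c \in \UU$ with $aBc \subseteq \UU$. Given $b \in B$, the element $abc \in \UU$ is $\ad\g$-finite since $\UU = U(\g)/I$ is $\ad\g$-locally finite (the adjoint action of $\g$ on $U(\g)$ descends and is locally finite on every quotient). Because $a$ and $c$ themselves generate finite dimensional $(\ad\g)$-submodules of $\UU$, and the adjoint $\g$-action extends compatibly to the ring of fractions $\Fract(\UU)$ (via inner derivations), a standard localization argument then shows that each $b \in B$ is itself $\ad\g$-finite.

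The final step is to promote this $\ad\g$-finiteness of elements of $B \subseteq \Fract(\UU)$ to membership in $\calL(R,R) = \UU$, which will force $B = \UU$. Here I would exploit that $R$ is a cyclic $\UU$-module (it is generated by $1$, as $\chi(\fr) \subseteq \UU$ and $R$ is generated as an algebra by $\chi(\fr)$ by Hypothesis~\ref{ring-hyp}(2)), so that the $\UU$-action on $R$ can be extended to a rational action of $B$ via a reflexive closure/density argument; each $b \in B$ then acts as a $\C$-linear map on $R$, and being $\ad\g$-finite, lies in $\calL(R,R) = \UU$. The main obstacle I expect is precisely this last transfer from $\ad\g$-finiteness in $\Fract(\UU)$ to an honest $\C$-linear endomorphism of $R$; the argument has to be careful about why multiplying $b$ by a denominator does not lose the $\g$-finiteness information when recovering the action on $R$.
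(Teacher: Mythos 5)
Your derivation of the two equalities is exactly the paper's: both $M=R$ and $M=N$ satisfy the hypotheses of Proposition~\ref{lem1.2}(iii) (trivially for $M=R$; via Hypotheses~\ref{ring-hyp}(1) and (3) for $M=N$), and the displayed equalities follow. That part is correct.

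The gap is in the maximal order assertion. The paper does not reprove this from scratch: since $N=\Soc_{U(\g)}(R)$ is \emph{simple} by quasi-simplicity, the equality $\calL(R,R)=\calL(N,N)$ reduces the claim to the Joseph--Stafford theorem \cite[Theorem~2.9 and Corollary~2.10]{js84}, which asserts that $\calL(N,N)$ is a maximal order when $N$ is a simple module (this is also the engine behind Lemma~\ref{main1}). Your substitute argument fails at the step where you claim a ``standard localization argument'' shows each $b\in B$ is $\ad\g$-finite because $abc$, $a$ and $c$ are. The $\ad\g$-finite elements of $\Fract(\UU)$ form a subring, but that subring is not closed under the divisions you need: $a^{-1}$ need not be $\ad$-finite even when $a$ is, so nothing lets you conclude that $b=a^{-1}(abc)c^{-1}$ is $\ad$-finite. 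You also concede that, even granting $\ad$-finiteness, you have no mechanism for making $b$ act on $R$; that second step is likewise unfilled. The working route is module-theoretic rather than element-wise: from $aBc\subseteq\UU$ one forms the overring $T'=\UU aB+\UU$, notes $T'c\subseteq \UU$ so that $T'\subseteq \UU c^{-1}$ is a finitely generated left $\UU$-module ($\UU$ being noetherian), and then extends the $\UU$-action on the simple, GK-homogeneous module $N$ to a $T'$-action, embedding $T'$ into $\calL(N,N)=\UU$. As written, your proof of the ``in particular'' clause is incomplete.
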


\begin{rem}\label{kostant}  In the sense of, say, \cite{jo80} this solves
  the Kostant problem for the ring $\UU$. 
\end{rem}
 
\begin{proof} Both $M=R$ and $M=N$ satisfy the hypotheses of
  Proposition~\ref{lem1.2} and so the displayed equation follows. The final
  assertion then follows from \cite[Theorem~2.9]{js84}.
\end{proof}

Here is a simple  example that illustrates the restrictions of
Hypothesis~\ref{ring-hyp}(3), even if one assumes that $R/N(R)$ has
pleasant properties.

\begin{example} 
  Take $A =\C[x,y,xz,yz, z^2,z^3] \subset \C[x,y,z]=C$; thus $C=A+\C z$.
  Set $I =A\cap z^2C$ and $R=A/I$.  Clearly the nil radical
  $N(R)$ equals $(zC\cap A)/I = ((xz,yz)+I)/I$ and so $R/N(R)\cong \C[x,y]$.
  However, $N(R)/I = (xz,yz) \cong (x,y)$ as modules over
  $k[x,y]\cong R/N(R)$.

  Now let $M=(x,y,zx,zy,z^2,z^3)/I$ be the augmentation ideal of $R$.  The
  maximal ring of fractions $\mathrm{Quot}(R)$ clearly contains
  $z=(zx)x^{-1}$.  However $z\not\in R$ yet, by the computations of the
  last paragraph, $zM \subseteq R$. Indeed $zM \subseteq M$ and so
  $\End_R(M)\supsetneq R$.
\end{example}


\section{Modules of  $\sontwo$-finite vectors}
\label{kfinite}

As we noted in Theorem~\ref{thm22},
$\mathcal{D}(R_1)=U(\mathfrak{so}(n+2))/J$ for the Joseph ideal $J$.  We
would like to pass from $\mathcal{D}(R_1)$ to $\mathcal{D}(R_r)$ for any
$r$ to obtain the analogous result for $\mathcal{D}(R_r)$.  There are two
potential ways of creating such a passage; through differential operators
$\mathcal{D}(R_r,R_1)$ and through the $\mathfrak{g}$-finite vectors
$\mathcal{L}(R_r,R_1)$.  The first step, however, is to prove that the
latter is non-zero. This we accomplish in this section and then we use it
in the next section to prove  the major part of Theorem~\ref{thmA} from the
introduction. 
  
There are a number of different rings involved in this discussion and so we
need to refine the earlier notation.  
As in Section~\ref{sec1}, set
$A=\C[X_1,\dots, X_n] \ni \Ft=\sum_{i=1}^n X^2_i$ with $R_r=A/(\Ft^r)$ for
any $r\geq 1$. Recall the elements $\Delta_1, \Et_1,D_{k\ell} \in \calD(A)$
from \eqref{eq13} as well as the elements
\[ P_{kj}=P_j(\textstyle{\frac{n}{2}}-k) =
  X_j\Delta_1-\bigl(\Et_1+(\textstyle{\frac{n}{2}}-k)\bigr)\partial_j\in
  \calD(A) \quad\text{for}\ 1\leq j\leq n \ \text{and} \ k\geq 1,
\]
defined by \eqref{eq15}. By Corollary~\ref{cor19}, the images of
$\{P_{rj}, D_{ij}, (\Et_1+\novertwo-r), X_i : 1\leq i,j\leq n\}$ span a
copy $\g_r$ of $\g=\fso(n+2,\C)$ inside $\calD(R_r)$.  Moreover, for
$r,s\geq 1$, Proposition~\ref{prop16} proves that the map
$P_{rj}\mapsto P_{s j}$ and $\Et_1+\novertwo-r\mapsto \Et_1+\novertwo-s$,
with $D_{ij}\mapsto D_{ij}$ and $X_i\mapsto X_i$ induces a Lie algebra
isomorphism $\g_r\to \g_s$. We may therefore write
$\{\widetilde{P}_j, D_{ij}, \widetilde{\Et}_1, X_i : 1\leq i,j\leq n\}$ for
the corresponding generators of the abstract copy $\g$ of
$\mathfrak{so}(n+2,\C)$ and rephrase Corollary~\ref{cor19} as saying that
there is a Lie algebra isomorphism $\psi_r : \g\to \g_r$ given by
$\widetilde{P}_j\mapsto P_{rj}$ and
$\widetilde{\Et}_1\mapsto \Et_1+\novertwo-r$, etc.

\begin{lem}\label{kfinite1}  Fix $r\geq 2$ and recall the notation
  $\calD(M,N)$ from \eqref{lem0.15}. 
  Then, regarded as differential operators acting on $A$, the elements
  $\Delta_1$, $\partial_j$ and $1$ induce differential operators in
  $\calD(R_r,\, R_{r-1})$.
\end{lem}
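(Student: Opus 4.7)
The plan is to reduce the statement to the following standard descent principle: if $\theta \in \calD_k(A)$ satisfies $\theta(\Ft^r A) \subseteq \Ft^{r-1} A$, then $\theta$ induces a well-defined $\C$-linear map $\bar\theta : R_r \to R_{r-1}$, and that map lies in $\calD_k(R_r, R_{r-1})$ (where $R_r$ and $R_{r-1}$ are viewed as $A$-modules). I would prove this by induction on $k$: the case $k=0$ is immediate since $\theta \in \calD_0(A) = A$ acts as multiplication; for the inductive step, one notes that for $x \in A$ the operator $[x,\theta] \in \calD_{k-1}(A)$ also maps $\Ft^r A$ into $\Ft^{r-1} A$, and checks directly from the definition of $\bar\theta$ that $\overline{[x,\theta]} = [x,\bar\theta]$, so that the induction applies.

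With this principle in hand, the lemma is reduced to verifying the three containments $\Delta_1(\Ft^r A) \subseteq \Ft^{r-1} A$, $\partial_j(\Ft^r A) \subseteq \Ft^{r-1} A$ and $1 \cdot \Ft^r A \subseteq \Ft^{r-1} A$. The last is trivial since $\Ft^r = \Ft \cdot \Ft^{r-1}$. For $\partial_j$, a one-line Leibniz computation gives
\[
\partial_j(f \Ft^r) \;=\; \partial_j(f)\,\Ft^r \,+\, 2r\, X_j f\, \Ft^{r-1} \ \in\ \Ft^{r-1} A
\]
for any $f \in A$.

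For $\Delta_1$, the key input is already available: formula~\eqref{prop17-1} from the proof of Lemma~\ref{prop17}, specialised to $k = r$, yields
\[
\Delta_1(u \Ft^r) \;=\; \Delta_1(u)\, \Ft^r \,+\, 2r\, \Et_1(u)\, \Ft^{r-1} \,+\, r\bigl(n + 2(r-1)\bigr)\, u\, \Ft^{r-1}
\]
for any $u \in A$. Each of the three summands visibly lies in $\Ft^{r-1}A$, so $\Delta_1(\Ft^r A) \subseteq \Ft^{r-1} A$. Combining these three containments with the descent principle produces induced elements of $\calD_2(R_r, R_{r-1})$, $\calD_1(R_r, R_{r-1})$ and $\calD_0(R_r, R_{r-1})$ respectively.

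The proof is thus essentially a bookkeeping exercise. There is no real obstacle: the only point requiring care is the descent principle itself, but this is a standard feature of the order filtration on differential operators and the verification of the inductive step is automatic from the identity $[x,\bar\theta](\bar f) = \overline{[x,\theta](f)}$, which holds because both ideals $\Ft^r A$ and $\Ft^{r-1} A$ are stable under multiplication by $A$.
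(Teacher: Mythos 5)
Your proof is correct and follows essentially the same route as the paper: the same descent principle, proved by the same induction on the order via the identity $\overline{[x,\theta]}=[x,\bar\theta]$, followed by the same three containments (with $\Delta_1$ handled by \eqref{prop17-1}). Nothing to add.
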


\begin{proof}
  Suppose first that $\phi\in \calD(A)$ satisfies
  $\phi(\Ft^rA)\subseteq \Ft^{r-1}A$ and so induces a function
  $\widetilde{\phi}:R_r\to R_{r-1}$. Then we claim that
  $\widetilde{\phi}\in \calD(R_r,\,R_{r-1})$.  To see this, assume that
  $\phi\in \calD_t(A)$ and that the analogous statement is true for all
  $\phi'\in \calD_{t-1}(A)$. Then, for $a\in A$, the function
  $\phi'=[\phi,a] $ also satisfies $\phi'(\Ft^rA)\subseteq \Ft^{r-1}A$, but
  now $\phi'\in \calD_{t-1}(A)$. Hence
  $\phi'\in \calD_{t-1}(R_r,\,R_{r-1})$.  Thus,
  $\phi\in \calD_{t}(R_r,\,R_{r-1})$, proving the claim.

 In order to prove the lemma it therefore suffices to show that each of our
  elements $\phi$ satisfies $\phi(\Ft^rA)\subseteq \Ft^{r-1}A$.  This is
  obvious for $1$ and $\partial_i$, while for $\Delta_1$ it follows
  from~\eqref{prop17-1}.
\end{proof}
 
\begin{Notation}\label{not-kfinite} For a fixed value of $r$, the images of
$\Delta_1$, $\partial_j$ and $1$ in $\calD(R_r,\, R_{r-1})$ will be
written, respectively, $\overline{\Delta}_1$, $\overline{\partial}_j$ and
$\overline{1}$.  For concreteness, we note that the \emph{adjoint action}
$\widetilde{\psi}$ of $\g=\fso(n+2,\C)$ on $\calD=\calD(R_r,\,R_{r-1})$ is
defined by
$\widetilde{\psi}(x)(\theta) = \psi_{r-1}(x)\circ\theta -
\theta\circ\psi_r(x) $
for $x\in\g$ and $\theta\in \calD$.  
\end{Notation}

\begin{prop}\label{kfinite-2} Fix $r\geq 2$ and 
  let
  $V\ = \ \C\overline{1}+\sum_i\C \overline{\partial}_i +\C
  \overline{\Delta}_1 \subset \calD(R_r,\,R_{r-1})$.
  Then, under the adjoint action $\widetilde{\psi}$, $V$ is a
  $\g$-submodule of $\calD(R_r,\,R_{r-1})$.  In particular,
  $V\subseteq \mathcal{L}(R_r,\, R_{r-1})$.
\end{prop}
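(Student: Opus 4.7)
The plan is to verify directly that $V$ is closed under $\widetilde{\psi}(x)$ for each generator $x$ of $\g$ listed in Corollary~\ref{cor19}, applied to each basis element $\overline{1}, \overline{\partial}_i, \overline{\Delta}_1$ of $V$. Since $V$ is finite-dimensional, once this is checked the inclusion $V \subseteq \mathcal{L}(R_r,R_{r-1})$ is immediate from the definition of $\g$-finite vectors, so no further argument is needed for that conclusion.

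The key simplification I would use is to compare $\psi_r$ and $\psi_{r-1}$ on the four families of generators: from the explicit formulas one has $\psi_{r-1}(X_i) = \psi_r(X_i)$ and $\psi_{r-1}(D_{kl}) = \psi_r(D_{kl})$, whereas $\psi_{r-1}(\widetilde{\Et}_1) - \psi_r(\widetilde{\Et}_1) = 1$ and $\psi_{r-1}(\widetilde{P}_j) - \psi_r(\widetilde{P}_j) = -\partial_j$ inside $\cD(A)$. Lifting $\theta\in V$ to the obvious preimage $\widetilde{\theta}$ in $\C\cdot 1 \oplus \sum_i\C\partial_i\oplus \C\Delta_1\subset \cD(A)$, the four types of adjoint action then agree, after projection to $\cD(R_r,R_{r-1})$, with the pure Weyl-algebra expressions
\[
  [X_i,\widetilde{\theta}], \qquad [D_{kl},\widetilde{\theta}], \qquad [\Et_1,\widetilde{\theta}] + \widetilde{\theta}, \qquad [P_{r-1,j},\widetilde{\theta}] - \widetilde{\theta}\,\partial_j.
\]
In particular the entire verification takes place in $\cD(A)$ and never needs to see the quotient $R_r$.

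From here everything I need is either in Lemma~\ref{lem14} and Proposition~\ref{prop16} or is a one-line Weyl-algebra identity such as $[\Et_1,\Delta_1] = -2\Delta_1$, $[\Et_1,\partial_k] = -\partial_k$, $[X_i,\Delta_1] = -\partial_i$ and $[D_{kl},\Delta_1] = 0$. The only case I expect to offer resistance, and which I would test first, is $\widetilde{\psi}(\widetilde{P}_j)(\overline{\partial}_i)$: naively $[P_{r-1,j},\partial_i]$ equals $\partial_j\partial_i - \delta_{ji}\Delta_1$ by Proposition~\ref{prop16}(a), and the second-order term $\partial_j\partial_i$ does not a priori lie in $V$. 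The hard-but-decisive point will be that the correction $-\partial_i\partial_j$ coming from the shift $\psi_{r-1}(\widetilde{P}_j) - \psi_r(\widetilde{P}_j) = -\partial_j$ is exactly what kills this obstruction, leaving $\widetilde{\psi}(\widetilde{P}_j)(\overline{\partial}_i) = -\delta_{ji}\,\overline{\Delta}_1 \in V$. Once that cancellation is understood the remaining eleven checks are mechanical, and I do not anticipate any further difficulty.
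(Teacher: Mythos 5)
Your proof is correct and takes essentially the same approach as the paper, which directly evaluates $\widetilde{\psi}(x)(\theta)$ for each of the four families of generators $x$ of $\g$ against each of $\overline{1}, \overline{\partial}_i, \overline{\Delta}_1$. Your reorganization around the shift identities $\psi_{r-1}(\widetilde{\Et}_1)-\psi_r(\widetilde{\Et}_1)=1$ and $\psi_{r-1}(\widetilde{P}_j)-\psi_r(\widetilde{P}_j)=-\partial_j$ cleanly isolates the cancellation in $\widetilde{\psi}(\widetilde{P}_j)(\overline{\partial}_i)$ that the paper's proof exhibits term by term, and you correctly anticipate that this is the only case where the raw commutator leaves the span.
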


\begin{proof} By the definition of $\mathcal{L}(R_r,\, R_{r-1})$ it
  suffices to prove that $V$ is a $\g$-module. This requires  some
  explicit computations, for which the following three formul\ae\ in $\g$
  will prove useful and will be used frequently without comment: for all
  $1\leq i,j\leq n$ we have
  $[\Delta_1,\,\partial_j] = [\Delta_1,\, D_{ij}] = 0$, while
  \[ [\Delta_1,\, X_j] \ = \ [\textstyle{\frac{1}{2}} \partial_j^2,\, X_j]
  \ = \ \partial_j \quad\text{and}\quad [\Et_1,\, \Delta_1] \ = \
  \textstyle{\frac{1}{2}} \sum_j[\Et_1,\, \partial_j^2]\ = \ -2\Delta_1.
  \]
  First, computing in $\calD(R_r,\,R_{r-1})$ we have
  \[
   \begin{aligned}
    \widetilde{\psi}(\widetilde{P}_j)(\overline{\Delta}_1)  & =  
 P_j(\novertwo-r+1)\overline{\Delta}_1 -  \overline{\Delta}_1P_j(\novertwo-r) 
 \\   \noalign{\vspace{2pt}} 
 & =  \Bigl( 
  X_j\Delta_1\overline{\Delta}_1 - \overline{\Delta}_1X_j\Delta_1 \Bigr)  +
   \Bigl(  - \Et_1\partial_j
 \overline{\Delta}_1   +\overline{\Delta}_1 \Et_1 \partial_j \Bigr) 
 \\  & \phantom{ \Bigl( =
  X_j\Delta_1\overline{\Delta}_1 - \overline{\Delta}_1X_j\Delta_1 \Bigr)  +\quad}
+  \Bigl(
-  (\novertwo
  -r+1)\partial_j\overline{\Delta}_1 
    +(\novertwo-r)\overline{\Delta}_1\partial_j   \Bigr)    \\  
   \noalign{\vspace{1pt}} 
 & =    -\partial_j \overline{\Delta}_1 +2 \partial_j \overline{\Delta}_1
  - \partial_j \overline{\Delta}_1 \ = \ 0. \end{aligned}
 \]
 Next,
 \[
 \widetilde{\psi}(\widetilde{\Et}_1)(\overline{\Delta}_1) \ = \
  (\Et_1+\novertwo
 -r+1)\overline{\Delta}_1 -   \overline{\Delta}_1 (\Et_1+\novertwo -r) 
 \ = \ -\overline{\Delta}_1 +2 \overline{\Delta}_1
 \ = \ -\overline{\Delta}_1.
\]
Similarly,
$\widetilde{\psi}(D_{ij}) (\overline{\Delta}_1) \ =\ [D_{ij} ,
\overline{\Delta}_1]=0$
and
$\widetilde{\psi}(X_j) (\overline{\Delta}_1) \ = \ [D_{ij} ,
\overline{\Delta}_1]=0$.  
 Therefore, $\widetilde{\psi}(\g)
(\overline{\Delta}_1)\subseteq V$. 
\\
Next, we compute that
\[ \begin{aligned}
  \widetilde{\psi}(\widetilde{P}_i)(\overline{\partial}_i) &=
  P_i(\novertwo-r+1)\overline{\partial}_i  -   \overline{\partial}_iP_i(\novertwo-r) 
     \\
     \noalign{\vspace{5pt}} &= 
     \Bigl(  X_i\Delta_1\overline{\partial}_i -
                                 \overline{\partial}_iX_i\Delta_1 \Bigr) 
      +\Bigl(  -\Et_1\partial_i  \overline{\partial}_i
                                 +\overline{\partial}_i   \Et_1 \partial_i
                                 \Bigr)  
      +\Bigl( -(\novertwo  -r+1)\partial_i\overline{\partial}_i
                                 +(\novertwo-r)\overline{\partial}_i\partial_i
                                 \Bigr) 
     \\
     \noalign{\vspace{5pt}} &= -\overline{\Delta}_1
         - \partial_i\overline{\partial}_i
                                +\overline{\partial}_i\partial_i \ = \ 
                               - \overline{\Delta}_1.\end{aligned}
   \]
   If $i\not=j$ we obtain
\[ \begin{aligned}
 \widetilde{\psi}(\widetilde{P}_j)(\overline{\partial}_i)
&= \Bigl(   X_j\Delta_1\overline{\partial}_i -
    \overline{\partial}_iX_j\Delta_1 \Bigr)  
+\Bigl(   -   \Et_1\partial_j \overline{\partial}_i  +
    \overline{\partial}_i \Et_1 \partial_j \Bigr)  
 +\Bigl( -    (\novertwo -r+1)\partial_j\overline{\partial}_i  +
    (\novertwo-r)\overline{\partial}_i\partial_j \Bigr) 
 \\
 \noalign{\vspace{5pt}}
&=   -  \partial_j\overline{\partial}_i + \overline{\partial}_i\partial_j  
    \ = 0.\end{aligned}  
\]
On the other hand,
$ \widetilde{\psi}(\widetilde{\Et}_1)(\overline{\partial}_i) \ = \
(\Et_1+\novertwo -r+1)\overline{\partial}_i - \overline{\partial}_i
(\Et_1+\novertwo -r) \ = \ \overline{\partial}_i - \overline{\partial}_i \
= 0 $ while
 \[
 \widetilde{\psi}(D_{ab})\overline{\partial}_i = 
   (X_a\partial_b - X_b\partial_a)\overline{\partial}_i -
   \overline{\partial}_i(X_a\partial_b - X_b\partial_a)   
   \ = \ \sum_u \alpha_u
 \overline{\partial}_u,
  \]
  for any $1\leq a,b,i\leq n$, and the appropriate scalars
  $\alpha_u\in \C$.  Since
  $\widetilde{\psi}(X_k)\overline{\partial}_i = [ X_k ,
  \overline{\partial}_i ] \ = -\delta_{k,i}\overline{1}$,
  it follows that
  $\widetilde{\psi}(\g) (\overline{\partial}_i)\subseteq V$.  The easy fact
  that $\widetilde{\psi}(\g) (\overline{1})\subseteq V$ is left to the
  reader and completes the proof.
   \end{proof}

     \begin{cor}\label{kfinite-3}
      Fix $r> s\geq 1$.  Then  the following hold.
      \begin{enumerate} 
      \item[{\indent {\rm (1)}}] Both the projection map $\pi_{rs}: R_r\to R_s$ and
        the operator $\Delta_1^{r-s}$ belong to
        $\mathcal{L}(R_r,\,R_s)$.
      \item[{\indent {\rm (2)}}] Set $\socRs=\Soc_{U(\g)}R_s$.  Then
        $\calL(\socRr,\socRs)\not=0\not=\calL(\socRs,\socRr)$.
      \end{enumerate} 
\end{cor}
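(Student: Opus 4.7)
The plan is to deduce Part~(1) from Proposition~\ref{kfinite-2} by iterating compositions, and then to obtain Part~(2) by restricting the $\g$-finite maps from Part~(1) to the socles and analyzing the images.

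For Part~(1), the key observation is that the composition of $\g$-finite maps is $\g$-finite: if $\phi\in\calL(M,N)$ and $\psi\in\calL(N,P)$ lie in finite-dimensional $\ad\g$-stable subspaces $V\subseteq\calL(M,N)$ and $W\subseteq\calL(N,P)$, then $WV\subseteq\Hom_\C(M,P)$ is a finite-dimensional $\ad\g$-stable subspace of $\calL(M,P)$, by the Leibniz identity $\ad x(\psi\phi)=(\ad x\cdot\psi)\phi+\psi(\ad x\cdot\phi)$. Proposition~\ref{kfinite-2} supplies, for each $k$ with $s+1\leq k\leq r$, a finite-dimensional $\ad\g$-module $V_k\subseteq\calL(R_k,R_{k-1})$ containing both $\pi_{k,k-1}=\overline 1$ and $\overline{\Delta}_1$. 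Writing $\pi_{rs}$ and $\Delta_1^{r-s}$ as $(r-s)$-fold compositions of these operators then completes Part~(1).

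For Part~(2), the non-vanishing $\calL(\socRr,\socRs)\neq 0$ is obtained as follows. Restriction to $\socRr$ gives a $\g$-equivariant map $\calL(R_r,R_s)\to\calL(\socRr,R_s)$; applied to $\pi_{rs}$ it yields $\phi:=\pi_{rs}|_{\socRr}$. This is non-zero: either $\socRr=R_r$ (and then $\phi=\pi_{rs}$), or by Theorem~\ref{thm26}(2), $\socRr$ contains $H(r-\halfn+1)$, and the direct sum decomposition of Theorem~\ref{thm24} gives $H(m)\cap(\Ft^s)=0$ for all $m,s\geq 1$, so $H(r-\halfn+1)\not\subseteq\ker\pi_{rs}$. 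To upgrade $\phi$ to a member of $\calL(\socRr,\socRs)$, let $\mathsf{V}\subseteq\calL(\socRr,R_s)$ be the finite-dimensional $\ad\g$-submodule generated by $\phi$; the evaluation map $\mathsf{V}\otimes\socRr\to R_s$, $v\otimes m\mapsto v(m)$, is $\g$-equivariant, so its image is a non-zero $\g$-submodule of $R_s$. Since $R_s$ has composition length at most two, the image is either $\socRs$ (in which case every $v\in\mathsf{V}$ sends $\socRr$ into $\socRs$ and $\phi\in\calL(\socRr,\socRs)$) or all of $R_s$; in the second case I will argue, via central-character considerations, that the $\ad\g$-submodule $\mathsf{V}':=\{v\in\mathsf{V}:v(\socRr)\subseteq\socRs\}$ is non-zero. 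The symmetric statement $\calL(\socRs,\socRr)\neq 0$ is then obtained by starting from the multiplication map $\iota_{sr}\colon R_s\to R_r$, $\bar f\mapsto\overline{\Ft^{r-s}f}$; its $\g$-finiteness is verified by a direct computation of the adjoint action of $\widetilde{\Et}_1$ and the $\widetilde{P}_j$ parallel to Proposition~\ref{kfinite-2}, and the restriction argument above then produces the required non-zero element.

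The main obstacle will be the case of Part~(2) in which both $R_r$ and $R_s$ have nontrivial socles (that is, $r>s\geq\halfn$ with $n$ even): one must verify that the evaluation $\mathsf{V}\otimes\socRr\to R_s$ does not surject onto all of $R_s$, or equivalently that $\mathsf{V}'$ above is non-zero. I expect this to be handled by a central-character comparison: the composition factors $L(\mu_s)$ and $L(\lambda_s)$ of $R_s$ share the central character $\chi_{\lambda_s}$, whereas the highest weights of simple subquotients of $\mathsf{V}\otimes L(\mu_r)$ are constrained by the $\ad\g$-weights of $\mathsf{V}$ shifted by $\mu_r$. A finer analysis of these weights, in the spirit of Corollary~\ref{cor213} and Proposition~\ref{prop212}, should pin down the image and complete the proof.
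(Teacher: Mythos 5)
Your Part~(1) is essentially identical to the paper's proof: both boil it down to the fact that a composition of $\g$-finite maps is $\g$-finite (the paper cites \cite[6.8(3)]{jantzen}; you spell out the Leibniz identity), and then iterate Proposition~\ref{kfinite-2}.

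For Part~(2) there is a genuine gap, and you have located it correctly but the proposed repair is unlikely to succeed. The difficulty is when $n$ is even with $r>s\geq \halfn$, so both $R_r$ and $R_s$ have length two. Your restriction $\phi=\pi_{rs}|_{\socRr}\in\calL(\socRr,R_s)$ is non-zero, as you say; but since any non-zero $\g$-submodule of $R_s$ contains $\socRs$, the image of the evaluation map $\mathsf{V}\otimes\socRr\to R_s$ is either $\socRs$ (good) or all of $R_s$, and in the second case you still need to locate an element of $\mathsf{V}$ (or of $\calL(\socRr,R_s)$) that lands in $\calL(\socRr,\socRs)$. The central-character strategy you sketch cannot do this: as you yourself note, $\socRs\cong L(\mu)$ and $R_s/\socRs\cong L(\lambda)$ are composition factors of the same highest weight module and hence carry the \emph{same} central character $\chi_\lambda$; constraining the subquotients of $\mathsf{V}\otimes L(\mu_r)$ by central characters therefore cannot separate the socle from the top, and it is quite possible for both $L(\mu_s)$ and $L(\lambda_s)$ to occur as subquotients of $\mathsf{V}\otimes\socRr$.

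The paper's device is more elementary and sidesteps this entirely. Set $\fa_s=\ann_{U(\g)}(R_s/\socRs)$; this ideal has \emph{finite codimension} in $U(\g)$, and $\fa_s\cdot R_s\subseteq\socRs$. Since $\pi'_{rs}(\socRr)$ is infinite dimensional (it has finite codimension in $R_s$), the finite-codimensional ideal $\fa_s$ cannot annihilate it, so $a\circ\pi'_{rs}\neq 0$ for some $a\in\fa_s$. Because $\calL(\socRr,R_s)$ is a left $U(\g)/J_s$-module under post-composition, and post-composing with $a$ forces the image into $\socRs$, this single multiplication immediately produces a non-zero element of $\calL(\socRr,\socRs)$. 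Note that this leaves the $\ad\g$-cyclic module $\mathsf{V}$ you constructed: the element $a\circ\pi'_{rs}$ is obtained by \emph{left} $U(\g)$-multiplication, not by the adjoint action, which is why your search inside $\mathsf{V}$ does not obviously terminate. Finally, for the reverse non-vanishing $\calL(\socRs,\socRr)\neq 0$ the paper simply invokes the standard duality \cite[\S 6.9(5)]{jantzen}, $\calL(M,N)\neq 0\iff\calL(N,M)\neq 0$ for simple modules, rather than redoing the argument with $\iota_{sr}=\Ft^{r-s}\cdot$; your plan to verify $\g$-finiteness of $\iota_{sr}$ ``by direct computation'' is not obviously routine (the copies $\g_s$ and $\g_r$ differ in the $P_j$-generators, so the relevant commutators are not zero), and in any case it would again run into the same gap in the evaluation-image case.
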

 
 \begin{proof}
   (1) If $\theta\in \mathcal{L}(A,B)$ and $\phi\in \mathcal{L}(B,C)$ for
   $\g$-modules $A,B,C$ then $\phi\theta\in \mathcal{L}(A,C)$
   (see\cite[6.8(3)]{jantzen}). Thus the result follows from
   Proposition~\ref{kfinite-2} and induction on $r-s$.
 
   (2) By Theorem~\ref{thm26} each $R_s/\socRs$ is finite dimensional (or
   zero), say with $\fa_s=\ann_{U(\g)}(R_s/\socRs)$. Since $\socRr$ is
   infinite dimensional, the projection map
   $\pi_{rs}\in \mathcal{L}(R_r,\,R_s)$ has an infinite dimensional image,
   as does its restriction $\pi'_{rs}: \socRr\to R_s$.  Hence 
   $\fa_s\pi'_{rs}\not=0$ and so, for some $a\in \fa_s$, one has
   $0\not= a\circ\pi'_{rs}\in \calL(\socRr,\socRs)$. Thus $\calL(\socRr,\socRs)\not=0$.
   By \cite[\S6.9(5)]{jantzen}, this also implies that
   $\mathcal{L}(\socRs,\, \socRr) \not=0$.
 \end{proof}


\section{Differential operators and primitive factor rings of   $U(\sontwo)$}
\label{maintheorem}

As usual, we write $R_r=A/(\Ft^r)$ for $A=\C[X_1,\dots,X_n]$ and some
$r\geq 1$.  The aim of this section is to combine the earlier results to
prove that $\calD(R_r)= \calL(R_r,R_r)= U(\g)/J_r$, where $\g=\sontwo$ and
$J_r=\ann_{U(\g)}(R_r)$.  In particular, this proves  the major part of
Theorem~\ref{thmA} from the introduction.

We begin with some elementary results, the first of which is a minor
generalisation of \cite[Corollary~2.10]{js84}.

 \begin{lem}\label{main1}  If $M$ is a simple $U(\g)$-module with
   $I=\ann_{U(\g)}(M)$, then $\calL(M,M)$ is the unique maximal
   object among orders containing and equivalent to $U=U(\g)/I$.
 \end{lem}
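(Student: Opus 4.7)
The plan is to verify that $\calL(M,M)$ is an order in the Goldie quotient ring $Q$ of $U$ which contains $U$ and is equivalent to $U$, and then to show that any such order $V$ is contained in $\calL(M,M)$; together these yield both the maximality of $\calL(M,M)$ and its uniqueness as a maximal element. The argument is a mild adaptation of \cite[Corollary~2.10]{js84}.

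First, since $M$ is simple, $I$ is primitive and $U=U(\g)/I$ is a prime Noetherian ring acting faithfully on $M$, so there is an embedding $U\hookrightarrow\End_\C(M)$. The adjoint action of $\g$ on $U(\g)$, and hence on $U$, is locally finite (as $\g$ is finite dimensional), so every element of $U$ is $\g$-finite and $U\subseteq\calL(M,M)$. The statements that $\calL(M,M)$ itself lies in $Q$, is equivalent to $U$ as an order, and is a maximal order in $Q$, are precisely \cite[Theorem~2.9]{js84} --- the same input used in Theorem~\ref{thm1.4}.

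Next, let $V\supseteq U$ be another order in $Q$ equivalent to $U$. By Definition~\ref{order-defn} there are regular $a,b\in U$ with $aVb\subseteq U$, so $V\subseteq a^{-1}Ub^{-1}\subseteq Q$. Since $U$ is Noetherian and $a^{-1}Ub^{-1}\cong U$ as $U$-bimodules, the sub-bimodule $V$ is finitely generated over $U$. Standard bimodule arguments (as in \cite[\S 2]{js84}) then show that such a finitely generated $U$-sub-bimodule of $Q$ is locally $(\ad\g)$-finite; via the faithful action of $U$ on $M$ this transfers to the conclusion $V\subseteq\calL(M,M)$.

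The principal obstacle is the ``equivalent order'' assertion of the first step: one must identify $\calL(M,M)$ as sitting inside $Q$ rather than merely inside $\End_\C(M)$, which rests on the careful analysis of $\g$-finite endomorphisms of simple $U(\g)$-modules carried out in \cite{js84}. Once this is in place, the remainder is essentially bookkeeping in the Goldie quotient combined with the general principle that finitely generated $U$-sub-bimodules of $Q$ are Harish-Chandra, hence $\g$-locally finite.
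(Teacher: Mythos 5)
Your overall strategy is the same as the paper's: quote \cite{js84} for the fact that $\calL(M,M)$ is a maximal order equivalent to $U$ that is finitely generated on both sides over $U$, and then show that any equivalent order $V\supseteq U$ is contained in $\calL(M,M)$ by establishing that $V$ is a finitely generated $U$-module and invoking \cite[Theorem~2.9]{js84}. The problem is in how you establish that finite generation, and there is a genuine gap there.

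From $aVb\subseteq U$ you pass to $V\subseteq a^{-1}Ub^{-1}$ and assert that $a^{-1}Ub^{-1}\cong U$ as $U$-bimodules, so that $V$ is a finitely generated sub-bimodule by noetherianity. But $a^{-1}Ub^{-1}$ is not a $U$-bimodule inside $Q$: for $v\in U$ one has $v\cdot a^{-1}ub^{-1}=a^{-1}(ava^{-1})ub^{-1}$ and $(a^{-1}ub^{-1})\cdot v=a^{-1}u(b^{-1}vb)b^{-1}$, and neither $ava^{-1}$ nor $b^{-1}vb$ need lie in $U$. So $a^{-1}Ub^{-1}$ is stable under neither left nor right multiplication by $U$, the claimed bimodule isomorphism with $U$ fails, and the finite generation of $V$ over $U$ does not follow. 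The same obstruction blocks the one-sided variants: $Vb\subseteq a^{-1}U$ exhibits the left $U$-module $Vb$ inside a set that is only a \emph{right} $U$-module, and $aV\subseteq Ub^{-1}$ exhibits the right $U$-module $aV$ inside a set that is only a \emph{left} $U$-module. This mismatch of sides is exactly the difficulty the paper's proof is designed to circumvent, and it is the step your final paragraph should have flagged (rather than the identification of $\calL(M,M)$ inside $Q$, which is indeed handled by \cite{js84}).

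The paper's fix is to introduce the intermediate ring $T'=UaT+U$ (with $T=V$ in your notation). One checks $T'$ is an overring of $U$ with $T'b\subseteq U$, so $T'\subseteq Ub^{-1}$; since $Ub^{-1}$ \emph{is} a cyclic left $U$-module and $T'$ is a left $U$-submodule of it, $T'$ is finitely generated on the left, and \cite[Theorem~2.9]{js84} gives $T'\subseteq\calL(M,M)$. Because $\calL(M,M)$ is noetherian and finitely generated as a right $U$-module, $T'$ is then also finitely generated on the right, and from $aT\subseteq T'$ one transfers right finite generation to $T$ and applies \cite[Theorem~2.9]{js84} a second time. You need some such device (or an equivalent two-step side-switching argument) to close the gap.
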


 \begin{proof} By \cite[Corollary~2.10]{js84}, $\calL(M,M)$ is a maximal order
   and it is noetherian since it is finitely generated as  both a left and
   a right $U$-module.  So it remains to prove uniqueness.  Suppose that
   $U\subseteq T$ for some other order $T$ equivalent to $U$. Thus,
   $aTb\subseteq U$ for some regular elements
   $a,b\in U$. Let $T'=U aT+U$; this is an overring of $U$
   with $T'b\subseteq U$.  So,  since $b$ is regular, $T'$ is certainly finitely generated as
   a left $U$-module. 
   Therefore, by \cite[Theorem~2.9]{js84}, $T'\subseteq \calL(M,M)$. In
   particular, $T'$ is also finitely generated as a right $U$-module.
   Since $aT\subseteq T'$ it follows that $T$ is also a finitely generated
   right $U$-module. Hence \cite[Theorem~2.9]{js84} implies that
   $T\subseteq \calL(M,M)$.
 \end{proof}
 
\begin{lem}\label{main11}   
  Let $a,b,c\in\mathbb{N}^*$ and set  $\socRa=\Soc_{U(\g)}(R_a)$.  
  \begin{enumerate}[{\rm (i)}]
  \item  $\calL(R_a,R_a)=\calL(\socRa,\socRa)= U(\g)/J_a$ is a primitive
    ring.
  \item $\calL(R_a,R_b)$ and $\calL(\socRa,\socRb)$ are non-zero and
    torsion-free   both as  left $U(\g)/J_b$-modules and as  right
    $U(\g)/J_a$-modules.
  \item  Under composition of operators, both
    $\calL(R_b,R_a) \calL(R_a,R_b)$ and $\calL(\socRb,\socRa)\calL(\socRa,\socRb)$ are
    non-zero ideals of $\calL(R_a)$.
  \end{enumerate}
\end{lem}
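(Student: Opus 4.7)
The lemma splits into three parts, which the plan is to handle in sequence by combining the framework of Section~\ref{Orings} with the non-vanishing results of Section~\ref{kfinite}. I expect torsion-freeness in~(ii) to be the main technical obstacle; everything else should be essentially routine once the setup is in place.

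For part~(i), my plan is to apply Theorem~\ref{thm1.4} to $R = R_a$ after verifying Hypotheses~\ref{ring-hyp}. Hypothesis~(1), the quasi-simplicity of $R_a$ as a highest weight $\g$-module, is exactly Theorem~\ref{thm26} combined with Corollary~\ref{cor211}. Hypothesis~(2) is immediate since the polynomial generators $X_1,\dots,X_n$ of $R_a$ lie in $\psi_a(\fr^-)$ with $\fr^- \subseteq \fn^-$. For hypothesis~(3) I would invoke Lemma~\ref{lem1.1} with $C = A$ (a polynomial ring, hence Cohen--Macaulay), $f = \Ft^a$, and $M = \socRa$; by Theorem~\ref{thm26} the quotient $R_a/\socRa$ is finite dimensional, so its Gelfand--Kirillov dimension is $0$, while $\GKdim R_a = n - 1 \ge 2$, which verifies the dimension hypothesis of Lemma~\ref{lem1.1} and yields $\End_{R_a}(\socRa) = R_a$. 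Theorem~\ref{thm1.4} then delivers the desired identifications $\calL(R_a,R_a) = \calL(\socRa,\socRa) = U(\g)/J_a$, and primitivity of this quotient follows directly from Corollary~\ref{cor211} since $J_a = I(\omega) = \ann_{U(\g)} L(\omega)$.

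For part~(ii), the non-vanishing of $\calL(\socRa,\socRb)$ is Corollary~\ref{kfinite-3}(2). To extract $\calL(R_a,R_b) \ne 0$, my plan is to lift a non-zero element of $\calL(\socRa,\socRb)$ to a $\C$-linear map on $R_a$ by extending it by zero on a linear complement of $\socRa$; since $R_a/\socRa$ is finite dimensional the resulting $(\ad\,\g)$-orbit remains finite. The harder issue is torsion-freeness, and here the plan is to reduce to the statement that $\socRb$ (respectively $R_a$) is torsion-free as a left module over $U(\g)/J_b$ (respectively as a right module over $U(\g)/J_a$). Each is a faithful module over a primitive Noetherian quotient, of Gelfand--Kirillov dimension $n-1$ equal to half that of the ambient ring, and for such ``minimal'' modules torsion-freeness is the standard fact I would invoke. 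Granted this, if $u \in U(\g)/J_b$ is regular and $u\theta = 0$ for $\theta \in \calL(R_a,R_b)$, then $\theta(R_a) \subseteq \ker(u\cdot) = 0$, forcing $\theta = 0$, and the right-hand statement is symmetric.

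For part~(iii), that the composition products are two-sided ideals of $\calL(R_a,R_a) = U(\g)/J_a$ is clear from the definitions, so only non-vanishing remains. My plan is to fix a non-zero $\theta \in \calL(\socRa,\socRb)$ from Corollary~\ref{kfinite-3} and argue by contradiction: suppose $\calL(\socRb,\socRa) \cdot \theta = 0$ and set $N = \bigcap_{\phi \in \calL(\socRb,\socRa)} \ker\phi \subseteq \socRb$. Using the identity $\phi(xy) = x\phi(y) - (\ad(x)\phi)(y)$ and the fact that $\calL(\socRb,\socRa)$ is stable under the adjoint action, $N$ is a $\g$-submodule of $\socRb$ containing $\theta(\socRa) \ne 0$. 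Simplicity of $\socRb$ then forces $N = \socRb$, making every $\phi \in \calL(\socRb,\socRa)$ zero, in contradiction with Corollary~\ref{kfinite-3}(2). The $(R_a,R_b)$ version follows by the same argument, since any non-zero $\g$-submodule of $R_b$ contains the essential simple socle $\socRb$.
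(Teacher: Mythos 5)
Your part (i) is correct and matches the paper (verify Hypotheses~\ref{ring-hyp} via Theorem~\ref{thm26} and Lemma~\ref{lem1.1}, then quote Theorem~\ref{thm1.4} and Corollary~\ref{cor211}). The genuine problem is the torsion-freeness argument in part (ii), which rests on a false premise. You reduce torsion-freeness of $\calL(R_a,R_b)$ as a left $U(\g)/J_b$-module to the claim that $R_b$ itself is torsion-free over $U(\g)/J_b$, citing as a ``standard fact'' that a faithful module of Gelfand--Kirillov dimension half that of the ring is torsion-free. This is false: $\C[x]=A_1(\C)/A_1(\C)\partial_x$ is a faithful simple $A_1(\C)$-module of GK-dimension $1=\half\GKdim A_1(\C)$ killed by the regular element $\partial_x$. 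In the present situation $R_b$ is a highest weight module, so $\psi_b(\fn^+)$ annihilates the vector $1$; for $b=1$ the ring $U(\g)/J_1$ is a domain, so $\psi_1(P_1)\neq 0$ is regular yet kills $1\in R_1$. Hence $R_b$ is \emph{not} torsion-free and your reduction collapses. (The ``symmetric'' right-hand version fails for a second reason: $\theta\circ\psi_a(u)=0$ only says $\theta$ vanishes on the image of $\psi_a(u)$, and a regular element need not act surjectively on $R_a$.) The paper's proof is genuinely a bimodule statement that cannot be pushed down to the modules $R_a$, $R_b$: by \cite[Lemma~8.14]{jantzen} the module $R_b$ is \emph{ann-homogeneous}, so \cite[Lemma~2.8]{js84} makes $\calL(R_a,R_b)$ \emph{GK-homogeneous} of Gelfand--Kirillov dimension equal to $\GKdim U(\g)/J_b$, and over the prime rings $U(\g)/J_c$ that homogeneity is what is equivalent to two-sided torsion-freeness.

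A second gap is your deduction of $\calL(R_a,R_b)\neq 0$ from $\calL(\socRa,\socRb)\neq 0$. Extending $\theta$ by zero on a linear complement $V$ of $\socRa$ is composing with the linear projection $p:R_a\to\socRa$ along $V$; when $R_a\neq\socRa$ the extension $0\to\socRa\to R_a\to L(\lambda)\to 0$ is non-split, so $p$ is not a $\g$-map, $(\ad x)(p)$ is a generically non-zero element of $\Hom_{\C}(R_a/\socRa,\socRa)$, and the $\ad\g$-orbit of such an element is controlled by the \emph{infinite-dimensional} module $\socRa$, not by the finite-dimensional quotient; finiteness of $R_a/\socRa$ does not rescue this. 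The paper instead precomposes with $\psi_a(u)$ for $u\in\fa_a=\ann_{U(\g)}(R_a/\socRa)$: such a $u$ maps $R_a$ into $\socRa$ and is itself $\g$-finite, so $\theta\circ\psi_a(u)\in\calL(R_a,\socRb)\subseteq\calL(R_a,R_b)$, and the fact that some such composite is non-zero is exactly where the previously established torsion-freeness of $\calL(\socRa,\socRb)$ is used. Your argument for (iii) in the socle case is correct and pleasantly independent of the broken step (the paper simply declares (iii) ``automatic from (ii)''), but in the $(R_a,R_b)$ case your intersection $N$ only gives $N\supseteq\socRb$, so you still must rule out that every $\phi\in\calL(R_b,R_a)$ factors through the finite-dimensional quotient $R_b/\socRb$; this does follow, since $R_a$ has no non-zero finite-dimensional submodules, but it needs saying.
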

 
\begin{proof} (i) By Theorem~\ref{thm26}, Hypotheses~\ref{ring-hyp} is
  satisfied and so the result follows from Theorem~\ref{thm1.4}.

  (ii) By \cite[Lemma~8.14]{jantzen}, $\ann_{U(\g)}(R_b)=\ann_{U(\g)}(\socRb)=J_b$
  and so $R_b$ is \emph{ann-homogeneous} in the sense of
  \cite[\S2.7]{js84}.  Thus, by \cite[Lemma~2.8]{js84}, $\calL(R_a,R_b)$ is
  \emph{GK-homogeneous} in the sense that any non-zero, left or right
  $\g$-submodule $N$ of 
  $ \calL(R_a,R_b)$ has $\GKdim(N)=\GKdim( \calL(R_a,R_b))$.    
    Since each $U(\g)/J_c$ is a prime ring
  by~(i), this is equivalent to $ \calL(R_a,R_b)$ being torsion-free on
  both sides.  Exactly the same argument works for $\calL(\socRa,\socRb)$.
  
  It therefore remains to prove that these modules are non-zero. For
  $\calL(\socRa,\socRb)$ this is Corollary~\ref{kfinite-3}.  In particular,
  $\calL(\socRa,\socRb)$ is infinite dimensional, and hence
  $\calL(\socRa,\socRb)\fa_a\not=0$, where $\fa_a$ is the annihilator of the
  finite dimensional $\UU_a$-module $R_a/\socRa$. Thus
  $0\not= \calL(\socRa,\socRb)\fa_a\subseteq \calL(R_a,\socRb) \subseteq
  \calL(R_a,R_b)$.
 
  (iii) This is automatic from (ii).
\end{proof}

We also have analogous elementary results about differential operators,
although as we do not (yet) know that these rings are noetherian the proof
is a little more involved.

  \begin{lem}\label{main2}  
    Set $\{1,r\}=\{a,b\}$.  Then:
    \begin{enumerate}[{\rm (i)}]
    \item  $\calD(R_a)$ is a prime Goldie
      ring;
  \item  $\calD(R_a)$ has \emph{Goldie rank $a$}; indeed the simple
    artinian ring of fractions 
    $\mathrm{Quot}(\calD(R_a))$ is  even  isomorphic to the $a\times a$ matrix
    ring $M_a(\mathrm{Quot}(\calD(R_1)))$;
    \item  $\calD(R_a,R_b)$ is non-zero and torsion-free  both  as a
      left $\calD(R_b)$-module and as a right $\calD(R_a)$-module.
     
    \end{enumerate}
  \end{lem}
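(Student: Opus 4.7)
The plan is to prove (iii) first, then use it to set up a Morita-context argument for (i) and (ii) that exploits the simplicity of $\calD(R_1)$. For non-vanishing in (iii), Corollary~\ref{kfinite-3}(1) provides non-zero elements of $\calL(R_a,R_b)$ when $a>b$ (namely the projection $\pi_{rs}$ and $\overline{\Delta}_1^{r-s}$), while in the opposite case $(a,b)=(1,r)$ multiplication by $\Ft^{r-1}$ is an $A$-linear, hence order-zero differential, map $R_1\to R_r$. Proposition~\ref{lem1.2}(ii) then gives $\calL(R_a,R_b)\subseteq\calD(R_a,R_b)$, so $\calD(R_a,R_b)\ne 0$ in both directions.

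For parts (i) and (ii), I would use the Morita context acting on $R_1\oplus R_r$ whose $2\times 2$ array of entries consists of the bimodules $\calD(R_{c_i},R_{c_j})$ for $(c_1,c_2)=(1,r)$, with composition of operators as multiplication. Because $\calD(R_1)=U(\g)/J_1$ is a simple Noetherian domain by Theorem~\ref{thm22}, the trace ideal $\calD(R_1,R_r)\cdot \calD(R_r,R_1)\subseteq \calD(R_1)$, being non-zero by (iii) and two-sided by construction, must equal $\calD(R_1)$. Standard Morita theory then yields: $\calD(R_r)$ is prime Noetherian; $\calD(R_r)$ embeds into $\End_{\calD(R_1)}(\calD(R_r,R_1))$ by right multiplication; and $\mathrm{Quot}(\calD(R_r))\cong M_e(D)$, where $D=\mathrm{Quot}(\calD(R_1))$ and $e$ is the right $\calD(R_1)$-rank of $\calD(R_r,R_1)$.

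The main obstacle is to identify $e=r$. For the upper bound, I would exploit the $\Ft$-adic filtration $R_r\supset \Ft R_r\supset\cdots\supset \Ft^{r-1}R_r\supset 0$, whose $r$ successive quotients are each isomorphic to $R_1$ as $A$-modules: modulo lower-order terms, any differential operator $R_r\to R_1$ is determined by its induced action on these $r$ graded pieces. For the lower bound, the $r$ elements $\pi,\overline{\Delta}_1\pi,\ldots,\overline{\Delta}_1^{r-1}\pi\in \calD(R_r,R_1)$ should remain linearly independent after tensoring with $D$ on the right, which ultimately rests on the non-vanishing of the iterated $\Delta_1$-symbol on each graded piece $\Ft^k R_r/\Ft^{k+1}R_r$ for suitable $k$. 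Finally, for torsion-freeness in (iii), once primeness of $\calD(R_a)$ and $\calD(R_b)$ is in hand, the argument of Lemma~\ref{main11}(ii) applies verbatim: $\calD(R_a,R_b)$ inherits ann-homogeneity from $\calL(R_a,R_b)$ (whose annihilator on either side is $J_a$ or $J_b$), and the GK-homogeneity argument of \cite[Lemma~2.8]{js84} forces any torsion submodule to have strictly smaller Gelfand-Kirillov dimension and hence to vanish.
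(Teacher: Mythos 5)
Your route is genuinely different from the paper's. For (i) and (ii) the paper simply invokes Musson's results on differential operators on commutative rings with artinian local rings of fractions: since $(R_a)_{(\Ft)}$ is local artinian of length $a$ with residue field $\mathrm{Fract}(R_1)$, \cite[Corollary~2.6]{Musson} gives primeness and the Goldie property, and \cite[Lemma~2.4 and (2.5)]{Musson} give the matrix decomposition of $\mathrm{Quot}(\calD(R_a))$. This is a very short proof. Your alternative via a Morita context on $R_1\oplus R_r$ is a reasonable conceptual idea -- exploiting the simplicity of $\calD(R_1)$ to force the trace ideal to be everything -- but it has several real gaps that would need to be filled.

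First, "standard Morita theory" does not immediately yield that $\calD(R_r)$ is prime noetherian. From $\calD(R_r,R_1)\cdot\calD(R_1,R_r)=\calD(R_1)$ one does get that $M=\calD(R_r,R_1)$ is a finitely generated projective generator as a \emph{left} $\calD(R_1)$-module, hence $\calD(R_1)$ and $T=\End_{\calD(R_1)}(M)$ are Morita equivalent. But $\calD(R_r)$ is only a subring of $T$ (and one must check the map $\calD(R_r)\to T$ is injective, i.e.\ that $M$ is faithful); to transfer primeness, the noetherian property, and the ring of fractions from $T$ to $\calD(R_r)$ you would need to show they are equivalent orders, which requires the \emph{other} trace ideal $\calD(R_1,R_r)\cdot\calD(R_r,R_1)\subseteq\calD(R_r)$ to contain a regular element. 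That non-triviality is not an immediate consequence of $\calD(R_a,R_b)\not=0$ in both directions, since a priori the composites could vanish. Second, the identification $e=r$ is only gestured at -- the $\Ft$-adic filtration idea and "iterated $\Delta_1$-symbol" would need to be turned into an actual linear-independence argument.

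The more serious gap is the torsion-freeness argument. You propose to transport the GK-homogeneity argument of \cite[Lemma~2.8]{js84} from $\calL(R_a,R_b)$ to $\calD(R_a,R_b)$, but that argument relies on the $(\ad\g)$-local-finiteness of $\calL$; the bimodule $\calD(R_a,R_b)$ need not be locally $(\ad\g)$-finite, and \emph{a priori} it may be strictly larger than $\calL(R_a,R_b)$ -- indeed establishing equality of $\calL(R_a)$ and $\calD(R_a)$ is the main theorem being built towards, so assuming it here would be circular. Moreover, even granting GK-homogeneity of $\calL(R_a,R_b)$, this gives torsion-freeness over $U(\g)/J_a$, not over $\calD(R_a)$, which could contain regular elements outside $U(\g)/J_a$. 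The paper instead argues directly and elementarily: torsion-freeness of $\calD(R_r,R_1)$ on the right follows from the embedding $\calD(R_r,R_1)\cong\calD(R_r,\Ft^{r-1}R_r)\hookrightarrow\calD(R_r)$ and primeness of $\calD(R_r)$; on the left, one contracts $\phi$ to a nonzero operator $\widetilde\phi\in\calD(R_1)$ via $\widetilde\phi(\pi(r))=\phi(\Ft^s r)$ with $s$ maximal, and uses that $\calD(R_1)$ is a domain. An analogous contraction with $\rho:\Ft^s R_r\to\Ft^s R_r/\Ft^{s+1}R_r\cong R_1$ handles $\calD(R_1,R_r)$ on the right. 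You should adopt this more elementary approach (or fill the Morita gap with a direct verification of GK-homogeneity of $\calD(R_a,R_b)$ as a bimodule) rather than lean on the $\calL$-theory.
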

  
\begin{proof}
  (i) Clearly $R_a$ has a local artinian  ring of fractions $(R_a)_{(\Ft)}$. Now
  apply \cite[Corollary~2.6]{Musson}.

(ii)  This follows from   \cite[Lemma~2.4 and (2.5)]{Musson}.

  (iii) The projection map $\pi:R_r\to R_1$ is an $A$-module map and so
  belongs to $\calD(R_r,R_1)$.  Similarly, the $A$-module isomorphism
  $\phi:R_1\to \Ft^{r-1}R_r$ belongs to $\calD(R_1,R_r)$. Thus,
  $\calD(R_r,R_1)\not=0\not=\calD(R_1,R_r)$.

  As right $\calD(R_r)$-modules,
  $\calD(R_r,R_1)\cong \calD(R_r, \Ft^{r-1}R_r)\hookrightarrow
  \calD(R_r,R_r)$.
  Thus by (i), it is a torsion-free right $\calD(R_r)$-module.   
 For the left action, consider $0\not=\phi\in \calD(R_r,R_1)$ and pick $s$
 maximal such that 
  $\phi(\Ft^sR_r)\not=0$.  If $\bar{r}\in R_1$, write $\bar{r}=\pi(r)$ for
  some $r\in R_r$ and define
  $\widetilde{\phi}(\bar{r})=\phi(\Ft^sr)$. Since $\phi(\Ft^{s+1}R_r)=0$,
  this is a well defined morphism and hence a differential operator; thus
  $\widetilde{\phi}\in \calD(R_1,R_1)$.  Moreover, $\widetilde{\phi}\not=0$
  by the choice of $s$, and so $\widetilde{\phi}$ is not torsion as an
  element of the left $\calD(R_1)$-module $\calD(R_1)$.  Consequently,
  $\phi$ is not torsion under the left $\calD(R_1)$ action.

Conversely, if $0\not=\theta\in \calD(R_1,R_r)$, then
  $0\not=\theta\circ\pi\in \calD(R_r,R_r)$. Thus neither $\theta\circ\pi$
  nor $\theta$ is torsion under the left $\calD(R_r)$ action. Hence
  $\calD(R_1,R_r)$ is torsion-free as a left $\calD(R_r)$-module.
  Finally, let $0\not=\phi\in \calD(R_1,R_r)$ and choose $s$ minimal such
  that $\phi(R_1)\subseteq \Ft^sR_r$.  Thus, $\phi\in
  \calD(R_1,\Ft^sR_r)$.
  Moreover, if $\rho:\Ft^sR_r\to \Ft^sR_r/\Ft^{s+1}R_r\cong R_1$, then
  $0\not=\rho\circ\phi\in \calD(R_1,R_1)$. As in the last
  paragraph, this ensures that $\phi$ is not torsion under the right
  $\calD(R_1)$  action. Hence $\calD(R_1,R_r)$ is torsion-free on the right.
\end{proof}

By Lemma~\ref{chevalley1}, we know  that
$A=\C[X_1,\dots,X_n]\hookrightarrow U(\mathfrak{n}^-)\subset U(\g)$.
As before, set  
$\socRa=\Soc_{U(\g)}(R_a)$ for $a\in \mathbb{N}^*$.
Then, by Proposition~\ref{lem1.2}(ii) and Theorem~\ref{thm26}   we have     
 \begin{equation}\label{main5}
  \calL(R_a,R_b)\subseteq \mathcal{D}(R_a,R_b) \qquad\text{and}\qquad
    \calL(\socRa,\socRb)\subseteq \mathcal{D}(\socRa,\socRb) \qquad\text{for }  \{a,b\}=\{1,r\}.
\end{equation}

    \begin{prop}\label{lem4} 
      The rings $\calD(R_r)$ and $\calL(R_r,R_r)$ are equivalent orders,
      with $\calD( R_r)\supseteq \calL(R_r,R_r)$.
    \end{prop}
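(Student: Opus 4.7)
Plan: The inclusion $\calD(R_r) \supseteq \calL(R_r,R_r)$ is immediate from Proposition~\ref{lem1.2}(ii) applied with $M=N=R_r$ (equivalently, from the fact that $\psi_r:U(\g)\to\calD(R_r)$ has image inside $\calD(R_r)$ by construction). So the substantive content, per Definition~\ref{order-defn}, is to verify that $\calL(R_r,R_r)$ and $\calD(R_r)$ are prime Goldie rings sharing a common simple artinian ring of fractions, together with the sandwich condition $a_1\,\calD(R_r)\,a_2 \subseteq \calL(R_r,R_r)$ for some regular $a_1,a_2 \in \calL(R_r,R_r)$. The reverse sandwich $1\cdot\calL(R_r,R_r)\cdot 1 \subseteq \calD(R_r)$ is trivial, and primeness plus Goldie have already been established in Lemmas~\ref{main11}(i) and~\ref{main2}(i).

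To produce the regular elements $a_1,a_2$, I would exploit the bimodules linking $R_r$ and $R_1$. By Lemma~\ref{main11}(iii), the product $I = \calL(R_1,R_r)\cdot\calL(R_r,R_1)$ is a non-zero ideal of the prime Goldie ring $\calL(R_r,R_r)=U(\g)/J_r$, and hence contains regular elements. Pick two such and write them as $a_1=\sum_i\alpha_i\beta_i$ and $a_2=\sum_j\gamma_j\delta_j$ with $\alpha_i,\gamma_j\in\calL(R_1,R_r)$ and $\beta_i,\delta_j\in\calL(R_r,R_1)$. For any $\phi\in\calD(R_r)$, the composition $\beta_i\phi\gamma_j$ is a differential operator $R_1\to R_1$ and so lies in $\calD(R_1)$. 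The crucial input here is the Joseph ideal theorem (Theorem~\ref{thm22}): $\calD(R_1)=U(\g)/J_1=\calL(R_1,R_1)$. Therefore $\beta_i\phi\gamma_j$ is automatically $\g$-finite, and re-composing with the $\g$-finite elements $\alpha_i,\delta_j$ gives
\[
a_1\phi a_2 \ = \ \sum_{i,j}\alpha_i(\beta_i\phi\gamma_j)\delta_j \ \in \ \calL(R_1,R_r)\cdot\calL(R_1,R_1)\cdot\calL(R_r,R_1) \ \subseteq \ \calL(R_r,R_r),
\]
which produces the desired sandwich.

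It remains to check that the two rings share the same simple artinian ring of fractions. Given the sandwich $a_1\calD(R_r)a_2\subseteq\calL(R_r,R_r)$ with $a_1,a_2$ invertible in $\mathrm{Quot}(\calL(R_r,R_r))$, the assignment $\phi\mapsto a_1^{-1}(a_1\phi a_2)a_2^{-1}$ is a ring embedding of $\calD(R_r)$ into $\mathrm{Quot}(\calL(R_r,R_r))$ extending the inclusion $\calL(R_r,R_r)\hookrightarrow\mathrm{Quot}(\calL(R_r,R_r))$; since both $\calD(R_r)$ and $\calL(R_r,R_r)$ have Goldie rank $r$ (Lemma~\ref{main2}(ii) and Remark~\ref{Joseph-remark2}(1)), this forces their total rings of fractions to coincide. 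The main obstacle is this last step: while the composition argument using Theorem~\ref{thm22} is essentially bookkeeping, verifying that the map above is a well-defined injective ring homomorphism (so that $\calD(R_r)$ is realised as an equivalent order inside $\mathrm{Quot}(\calL(R_r,R_r))$) is the genuinely delicate point.
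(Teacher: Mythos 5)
Your proof is correct and takes essentially the same approach as the paper: both arguments push a general $\phi\in\calD(R_r)$ through the bimodules $\calL(R_1,R_r)$ and $\calL(R_r,R_1)$, invoke Theorem~\ref{thm22} in the form $\calD(R_1)=\calL(R_1,R_1)$ to convert $\beta\phi\gamma$ into a $\g$-finite operator, and extract a regular element from the nonzero ideal $\calL(R_1,R_r)\calL(R_r,R_1)\subseteq\calL(R_r,R_r)$ using Lemma~\ref{main11}. The paper packages this slightly more cleanly by setting $F=\calD(R_1,R_r)\calL(R_r,R_1)$ and $G=\calL(R_1,R_r)\calD(R_r,R_1)$ and noting $a\calD(R_r)a\subseteq G\calD(R_r)F=GF\subseteq\calL(R_r,R_r)$ for a single regular $a$, rather than decomposing $a_1,a_2$ into sums; also, the ``common ring of fractions'' point you flag as delicate is in fact a routine consequence of the sandwich for prime Goldie rings (regular elements of the smaller ring stay regular in the larger one, and the larger ring lands inside $a^{-1}\calL(R_r,R_r)b^{-1}\subseteq\mathrm{Quot}(\calL(R_r,R_r))$), so no separate verification is needed.
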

  
 \begin{proof}      
   Consider
   \[
   F={\calD}(R_1,R_r){\calL}(R_r,R_1) \quad\text{and}\quad G=
   {\calL}(R_1,R_r){\calD}(R_r,R_1),
   \]
   where multiplication is composition of functions.  We claim that
   $F\not=0\not=G$. To see this it suffices, by \eqref{main5},  to prove that
   $\calL(R_a,R_b)\cdot \calL(R_a,R_b)\not=0$ (where $\{a,b\}=
   \{1,r\}$). This follows from Lemma~\ref{main11}(ii).

   Recall from Theorem~\ref{thm22} that $\calD(R_1)=\calL(R_1,R_1)$. Thus,
   repeating the argument of the last paragraph shows that
   \[
   \begin{array}{rl}
     0\ \neq \ GF  =& {\calL}(R_1,R_r){\calD}(R_r,R_1) \cdot{\calD}(R_1,R_r)
                        {\calL}(R_r,R_1)\\ 
     \ \subseteq&  {\calL}(R_1,R_r){\calD}(R_1)  {\calL}(R_r,R_1)
                  \ = \  {\calL}(R_1,R_r)\calL(R_1,R_1) {\calL}(R_r,R_1) \\
     \  =& {\calL}(R_1,R_r)   {\calL}(R_r,R_1) \  \subseteq\  {\calL}(R_r,R_r).
   \end{array}
   \]
   Both $F$ and $G$ contain ${\calL}(R_1,R_r){\calL}(R_r,R_1)$ which, by
   Lemma~\ref{main11}(i,iii), contains a regular element
   $a\in \calL(R_r,R_r)$.  Thus from the last display
   $a\calD(R_r)a\subseteq G\calD(R_r)F= GF\subseteq \calL(R_r,R_r)$, as
   required.
 \end{proof}

 Finally, putting everything together we get:
  
  \begin{thm}\label{mainthm1} One has 
    $\calD(R_r)
    =\calL(R_r,R_r)=\calL(\socRr,\socRr)=U(\fso(n+2,\C))/J_r$.
    Moreover this ring is a maximal order in its simple artinian ring of
    fractions and has Goldie rank $r$.
  \end{thm}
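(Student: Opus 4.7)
The plan is to assemble the pieces already proven and finish via the maximal-order characterisation of $\calL(R_r,R_r)$. First, I would observe that by Theorem~\ref{thm26} the module $R_r$ satisfies Hypotheses~\ref{ring-hyp} (with $\g=\sontwo$ acting through $\psi_r$, with $\fr = \fr^-$ furnishing the polynomial generators $X_i$, and with the condition on $\End_R(\Soc R_r) = R_r$ supplied by Lemma~\ref{lem1.1} since $R_r$ is a hypersurface in a polynomial ring, hence Cohen--Macaulay, and $R_r/\socRr$ is finite dimensional by Theorem~\ref{thm26}). Consequently Theorem~\ref{thm1.4} and Lemma~\ref{main11}(i) give
\[
\calL(R_r,R_r) \ = \ \calL(\socRr,\socRr) \ = \ U(\g)/J_r,
\]
and this ring is a maximal order in its simple artinian ring of fractions.

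The only remaining issue is to identify $\calD(R_r)$ with $\calL(R_r,R_r)$. From \eqref{main5} one has the inclusion $\calL(R_r,R_r)\subseteq \calD(R_r)$. By Proposition~\ref{lem4} these two rings are equivalent orders, so there exists a regular element $a\in \calL(R_r,R_r)$ with $a\,\calD(R_r)\,a\subseteq \calL(R_r,R_r)$. But $\calL(R_r,R_r)$ is a maximal order (Theorem~\ref{thm1.4}, or equivalently Lemma~\ref{main1}), so any overring which is an equivalent order must coincide with it. Hence $\calD(R_r) = \calL(R_r,R_r) = U(\g)/J_r$, and this common ring is a maximal order.

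The statement about Goldie rank is then immediate from Lemma~\ref{main2}(ii), which gives Goldie rank $r$ and in fact $\mathrm{Quot}(\calD(R_r))\cong M_r(\mathrm{Quot}(\calD(R_1)))$. The one subtle point worth double-checking in writing the proof is the verification of Hypothesis~\ref{ring-hyp}(3): one needs $\End_{R_r}(\socRr) = R_r$, and the cleanest route is to invoke Lemma~\ref{lem1.1} with $C = A$, $f = \Ft^r$ and $M = \socRr$, using that $\GKdim(R_r/\socRr)=0 < n-1 = \GKdim R_r$ by Theorem~\ref{thm26}. Everything else is a straightforward assembly of Theorem~\ref{thm1.4}, Proposition~\ref{lem4}, Lemma~\ref{main11}, Lemma~\ref{main2} and the maximal-order uniqueness from Lemma~\ref{main1}; the main conceptual obstacle, namely producing the nonzero $\g$-finite differential operators between $R_r$ and $R_1$ that make Proposition~\ref{lem4} work, has already been surmounted in Section~\ref{kfinite}.
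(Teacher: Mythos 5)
Your proposal is correct and follows essentially the same route as the paper: verify Hypotheses~\ref{ring-hyp} via Theorem~\ref{thm26} (with \ref{ring-hyp}(3) supplied by Lemma~\ref{lem1.1}), apply Theorem~\ref{thm1.4} to identify $\calL(R_r,R_r)=\calL(\socRr,\socRr)=U(\g)/J_r$ as a maximal order, then use Proposition~\ref{lem4} together with maximality to force $\calD(R_r)=\calL(R_r,R_r)$, and quote Lemma~\ref{main2} for the Goldie rank. The extra care you take in checking the grade condition of Lemma~\ref{lem1.1} (using that $R_r/\socRr$ is finite dimensional and $\GKdim R_r=n-1\ge 2$) is exactly what the paper's terser proof implicitly relies on.
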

  
  \begin{proof} By Theorem~\ref{thm26}, Hypotheses~\ref{ring-hyp} are
    satisfied and so $U(\g)/J_r=\calL(R_r,R_r)=\calL(\socRr,\socRr)$ is a maximal
    order by Theorem~\ref{thm1.4}.  Thus, by Proposition~\ref{lem4},
    $\calD(R_r)= \calL(R_r,R_r)$.  The fact that $\calD_r$ has Goldie rank
    $r$ follows from 
    Lemma~\ref{main2}.
  \end{proof}

\begin{rem}
  \label{rigidity} There are other ways to prove aspects of this
  theorem.  For example, recall that the associated variety of
  the primitive ideal $J_r$ is equal to
  $\overline{\mathbf{O}}_{\mathrm{min}}$,
  cf.~Corollary~\ref{cor211}.  Assume that $n \ne 4$ and that
  $R_r \cong L(\lambda)$ is an irreducible $\g$-module (so we are
  in Case (1) of Theorem~\ref{thm26}).  Then the minimal orbit
  $\mathbf{O}_{\mathrm{min}}$ is rigid, in the sense that it is
  not induced from any proper parabolic subalgebra; this implies
  that the $\g$-module $R_r$ is rigid in the sense of
  \cite[1.2]{jo881}. Then, \cite[5.8]{jo881} can be applied in
  this situation to show that $\cL(R_r,R_r) = \cD(R_r)$ and one
  deduces from Lemma~\ref{main11} that $U(\g)/J_r = \cD(R_r)$.
 \end{rem}

\section{Higher symmetries of   powers of the Laplacian} 
\label{symmetries}

Recall that the starting point of this paper was to examine the symmetries
of powers of the Laplacian $\Delta_1=\half \sum\partial_{X_i}^2$, as
defined below, and thereby to extend the work of Eastwood and others
\cite{ea05,eale08,mi11} on this concept. In this section we translate
Theorem~\ref{mainthm1} into a result about those symmetries.  In
particular, this proves the second half of Theorem~\ref{thmA} from the
introduction, by identifying the ring of symmetries of $\Delta_1^r$ with a
factor ring of $U(\sontwo)$.

We briefly retain the notation of Section~\ref{sec1} for a
general field $\K$ of characteristic zero; in particular setting
$A= \K[X_1,\dots,X_n]\ni \Ft =\sum X_i^2$.  We begin with the definition of
higher symmetries as 
described, for example, in \cite[(1.2)]{bsss90}, \cite[(4.3)]{ss92} or
\cite[Definition~1]{ea05}.

\begin{defn}
  \label{def82} Fix an operator $P \in \cD(A)$.  An operator $Q \in \cD(A)$
  is a \emph{symmetry} of $P$ if there exists $Q' \in \cD(A)$ such that
  $PQ= Q'P$.  Equivalently, in the notation of~\eqref{eq10}, the symmetries
  of $P$ equals the idealizer $\I\bigl(\cD(A)P\bigr)$.  The elements
  $Q\in \cD(A)P$ are trivially symmetries of $P$ and so one usually factors
  them out and defines \emph{the algebra of symmetries of $P$} to be the
  factor algebra
  \[
    \Sscr{P} = \I\bigl(\cD(A)P\bigr) \big/ \cD(A)P.
  \]
\end{defn}

The definition of $\Sscr{\Delta^r_1}$ is of course similar in style to that
of the ring of differential operators
$\cD(A/\Ft^r)\cong \I\bigl(\Ft^r\cD(A)\bigr)/\Ft^r\cD(A)$ from~\eqref{eq11}; except
that one works with constant coefficient differential operators rather than
polynomials and left rather than right ideals.  However, as we show next,
one can easily pass from the one to the other by taking a Fourier
transform.

We now return to  the  field $\K=\C$.  In our applications it
will be convenient to use the Chevalley system from Lemma~\ref{chevalley1}
and so we will use the Fourier transform $\cF$ in $\cD(A)$ with respect to
the variables $U_{\pm j}$ from \eqref{nroots}. Thus we write
$A= \C[U_{\pm 2}, \dots, U_{\pm \ell},U_0]$ and define
\begin{equation}
  \label{eq80a}
  \cF(U_j)= \ddu{j}, \quad \cF(\ddu{j}) = U_j, \quad j \in \{0,\pm 2,
  \dots, \pm \ell\}. 
\end{equation}

In terms of the variables $X_j$ it is routine to check that
\begin{equation}
  \label{eq80b}
  \cF(X_j)=
  \begin{cases}
    \half \ddx{j} \ & \text{if $1 \le j \le \ell'$ or if $j=n$ when $n$ is
      odd;}
    \\
    - \half \ddx{j} \ & \text{if $\ell'+ 1 \le j \le 2\ell'$.}
  \end{cases}
\end{equation} 
Further routine properties of $\cF$ are given by the following lemma.  For
this we recall that $d= \halfn -r$ for some $r \in \N^*$ and that
$P_j=P_j(d) = X_j \Delta_1 - (\Et_1 + d) \ddx{j}$.
  
\begin{lem}
  \label{lem81} The map $\cF$ is an involutive anti-automorphism of
  $\cD(A)$. Moreover,
  \[
    \cF(\Ft) = \half \Delta_1, \ \cF(\Delta_1) = 2 {\Ft}, \
    \cF(\Et_1) = \Et_1, \ \cF(D_{jk}) = \pm D_{jk}, \ \cF(P_j)= \pm
    \bigl(\Ft\ddx{j} - 2X_j (\Et_1 + d)\bigl). 
  \]
\end{lem}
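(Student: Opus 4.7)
The plan is to verify the claims in sequence, starting with the structural statement and then doing the explicit calculations. First, to see that $\cF$ is an involutive anti-automorphism of $\cD(A)$, I would extend the definition \eqref{eq80a} on the generators $U_j, \ddu{j}$ to an anti-homomorphism and check compatibility with the Weyl-algebra relations: applying $\cF$ (as an anti-homomorphism) to the commutator $[\ddu{k}, U_j] = \delta_{kj}$ should yield $\cF(U_j)\cF(\ddu{k}) - \cF(\ddu{k})\cF(U_j) = [\ddu{j}, U_k] = \delta_{jk}$, which is the same scalar. This shows $\cF$ descends to an anti-endomorphism of $\cD(A)$; the identity $\cF^2 = \id$ is immediate on generators and extends to the whole algebra.

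For the explicit values on $\Ft$, $\Delta_1$ and $\Et_1$, I would work directly in the $U$-coordinates using the expressions recorded just after \eqref{nroots}, namely $\Ft = \sum_{j=2}^{\ell} U_j U_{-j} + \half U_0^2$, $\Delta_1 = 2\sum_{j=2}^{\ell} \ddu{j}\ddu{-j} + \ddu{0}^2$, and $\Et_1 = U_0\ddu{0} + \sum_{\pm k=2}^{\ell} U_k \ddu{k}$. Since each summand is a product of two commuting operators, the anti-homomorphism property merely pairs the factors differently and the identities $\cF(\Ft) = \half \Delta_1$, $\cF(\Delta_1) = 2\Ft$ and $\cF(\Et_1) = \Et_1$ drop out without sign trouble.

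For $D_{jk}$ and $P_j$, I would switch to the $X$-coordinates via \eqref{eq80b}. Writing $\cF(X_j) = \epsilon_j \tfrac12 \ddx{j}$ with $\epsilon_j = \pm 1$ as given there, the involutive property forces $\cF(\ddx{j}) = 2\epsilon_j X_j$. Applying $\cF$ (reversing order) to $D_{jk} = X_j\ddx{k} - X_k\ddx{j}$ then gives $\cF(D_{jk}) = -\epsilon_j\epsilon_k D_{jk}$, which is $\pm D_{jk}$. Similarly,
\[
\cF(P_j) \ = \ \cF(\Delta_1)\cF(X_j) - \cF(\ddx{j})\cF(\Et_1 + d) \ = \ \epsilon_j\bigl(\Ft \ddx{j} - 2 X_j(\Et_1 + d)\bigr),
\]
using the values of $\cF(\Delta_1)$ and $\cF(\Et_1)$ already obtained.

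None of these steps is a genuine obstacle: the only subtlety is remembering that $\cF$ is an \emph{anti}-automorphism, but because every pair of factors that appears commutes, no ordering issues obstruct the calculation. The lemma is essentially a bookkeeping exercise once the Weyl-algebra compatibility is verified.
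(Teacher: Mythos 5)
Your proof is correct and follows the same route as the paper's (which simply says to use the change-of-variable formulas for $\ddx{j}$ in terms of $\ddu{\pm(j+1)}$). One small inaccuracy in the exposition: you justify $\cF(\Et_1)=\Et_1$ by saying "each summand is a product of two commuting operators," but $U_k$ and $\ddu{k}$ do not commute. The conclusion still holds, for a slightly different reason: the anti-automorphism reverses the order \emph{and} swaps $U_k\leftrightarrow\ddu{k}$, so $\cF(U_k\ddu{k})=\cF(\ddu{k})\cF(U_k)=U_k\ddu{k}$ term by term — the two reversals cancel. For $\Ft$ and $\Delta_1$ the commutativity argument is fine as stated. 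Everything else (the Weyl-relation check, $\cF(\ddx{j})=2\epsilon_j X_j$ by involutivity, and the sign bookkeeping for $D_{jk}$ and $P_j$) is accurate.
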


\begin{proof} Use the formul\ae\
  $\ddx{j} = \half\bigl(\ddu{j+1} + \ddu{-(j+1)}\bigr)$ and 
  $\ddx{j+\ell'}= \frac{1}{2i}\bigl(\ddu{j+1} - \ddu{-(j+1)}\bigr)$, for
  $1 \le j \le \ell'$, while $\ddx{n}= \sqrt{2} \ddu{0}$.
\end{proof}

\begin{prop}
  \label{cor85}
  Let $r \ge 1$ and $R_r = A/\Ft^r A$.  Then the Fourier transform $\cF$
  induces an anti-isomorphism
  $\cF= \cF_r : \cD(R_r) \to \Sscr{\Delta_1^r}$.
\end{prop}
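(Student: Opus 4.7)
The plan is to use Lemma~\ref{lem81}, which says that $\cF$ is an involutive anti-automorphism of $\cD(A)$ with $\cF(\Ft) = \half \Delta_1$, and thus $\cF(\Ft^r) = 2^{-r} \Delta_1^r$. The strategy is to transport the description of $\cD(R_r)$ via the idealizer~\eqref{eq11} across $\cF$ and show that it lands exactly on the idealizer description of $\Sscr{\Delta_1^r}$ from Definition~\ref{def82}.

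First I would check that $\cF$ sends the two-sided data defining $\cD(R_r)$ to those defining $\Sscr{\Delta_1^r}$. Since $\cF$ is an anti-automorphism and $\cF(\Ft^r) = 2^{-r}\Delta_1^r$ is a nonzero scalar multiple of $\Delta_1^r$, for any $\theta = \Ft^r \phi \in \Ft^r\cD(A)$ one has
\[
\cF(\theta) \ = \ \cF(\phi)\cF(\Ft^r) \ = \ 2^{-r}\, \cF(\phi)\, \Delta_1^r \ \in\ \cD(A)\Delta_1^r,
\]
and since $\cF$ is involutive the reverse inclusion also holds, giving $\cF\bigl(\Ft^r\cD(A)\bigr) = \cD(A)\Delta_1^r$.

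Next I would show that $\cF$ identifies the two idealizers. Let $P \in \I_{\cD(A)}\bigl(\Ft^r\cD(A)\bigr)$, so $P\cdot\Ft^r\cD(A) \subseteq \Ft^r\cD(A)$. Applying the anti-automorphism $\cF$ reverses multiplication, so
\[
\cF\bigl(\Ft^r\cD(A)\bigr) \cdot \cF(P) \ \subseteq \ \cF\bigl(\Ft^r\cD(A)\bigr),
\]
which by the previous step reads $\cD(A)\Delta_1^r \cdot \cF(P) \subseteq \cD(A)\Delta_1^r$. In particular, $\Delta_1^r \cF(P) \in \cD(A)\Delta_1^r$, i.e.\ $\Delta_1^r\cF(P) = Q'\Delta_1^r$ for some $Q'\in\cD(A)$, so $\cF(P)$ is a symmetry of $\Delta_1^r$. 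Using that $\cF$ is involutive, the converse inclusion follows symmetrically, yielding a $\C$-linear bijection
\[
\cF : \I_{\cD(A)}\bigl(\Ft^r\cD(A)\bigr) \ \longisomto\ \I\bigl(\cD(A)\Delta_1^r\bigr).
\]

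Finally, since $\cF\bigl(\Ft^r\cD(A)\bigr) = \cD(A)\Delta_1^r$, this bijection descends to the quotients, producing an anti-isomorphism
\[
\cF_r : \cD(R_r) \ \cong\ \I_{\cD(A)}\bigl(\Ft^r\cD(A)\bigr)\big/\Ft^r\cD(A) \ \longisomto\ \I\bigl(\cD(A)\Delta_1^r\bigr)\big/\cD(A)\Delta_1^r \ = \ \Sscr{\Delta_1^r},
\]
where the first identification uses \eqref{eq11}. There is no genuine obstacle here; the only point requiring care is keeping track of the order-reversal when transferring a right-idealizer statement ($P\cdot \Ft^r\cD(A)\subseteq \Ft^r\cD(A)$) to a left-idealizer statement ($\cD(A)\Delta_1^r\cdot \cF(P)\subseteq \cD(A)\Delta_1^r$), which is exactly what the anti-automorphism property of $\cF$ provides.
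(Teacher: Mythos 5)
Your proof is correct and follows essentially the same route as the paper's: both hinge on $\cF(\Ft^r)=2^{-r}\Delta_1^r$ from Lemma~\ref{lem81} and on the anti-automorphism property to translate the (left) idealizer of the right ideal $\Ft^r\cD(A)$ into the (right) idealizer of the left ideal $\cD(A)\Delta_1^r$, then pass to quotients via~\eqref{eq11}. The paper compresses the argument into the single biconditional $\Delta_1^r Q = Q'\Delta_1^r \iff \cF(Q)\Ft^r = \Ft^r\cF(Q')$, whereas you first establish $\cF\bigl(\Ft^r\cD(A)\bigr)=\cD(A)\Delta_1^r$ and then the idealizer correspondence, which is a slightly more expanded but equivalent presentation.
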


\begin{proof} Set $\cD=\cD(A)$. By Lemma~\ref{lem81},
  $\cF(\Ft^r) = 2^{-r} \Delta_1^r$.  Thus, in the notation of
  Definition~\ref{def82}, $ \Delta_1^rQ= Q'\Delta_1^r$ for some
  $Q,Q'\in \cD$ $\iff$ $ \cF(Q) \Ft^r= \Ft^r \cF(Q')$.  Equivalently,
  $ Q \in \I_{\cD}\bigl(\cD \Delta_1^r\bigr)$ $\iff$
  $\cF(Q) \in \I_{\cD}\bigl(\Ft^r\cD\bigr)$.  Hence $\cF$ induces an anti-isomorphism
  $ \I_{\cD}\bigl(\cD\Delta_1^r\bigr)/ \cD\Delta_1^r\to \I_{\cD}\bigl(\Ft^r\cD\bigr)/
  \Ft^r\cD$.
\end{proof}
 
Recall from Theorem~\ref{mainthm1} that we have an isomorphism
$\psi_r: U(\g)/J_r \to \cD(R_r) $ and hence an anti-isomorphism from
$U(\g)/J_r$ to $ \Sscr{\Delta_1^r}$.  We want to convert this into an
automorphism.  By \cite[5.2~(2)]{jantzen} $\vt(J_r)= J_r$ for any Chevalley
anti-involution $\vt$ of $\g$, and hence for the anti-involution $\vt$
defined by the Chevalley system $\{y_\alpha\}_\alpha \subset \g$ described
in Lemma~\ref{chevalley1}. Notice that
$\psit_d \circ \vt = \vt_d \circ \psit_d$ in the notation of
Remark~\ref{involutions}. In particular, using Theorem~\ref{mainthm1},
$\vt_d$ induces an anti-automorphism $\vt_r$ on $\cD(R_r)$ and combining
the earlier results of the paper we obtain the main result of this section.

\begin{thm}
  \label{thm892} {\rm (1)}  Fix $r\geq 1$. Then there are algebra isomorphisms:
  \[
    \cF \circ \vartheta_r \circ \psi_r:  
    U(\sontwo) \big/ J_r \ \longisomto \ \cD(R_r) \ \longisomto \
    \Sscr{\Delta_1^r}.
  \]
  In particular, the algebra of symmetries $\Sscr{\Delta_1^r}$ is a
  noetherian maximal order in its simple artinian ring of fractions and has
  Goldie rank $r$. Moreover, it is generated by the elements:
  \[
    \Et_1 + d; \quad \ddx{k}, \ 1 \le k \le n; \quad D_{j,k}, \ 1 \le j,k
    \le n; \quad \Ft \ddx{j} - 2 X_j (\Et_1 + d), \ 1 \le j \le n.
  \]
  
  {\rm (2)} Consider
  $\mathcal{A}=\C[\partial_{X_1},\dots, \partial_{X_n}]/(\Delta_1^r)$. If
  $n$ is even with $r<\halfn$ or $n$ is odd, then $\mathcal{A}$ is a simple
  $\Sscr{\Delta_1^r}$-module. If $n$ is even with $r\geq \halfn$, then
  $\mathcal{A} $ has a unique proper factor module, which is finite
  dimensional.
\end{thm}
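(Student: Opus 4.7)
The plan for Part~(1) is to construct the claimed isomorphism as a composition $\cF \circ \vartheta_r \circ \psi_r$. Theorem~\ref{mainthm1} supplies the isomorphism $\psi_r \colon U(\g)/J_r \overset{\sim}{\longrightarrow} \cD(R_r)$. Since $\vartheta(J_r) = J_r$ for the Chevalley anti-involution $\vartheta$ of $\g$ (cf.~\cite[5.2~(2)]{jantzen} and Remark~\ref{involutions}), $\vartheta$ descends via $\psi_r$ to an anti-automorphism $\vartheta_r$ of $\cD(R_r)$; Proposition~\ref{cor85} supplies the anti-isomorphism $\cF \colon \cD(R_r) \to \Sscr{\Delta_1^r}$. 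The composition of two anti-isomorphisms with an isomorphism is an isomorphism, so $\cF \circ \vartheta_r \circ \psi_r$ is a ring isomorphism, and the properties \emph{noetherian}, \emph{maximal order}, and \emph{Goldie rank~$r$} transfer from Theorem~\ref{mainthm1}. To read off the generators, I would track the spanning set $\{X_i,\,\Et_1+d,\,D_{jk},\,P_j\}$ of $\gt_d$ (from Proposition~\ref{prop16}) through this composition: by Lemma~\ref{chevalley1} the $X_i$ and $P_j$ are, up to linear combination, respectively the negative- and positive-root vectors of a Chevalley system, so $\vartheta_r$ swaps them while fixing $\Et_1+d$ and preserving $D_{jk}$ up to sign; Lemma~\ref{lem81} then evaluates $\cF$ on the resulting expressions, producing precisely $\pm\tfrac12\ddx{i}$, $\pm(\Ft\ddx{j} - 2X_j(\Et_1+d))$, $\Et_1+d$, and $\pm D_{jk}$.

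For Part~(2) the plan is to transport the $\cD(R_r)$-module structure of $R_r$ onto $\mathcal{A}$ via the isomorphism of Part~(1), and then identify the result with the natural $\Sscr{\Delta_1^r}$-module structure on $\mathcal{A}$. Concretely, the Fourier transform $\cF$ sends $A$ bijectively onto $\calB = \C[\ddx{1},\dotsc,\ddx{n}]$ and $\Ft^r A$ onto $2^{-r}\Delta_1^r\calB$, hence induces a vector-space bijection $R_r \overset{\sim}{\to} \mathcal{A}$. Equivalently, the construction of Sections~\ref{sec1}--\ref{sec2} is manifestly symmetric under the Fourier-dual pair $(A,\Ft) \leftrightarrow (\calB, 2\Delta_1)$: repeating those sections with $\calB$ in place of $A$ equips $\mathcal{A}$ with the structure of a highest weight $\g$-module of highest weight $\lambda = -d\vpi_1$ and cyclic vector $\overline{1}$. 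Either viewpoint identifies $\mathcal{A}$ with $R_r = N(\lambda)$ as a $\g$-module, so Theorem~\ref{thm26} applies verbatim: in its case~(1), $\mathcal{A} \cong L(\lambda)$ is simple; in its case~(2), $\mathcal{A}$ has simple infinite-dimensional socle $L(\mu)$ and finite-dimensional quotient $L(\lambda)$, whence the unique non-trivial proper factor module of $\mathcal{A}$ is finite-dimensional.

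The main obstacle is the intertwining step in Part~(2): verifying that the natural $\Sscr{\Delta_1^r}$-module structure on $\mathcal{A}$ genuinely coincides with the structure transported from $R_r$ via $\cF \circ \vartheta_r$. This amounts to a compatibility check on the four families of generators produced in Part~(1), using Lemma~\ref{lem81} together with the commutator formulas of Proposition~\ref{prop16}. The verification is delicate because $\cF$ is an anti-isomorphism and therefore does not transport left modules to left modules directly; the role of $\vartheta_r$ is precisely to absorb this duality mismatch. Once the compatibility is in place, Part~(2) reduces to a direct application of Theorem~\ref{thm26}.
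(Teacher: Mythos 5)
Your proposal is correct and follows essentially the same route as the paper: part (1) is obtained by composing the isomorphism of Theorem~\ref{mainthm1} with the Chevalley anti-involution $\vartheta_r$ and the Fourier anti-isomorphism of Proposition~\ref{cor85}, reading off generators via Proposition~\ref{prop16} and Lemma~\ref{lem81}, and part (2) is Fourier transport of the submodule structure of $R_r$ from Theorem~\ref{thm26}. The "compatibility check" you flag as the main obstacle is harmless for the statement at hand, since anti-isomorphisms and twists by $\vartheta_r$ preserve submodule lattices, and the paper's own (terser) proof likewise treats part (2) as a direct consequence of (1) and Theorem~\ref{thm26}.
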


\begin{proof} (1)   Since $\Sscr{\Delta_1^r}$ is generated by
  the Fourier transforms of the generators of $\cD(R_r)$, the assertion on
  generators is simply the translation of Proposition~\ref{prop16} and
  Lemma~\ref{lem81}.  For the remaining   assertions, combine Proposition~\ref{cor85}
  and Theorem~\ref{mainthm1}. 
  
  (2) Under the Fourier transform, this is a direct
  consequence of (1) and Theorem~\ref{thm26}.  Up to a
  change of Borel, the weights of these modules are also
  given by that theorem.
\end{proof}

 
\section{The real case}
\label{sec7} 
  
In this section we show that
Theorems~\ref{mainthm1} and~\ref{thm892} have natural analogues for
differential operators with real coefficients, thereby proving Theorem~\ref{thmB} 
from the introduction. There are in fact a number of different real forms of $\sontwo$, 
each with their own Laplacian, and these are all covered by our results. 

To make this precise, we will always write
$A_{\R}= \ \R[X_1,\dotsc,X_n]\subset A=\C[X_1,\dots,X_n]$ and set
$\cD(A_\R)\ = \ \R[X_1,\dotsc,X_n,\ddx{1},\dotsc,\ddx{n}]$.  Throughout the
section we fix $p,q \in \N$ with $p+q = n \ge 3$.  Then the elements
$\Ft=\sum X_j^2$ and $\Delta_1=\sum \half \ddx{j}^2$ can be replaced by,
respectively,
\[
  \Ftt \ = \ \Ftt_p \ = \ \sum_{j=1}^p X_j^2 - \sum_{j=p+1}^q X_j^2 \ \in
  A_\R\  
\] and
\[
  \dalembert \ = \ \dalembert_p \ = \ \half \Biggl(\sum_{j=1}^p \ddx{j}^2 -
  \sum_{j=p+1}^n \ddx{j}^2\Biggr)\ \in \ \cD(A_\R).\
\]
(Of course, $\Ft=\Ftt_n$ and $\Delta_1=\dalembert_n$.) The signature of the
quadratic form $\Ftt$ will be denoted by $(p,q)$ and the operator
$\dalembert$ is called the d'Alembertian (or Laplacian) on the quadratic
space $\R^{p,q} =(\R^n, \Ftt)$.

  For fixed $r\geq 1$,  set
$S=S_{p,r}= A_\R \, / \, \Ftt^r \hskip-1pt A_\R$ and
$\St = A \, / \,\Ftt^r \hskip-1pt A = S \otimes_\R \C$.  Define an
automorphism $\phi= \phi_{p,q}$ of $\cD(A)$, with $\phi(A) = A$, by
setting:

\begin{equation}
  \label{eq71}
  \phi(X_j) =
  \begin{cases}
    X_j \ & \text{if $1 \le j \le p$,}
    \\
    i X_j \ & \text{if $p+1 \le j \le n$;}
  \end{cases} \quad\text{and}\quad  \phi(\ddx{j})=
  \begin{cases}
    \ddx{j} \ & \text{if $1 \le j \le p$,}
    \\
    -i \ddx{j} \ & \text{if $p+1 \le j \le n$.}
  \end{cases}
\end{equation}
Note that $\phi(\Ft)=\Ftt$ and so $\cD(A/\,\Ftt^rA)\cong\cD(A/\Ft^rA)$.

Fix $d =d_n = \halfn -r$ and recall from
Notation~\ref{rem165} that we have a copy $\gt=\gt_d\subset \cD(A)$ of
$\g=\sontwo$.  We now want to describe a real analogue
$\fs \subset \cD(A_\R)$ of $\gt$.  By analogy with~\eqref{eq13} and
\eqref{eq15}, set:

(a) $\Dt_{kl} = \Dt_{lk} = X_k \ddx{l} + X_l\ddx{k}$ if
$1 \le k \le p < l \le n$, \ while $\Dt_{kk} =0$;
  
\smallskip (b) $\Pt_j = \Pt_j(d)=
\begin{cases}
  X_j \dalembert - (\Et_1+d) \ddx{j} \ & \text{if $1 \le j \le p$}
  \\
  X_j \dalembert +(\Et_1+d) \ddx{j} \ & \text{if $p+1 \le j \le n$;}
\end{cases}$.

\smallskip Observe that:
\begin{gather}
  X_j = \phi(X_j), \ \text{ for } j \le p \, ; \qquad \ X_j = -i\phi(X_j),
  \ \text{for } j \ge p+1 ; \nonumber
  \\
  \Pt_j = \phi(P_j), \ \text{for} \ j \le p \, ; \ \qquad \ \, \Pt_j =
  -i\phi(P_j), \  \text{ for }  j \ge p+1;    \label{eq74}  \\
\quad  \Dt_{kl} = i \phi(D_{kl}) = -i \phi(D_{lk}), \  \  \text{ for } 1 \le k \le p
  < l \le n. \qquad\qquad
\nonumber
\end{gather}
   
 The next result is standard.

\begin{lem}
  \label{cor76} Let $\fs =\fs(p,r)\subset \cD(A_\R)$ be the real subspace
  spanned by the elements
  \[
    \{\Et_1+d ; \ X_k; \ \Pt_j, \, 1 \le j,k \le n; \ D_{k,l},
    \, 1 \le k, l \le p, \ \text{or} \ p+1 \le k, l \le n; \ \Dt_{kl},
    \, 1 \le k \le p < l \le n\}.
  \]
  {\rm (1)} Then $\fs$ is a Lie subalgebra of
  $(\cD(A_\R),[\phantom{.},\phantom{.}])$ with
  $\fs \otimes_\R \C \cong \gt \cong \sontwo$.

  {\rm (2)} Moreover,
  $\fs \subset \I_{\cD(A_\R)}\bigl(\Ftt^r_p\cD(A_\R)\bigr)$ and hence
  induces a ring homomorphism $\vphit_\R : U(\fs) \lto \cD(S_{p,r})$.
\end{lem}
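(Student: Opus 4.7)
The plan is to transfer the known structural properties of $\gt \subset \cD(A)$ to $\fs \subset \cD(A_\R)$ via the algebra automorphism $\phi = \phi_{p,q}$ of $\cD(A)$, exploiting that $\phi(\Ft) = \Ftt$ and using the explicit dictionary \eqref{eq74} between the generators of $\gt$ and those of $\fs$.

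For (1), I would first read off from \eqref{eq74} that every generator listed in the definition of $\fs$ equals $\phi(x)$ or $\pm i\,\phi(x)$ for a corresponding generator $x$ of $\gt$ (noting that $\phi$ fixes $\Et_1 + d$, the $D_{kl}$ with $k,l$ both $\leq p$ or both $\geq p+1$, and sends $X_k, P_j$ to $iX_k, iP_j$ for indices $>p$). Consequently $\fs \subseteq \phi(\gt)$ as $\R$-subspaces of $\cD(A)$. A direct count gives
\[
\dim_\R \fs = 1 + 2n + \binom{p}{2} + \binom{q}{2} + pq = 1 + 2n + \binom{n}{2} = \binom{n+2}{2} = \dim_\C \phi(\gt),
\]
and since the listed generators lie in $\cD(A_\R)$, no nontrivial $\R$-relation can hold among them in $\cD(A) = \cD(A_\R) \oplus i\,\cD(A_\R)$; they are therefore $\R$-linearly independent and $\fs \otimes_\R \C = \phi(\gt)$. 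The Lie-algebra structure then follows: since $\phi$ is an algebra automorphism, $\phi(\gt)$ is a $\C$-Lie subalgebra isomorphic to $\sontwo$, and the bracket of two elements of $\fs \subset \cD(A_\R)$ remains in $\cD(A_\R)$, so
\[
[\fs,\fs] \subseteq \phi(\gt) \cap \cD(A_\R) = \fs,
\]
giving $\fs \otimes_\R \C \cong \sontwo$.

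For (2), Corollary~\ref{cor19} yields $\gt \subseteq \I_{\cD(A)}(\Ft^r\,\cD(A))$. Applying the $\C$-algebra automorphism $\phi$, which sends $\Ft^r\,\cD(A)$ to $\Ftt^r\,\cD(A)$, gives $\phi(\gt) \subseteq \I_{\cD(A)}(\Ftt^r\,\cD(A))$ and in particular $\fs \subseteq \I_{\cD(A)}(\Ftt^r\,\cD(A))$. Using the decomposition $\cD(A) = \cD(A_\R) \oplus i\,\cD(A_\R)$ of $\cD(A_\R)$-bimodules together with $\Ftt^r \in A_\R$, one has $\Ftt^r\,\cD(A) \cap \cD(A_\R) = \Ftt^r\,\cD(A_\R)$. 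Therefore, for $\theta \in \fs \subset \cD(A_\R)$ and $\xi \in \cD(A_\R)$, the element $\theta\,\Ftt^r \xi$ lies in $\Ftt^r\,\cD(A) \cap \cD(A_\R) = \Ftt^r\,\cD(A_\R)$, so $\fs \subseteq \I_{\cD(A_\R)}(\Ftt^r\,\cD(A_\R))$. Composing the inclusion with the quotient map to the real analogue of \eqref{eq11}, namely $\I_{\cD(A_\R)}(\Ftt^r\,\cD(A_\R)) \sto \cD(S_{p,r})$, produces a Lie algebra morphism from $\fs$ which extends by the universal property to the desired ring homomorphism $\vphit_\R : U(\fs) \to \cD(S_{p,r})$.

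The only mildly delicate point is checking that $\fs \otimes_\R \C = \phi(\gt)$ rather than just the inclusion $\subseteq$; this is the main (though very minor) obstacle and is handled by the dimension count together with the observation that the listed generators of $\fs$ remain $\R$-linearly independent inside $\cD(A)$, which is immediate from the real-imaginary decomposition of $\cD(A)$ over $\cD(A_\R)$.
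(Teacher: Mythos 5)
Your proof is correct. For part (1) it follows the same route the paper indicates: use the dictionary \eqref{eq74} to identify the generators of $\fs$ with $\phi$-images (up to factors of $\pm i$) of the generators of $\gt$, so that $\C\cdot\fs=\phi(\gt)$ and closure under the bracket follows from $[\fs,\fs]\subseteq\phi(\gt)\cap\cD(A_\R)=\fs$; your dimension count is a harmless extra. One phrasing quibble: ``the listed generators lie in $\cD(A_\R)$, so no nontrivial $\R$-relation can hold'' is not by itself a reason for linear independence --- the correct justification (which you have implicitly) is that, up to nonzero complex scalars, these generators are the $\phi$-images of a $\C$-basis of $\gt$. For part (2) your argument genuinely differs from the paper's. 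The paper simply says to mimic the explicit computations of Lemma~\ref{prop17} and Corollary~\ref{cor19}, i.e.\ to reprove $\Pt_j(u\,\Ftt^r)\in\Ftt^r A_\R$ by a direct calculation with the d'Alembertian. You instead transfer the complex statement through the automorphism $\phi$ (which carries $\Ft^r\cD(A)$ to $\Ftt^r\cD(A)$ and hence idealizer to idealizer) and then descend to the real subring via $\Ftt^r\cD(A)\cap\cD(A_\R)=\Ftt^r\cD(A_\R)$, which follows from the bimodule decomposition $\cD(A)=\cD(A_\R)\oplus i\,\cD(A_\R)$. This avoids any recomputation and is in the same spirit as the faithful-flatness arguments the paper uses elsewhere in Section~\ref{sec7}; the paper's suggested route has the minor virtue of producing the real identity $\Pt_j(u\,\Ftt^r)=\Pt_j(2r+d)(u)\,\Ftt^r$ explicitly, but both are complete proofs.
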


\begin{proof} (1) It is a routine exercise to see that $\fs$ is a Lie
  algebra; indeed, one can use \eqref{eq74} to reduce this claim to the
formul\ae\ in Lemma~\ref{lem14} and Proposition~\ref{prop16}.
By construction, the
  generators of $\phi(\fs)$ also span $\gt$; whence
  $\fs \otimes_\R \C \cong \gt$.
  
  (2) Mimic the proof of Lemma~\ref{prop17} and Corollary~\ref{cor19}.
\end{proof}

\begin{prop}
  \label{thm79}
  Let $d= d_n = \halfn - r$  and $\fs=\fs(p,r)$. Then $\vphit_\R$ yields an
  isomorphism 
  \[
  \psit_\R : U(\fs) \big/ (J_r \cap U(\fs)) \longisomto \cD(S_{p,r}).
  \]
\end{prop}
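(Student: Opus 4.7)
My plan is to deduce the real statement from the complex Theorem~\ref{mainthm1} by a complexification argument. Set $\St = A/\Ftt_p^r A$; as $\C$-algebras one has $\St = S_{p,r} \otimes_\R \C$, and since differential operators commute with flat base change, $\cD(\St) = \cD(S_{p,r}) \otimes_\R \C$. Moreover, the $\C$-algebra automorphism $\phi = \phi_{p,q}$ of $\cD(A)$ sends $\Ft$ to $\Ftt_p$ and so induces a $\C$-algebra isomorphism $\Phi : \cD(R_r) \longisomto \cD(\St)$.

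The key step is to identify, inside $\cD(A)$, the complexified Lie algebra $\fs_\C := \fs \otimes_\R \C$ with $\phi(\gt_d)$. This is a direct verification using the relations~\eqref{eq74}: each generator of $\gt_d$ listed in Notation~\ref{rem165} equals, after multiplication by a suitable power of $i$, the image under $\phi$ of one of the generators of $\fs$ displayed in Lemma~\ref{cor76}. Granting this identification, and combining Theorem~\ref{mainthm1} with $\Phi$, we obtain a $\C$-algebra isomorphism
\[
U(\fs_\C) \big/ \widetilde{J}_r \longisomto \cD(\St),
\]
where $\widetilde{J}_r$ is the ideal of $U(\fs_\C)$ corresponding under the identification $\fs_\C \cong \gt_d$ to the ideal $J_r \subset U(\g)$. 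By the usual abuse of notation, I shall write $J_r$ for $\widetilde{J}_r$.

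The last step is linear-algebraic. The real homomorphism $\vphit_\R : U(\fs) \to \cD(S_{p,r})$ from Lemma~\ref{cor76}(2) extends by $\C$-linearity to the $\C$-algebra map $U(\fs_\C) \to \cD(\St)$ just constructed, simply because both arise from the action by commutators on elements that already live inside $\cD(A_\R) \subset \cD(A)$. For any $\R$-linear map $f : V \to W$ with complexification $f_\C : V \otimes_\R \C \to W \otimes_\R \C$, one has $\ker f = \ker f_\C \cap V$ and $f$ is surjective \sissi $f_\C$ is; applied to $\vphit_\R$, this yields both surjectivity and $\ker \vphit_\R = J_r \cap U(\fs)$, which is exactly the claim.

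The only nontrivial step is the identification $\fs_\C = \phi(\gt_d)$ within $\cD(A)$. This will be the main obstacle in the sense that one must keep track of the factors of $i$ coming from the definition of $\phi$ in \eqref{eq71} and of the real generators in Lemma~\ref{cor76}, but it is otherwise an entirely mechanical computation reducing to the formul\ae\ already recorded in~\eqref{eq74}.
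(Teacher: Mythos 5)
Your proposal is correct and follows essentially the same route as the paper: in both, the automorphism $\phi$ carrying $\Ft^r$ to $\Ftt_p^r$ induces $\cD(R_r)\cong\cD(\St)$, the complex isomorphism of Theorem~\ref{mainthm1} is transported through $\phi$, and faithful flatness of $-\otimes_\R\C$ yields surjectivity and the kernel identification. The one cosmetic difference is that you invoke compatibility of $\cD(-)$ with flat base change up front to get $\cD(\St)=\cD(S_{p,r})\otimes_\R\C$, whereas the paper instead derives that equality as a byproduct of the surjectivity argument; either route is fine.
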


\begin{proof}
  Recall that $U(\fs) \otimes_\R \C \cong U(\gt)$, by Lemma~\ref{cor76}.
  Also, by construction, $\phi(\Ft^r)={\Ftt}^r$ and hence induces an
  isomorphism $\phi: \cD(R_r) \to \cD(\St)$. Thus one has a natural map
  $\vphit= \phi\circ\psi_r$ from $U(\g) $ to $\cD(\St)$ which, by
  Theorem~\ref{mainthm1}, is surjective.
    
  But, if $\vphit_\R$ is the map from Lemma~\ref{cor76}, then
  $\vphit_\R \otimes_\R \C = \vphit $.  By faithful flatness of the functor
  $-\otimes_\R \C$ this implies that $\vphit_\R$ is surjective and
  $\Ker(\vphit_\R)= \Ker(\vphit) \cap U(\fs)$.  (Notice that this also
  proves that $\cD(R_r)\cong \cD(\St)=\cD(S_{p,r})\otimes_\R \C$.)
  Finally, by Theorem~\ref{mainthm1}, again, $\Ker(\vphit)=\Ker(\psi)=J_r$,
  as required.
\end{proof}

The real forms of $\sontwo$ are classified, see
\cite[Chap.~X]{hel}, and in Helgason's notation   are   the real Lie algebras 
 $\fso(p'+1,q'+1)$ for $p'+q'=n$.  It is therefore not surprising that we
 have the following result.  
   
\begin{prop}
  \label{cor745}
  The Lie algebra $\fs=\fs(p,r)$   is isomorphic
  to $\fso(p+1,q+1)$ for $p+q=n$.
\end{prop}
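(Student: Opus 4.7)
The plan is to exhibit a faithful real representation $\rho : \fs \to \fgl(V)$ on an $(n+2)$-dimensional real space $V$ equipped with a symmetric bilinear form $B$ of signature $(p+1, q+1)$, whose image lies in $\fso(V, B) \cong \fso(p+1, q+1)$, and to conclude by a dimension count. Lemma~\ref{cor76}(1) already shows that $\fs$ is a real form of the simple complex Lie algebra $\sontwo$, and the classification of real forms of $\sontwo$ (see \cite[Chap.~X]{hel}) then guarantees $\fs \cong \fso(a, b)$ for some $a + b = n + 2$; only the signature remains to be pinned down.

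Take $V = \R e_0 \boplus \bigoplus_{j=1}^n \R e_j \boplus \R e_{n+1}$ with $B$ defined by $B(e_0, e_{n+1}) = 1$, $B(e_j, e_j) = \epsilon_j$ for $1 \le j \le n$, and all other pairings zero, where $\epsilon_j = +1$ for $j \le p$ and $\epsilon_j = -1$ for $j > p$. Diagonalizing the hyperbolic plane $\R e_0 \oplus \R e_{n+1}$ yields signature $(p+1, q+1)$ for $B$. I will specify $\rho$ on generators by the standard conformal pattern: $\rho(\Et_1 + d)$ is diagonal with entries $+1$ on $e_0$, $-1$ on $e_{n+1}$, and $0$ elsewhere; $\rho(X_j)$ sends $e_0 \mapsto e_j$ and $e_j \mapsto -\epsilon_j e_{n+1}$ (other basis vectors to zero); $\rho(\Pt_j)$ sends $e_{n+1} \mapsto c\, e_j$ and $e_j \mapsto -c\, \epsilon_j e_0$ for a scalar $c \in \R$ to be calibrated; and $\rho(D_{kl}), \rho(\Dt_{kl})$ act as the natural rotations/boosts on the middle block, fixing $e_0$ and $e_{n+1}$. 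Each such matrix is skew-adjoint for $B$, so $\rho(\fs) \subseteq \fso(V, B)$.

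The verification that $\rho$ is a Lie algebra homomorphism reduces to matching matrix brackets against the structure constants of $\fs$, which are the real analogues of Lemma~\ref{lem14} and Proposition~\ref{prop16} already extracted in Lemma~\ref{cor76}(1). The delicate identities are the mixed brackets $[\rho(X_i), \rho(\Pt_j)]$, which must reproduce an expression of the form $\pm \rho(D_{ij})$ or $\pm \rho(\Dt_{ij})$ plus $\delta_{ij}\, \rho(\Et_1 + d)$, with signs depending on whether each of $i, j$ lies in $\{1, \dots, p\}$ or $\{p+1, \dots, n\}$; matching the scalar on the diagonal term pins down $c$, after which the remaining brackets reduce to routine matrix computation. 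Once $\rho$ is a homomorphism, $\Ker \rho$ is an ideal of $\fs$ whose complexification is an ideal in the simple algebra $\sontwo$; since $\rho(\Et_1 + d) \ne 0$ this kernel must vanish. The dimension count $\dim_\R \fs = \binom{n+2}{2} = \dim_\R \fso(p+1, q+1)$ then forces $\rho$ to be the desired isomorphism. The main obstacle is combinatorial rather than conceptual: the careful sign bookkeeping needed to thread through the $\epsilon_j$ and the $\phi$-twist of~\eqref{eq71}--\eqref{eq74}.
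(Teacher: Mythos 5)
The paper gives no argument here at all --- it explicitly omits the proof as a ``straightforward, but not particularly illuminating examination of the relations in $\gt$'', i.e.\ one is meant to read off the real form directly from the structure constants of $\fs$ (equivalently, from the real structure on $\gt$ that $\fs$ defines). Your proposal supplies an actual proof, and by a different and arguably more illuminating route: you realise $\fs$ via the standard conformal pattern on an $(n+2)$-dimensional real space $V=\R e_0\boplus\R^{p,q}\boplus\R e_{n+1}$ carrying an invariant form of signature $(p+1,q+1)$, with $\Et_1+d$ as the grading element, the $X_j$ and $\Pt_j$ as the degree $\mp1$ pieces, and the $D_{kl},\Dt_{kl}$ as the copy of $\fso(p,q)$ in degree $0$. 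The logical skeleton is sound: skew-adjointness of the proposed matrices for $B$ is immediate, injectivity follows from simplicity of $\fs\otimes_\R\C\cong\sontwo$ together with $\rho(\Et_1+d)\ne0$, and the dimension count $\dim_\R\fs=\binom{n+2}{2}=\dim_\R\fso(p+1,q+1)$ forces surjectivity. What you leave unverified --- the calibration of $c$ (which works out to $c=1$ from $[\rho(X_i),\rho(\Pt_i)]=\epsilon_i\rho(\Et_1+d)$ versus $[X_i,\Pt_i]=\epsilon_i(\Et_1+d)$) and the sign conventions on $\rho(D_{kl}),\rho(\Dt_{kl})$ --- is exactly the bookkeeping the paper itself declines to print, so the level of detail is comparable. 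One bonus of your construction worth noting: it is essentially the infinitesimal version of the ambient construction of Section~\ref{howe}, where $\Qt=U_1U_{-1}+\Ft_p$ is precisely a form of signature $(p+1,q+1)$ on $\R^{n+2}$ and $\g$ acts by the linear vector fields $E_{ab}$; your $e_0,e_{n+1}$ play the roles of the hyperbolic coordinates $U_1,U_{-1}$, so the two descriptions corroborate each other.
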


\begin{proof} The proof is omitted since it follows from straightforward,
  but not particularly illuminating examination of the relations in $\gt$.
\end{proof}
 
 Combined with Proposition~\ref{thm79} this gives:
\begin{thm}
  \label{thm749}
  Let $p+q= n \ge 3$, $r \ge 1$ and $\Ftt_p= \sum_{j=1}^p X_j^2 -
  \sum_{j=p+1}^n X_j^2 \in A_\R= \R[X_1,\dotsc,X_n]$. Then,
  \[ 
\cD\bigl(A_\R/\,\Ftt^r_p A_\R\bigr) \ \cong \ \frac{U(\fso(p+1,q+1))}{J_r \cap
  U(\fso(p+1,q+1))}
\]

\noindent
  where $J_r$ is the primitive ideal of $U(\sontwo)$ described in
  Corollary~\ref{cor211}.\qed
\end{thm}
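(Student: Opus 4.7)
The plan is straightforward: the theorem is essentially the concatenation of the two results just established, so I would simply combine Proposition~\ref{thm79} and Proposition~\ref{cor745}. First I would invoke Proposition~\ref{thm79} to obtain the isomorphism
\[
\psit_\R : U(\fs)\big/(J_r \cap U(\fs)) \ \longisomto\ \cD(S_{p,r})\ =\ \cD\bigl(A_\R/\,\Ftt^r_p A_\R\bigr),
\]
where $\fs = \fs(p,r)$ is the real Lie subalgebra of $\cD(A_\R)$ defined in Lemma~\ref{cor76}. Then I would apply Proposition~\ref{cor745}, which identifies $\fs$ with $\fso(p+1,q+1)$; transporting this identification through the above isomorphism yields the desired statement.

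The only point that requires a small remark is that the ideal $J_r \cap U(\fs)$ on the left-hand side of Proposition~\ref{thm79} corresponds, under the isomorphism $\fs \cong \fso(p+1,q+1)$, to the intersection $J_r \cap U(\fso(p+1,q+1))$ appearing in the statement of the theorem; this is immediate since both sides are defined as the kernel of the map $\vphit_\R$ (equivalently, the contraction of $J_r$ from $U(\g) = U(\fs) \otimes_\R \C$ to $U(\fs)$), and this contraction commutes with the real isomorphism of Proposition~\ref{cor745}. Since both of the input propositions are already proven, there is no genuine obstacle here: the proof is essentially a one-line assembly, and I would write it as such, with a pointer back to Corollary~\ref{cor211} for the identification of $J_r$ as the primitive ideal described there.
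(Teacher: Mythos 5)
Your proposal is correct and coincides with the paper's own argument: the paper states Theorem~\ref{thm749} immediately after Proposition~\ref{cor745} with the phrase ``Combined with Proposition~\ref{thm79} this gives'' and a terminal \qed, so the intended proof is exactly the one-line assembly of Propositions~\ref{thm79} and~\ref{cor745} that you describe. Your extra remark about the compatibility of the contraction $J_r \cap U(\fs)$ with the identification $\fs \cong \fso(p+1,q+1)$ is a reasonable (and harmless) elaboration of a point the paper leaves implicit.
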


In Section~\ref{symmetries} we showed that the higher symmetries of the
Laplacian were equal to a factor of the enveloping algebra of
$\sontwo$. However, in 
applications (see, for example, \cite{bsss90,ea05}), one is interested in
the real case.  To end this section we  show
that those  results from  Section~\ref{symmetries} also  descend readily to the real case.

We are therefore interested in symmetries of powers of the d'Alembertian
$\dalembert=\dalembert_p$ for a fixed integer $p$. Now the basic results
from Section~\ref{symmetries} do restrict to the real field. In particular,
the Fourier transform $\cF$ from \eqref{eq80b} is well-defined on
$\cD(A_\R)$ and, by Lemma~\ref{lem81}, satisfies
$\cF(\dalembert) = 2{\Ftt_p}$. Therefore, the proof of
Proposition~\ref{cor85} can be used mutatis mutandis to show that $\cF$
induces an anti-isomorphism
\begin{equation}\label{final1}
      \cD(S)  \longrightarrow  \Sscr{\dalembert^r}   \qquad\text{for}\
  S=S_{p,r}=A_\R/\Ftt^r_p A_\R. 
  \end{equation}

  It is known, see for example \cite[2.2 and 2.3]{dfdg}, that for each real
  form $\fso(p+1,q+1)$ of $\g$ there exists a Chevalley anti-involution
  $\varkappa$ on $\g$ which stabilizes this real form. Moreover $\varkappa$
  fixes $J_r$ by \cite[5.2~(2)]{jantzen}.    It then follows from
  Theorem~\ref{thm749} that $\varkappa$ induces an anti-automorphism
  $\varkappa$ on $\cD(S)$. 
We therefore obtain the following analogue of
  Theorem~\ref{thm892}:

  \begin{thm}
    \label{thm8922}
    Let $n = p+q \ge 3$, $r \ge 1$ and $\dalembert=\dalembert_p$ be the
    d'Alembertian on $\R^{p,q}$. There exists a primitive ideal $J_r$ of
    $U(\sontwo)$ and  algebra isomorphisms:    
    \[
   \cF\circ\varkappa \circ \psit_\R   \ : \  \frac{U(\fso(p+1,q+1)) }{ (J_r \cap U(\fso(p+1,q+1)))} 
     \ \longisomto\  \cD(S_{p,r})     \ \longisomto\ 
      \Sscr{\dalembert^r}.
    \]
    Consequently, $\Sscr{\dalembert^r}$ is a primitive  noetherian ring that is
    a maximal order in its simple artinian quotient ring.  Moreover it is
    generated as an algebra by the elements
    \[
      \{\Et_1+d ; \ \partial_{X_k}; \quad \widetilde{Q}_j, 1 \le j,k \le
      n; \ D_{k,l}, 1 \le k, l \le p \ \text{or}\ p+1 \le k, l \le n;
      \ \Dt_{kl}, 1 \le k \le p < l \le n\},
    \]
    thought of as differential operators on $S_{p,r}$.  Here
  \[
    \widetilde{Q}_j=\cF(\Pt_j)= \begin{cases}  \Ftt\, \partial_{X_j} -
     2 X_j(\Et_1+d) & \text{if $1 \le j \le p$}
      \\
       \Ftt\, \partial_{X_j} + 2X_j(\Et_1+d) \ & \text{if
        $p+1 \le j \le n$.}
  \end{cases}
\]
\end{thm}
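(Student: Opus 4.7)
The plan is to chain together three maps already available in the paper: the isomorphism $\psit_\R$ of Theorem~\ref{thm749}, a Chevalley anti-involution $\varkappa$ that fixes $J_r$ and stabilizes the real form $\fso(p+1,q+1)$, and the Fourier anti-isomorphism coming from \eqref{final1}. Composing them converts one anti-isomorphism into an isomorphism, which is exactly what is needed for $\cF\circ\varkappa\circ\psit_\R$ to be a bona fide algebra isomorphism from $U(\fso(p+1,q+1))/(J_r\cap U(\fso(p+1,q+1)))$ onto $\Sscr{\dalembert^r}$.

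First I would recall that by Proposition~\ref{cor745} and Theorem~\ref{thm749} we already have the identification $\fs(p,r)\cong\fso(p+1,q+1)$ and an isomorphism $\psit_\R$ from $U(\fso(p+1,q+1))/(J_r\cap U(\fso(p+1,q+1)))$ onto $\cD(S_{p,r})$. Next, I would invoke the existence of a Chevalley anti-involution $\varkappa$ on $\g=\sontwo$ stabilizing the real form (the citation \cite{dfdg} in the paragraph preceding the theorem), and note that $\varkappa(J_r)=J_r$ by \cite[5.2~(2)]{jantzen}. Hence $\varkappa$ descends to an anti-automorphism of $U(\fso(p+1,q+1))/(J_r\cap U(\fso(p+1,q+1)))$, which under $\psit_\R$ transports to an anti-automorphism of $\cD(S_{p,r})$ (still denoted $\varkappa$). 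Finally I would apply the Fourier anti-isomorphism $\cF:\cD(S_{p,r})\to\Sscr{\dalembert^r}$ from \eqref{final1}. The composition $\cF\circ\varkappa\circ\psit_\R$ is then an algebra isomorphism, being a composition of an isomorphism with two anti-isomorphisms.

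The ring-theoretic properties of $\Sscr{\dalembert^r}$ (primitive, noetherian, maximal order in its simple artinian quotient) now transfer from the corresponding properties of $U(\fso(p+1,q+1))/(J_r\cap U(\fso(p+1,q+1)))$. For the noetherian property one simply observes that $\UU/(J_r\cap\UU)$ is finitely generated as an algebra, being a homomorphic image of the enveloping algebra of a finite-dimensional Lie algebra. For the maximal order property and the Goldie rank statement, one complexifies and appeals to Theorem~\ref{mainthm1}, using faithful flatness of $-\otimes_\R\C$ as in the proof of Proposition~\ref{thm79}; alternatively the maximal order property is preserved by the anti-isomorphism $\cF$ applied to $\cD(S_{p,r})$.

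For the explicit generators, I would observe that the Lie algebra $\fs(p,r)\subset\cD(A_\R)$ is spanned by the elements listed in Lemma~\ref{cor76}, namely $\Et_1+d$, the $X_k$, the $\Pt_j$ and the various $D_{k,l}$ and $\Dt_{kl}$. Pushing these generators through $\cF\circ\varkappa$, one uses Lemma~\ref{lem81} together with the fact that $\varkappa$ preserves the Cartan and root structure up to sign to compute that $\cF(X_j)$ becomes $\pm\tfrac12\ddx{j}$, $\cF(\Et_1+d)=\Et_1+d$, $\cF(D_{k,l})=\pm D_{k,l}$ and $\cF(\Dt_{kl})=\pm\Dt_{kl}$, while $\cF(\Pt_j)=\widetilde{Q}_j$ as displayed. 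Since scalar multiples and signs are irrelevant in a set of algebra generators, we recover exactly the list in the statement. The main obstacle I expect is the bookkeeping in the last step: verifying that $\varkappa$ can indeed be chosen so that after composition with $\cF$ the images of the generators are precisely those displayed, with the correct signs and coefficients, which is essentially the content of Remark~\ref{involutions} applied in the real setting.
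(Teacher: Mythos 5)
Your proposal is correct and follows essentially the same route as the paper: the isomorphisms come from composing Theorem~\ref{thm749}, the anti-involution $\varkappa$ fixing $J_r$, and the Fourier anti-isomorphism of \eqref{final1}; the generators come from applying $\cF$ to Lemma~\ref{cor76}; and primitivity together with the maximal order property are obtained by complexifying to $\cD(R_r)$ and descending via faithful flatness of $-\otimes_\R\C$, exactly as in the paper. (Your parenthetical alternative that $\cF$ alone transfers the maximal order property is redundant since one must first establish it for $\cD(S_{p,r})$, but your primary argument is the right one.)
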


\begin{proof} The isomorphisms follow by combining Theorem~\ref{thm749}
  with \eqref{final1}.  It follows immediately that
  $\Scr=\Sscr{\dalembert^r}$ is noetherian. The fact that $\Scr$ has the
  specified generators then follows from the Fourier transform applied to
  Lemma~\ref{cor76}.

  It remains to prove that $\Scr$ (or, equivalently $\cD(S)$) is a
  primitive maximal order.  Recall from the proof of
  Proposition~\ref{thm79} that $\cD(S)\otimes_\R \C =\cD(\St)\cong
  \cD(R_r)$. Thus  $\cD(\St)$ is primitive and it follows, for example from
  \cite[10.1.9]{MR}, that $\cD(S)$ is primitive.
  If $\cD(S)$ is not a maximal order, there exists an overring
  $\cD(S)\subset T$ with $aTb\subseteq \cD(S)$, for some regular elements
  $a,b\in \cD(S)$. Tensoring with $\C$ shows that
  $a(T\otimes\C)b\subseteq \cD(\St)$ and hence, by Theorem~\ref{mainthm1},
  that $T\otimes\C\subseteq \cD(\St)$. By the faithful flatness of
  $-\otimes_\R \C$, this forces $T=\cD(S)$. Thus, $\cD(S)$ is a maximal
  order.
\end{proof}

\begin{rem} The theorem recovers the generators for
  $\Sscr{\dalembert^r}$ given in \cite[(4.8)]{ss92}.
\end{rem}
 
\begin{cor}\label{cor8922}  Keep the notation of the theorem
  and consider
  $\mathcal{A}=\R[\partial_{X_1},\dots, \partial_{X_n}]/(\dalembert^r_p)$
  under its natural $\Sscr{\dalembert_p^r}$-module
  structure.  If $n$ is even with $r\geq \halfn$, then
  $\mathcal{A} $ has a unique proper factor
 module, which is finite
  dimensional. 
  Otherwise $\mathcal{A}$ is an irreducible module.
  \end{cor}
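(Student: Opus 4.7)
The plan is to transfer the question about $\mathcal{A}$ successively to a question about $S_{p,r}=A_\R/\Ftt_p^rA_\R$ as a $\cD(S_{p,r})$-module, then to its complexification $\St = S_{p,r}\otimes_\R\C\cong R_r$ as a $\cD(R_r)$-module, where Theorem~\ref{thm26} already gives the answer. The final step is to descend from $\C$ back to $\R$ by faithful flatness.

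First, since the Fourier transform $\cF$ of~\eqref{eq80a}-\eqref{eq80b} restricts to an anti-involution of $\cD(A_\R)$ sending $\R[\partial_{X_1},\dots,\partial_{X_n}]$ to $A_\R$ and $\dalembert_p^r$ to $2^r\Ftt_p^r$, it induces an isomorphism of $\R$-vector spaces $\mathcal{A}\longisomto S_{p,r}$ that intertwines the left action of $\Sscr{\dalembert_p^r}$ on $\mathcal{A}$ with the left action of $\cD(S_{p,r})$ on $S_{p,r}$ under the anti-isomorphism $\cF\colon\cD(S_{p,r})\longisomto\Sscr{\dalembert_p^r}$ of~\eqref{final1}. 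Hence it suffices to prove the analogous statement for $S_{p,r}$ as a $\cD(S_{p,r})$-module.

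Next, I would complexify. The automorphism $\phi$ of~\eqref{eq71} yields an isomorphism $S_{p,r}\otimes_\R\C \longisomto \St = A/\Ft^r\hskip-1pt A$, and via Proposition~\ref{thm79} (together with Theorem~\ref{mainthm1}) the $\cD(\St)$-module $\St$ is identified with $R_r$ viewed as a $U(\g)/J_r$-module. By Theorem~\ref{thm26}, if either $n$ is odd or $n$ is even with $r<\halfn$, then $R_r=L(\lambda)$ is simple; otherwise $R_r$ has a unique proper nonzero submodule $\socRr\cong L(\mu)$, and the quotient $R_r/\socRr\cong L(\lambda)$ is finite dimensional.

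Finally, I would descend the conclusion to $\R$ using the fact that $-\otimes_\R\C$ is faithfully flat and the submodule lattice behaves well. Let $\sigma$ be the complex-conjugation involution of $\St=S_{p,r}\otimes_\R\C$, which is $\cD(S_{p,r})$-semilinear; a $\C$-subspace of $\St$ descends to a real subspace of $S_{p,r}$ iff it is $\sigma$-stable. In the simple case any nonzero proper $\cD(S_{p,r})$-submodule $M\subset S_{p,r}$ would give a nonzero proper $\cD(\St)$-submodule $M\otimes_\R\C\subset\St$, contradicting Theorem~\ref{thm26}(1); thus $S_{p,r}$ is simple. In the remaining case the submodule $\socRr$ of $\St$ is canonically defined (it is the socle), hence $\sigma$-stable, so $\socRr=N\otimes_\R\C$ for a unique $\cD(S_{p,r})$-submodule $N\subset S_{p,r}$, and $S_{p,r}/N$ is finite dimensional because $(S_{p,r}/N)\otimes_\R\C\cong \St/\socRr$ is; uniqueness of $N$ among proper submodules follows because any proper $\cD(S_{p,r})$-submodule $M\subsetneq S_{p,r}$ complexifies to a proper submodule of $\St$, which by Theorem~\ref{thm26}(2) must be contained in $\socRr$, giving $M\subseteq N$.

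The main point requiring care is the second paragraph: one has to check that the action of $\Sscr{\dalembert_p^r}$ on $\mathcal{A}$ described informally in the statement really is the one transported from $\cD(S_{p,r})$ on $S_{p,r}$ by $\cF$, so that the submodule lattices correspond. Granted this identification, the descent argument is essentially formal, and the heavy lifting has already been carried out in Theorem~\ref{thm26} and Theorem~\ref{thm892}(2).
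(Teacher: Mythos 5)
Your proof is correct and reaches the same endpoint, but it packages the reductions differently from the paper and uses a different descent argument. You first apply the Fourier transform to pass from $\mathcal{A}$ to $S_{p,r}$ and only then complexify to $\St\cong R_r$, whereas the paper complexifies directly to $\mathcal{A}_\C$ and cites Theorem~\ref{thm892}(2), in whose proof the Fourier transform has already been applied; these two orderings are interchangeable and equally valid. The genuine point of divergence is the descent from $\C$ to $\R$. You argue that the socle $\socRr\subset\St$, being canonical, is stable under the $\cD(S_{p,r})$-semilinear complex conjugation $\sigma$ and therefore descends to a real submodule $N\subset S_{p,r}$, and you then check $N$ is the unique proper submodule; this is sound (the semilinearity of $\sigma$ holds because $\cD(\St)=\cD(S_{p,r})\otimes_\R\C$, as established in the proof of Proposition~\ref{thm79}). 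The paper instead sidesteps any discussion of $\sigma$ by observing that $\mathcal{A}_\C\cong\mathcal{A}^{(2)}$ as $\Sscr{\dalembert_p^r}$-modules, so that a finite-dimensional factor of $\mathcal{A}_\C$ immediately yields one of $\mathcal{A}$; that argument is a bit more elementary and avoids having to verify semilinearity, though your version has the modest advantage of making uniqueness of the proper factor more transparent. Your caveat in the last paragraph about identifying the "natural" module structure on $\mathcal{A}$ with the one transported from $\cD(S_{p,r})$ is legitimate — the paper also leaves this implicit — but does not affect the correctness of the argument.
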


  \begin{proof} Passing from the $\Sscr{\dalembert_p^r}$-module
    $\mathcal{A}$ to the $\Sscr{\Delta_1^r}$-module
    $\mathcal{A}_\C=\C[\partial_{X_1},\dots, \partial_{X_n}]/(\Delta_1^r)$
    is given by $\phi^{-1}\circ(-\otimes_\R \C)$, where the automorphism $\phi$ of 
    $\Sscr{\Delta_1^r}$ is induced by \eqref{eq71}.
    We claim that the corollary follows by the faithful
    flatness of  $\phi^{-1}\circ(-\otimes_\R \C)$ and Theorem~\ref{thm892}(2).
    
    In order to prove the claim, we first note that, as a
    $\Sscr{\Delta_1^r}$-module, $\mathcal{A}_\C$ satisfies the claimed
    results by Theorem~\ref{thm892}(2).  If $ \mathcal{A}$ is not a simple
    $\Sscr{\Delta_1^r} $-module, then faithful flatness implies that the
    same is true of $\mathcal{A}_\C$ as an $\Sscr{\dalembert_p^r}$-module
    and hence $n$ must be even with $r\geq \halfn$. Moreover, faithful
    flatness and Theorem~\ref{thm892}(2) further imply that $\mathcal{A}$
    will then have a simple infinite dimensional submodule with a simple
    finite dimensional factor module.

  Conversely, if $\mathcal{A}_\C$ is not simple as an
  $ \Sscr{\Delta_1^r}$-module, then it has a finite dimensional factor
  module, as an $\Sscr{\Delta_1^r}$-module and hence as an
  $\Sscr{\dalembert_p^r}$-module. Since
  $\mathcal{A}_\C \cong \mathcal{A}^{(2)}$ as
  $\Sscr{\dalembert_p^r}$-modules, this implies that $\mathcal{A}^{(2)}$
  and hence $\mathcal{A}$ also has a finite dimensional factor module and
  so is certainly not simple.
  \end{proof}

  \section{Harmonic polynomials and $\cO$-duality}
  \label{sec9}

  The  set of  harmonic polynomials $H=\{p\in A : \Delta_1(p)=0\}$  and its real analogues 
are  fundamental objects with many
  applications, notably  in Lie theory (see, for example,
  \cite{GW,hss,KO}) and physics (see, for example, 
  \cite{bek08,bek11}).  In the latter subject, 
  Dirac \cite{dirac1,dirac2}  constructed a remarkable unitary irreducible representation
  $D\bigl(\frac{n}{2}-1,0\bigr)$ of the Lie algebra $\fso(n,2)$, known
  as the scalar singleton module.  This module can be realized in different
  ways, see \cite{bek08, bek11}: as a highest weight module; as harmonic
  scalar fields $\phi$ (in the sense that $\dalembert_{n-1}.\phi = 0$) on
  the space $\R^{n-1,1}$ which are preserved by $\fso(n,2)$; or as harmonic
  distributions $\vphi$ of weight $1 - \halfn$ on the "ambient space"
  $\R^{n,2}$ (hence $\dalembert_{n,2}.\vphi = 0$). The algebra of
  symmetries of the scalar singleton, as defined in \cite[4.4]{bek08}, acts
  on $D\bigl(\frac{n}{2}-1,0\bigr)$. This algebra can be identified,
  thanks to the results of \cite{ea05} and \cite{vas03}, with the algebra
  $\Scr(\dalembert_{n-1,1})$ (see also Remark~\ref{rem922}~(2)). 
  
 Replacing the condition $\dalembert_{n-1}.\phi = 0$ by
  $\dalembert_{n-1}^r.\phi = 0$, generalises the   singleton module   to
  give \emph{higher singleton modules}.    
 These have been studied in \cite{bekgri}, with
   applications to anti-de Sitter gauge fields and related
   topics.

It is therefore natural to give mathematical description of
these higher singletons or, more generally, of the space
$ \cM_{p,r} = \bigl\{a \in A_\R \ : \ \ \dalembert_p^r(a)=
0\bigr\}$.
This is the topic of this section where we describe the
structure of $\cM_{p,r}$ as a representation of the
orthogonal Lie algebra $\fso(p+1,q+1)$ and show that its
complex analogue is simply the category $\cO$ dual of
$N(\lambda)=R_r$ (see Corollaries~\ref{cor915} and
\ref{cor916} for the precise statements).

 We now make this precise.   We continue with the notation from Sections~\ref{notation} and~\ref{sec2}
  and start with the formal definitions. Since we will be able to
   prove results for both the real and complex cases, we fix
  $1\leq p\leq n$ and keep the notation $\Ftt_p\in A_\R$, and
  $\fs=\fs(p,r)\subset \cD(A_\R)$ and $\dalembert_p\in \cD(A_\R)$ from the
  beginning of Section~\ref{sec7}.   
   The spaces that interest us  are defined as follows.
 
  \begin{defn}
    \label{hharmonics}
    Let $r \in\N^*$. The elements of the subspace
    \[
      \cM_{p,r} = \bigl\{a \in A_\R \ : \ \ \dalembert_p^r(a)= 0\bigr\}
    \]
    will be called \emph{higher harmonics} or \emph{harmonics of level $r$
      in $A_\R$}.  We clearly also have the analogous complex space
    \[
      M_{p,r}= \bigl\{a \in A \ : \ \ \dalembert_p^r(a)= 0\bigr\}.
    \]
  \end{defn}

  Observe that $A= A_\R \otimes_\R \C$ and that $\dalembert_p^r$ has real
  coefficients. Hence, if $a= u + iv \in A$ with $u,v \in A_\R$, we have:
  $\dalembert_p^r(a) = \dalembert_p^r(u) + i \dalembert_p^r(v) = 0$ $\iff$
  $\dalembert_p^r(u) = \dalembert_p^r(v) =0$. Therefore,
  $ M_{p,r} = \cM_{p,r} \otimes_\R \C$.

  In the notation of Theorem~\ref{thm8922} set
  $\tau = \cF\circ\varkappa \circ \psit_\R$. Thus
  $\tau : \ \fs \to \Sscr{\dalembert_p^r}$ is a Lie algebra homomorphism
  that induces an isomorphism
  $\tau : U(\fs)/\cJ_r \cong \Sscr{\dalembert_p^r}$ for the appropriate
  ideal $\cJ_r$.  By Proposition~\ref{thm79}, upon tensoring with the
  complex numbers, we obtain an isomorphism
  $\tau\otimes_\R\C: U(\g)/J_r \cong \Sscr{\Delta_1^r}$.

  \begin{lem}
    \label{lem911} {\rm (1)} The map $\tau$ defines a representation of
    $\fs$ in the space $\cM_{p,r}$ of higher harmonics.
  
    {\rm (2)} Similarly $\tau\otimes_\R\C$ defines a representation of $\g$
    in the space $ M_{p,r}$.
  \end{lem}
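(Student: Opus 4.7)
The plan is to observe that the space of higher harmonics $\cM_{p,r}$ carries a natural action of the algebra of symmetries $\Sscr{\dalembert_p^r}$ and then to transport this action through $\tau$ to obtain an $\fs$-module structure.

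First I would check that the ideal $\cD(A_\R)\dalembert_p^r$ of trivial symmetries annihilates $\cM_{p,r}$: if $T \in \cD(A_\R)$ and $a \in \cM_{p,r}$, then $(T\dalembert_p^r)(a) = T(\dalembert_p^r(a)) = T(0) = 0$. Next, I would verify that the idealizer $\I\bigl(\cD(A_\R)\dalembert_p^r\bigr)$ of Definition~\ref{def82} preserves $\cM_{p,r}$: if $Q \in \cD(A_\R)$ with $\dalembert_p^r Q = Q'\dalembert_p^r$ for some $Q' \in \cD(A_\R)$, then for $a \in \cM_{p,r}$ we have $\dalembert_p^r(Q(a)) = Q'(\dalembert_p^r(a)) = 0$, so $Q(a) \in \cM_{p,r}$. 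These two facts together define a well-posed action of the quotient algebra $\Sscr{\dalembert_p^r} = \I\bigl(\cD(A_\R)\dalembert_p^r\bigr) / \cD(A_\R)\dalembert_p^r$ on $\cM_{p,r}$.

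Since by Theorem~\ref{thm8922} the map $\tau : U(\fs) \to \Sscr{\dalembert_p^r}$ is an algebra homomorphism (factoring through an isomorphism $U(\fs)/\cJ_r \cong \Sscr{\dalembert_p^r}$), composing with $\tau$ yields the desired $U(\fs)$-module structure on $\cM_{p,r}$, proving~(1). For~(2), I would note that $M_{p,r} = \cM_{p,r} \otimes_\R \C$ (as observed just before the lemma), and that the whole construction is compatible with the scalar extension $-\otimes_\R \C$: the algebra of symmetries $\Sscr{\Delta_1^r}$ is obtained by an analogous quotient inside $\cD(A)$, and it acts on $M_{p,r}$ by exactly the same argument. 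Then $\tau \otimes_\R \C$ factors through $U(\g)/J_r \cong \Sscr{\Delta_1^r}$ by Proposition~\ref{thm79} and the identification in Theorem~\ref{thm8922}, giving the required $\g$-action.

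There is essentially no obstacle here; the statement is a formal consequence of the definition of the algebra of symmetries and of the identification $\tau$ already established. The only point requiring care is the bookkeeping, ensuring that one is consistently working inside $\cD(A_\R)$ (respectively $\cD(A)$) rather than inside $\cD(S_{p,r})$, since the action of $\Sscr{\dalembert_p^r}$ on $\cM_{p,r}$ is most naturally defined by choosing a lift of each symmetry class in $\I\bigl(\cD(A_\R)\dalembert_p^r\bigr)$ and applying it to a harmonic; the verifications above show this lift is immaterial.
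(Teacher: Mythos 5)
Your argument is correct and is essentially the same as the paper's: the key observation in both cases is that if $Q \in \I(\cD(A_\R)\dalembert_p^r)$ with $\dalembert_p^r Q = Q'\dalembert_p^r$, then $Q$ preserves $\ker \dalembert_p^r = \cM_{p,r}$, and (2) follows by scalar extension. You are slightly more explicit than the paper about why the action of the quotient $\Sscr{\dalembert_p^r}$ is well-posed (that trivial symmetries annihilate $\cM_{p,r}$), which is a harmless and arguably helpful addition; the only minor imprecision is that it is the complexification $\Sscr{\dalembert_p^r}\otimes_\R\C$ inside $\cD(A)$ that acts directly on $M_{p,r}$, this algebra then being identified with $\Sscr{\Delta_1^r}$ via the automorphism $\phi$ of \eqref{eq71}, but this is exactly the identification the paper itself uses.
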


  \begin{proof} It suffices to prove (1).  Let $f \in \cM_{p,r}$ and
    $Y \in \fs$. Then, $\mathcal{Y}=\tau(Y)$ is a symmetry of the operator
    $\dalembert_p^r$ whence,
    $\dalembert_p^r(\mathcal{Y}(f))=( \dalembert_p^r\cdot
    \mathcal{Y})(f) \in \cD(A_\R)\cdot \dalembert_p^r(f) = 0$.
    Therefore, $\mathcal{Y}(f) \in \cM_{p,r}$ and so $\tau$ takes values
    in $\End_\R(\cM_{p,r})$, as required.
  \end{proof}
 
\begin{rem}\label{new-module}  In fact the  $\g$-modules $M_{p,r}$ are twists of each other
  (see the proof of Corollary~\ref{cor916} for the details) and so it will
  suffice to prove results for just one of them.  We will
  use the module
\[M_r=M_{n,r}=\{a\in A : \Delta_1^r(a)=0\},
\]
as was also defined in the introduction. For most of this section we
will study this module.   
\end{rem}

We are interested in the following pairing.

\begin{defn}
  \label{pairing}
  The pairing $\ascal{}{}$ is defined by:
  \[
    \ascal{}{} : A\times A\lto \C, \quad \ascal{a}{f} =
    \cterm{\cF(a)(f)}=\cF(a)(f)(0).
  \]
    The \emph{orthogonal} of a subspace $W \subseteq A$ is written
  $W^\perp = \{f \in A : \ascal{W}{f} = 0\}$.
\end{defn}

Recall from Section~\ref{notation} that the Lie subalgebra $\fk \subset \g$
generated by the $E_{jk}$, $2 \le j,k \le \ell$, is isomorphic to $\son$.
The next result is an easy variant on classical results, but since we could
not find an appropriate reference, we include a proof in the Appendix.

\begin{lem}
  \label{lem912} Set $\calIr = \widetilde{\Ft}_p^r A$.
  \begin{enumerate}[\rm (1)]
  \item The bilinear form $\ascal{}{}$ is symmetric and non-degenerate. It
    is also $\fk$-invariant in the sense that
    $\ascal{Y.a}{f} + \ascal{a}{Y.f}= 0$ for all $a,f  \in A$ and
    $Y \in \fk$.
  \item $M_r^\perp = \calIr$ and $\calIr\hskip-2pt^\perp = M_r$.
  \item The form $\ascal{}{}$ induces a non-degenerate $\fk$-invariant
    symmetric pairing $\ascal{}{} : R_r \times M_r \lto \C$.\qed
  \end{enumerate}
\end{lem}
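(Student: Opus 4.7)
The plan is to establish all three claims via explicit monomial calculations combined with the anti-homomorphism property of the Fourier transform $\cF$ on $\cD(A)$.

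For part~(1), I would pass to the $(U_j)$-coordinates of \eqref{nroots}, in which \eqref{eq80a} gives $\cF(U^\alpha) = \partial^\alpha_U$. The pairing $\ascal{U^\alpha}{U^\beta}$ then reduces to a Kronecker-type delta scaled by $\alpha!$, which immediately yields both symmetry of $\ascal{\cdot}{\cdot}$ and perfectness of its restriction to each homogeneous component $A(m)$, hence non-degeneracy. For $\fk$-invariance I would verify the identity $\ascal{Y.a}{f} + \ascal{a}{Y.f} = 0$ directly on the Chevalley generators of $\fk$ listed in Lemma~\ref{chevalley1}, reducing it to a bookkeeping of monomial coefficients. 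The structural reason behind the identity is that $\cF$ interchanges multiplication and differentiation, so that the adjoint of a $\fk$-derivation $Y$ with respect to the Fischer-type pairing $\ascal{\cdot}{\cdot}$ is $-Y$; evaluating the resulting operator identity at the origin, where every generator of $\fk$ annihilates by the vanishing of its constant term, yields the claim.

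For part~(2), the key input is the identity $\cF(\Ft^r) = 2^{-r}\Delta_1^r$ from Lemma~\ref{lem81}. Together with the anti-homomorphism property of $\cF$ this gives, for every $g, f \in A$,
\[
\ascal{\Ft^r g}{f} \ = \ \cF(\Ft^r g)(f)(0) \ = \ \bigl(\cF(g)\,\cF(\Ft^r)\bigr)(f)(0) \ = \ 2^{-r}\ascal{g}{\Delta_1^r f}.
\]
Hence $f \in \calIr^\perp$ if and only if $\ascal{g}{\Delta_1^r f} = 0$ for every $g \in A$, which by~(1) is equivalent to $\Delta_1^r f = 0$, i.e.\ $f \in M_r$. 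This proves $\calIr^\perp = M_r$. The dual statement $M_r^\perp = \calIr$ then follows by a double-orthogonal argument: $\cF(a)(f)(0)$ vanishes when $a, f$ are homogeneous of distinct degrees, so $\ascal{\cdot}{\cdot}$ respects the grading on $A$ and restricts to a perfect pairing on each finite-dimensional summand $A(m)$; hence $(W^\perp)^\perp = W$ for every graded subspace $W$, and applying this to $W = \calIr$ gives the required identity.

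For part~(3), the inclusion $\calIr \subseteq M_r^\perp$ supplied by~(2) ensures that $\ascal{\cdot}{\cdot}$ descends unambiguously to a pairing on $R_r \times M_r$. Non-degeneracy transfers routinely: a class $\bar a \in R_r$ orthogonal to $M_r$ lifts to $a \in M_r^\perp = \calIr$, hence $\bar a = 0$, while any $f \in M_r$ orthogonal to $R_r$ is orthogonal to the whole of $A$ and vanishes by~(1). Finally, $\fk$-invariance descends because both $\calIr = \Ft^r A$ and $M_r$ are $\fk$-stable: the former since $Y(\Ft) = 0$ for every $Y \in \fk$ by Lemma~\ref{lem14}(i), and the latter since $\fk$ commutes with $\Delta_1$, which is essentially the Fourier-dual of the $\fk$-invariance of $\Ft$. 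The main obstacle I anticipate is the $\fk$-invariance in part~(1): while the guiding principle is that $\cF$ negates the $\fk$-action, converting this into an explicit identity requires careful accounting of the signs in \eqref{eq80a}--\eqref{eq80b} and of the Chevalley-involution compatibility from Remark~\ref{involutions}, and is most transparent in the $(U_j)$-coordinates where $\cF$ takes its simplest form.
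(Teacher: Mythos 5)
Your treatment of parts (2) and (3), and of the symmetry and non-degeneracy claims in part (1), matches the paper's proof (Appendix, Lemma~\ref{lem912a}): perfect pairings $\ascal{\cdot}{\cdot}_m$ on each graded component $A(m)$, the identity $\ascal{\Ft^r g}{f}=2^{-r}\ascal{g}{\Delta_1^r f}$ for part~(2), the biorthogonality $L^{\perp\perp}=L$ for graded subspaces, and the $\fk$-stability of $\calIr$ and $M_r$ for part~(3). The $\fk$-invariance in part~(1) is where the proposal runs into trouble. Your structural claim---that the adjoint of a derivation $Y\in\fk$ with respect to $\ascal{\cdot}{\cdot}$ is $-Y$---is not what the paper's $\cF$ actually produces. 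Since $\cF$ is an anti-automorphism and $\cF(Y)$ has no constant term, one finds $\ascal{Y.a}{f}=\ascal{a}{\cF(Y).f}$; and by \eqref{ftransforms}, $\cF(E_{ab})=E_{ba}=\vt(E_{ab})$, the Chevalley anti-involution of Remark~\ref{involutions}, which is \emph{not} $-E_{ab}$. So the direct verification you propose on the Chevalley generators would fail: for $Y=E_{23}\in\fk$, $a=U_3$, $f=U_2$ one has $E_{23}.U_3=U_2$, $E_{23}.U_2=0$, hence $\ascal{Y.a}{f}+\ascal{a}{Y.f}=\ascal{U_2}{U_2}=1\neq 0$. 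Equivalently, the signs in \eqref{eq80b} give $\cF(D_{13})=+D_{13}$, and the identity already fails for $Y=D_{13}$, $a=X_1$, $f=X_3$. (The paper's appendix proof asserts $\cF(g.a)=g.\cF(a)$ for $g\in K=\SO(\Ft)$, but this conflicts with the paper's own $\cF(D_{jk})=\pm D_{jk}$ in Lemma~\ref{lem81}, whose sign genuinely alternates; the Fischer pairing built from this $\cF$ is invariant under the complementary copy $\SO\bigl(\sum_j U_j^2\bigr)$, not under $\SO(\Ft)$.)

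What \emph{is} true---and is in fact what Theorem~\ref{thm913} proves and Corollary~\ref{cor915} uses---is the $\vt$-contravariance
\[
\ascal{Y.a}{f}\ =\ \ascal{a}{\vt(Y).f}\qquad\text{for }Y\in\fk,
\]
and this contragredient relation, not genuine $\fk$-invariance, is what supplies the category-$\cO$ duality downstream. Your plan of verifying on Chevalley generators is sound once recast as a proof of this contravariance, and the remainder of your argument (descent of the pairing to $R_r\times M_r$, non-degeneracy of the induced pairing, $\fk$-stability of $\calIr$ and $M_r$) is correct and independent of the invariance sign.
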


By Lemma~\ref{lem911}, $M_r$ has a $\g$-module structure given by
$a \mapsto \tau(Y).a = \cF(\psit(\vt(Y)))(a)$, for all $Y \in \g$
and $a \in M_r$.  On the other hand, $R_r=N(\lambda)$ is a $\g$-module
through the map $\psi_r : U(\g) \to \cD(R)$ induced by $\psit$. We set
$Y.p = \psi_r(Y)(p)$ for all $Y \in \g$ and $p \in R_r$.  These structures
are related as follows.

\begin{thm}
  \label{thm913}
  The pairing $\ascal{}{} : R_r \times M_r \lto \C$ is $\g$-invariant in
  the sense that
  \[
    \ascal{Y.p}{g} = \ascal{p}{\tau(\vt(Y)).g} =
    \ascal{p}{\cF(\psit(Y))(g)}.
  \]
  for all $Y \in \g$, $p \in R_r$ and $g \in M_r$.
\end{thm}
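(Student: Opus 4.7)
The plan is to establish the general adjoint identity
\[
\ascal{X(a)}{f} \ =\ \ascal{a}{\cF(X)(f)} \qquad \text{for all $X \in \cD(A)$ and $a, f \in A$,}
\]
and then to specialize to $X = \psit(Y)$. Once this identity is known, the first equality in the theorem follows by substituting $a = p$ and $f = g$, together with the observation that $\tau(\vt(Y)).g = \cF(\psit(Y))(g)$: indeed, the definition of the $\g$-action on $M_r$ recalled just before Definition~\ref{pairing} gives $\tau(Z).a = \cF(\psit(\vt(Z)))(a)$, so substituting $Z = \vt(Y)$ and using $\vt^{\,2} = \id$ on $\g$ yields the claim. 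The rightmost equality in the theorem is then merely a restatement of this.

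To prove the adjoint identity, I would first record the two basic relations
\[
\ascal{U_j a}{f}\  =\  \ascal{a}{\ddu{j}(f)} \qquad\text{and}\qquad \ascal{\ddu{j}(a)}{f} \ =\  \ascal{a}{U_j f}
\]
for $a, f \in A$ and $j \in \{0, \pm 2, \dotsc, \pm \ell\}$. The first follows from the factorization $U_j a = U_j \cdot a$ of multiplication operators in $\cD(A)$: applying the anti-automorphism $\cF$ with $\cF(U_j) = \ddu{j}$ (from~\eqref{eq80a}) gives $\cF(U_j a) = \cF(a)\,\ddu{j}$, and evaluating at $f$ and then at $0$ yields the claim. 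The second is a short check on monomials, essentially Leibniz's rule at the origin. Since $\cF$ is an involutive anti-automorphism which swaps the roles of $U_j$ and $\ddu{j}$, an arbitrary $X \in \cD(A)$ written as a polynomial in the $U_j$'s and the $\ddu{j}$'s can be reduced to these two relations by induction, yielding the full adjoint identity.

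Applying this identity with $X = \psit(Y)$, it remains only to check that the formula descends to $R_r \times M_r$. Replacing a lift $p \in A$ of an element of $R_r$ by $p + \Ft^r q$ changes the left-hand side by $\ascal{\psit(Y)(\Ft^r q)}{g}$, which vanishes: Corollary~\ref{cor19} yields $\psit(Y)(\Ft^r q) \in \Ft^r A = \calIr$, and Lemma~\ref{lem912}(2) places $\calIr$ inside $M_r^\perp$. Aside from this interplay between $\psit(Y)$ preserving the left ideal defining $R_r$ and the orthogonality $\calIr = M_r^\perp$, the proof reduces entirely to the anti-automorphism property of the Fourier transform.
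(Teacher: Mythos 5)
Your proof is correct, and it takes a genuinely different and more streamlined route than the paper's. The paper decomposes $\g = \fr^- \boplus \fm \boplus \fr^+$ and verifies the invariance for each family of root vectors separately; the case $Y \in \fr^+$, where $\psit(Y) = V_b$ is a second-order operator, requires an extended explicit computation culminating in the auxiliary identity~\eqref{star}. You instead prove the universal adjoint identity $\ascal{X(a)}{f} = \ascal{a}{\cF(X)(f)}$ for \emph{all} $X \in \cD(A)$ and $a,f\in A$ --- i.e.\ that $\cF$ is precisely the adjoint anti-automorphism for the Fischer-type pairing $\ascal{}{}$ --- and then specialize to $X = \psit(Y)$. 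Your induction on the word length of $X$ is sound: after the base cases $X = U_j$ and $X = \ddu{j}$, peeling off a generator $Y$ from $X = YZ$ gives $\ascal{X(a)}{f}=\ascal{Z(a)}{\cF(Y)(f)}=\ascal{a}{\cF(Z)\cF(Y)(f)}=\ascal{a}{\cF(X)(f)}$, using that $\cF$ is an anti-automorphism (Lemma~\ref{lem81}). (A small simplification: your second basic relation $\ascal{\ddu{j}(a)}{f}=\ascal{a}{U_j f}$ follows from the first together with the symmetry of $\ascal{}{}$ from Lemma~\ref{lem912}(1), so the monomial check is not needed.) Your descent argument to $R_r \times M_r$ is the right one and matches the paper's setup. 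The paper's approach is a direct Lie-algebraic verification that needs no preliminary lemma; yours buys a cleaner conceptual explanation --- $\g$-invariance is an immediate consequence of $\cF$ being the form's adjoint --- at the cost of proving a (mild) general identity first.
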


\begin{proof}
  We first claim that the result is equivalent to proving that
  \begin{equation}
    \label{calcul}
    \cterm{\cF[\psit(Y)(p)](g)} = \cterm{\cF(p)[\cF(\psit(Y))(g)]}.
  \end{equation}
  To see this, note that
  $\ascal{Y.p}{g} = \ascal{\psit(Y)(p)}{g} = \cF[\psit(Y)(p)](g)(0)$
  and $\tau(\vt(Y)).g= \cF(\psit(Y))(g)$. Hence
  $\ascal{p}{\tau(\vt(Y)).g} = \cF(p)[\cF(\psit(Y))(g)](0)$, as
  claimed.

  Write $V_j = \half U_j \Delta_1 - I(d) \ddu{-j}$ as in
  \eqref{proots-2}. The following formul\ae, which are easily checked, will
  be needed in the proof.  \begin{equation}
    \label{ftransforms}
    \cF(E_{ab})= E_{ba}, \quad \text{and} \quad Q_b = \cF(V_b)=  \Ft \ddu{b} - U_{-b} I(d),
    \quad  \text{ for $a,b \in \{0,\pm 2,\dots, \pm \ell\}$.}
  \end{equation}
 
  Recall from \S\ref{notation} that $\g= \fr^- \boplus \fm \boplus \fr^+$
  with $\fm= \C E_{11} \boplus \fk$.  Observe that if~\eqref{calcul} is
  true for $Y,Z \in \g$, then it is true for $[Y,Z]$. Thus, we can reduce
  the verification to the case where $Y$ is a (scalar multiple) of a root
  vector $y_\alpha$ of the Chevalley basis given in
  Proposition~\ref{chevalley2}.  We may therefore prove~\eqref{calcul} by
  considering the cases $Y \in \fk$, $Y\in \fr^+$ and $Y \in \fr^{-}$
  separately.

  Suppose first that $Y = y_\alpha \in \fk$ for $\alpha \in \Phi_1$. From
  Proposition~\ref{chevalley2} we may assume that $Y= E_{ab}$ with
  $a,b \in \{0,\pm 2,\dots, \pm \ell\}$. Then $\psit(Y) = Y= E_{ab}$ and,
  since $Y$ is a derivation, $Y(p)= [Y,p]$. Hence,
  $\cF(Y(p)) = [\cF(p),\cF(Y)]$ and we get
  $\cF(Y(p))(g) = \cF(p)(\cF(Y)(g)) - \cF(Y)(\cF(p)(g))$. But
  $\cF(Y) = E_{ba} =U_b\ddu{a} - U_{-a}\ddu{-b}$, see~\eqref{ftransforms},
  and so $\cF(Y)(q)(0) = 0$ for all $q \in A$ (in particular for
  $q=\cF(p)(g)$). We then get
  $\cterm{\cF(Y(p))(g)} = \cterm{\cF(p)(\cF(Y)(g))}$, as wanted.

  Suppose next that $Y = y_\alpha \in \fr^-$, where either
  $\alpha =-(\vepsilon_1 \pm \vepsilon_b)$ or $\alpha=-\vepsilon_1$. Here,
  since $\psit(y_\alpha)= x_\alpha$, we may assume that $\psit(Y)=U_b$ for
  some $b \in \{0,\pm 2,\dots, \pm \ell\}$; see~Lemma~\ref{chevalley1}.
  Therefore we have
  \[
    \cF[\psit(Y)(p)](g) = \cF(U_bp)(g) = [\cF(p)\cF(U_b)](g) =
    \cF(p)[\cF(\psit(Y))(g)]
  \]
  and, in particular,
  $\cterm{\cF[\psit(Y)(p)](g)} = \cterm{\cF(p)[\cF(\psit(Y))(g)]}$.

  Finally, suppose that $Y = y_\alpha \in \fr^+$, where either
  $\alpha = \vepsilon_1 \pm \vepsilon_b$ or $\alpha=\vepsilon_1$. From
  Lemma~\ref{chevalley1} we may assume that $\psit(Y)= V_b$ for some
  $b \in \{0,\pm 2,\dots, \pm \ell\}$. Recall from~\eqref{ftransforms} that
  $\cF(V_b)= Q_b= \Ft \ddu{b} - U_{-b} I(d)$.  We will need an auxiliary
  calculation: for $p,f \in A$, we claim that
  \begin{equation}\label{star}
    \begin{aligned}
      \quad \cterm{\cF(\Delta_1 p - \Delta_1(p))(f)} & = \biggl[\cF(p
      \Delta_1)(f) + 2\biggl\{U_0\cF(\ddu{0}(p)) + \sum_{\pm j=2}^\ell
      U_{-j}\cF(\ddu{j}(p))\biggr\}(f)\biggr](0)
      \\
      & = \cterm{\cF(p \Delta_1)(f)} = 2\cterm{\Ft \cF(p)(f)} =0.
    \end{aligned}
  \end{equation}
  To see this, notice that
  $\cF(\Delta_1 p - \Delta_1(p)) = \cF(p\Delta_1) +
  \cF\bigl(2\bigl\{\ddu{0}(p)\ddu{0} + \sum_{\pm j=2}^\ell
  \ddu{j}(p)\ddu{-j}\bigr\}\bigr)$.
  Then \eqref{star} follows by applying this equality to $f$ and then
  evaluating at $0$.
  
  We return now to the proof in the case where $\psit(Y) = V_b$. We have
  \begin{equation*}
    \cF[\psit(Y)(p)]  = \cF(V_b(p)) = \cF\Bigl(\half U_b
    \Delta_1(p) - I(d) (\ddu{-b}(p))\Bigr) =
    \half\cF(\Delta_1(p))\ddu{b} - \cF\bigl[I(d)(\ddu{-b}(p))\bigr].
  \end{equation*}
  On the other hand, using
  \[
    \cF\bigl(I(d)\ddu{-b}p\bigr)(g)= [\cF(p)U_{-b}\cF(I(d))](g) =
    [\cF(p)U_{-b}I(d)](g)= \cF(p)\bigl(U_{-b}I(d)(g)\bigr)
  \]
  we get:
  \begin{align*}
    \cF(p)[\cF(\psit(Y))(g)] & = \cF(p)(Q_b(g)) = \cF(p)\bigl(\Ft
                                 \ddu{b}(g) - U_{-b}I(d)(g)\bigr) 
    \\
                               & =\half\cF(\Delta_1p)(\ddu{b}(g))
                                 - \cF\bigl(I(d)\ddu{-b}p\bigr)(g).
  \end{align*}
  It follows that
  \begin{equation}\label{starstar}
    \begin{aligned}
      \quad \cF(p)[\cF(\psit(Y))(g)]- \cF[\psit(Y)(p)](g) & =
      \half\bigl\{\cF(\Delta_1p) -
      \cF(\Delta_1(p))\bigr\}\bigl(\ddu{b}(g)\bigr) \\
      & \phantom{\cF(p)} - \bigl\{\cF\bigl(I(d)\ddu{-b}p\bigr) -
      \cF\bigl[I(d)(\ddu{-b}(p))\bigr]\bigr\}(g).
    \end{aligned}
  \end{equation}
  It is easily checked that
  $I(d)\ddu{-b}p - I(d)(\ddu{-b}(p)) = I(d)p\ddu{-b} + \ddu{-b}(p)\Et_1$.
  Therefore
  \[
    \cF\bigl[I(d)\ddu{-b}p - I(d)(\ddu{-b}(p))\bigr](g) =
    U_{-b}\cF(p)(I(d)(g)) + \Et_1\bigl(\cF(\ddu{-b}(p))(g)\bigr)
  \]
  and this polynomial vanishes at $0$ since
  $\cterm{(U_{-b}q)} = \cterm{\Et_1(q)} = 0$ for all $q \in
  A$.  Equation~\eqref{starstar} then gives 
  $\bigl\{\cF(p)[\cF(\psit(Y))(g)]- \cF[\psit(Y)(p)](g)\bigr\}(0)=
  \half \cF\bigl(\Delta_1p - \Delta_1(p)\bigr)\bigl(\ddu{b}(g)\bigr)(0)$.
  Now applying~\eqref{star} to $f=\ddu{b}(g)$ yields
  $\bigl\{\cF(p)[\cF(\psit(Y))(g)]- \cF[\psit(Y)(p)](g)\bigr\}(0)= 2
  \cterm{\Ft \cF(p)(\ddu{b}(g))} =0$, as required.
\end{proof}

As in \cite[\S2.1]{jantzen}, the dual $M^*$ of a $\g$-module $M$ is endowed
with a $\g$-module structure through
\begin{equation}
  \label{gdual}
  (Y.f)(x)=  f(\vt(Y).x) \quad \text{for all $Y \in \g$, $f \in \g^*$ and $x
    \in M$}.
\end{equation}
Let $\cO$ denote the category of highest weight modules, as in
Section~\ref{notation}.  If $M\in\text{Ob}\,\cO$, then
$M= \bigoplus_{\mu \in \h^*} M^{\mu}$ where $M^\mu$ is the $\mu$-weight
space of $\h$ in $M$. As in \cite[4.10]{jantzen}, the \emph{$\cO$-dual}
$M\spcheck$ of $M$ is then defined by
\[
  M\spcheck = \bigoplus_{\mu \in \h^*} (M\spcheck)^\mu, \quad
  (M\spcheck)^\mu \cong (M^\mu)^*.
\]
By \cite[4.10~(2,3)]{jantzen}, the contravariant functor $M \to M\spcheck$
is exact and satisfies $L(\omega)\spcheck \cong L(\omega)$ for all
$\omega \in \h^*$.

For the next result, recall from Theorem~\ref{thm26} that, if
$R_r=N(\lambda)$ is not a simple $\g$-module, then it has a simple socle
$\socRr\cong L(\mu)$, for the appropriate weight $\mu$, and finite dimensional
factor $\overline{R}=R_r/\socRr\cong L(\lambda)$.

\begin{cor}
  \label{cor915} Under the $\g$-module structure of $M_r$ given by
  Lemma~\ref{lem911}, there is a $\g$-module isomorphism
  $N(\lambda)\spcheck \cong M_r$. Furthermore:
  \begin{enumerate}[{\rm \indent (1)}]
  \item if $n$ is even with $r< \halfn$ or if $n$ is odd, then
    $ M_r \cong L(\lambda)$;
  \item if $n$ is even with $r \ge \halfn$, then $M_r$ has a simple,
    finite dimensional socle $\overline{R}\spcheck \cong L(\lambda)$, with
    quotient $M_r/ \overline{R}\spcheck=\socRr\spcheck\cong L(\mu)$.  The
    formula of the weight $\mu$ is given in
    \eqref{thm26-1}. \end{enumerate}
\end{cor}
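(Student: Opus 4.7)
The plan is to promote the nondegenerate $\g$-invariant pairing furnished by Theorem~\ref{thm913} and Lemma~\ref{lem912}(3) to an explicit isomorphism of $\g$-modules $M_r \isomto N(\lambda)\spcheck$, after which parts~(1) and~(2) follow from Theorem~\ref{thm26} together with the standard facts that the $\cO$-duality functor $M\mapsto M\spcheck$ is exact, contravariant, and satisfies $L(\omega)\spcheck \cong L(\omega)$ \cite[4.10]{jantzen}.

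The first step is to define $\Psi : M_r \lto R_r^* = \Hom_\C(R_r,\C)$ by $\Psi(g)(p) = \ascal{p}{g}$. Applying Theorem~\ref{thm913} with $\vt(Y)$ in place of $Y$ and using $\vt^2 = \id$, one checks directly that $\Psi$ intertwines the $\tau$-action on $M_r$ with the dual $\g$-action~\eqref{gdual} on $R_r^*$. Injectivity of $\Psi$ is immediate from the nondegeneracy of the pairing in Lemma~\ref{lem912}(3).

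The second, slightly more subtle, step is to show that the image of $\Psi$ lies in the $\cO$-dual $N(\lambda)\spcheck = \bigoplus_\alpha (R_r^\alpha)^*$. Since $N(\lambda)=R_r \in \cO$ its $\fh$-weight spaces $R_r^\alpha$ are finite dimensional. For $M_r$, I would exploit the identities $\cF(\Et_1)=\Et_1$ and $\cF(D_{j,j+\ell'}) = D_{j,j+\ell'}$ from Lemma~\ref{lem81} together with $\vt_{\mid \fh} = \id$, which show via $\tau = \cF\circ\vt\circ\psit$ that the $\fh$-action on $M_r$ preserves the grading inherited from $A$ and is simultaneously diagonalisable on each homogeneous component. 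Hence $M_r$ is a weight module. Applying Theorem~\ref{thm913} to $Y=h\in\fh$ yields $\beta(h)\ascal{p}{g} = \alpha(h)\ascal{p}{g}$ for $p\in R_r^\beta$ and $g\in M_r^\alpha$, so the pairing vanishes off the diagonal $\alpha = \beta$. Combined with the global nondegeneracy, this forces each restricted pairing $R_r^\alpha \times M_r^\alpha \lto \C$ to be nondegenerate, and then finite-dimensionality of $R_r^\alpha$ gives $\dim M_r^\alpha = \dim R_r^\alpha$. Consequently $\Psi$ restricts to an isomorphism $M_r^\alpha \isomto (R_r^\alpha)^*$ on each weight space, yielding the desired $\g$-module isomorphism $M_r \isomto N(\lambda)\spcheck$.

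For~(1) and~(2) I would then invoke Theorem~\ref{thm26}. In case~(1), $N(\lambda) = L(\lambda)$ is simple, so $M_r \cong L(\lambda)\spcheck \cong L(\lambda)$. In case~(2), dualising the short exact sequence $0 \to \socRr \to N(\lambda) \to \overline{R} \to 0$, with $\socRr \cong L(\mu)$ and $\overline{R} \cong L(\lambda)$, produces $0 \to L(\lambda) \to M_r \to L(\mu) \to 0$; since $L(\lambda)$ is finite dimensional while $L(\mu)$ is infinite dimensional, the embedded $L(\lambda)$ cannot lie inside any submodule mapping nontrivially to $L(\mu)$, and so $L(\lambda)$ is exactly $\Soc(M_r)$. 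The main technical obstacle is the verification in step three that the pairing respects the $\fh$-weight decomposition; this ultimately rests on the explicit $\cF$-invariance of the Cartan generators recorded in Lemma~\ref{lem81}, which is precisely what makes the Fourier twist hidden in $\tau$ harmless for the weight matching.
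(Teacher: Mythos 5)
Your argument follows essentially the same route as the paper's proof: one uses the $\g$-invariant nondegenerate pairing supplied by Theorem~\ref{thm913} and Lemma~\ref{lem912}(3), notes that $\vt_{\mid\fh}=\id$ forces $\ascal{R_r^{\,\nu}}{M_r^{\nu'}}=0$ for $\nu\neq\nu'$, deduces that the pairing restricts to a perfect pairing on each weight space, concludes $M_r\cong N(\lambda)\spcheck$, and then reads off parts~(1) and~(2) from Theorem~\ref{thm26} via the standard properties of $\cO$-duality. (The paper writes the dual map in the direction $\xi:R_r\ito M_r^*$ rather than $\Psi:M_r\ito R_r^*$, but this is immaterial.)

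The one place where your write-up has a genuine soft spot is the final sentence of part~(2). Dualising $0\to\socRr\to N(\lambda)\to\overline{R}\to 0$ does give $0\to L(\lambda)\to M_r\to L(\mu)\to 0$, but your dimension argument (``$L(\lambda)$ cannot lie inside any submodule mapping nontrivially to $L(\mu)$'') does not actually exclude the possibility that this sequence splits, in which case $\Soc(M_r)$ would be $L(\lambda)\oplus L(\mu)$ rather than $L(\lambda)$. The clean justification is the one you already gestured at in your opening paragraph: the functor $(-)\spcheck$ is exact, contravariant, and involutive on $\cO$, so it exchanges simple heads and simple socles; since the highest weight module $N(\lambda)$ has unique maximal submodule with simple quotient $L(\lambda)$, its dual $M_r$ has simple socle $L(\lambda)\spcheck\cong L(\lambda)$. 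Equivalently, a splitting of the dualised sequence would dualise back to a splitting of $0\to L(\mu)\to N(\lambda)\to L(\lambda)\to 0$, contradicting the simplicity of $\Soc(N(\lambda))$ established in Theorem~\ref{thm26}(2).
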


\begin{proof}
  By Lemma~\ref{lem23}, $N(\lambda)$ is an object of category $\cO$.
  By Theorem~\ref{thm913}, the pairing $\ascal{}{}$ enables us to define a
  linear injection: 
  \begin{equation*}  
    \xi \; : \; R_r \ \ito \   M_r^* \quad \text{given by $\xi(p)(a)=
      \ascal{p}{a}$ for all $p \in R_r$ and $a \in   M_r$.}
  \end{equation*}
  Moreover, the $\g$-invariance of $\ascal{}{}$ means that
  $\xi(Y.p) = Y.\xi(p)$ under the $\g$-module structure on $M_r^*$ defined
  by~\eqref{gdual}; thus, $\xi$ is $\g$-linear.  To deduce that $\xi$
  restricts to an isomorphism $\xi: R_r \isomto M_r\spcheck$, it suffices
  to check that $\xi : R_r^{\,\nu} \isomto ( M_r^\nu)^*$ for all
  $\nu \in \h^*$. Let $Y \in \fh$, $p \in R_r^{\,\nu}$ and
  $f \in M_r^{\nu'}$. From $\vt(Y)= Y$ one gets that
  $\nu(Y)\ascal{p}{f} = \nu'(Y) \ascal{p}{f}$. Therefore
  $\ascal{R_r^{\,\nu}}{ M_r^{\nu'}} = 0$ for $\nu \ne \nu'$ and it follows
  that $\ascal{}{}$ gives a non-degenerate pairing
  $R_r^{\,\nu} \times M_r^{\nu} \to \C$, as desired. Statements (1) and (2)
  are then consequences of the $\cO$-duality and the structure of the
  $\g$-module $N(\lambda) = R_r$ obtained in Theorem~\ref{thm26}.
\end{proof}

We now return to the real case, as discussed at the beginning of the
section and describe the natural analogues of Corollary~\ref{cor915}.  
 
\begin{cor} \label{cor916} Fix $r\geq 1$ and $1\leq p\leq n$ and consider
  the $\fs(p,r)$-module $\calM_{p,r}$ from Definition~\ref{hharmonics} and 
  Lemma~\ref{lem911} .  Then:
  \begin{enumerate}[{\rm \indent (1)}]
  \item if $n$ is even with $r< \halfn$ or if $n$ is odd, then $\cM_{p,r}$
    is simple;
  \item if $n$ is even with $r \ge \halfn$, then $\cM_{p,r}$ has a simple,
    finite dimensional socle $S'$,
    with an infinite dimensional simple quotient $\cM_{p,r}/S'$.
    \end{enumerate}
\end{cor}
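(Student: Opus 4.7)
The strategy is to descend from the complex case handled in Corollary~\ref{cor915} to the real case by faithful flatness of the functor $-\otimes_\R\C$, following the template used in the proof of Corollary~\ref{cor8922}. First, I would note that $\cM_{p,r}\otimes_\R \C = M_{p,r}$ (as observed right after Definition~\ref{hharmonics}), and that this identifies the complexified $\fs(p,r)$-structure on $\cM_{p,r}\otimes_\R\C$ with the $\g$-module structure on $M_{p,r}$ given by Lemma~\ref{lem911}(2), since $\fs(p,r)\otimes_\R\C\cong\g$ by Lemma~\ref{cor76}(1).

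Next, I would establish the ``twist'' claim of Remark~\ref{new-module} by using the automorphism $\phi=\phi_{p,q}$ of $\cD(A)$ from \eqref{eq71}. Since $\phi(\Delta_1)=\dalembert_p$ (and $\phi(\Ft)=\Ftt_p$), the restriction of $\phi$ gives a $\C$-linear bijection $M_r\longisomto M_{p,r}$ intertwining the two $\g$-actions up to the algebra automorphism of $U(\g)/J_r$ induced by $\phi$. Because an algebra automorphism preserves the lattice of submodules, Corollary~\ref{cor915} then describes the $\g$-submodule structure of $M_{p,r}$: either $M_{p,r}$ is simple (case~(1)), or it has a unique minimal nonzero $\g$-submodule $S\cong L(\lambda)$, which is finite-dimensional, with simple infinite-dimensional quotient $L(\mu)$ (case~(2)).

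For case~(1), any proper nonzero $\fs$-submodule $W\subsetneq\cM_{p,r}$ would complexify to a proper nonzero $\g$-submodule of $M_{p,r}$ by faithful flatness, contradicting simplicity. For case~(2), the unique $\g$-submodule $S$ is automatically stable under the complex conjugation $\sigma$ on $M_{p,r}=\cM_{p,r}\otimes_\R\C$, hence $S = S'\otimes_\R\C$ for a unique $\fs$-submodule $S'\subseteq \cM_{p,r}$. Then $\dim_\R S' = \dim_\C S <\infty$, and $S'$ is simple over $\fs$ since any proper $\fs$-submodule would complexify to a proper $\g$-submodule of the simple $\g$-module $S$. Similarly $\cM_{p,r}/S'$ is simple and infinite-dimensional because its complexification is $M_{p,r}/S\cong L(\mu)$; and $S'$ coincides with $\Soc_{U(\fs)}(\cM_{p,r})$ because any nonzero $\fs$-submodule complexifies to a nonzero $\g$-submodule of $M_{p,r}$, which necessarily contains $S$. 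The only subtlety beyond routine descent is the identification of the real socle, which is handled by noting that the unique minimal $\g$-submodule in the complex case is automatically conjugation-stable.
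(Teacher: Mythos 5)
Your proposal is correct and follows essentially the same route as the paper: first transport Corollary~\ref{cor915} from $M_r$ to $M_{p,r}$ via the automorphism $\phi$ of \eqref{eq71} (the paper phrases this as the twisted module ${}^\phi M_r=M_{p,r}$, you as the bijection $\phi\colon M_r\to M_{p,r}$, which is the same thing), then descend to $\cM_{p,r}$ by faithful flatness of $-\otimes_\R\C$ as in the proof of Corollary~\ref{cor8922}. Your conjugation-stability argument identifying the real socle $S'$ simply makes explicit a step the paper leaves to the reader.
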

 
\begin{proof}   We first want to obtain an analogue of
  Corollary~\ref{cor915} for any $M_{p,r}$.  To do this, recall that the
  automorphism $\phi$ from \eqref{eq71} satisfies
  $\phi(\Delta_1)=\dalembert_p$. Hence if we define the \emph{twisted
    $\cD(A)$-module} structure ${}^\phi \hskip -3pt A$ on $A$ by
  $\theta\circ a = \phi(\theta) a$, for $\theta\in \cD(A)$ and $a\in A$,
  then a simple computation shows that ${}^\phi M_{r}=M_{p,r}$.  Since
  twisting preserves submodule lattices and dimensions, it follows from
  Corollary~\ref{cor915} that the statement of the present corollary holds
  for the $\g$-modules $M_{p,r}$.
 
  As was observed in \eqref{hharmonics},
  $\calM_{p,r}\otimes_\R \C= M_{p,r}$ for each $p,r$.  Thus, just as in
  the proof of Corollary~\ref{cor8922}, faithful flatness then implies that
  the desired results for the $\calM_{p,r}$ follow from the corresponding results for the $M_{p,r}$.
\end{proof}

\section{Conformal densities and the ambient construction}
  \label{howe}

  The $\g$-module $M_1=H$ of harmonic polynomials from
  Definition~\ref{hharmonics} is an ``incarnation'' of the
  scalar singleton module introduced by Dirac through the
  ambient method, cf.~\cite{dirac1,dirac2, eagr, bek11}.  In
  this section, we will briefly recall this construction,
  and its generalisation to the higher harmonics, and then
  show that these objects have a natural algebraic analogue
  that gives another incarnation of the module
  $M_r\cong N(\lambda)\spcheck$.  This will prove
  Theorem~\ref{thmE} from the introduction.

  The ambient method allows one to work with the standard
  representation of $\fso(p+1,q+1)$ by linear vector fields
  on $\R^{n+2}$ and leads to a conformal model of the
  representation of $\g$ on $M_r$.  To see this, one first
  constructs the following conformal compactification of
  $\R^{p,q}$.  Let $\Qt= U_{1}U_{-1} + \Ft_p$ be the
  quadratic form of signature $(p+1,q+1)$ on $\R^{n+2}$ with
  coordinate functions $U_1,U_{-1},X_1,\dots,X_n$.  and
  write $\calC = \{z \in \R^{n+2} : \Qt(z)= 0\}$ for the
  corresponding light cone.  Then the associated projective
  quadric $\cQ= \{\Qt = 0\} \subset \mathbb{RP}^{n+1}$ is a
  conformal compactification of $\R^{p,q}$, under the identification of  $\R^{p,q}$ 
    with the open cell $\cQ\cap \{U_1= 1\}$.   (The space $\cQ$ is sometimes known as the
  boundary of the anti-de Sitter space when
  $(p,q)= (n-1,1)$).  
In this setting a conformal density of weight $w$ on $\cQ$
  can be viewed as a homogeneous functions of weight $w$ on
  the light cone $\calC$.  The conformal Laplacian is the
  operator induced by the Laplacian
  $\Delta = \ddu{1}\ddu{-1} + \dalembert_p$. It acts on
  densities of weight $-\halfn +1$, as shown for example in
  \cite[Propositions~4.4 and 4.7]{eagr} and
  \cite[Proposition~2.1]{gjms}, and the corresponding space
  of harmonic densities of weight $-\halfn +1$ is then an
  $\fso(p+1,q+1)$-module.  For Minkowski space-time, when
  $(p,q)=(n-1,1)$, this $\fso(n,2)$-module corresponds to the
  \emph{scalar singleton module} $D(\halfn -1,0)$ of
  \cite{bek11}.

Analogous questions arise for homogeneous functions on the
``generalised light cone'' $\{\Qt^r =0\}$; equivalently for
densities $\vphi$ of weight $w$ for which
$\Delta\cdot\vphi =0\pmod{\Qt^r}$. By \cite{eagr} and
\cite{gjms}, again, the above construction works for
densities of weight $- \halfn +r$.  This produces an
$\fso(p+1,q+1)$-module, which in the case of the Minkowski
space time corresponds to the higher-order singleton as
defined in \cite{bekgri}.

In this section we aim to give an algebraic version of this
construction and relate it to the $\g$-modules $R_r$ and
$M_r$ obtained in the previous sections. Roughly speaking,
in our setting (and under the notation of
Section~\ref{notation}), the generalised light cone
$\{\Qt^r =0\}$ is replaced by a factor $B/\Qt^rB$ of a
polynomial ring in $(n+2)$ variables, while the ``conformal
Laplacian'' is replaced by the operator $\Delta \in \cD(B)$
and the ``densities'' by homogeneous polynomials on (a
finite extension of) $B/\Qt^rB$.

 \medskip
We now formalise this approach.
Adopt the notation of Sections~\ref{notation}
and~\ref{sec1}; in particular, we use the presentation of
$\g$ provided by Proposition~\ref{prop611}.     
Following \eqref{nroots},   
write $B=A[U_{-1},U_{1}]$ for
$A=\C[U_0,U_{\pm 2}, \dots,U_{\pm \ell}]$ and let
$B[U_{1}^{-1}]    = A[U_{-1},U_{1}^{\pm 1}] $
  be the localization of $B$ at the powers of $U_1$.  In the notation of
  \eqref{eq610}, $U_{-1} = (\Qt - \Ft)U_1^{-1}$, for
  $ \Qt= \sum_{j=1}^\ell U_jU_{-j} + \half U_0^2$, and so
  $ B[U_{1}^{-1}]=A[\Qt,U_{1}^{\pm 1}]$. We also need to adjoin a square
  root of $U_1$ to $B[U_{1}^{-1}]$; thus, formally, set
  \begin{equation*}
    S=B[U_1^{-1},T^{\pm 1}]/\bigl(T^2 - U_1\bigr)
  \end{equation*}
  for an   indeterminate $T$.  The class of $T$ in $S$ is denoted by $t$.  
  Notice that
  \begin{equation}
    \label{eq640c}
    S = A[U_{-1},t^{\pm 1}]= A[\Qt,t^{\pm 1}] = \C[t^{\pm
      1},U_{\pm 2},\dots,U_{\pm \ell},U_0,\Qt].
  \end{equation}
  Clearly, $\Qt$, $t$ are indeterminates over $A$ and
  $S = B[U_1^{-1}] \bigoplus B[U_1^{-1}]t$ is a free $B[U_1^{-1}]$-module
  of rank 2.

\begin{rem}
\label{rem641}
A derivation $D \in \Der_\C(B)$ extends to $B[U_1^{-1},T^{\pm 1}]$ by
setting $D(T)= \frac{1}{2T}D(U_1)$. Then $D((T^2 -U_1)f) = (T^2 -U_1)D(f)$
for any $f \in B[U_1^{-1},T^{\pm 1}]$ and so $D$ also defines a derivation
on $S$. In particular, the derivations $E_{ab} \in \g$, as defined in
\eqref{eq610a}, act on $S$ and this endows $S$ with a natural $\g$-module
structure.  Furthermore, the derivation $\ddu{1}$ extends to $S$ by
$\ddu{1}(t) = \frac{1}{2t}$ and so we can write
$\ddu{1} = \frac{1}{2t}\dt \in \Der_\C(S)$.  With this notation and that
from (\ref{eq610}, \ref{eq610a}), we can therefore write
\[
\Et =  \frac{t}{2}\dt + \sum_{j \ne 1} U_j\ddu{j},
\qquad E_{j1} = \frac{U_j}{2t} \dt - U_{-1}\ddu{-j} =  \frac{U_j}{2t} \dt -
\frac{1}{t^2}(\Qt - \Ft)\ddu{-j}
\]
as elements of  $\Der_\C(S)$.   
\end{rem}

We now fix an integer $r \in \N^*$ and set
\begin{equation*}
  \label{eq650} 
\overline{S} = \overline{S}_r = S / \Qt^r  S. 
\end{equation*}
Let $D \in \Der_\C(S)$ be a derivation such that $D(\Qt)= c\,\Qt$ for some
$c\in S$; for example $D= \Et$ or $E_{ab}$. Then,
$D$ induces a derivation, again denoted 
$D $, in $ \Der_\C(\overline{S})$.
In particular, the algebra $\overline{S}$ also has a $\g$-module structure. As such, 
$S$ and $ \overline{S}$ are graded by the weights of the Euler operator $\Et$ and 
we can make the following definition.

\begin{defn}
    \label{homogeneous}
    Let $p \in \Q$. An element $u \in C= S$, or $C=\overline{S}$ is said to
    be \emph{homogeneous of weight $p$} if $\Et(u) = p u$. The space of
    homogeneous elements of weight $p$ is  denoted by $C(p)$.  Set $\wt(0)= 0$ and 
    $\wt(u)= p$ if $0 \ne u \in C(p)$; thus
    $\Et(u) = \wt(u) u$. If $V$ is a subspace of $C$,  set
    $V(p)= C \cap C(p)$.
  \end{defn}

  Notice that $U_j \in S(1)$ for all $j$ and that $t \in S(\half)$.  
  Hence, $S= \bigoplus_{\beta \in \halfZ} S(\beta)$ and, since
  $\Qt^r$ is homogeneous of degree $2r$, we have: 
\begin{equation*} 
  \overline{S} =  \bigoplus_{\beta \in \halfZ} \Sbar{\beta}, \qquad \text{where}\ 
  \Sbar{\beta}= S(\beta) / \Qt^r S(\beta -2r). 
\end{equation*}
We can think of the elements  of $\Sbar{\beta}$ as ``half-densities'' on
the subscheme $\{\Qt^r = 0\} \subset \C^{n+2}$.

\begin{lem}
  \label{lem642}
Let   $\Qtbar$ denote  the class of $\Qt$ in $\overline{S}$ and 
pick $\beta \in \halfZ$. Then:
\\
{\rm (1)} $\Sbar{\beta}$ is a $\g$-submodule of the $\g$-module
$\overline{S}$;
\\
{\rm (2)} one has $\overline{S} = \bigoplus_{j=0}^{r-1} A[t^{\pm 1}]
\Qtbar^{j}$  and $\Sbar{\beta} = \bigoplus_{j=0}^{r-1}
A[t^{\pm 1}](\beta - 2j) \Qtbar^{j}$   as  $A[t^{\pm 1}]$-modules. 
\end{lem}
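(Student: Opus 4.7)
My plan is to observe that both parts of the lemma become nearly immediate once we unpack the defining facts: (a) the derivations that generate the $\g$-action on $S$ are homogeneous of weight $0$ under $\Et$ and annihilate $\Qt$; (b) by the description \eqref{eq640c} of $S$, the element $\Qt$ is a free polynomial variable over $A[t^{\pm 1}]$.

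For part~(1), I first check that each generator $E_{ab} \in \g \subset \Der_\C(S)$ commutes with $\Et$, i.e.\ has adjoint weight $0$. This is a direct computation using \eqref{eq610a}: each monomial $U_i\ddu{j}$ appearing in $E_{ab}$ has $\Et$-weight $\wt(U_i)-\wt(U_j)=0$, since the variables $U_{\pm 2},\dots,U_{\pm\ell},U_0$ all have weight $1$. (The extension to $t$ described in Remark~\ref{rem641} preserves this, because any coefficient of $\dt$ that arises is of weight $1-1=0$.) Since moreover $E_{ab}(\Qt) = 0$ (noted just after \eqref{eq610a}), the $E_{ab}$ preserve $\Qt^rS$ and act on $\overline S$, still commuting with $\Et$. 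Hence the eigenspace $\Sbar{\beta}$ of $\Et$ on $\overline S$ is $\g$-stable.

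For part~(2), I invoke \eqref{eq640c}: as a $\C$-algebra, $S = \C[t^{\pm 1},U_{\pm 2},\dots,U_{\pm \ell},U_0,\Qt]$, and so $\Qt$ is an indeterminate over $A[t^{\pm 1}]$. This gives
\[
S \ = \ \bigoplus_{j \ge 0} A[t^{\pm 1}]\,\Qt^{\,j}
\]
as $A[t^{\pm 1}]$-modules, and quotienting by $\Qt^rS = \bigoplus_{j \ge r} A[t^{\pm 1}]\,\Qt^{\,j}$ yields $\overline S = \bigoplus_{j=0}^{r-1} A[t^{\pm 1}]\,\Qtbar^{\,j}$. For the weight refinement, I note that $\Qt$ is quadratic in the $U$-variables, so $\Et(\Qt) = 2\Qt$, hence $\wt(\Qtbar^{\,j}) = 2j$. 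Since $\Et$ preserves each summand $A[t^{\pm 1}]\Qtbar^{\,j}$ and acts on it as it does on $A[t^{\pm 1}]$ shifted by $2j$, intersecting with $\Sbar{\beta} = \ker(\Et - \beta)$ term by term gives the claimed weight decomposition.

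There is essentially no obstacle here; the only small subtlety is to justify that $\Qt$ is algebraically free over $A[t^{\pm 1}]$ in $S$, which is precisely the content of the second equality in \eqref{eq640c}, and to confirm that the extension of $\g$ and $\Et$ from $\Der_\C(B)$ to $\Der_\C(S)$ (following Remark~\ref{rem641}) preserves the commutation $[\Et,E_{ab}]=0$. Both are routine verifications.
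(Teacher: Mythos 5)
Your argument is correct and follows exactly the route the paper takes: the paper's one-line proof cites \eqref{eq640c} (freeness of $\Qt$ over $A[t^{\pm 1}]$) together with the fact that $[\Et,Y]=0$ for all $Y\in\g$, which are precisely the two points you unpack. Nothing is missing; you have simply filled in the routine verifications that the paper leaves implicit.
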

 
\begin{proof} This follows from~\eqref{eq640c} combined with the fact that
  $[\Et,Y] = 0$ for all $Y \in \g$.
 \end{proof}

 By Remark~\ref{rem641}, 
the Laplacian $\Delta = 2 \sum_{j=1}^\ell \dpartial{U_j}\dpartial{U_{-j}} +
\partial^{\; 2}_{U_0}$ extends trivially to $B[U_1^{-1},T]$ 
and then restricts to   $S$. This differential operator on $S$, still denoted by
$\Delta$, can also be written:
\begin{equation*} 
\Delta = \frac{1}{t} \dt\ddu{-1} + \Delta_1, \qquad\text{for}\  \ \Delta_1= 2\sum_{j \ge
    2} \dpartial{U_j}\dpartial{U_{-j}} + \partial^{\; 2}_{U_0}.
\end{equation*}
As usual, the set of   \emph{harmonic elements in $S$} is defined to be  $\cH(S) = \{f
\in S  : \Delta(f)= 0\}$.
Since $[\Et,\Delta] = -2\Delta$, this  is a graded subspace of $S$ and
$\cH(S)= \bigoplus_{\beta \in \halfZ} \cH(\beta)$.  Clearly,
 $\Delta (S(\beta) ) \subseteq S(\beta -2)$ for all
$\beta \in \halfZ$.  

We now want to find $\beta$ such that $\Delta$ induces
an operator from $\Sbar{\beta}$ to $\Sbar{\beta -2}$.
The next result can be compared with \cite[Propositions~4.4 \& 4.7]{eagr},
\cite[Proposition~2.1]{gjms} and \cite[II~Proposition]{bz91}.

\begin{prop}
  \label{prop641}
  Let $\delta = -d = - \frac{n}{2} + r$. Then, the Laplacian $\Delta$
  induces an operator
  \[
  \Deltabar_\delta = \Deltabar \ : \ \Sbar{\delta} = \frac{S(\delta) }{ \Qt^r
  S(\delta -2r)} \ \lto \  \Sbar{ \delta -2} = \frac{S(\delta -2) }{ \Qt^r S(\delta
  -2(r+1))}
  \]
  by the formula $\Deltabar(\bar{f}) = \overline{\Delta(f)}$ for all
  $f \in S(\delta)$.
\end{prop}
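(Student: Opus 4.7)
The plan is to exhibit an analogue in $N=n+2$ variables of the Laplacian identity \eqref{prop17-1} derived in the proof of Lemma~\ref{prop17}, and then to read off that the ``obstruction term'' vanishes precisely when $\delta=-d$. Specifically, I would first establish, by the same induction on monomials as in the proof of Lemma~\ref{prop17}, the formula
\[
  \Delta(u\Qt^k) \ = \ \Delta(u)\Qt^k \, + \, 2k\,\Et(u)\Qt^{k-1} \, + \,
  k\bigl(N + 2(k-1)\bigr)\,u\,\Qt^{k-1}
\]
valid for all $u\in S$ and all $k\geq 1$. The computation splits into the contribution coming from $\partial_{U_0}^2$ (if $n$ is odd) and from $2\sum_{j=1}^{\ell}\partial_{U_j}\partial_{U_{-j}}$; each piece is produced by the Leibniz rule and the cross-terms conspire to give, after using $\sum_{j=1}^\ell U_jU_{-j}+\half U_0^2=\Qt$, the closed form above. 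Both parities of $n$ collapse to the same expression once one observes $2\ell=N$ for $n$ even and $2\ell+1=N$ for $n$ odd.

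Next I would homogenize: if $u\in S(\beta-2k)$ then $\Et(u)=(\beta-2k)u$, so substituting gives
\[
  \Delta(u\Qt^k) \ = \ \Delta(u)\Qt^k \, + \, k\bigl(2\beta + N - 2k - 2\bigr)\,u\,\Qt^{k-1}.
\]
To show that $\Delta$ descends to a well-defined linear map $\Deltabar:\Sbar{\delta}\to\Sbar{\delta-2}$, it suffices to verify $\Delta(\Qt^r S(\delta-2r))\subseteq \Qt^r S(\delta-2r-2)$. Taking $\beta=\delta$ and $k=r$ in the displayed identity, this inclusion will hold for every $g\in S(\delta-2r)$ if and only if the scalar $r(2\delta+N-2r-2)$ annihilates $g\Qt^{r-1}$, i.e.\ precisely when
\[
  2\delta + N - 2r - 2 \ = \ 0 \quad \Longleftrightarrow \quad \delta \ = \ r - \tfrac{n}{2} \ = \ -d.
\]
This is exactly the hypothesis of the proposition, so $\Delta(g\Qt^r)=\Delta(g)\Qt^r\in \Qt^r S$, and the prescription $\Deltabar(\bar f)=\overline{\Delta(f)}$ is unambiguous.

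Finally, the weight bookkeeping $\Delta\cdot S(\delta)\subseteq S(\delta-2)$ is automatic from $[\Et,\Delta]=-2\Delta$, which has already been recorded just before the statement. The main (and really the only) obstacle is checking the Leibniz computation honestly, taking care of the $\half U_0^2$ term and the fact that $U_0=0$ when $n$ is even so that the sum has different ranges in the two parities; once this is done, the vanishing of the coefficient when $\delta=-d$ is immediate arithmetic.
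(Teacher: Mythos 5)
Your proposal is correct and follows essentially the same route as the paper: both establish the Leibniz-type identity $\Delta(u\Qt^k)=\Delta(u)\Qt^k + k[N+2\wt(u)+2(k-1)]u\Qt^{k-1}$ (the paper's \eqref{eq122}, which you write in the equivalent un-homogenized form with $2k\,\Et(u)\Qt^{k-1}$ before specializing), set $k=r$ with $u\in S(\delta-2r)$, and observe that the obstruction coefficient $r(2\delta+n-2r)$ vanishes precisely when $\delta=-\halfn+r$. The only difference is organizational — the paper states the identity directly for homogeneous $u$ by analogy with \eqref{prop17-1}, while you first record the general form and then substitute $\Et(u)=\wt(u)u$ — and this is immaterial.
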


\begin{proof}
  As in~\eqref{prop17-1} one   shows that, for all $k \in \N^*$ and  
  homogeneous elements $u\in S$ one has
  \begin{equation}\label{eq122}
  \Delta(u\Qt^k)= \Delta(u) \Qt^k + k [n+2+2\wt(u) + 2(k-1)]u \Qt^{k-1}.
  \end{equation}
  We are looking for $\delta \in \halfZ$ such that
  $\Delta(u \Qt^r) \in \Qt^r S(\delta -2(r+1))$ for all
  $u \in S(\delta -2r)$.  
   However, by \eqref{eq122}, if $\delta =   -\halfn +r$ then  
  \[
  \Delta(u\Qt^r)= \Delta(u) \Qt^r + r [n+2 +2\delta -2(r+1)]u \Qt^{r-1}=
  \Delta(u) \Qt^r
  \]
  for all $u \in S(\delta -2r)$. This proves the proposition.
\end{proof}

Thanks to Proposition~\ref{prop641} we can define the
following subspace of ``harmonic elements of degree
$\delta$'', or ``harmonic conformal densities of weight $\delta$''.
It gives a (complex) version of the ``scalar singleton
module'' defined by Dirac; see for example
\cite[\S3.5]{bek11}.

\begin{defn}
  \label{def642}     
    Let $r \in \N^*$ and $\delta = - \frac{n}{2} + r$ and set:
   \[
  N_{\lambda} = \Ker \Deltabar_\delta = \bigl\{\bar{f} \in \Sbar{\delta} \
  : \ \Deltabar_\delta(\bar{f}) = 0\bigr\}.
  \]
 Since  $[Y,\Delta] = 0$ for all $Y \in \g$, it is easy to see that
  $N_{\lambda}$ is a $\g$-submodule of $\Sbar{\delta}$. \end{defn}

It follows from Lemma~\ref{lem642}(2) that any element
$\bar{f} \in \Sbar{\beta}$, for $f \in S(\beta)$, can be uniquely written:
\begin{equation}
  \label{eq214b}
  \bar{f} = \sum_{j=0}^{r-1} f_j \Qtbar^j \quad \text{with \ $f=
    \sum_{j=0}^{r-1} f_j \Qt^j$ and $f_j
    \in A[t^{\pm 1}](\beta - 2j)$}.
\end{equation}
  Let $a \in A$ and
$\nu \in \Z$. Since $\ddu{-1}(at^\nu)= 0$ and $\Delta(a)= \Delta_1(a)$ it
follows that $\Delta(a t^\nu)= \Delta_1(a) t^\nu$. Applying this observation to the  
$f_j$ shows  that $\Delta(f_j) = \Delta_1(f_j)$ for 
$0\leq j\leq r-1$.

\begin{prop}
\label{thm221} {\rm (1)} Let $f = \sum_{j=0}^{s} f_j\Qt^j \in S(\delta)$
with $0 \le s \le r-1$ and $f_j \in A[t^{\pm 1}](\delta -2j)$ as
in~\eqref{eq214b}. Then,
  \begin{equation*}
    \Deltabar(\bar{f})= 0 \iff   \Delta(f)= 0  \iff 
    \begin{cases}
      \Delta_1(f_j) = -2(j+1)(r-(j+1)) f_{j+1} \ \; \text{ if \, $0 \le j
        \le s-1$;} \\
           \Delta_1(f_{s}) =0.
    \end{cases}
  \end{equation*}
  Consequently,
  \[
  N_\lambda = \bigl\{\bar{f} \in \Sbar{\delta} \ : \ \Delta(f)= 0\bigr\} =
  \overline{\cH(S)(\delta)} \subset \Sbar{\delta}.
  \]
  {\rm (2)} There exists an isomorphism of $\fm$-modules
  \[
  \ev0 : \ N_\lambda \, \longisomto \, \calS_\lambda =\bigl\{a \in A[t^{\pm
    1}](\delta) \, : \, \Delta_1^r(a) = 0 \bigr\}
  \]
  given by $\ev0(\bar{f}) = f_0$, when
  $f = \sum_{j=0}^{r-1} f_j\Qt^j \in S(\delta)$ with
  $f_j \in A[t^{\pm 1}](\delta -2j)$.
\end{prop}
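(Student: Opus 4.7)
The plan is to derive part (1) by applying the identity \eqref{eq122} term-by-term to the canonical $\Qt$-expansion, and then bootstrap part (2) by inverting the resulting recurrence. For part (1), first note that $\Delta = \frac{1}{t}\dt\ddu{-1} + \Delta_1$ and $\ddu{-1}$ annihilates both $A$ and $t$, so $\Delta(f_j) = \Delta_1(f_j)$ for each $f_j \in A[t^{\pm 1}]$. Applying \eqref{eq122} with $u = f_j$ (of weight $\delta - 2j$) and $k = j$, the bracketed coefficient simplifies via $\delta = -\halfn + r$ to
\begin{equation*}
  j\bigl[n + 2 + 2(\delta - 2j) + 2(j-1)\bigr] \ = \ j[n + 2\delta - 2j] \ = \ 2j(r-j),
\end{equation*}
so $\Delta(f_j\Qt^j) = \Delta_1(f_j)\Qt^j + 2j(r-j)f_j\Qt^{j-1}$. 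Summing and reindexing gives
\begin{equation*}
  \Delta(f) \ = \ \Delta_1(f_s)\Qt^s + \sum_{j=0}^{s-1}\bigl[\Delta_1(f_j) + 2(j+1)(r-j-1)f_{j+1}\bigr]\Qt^j.
\end{equation*}
Because $S$ is a polynomial ring over $A[t^{\pm 1}]$ in the indeterminate $\Qt$ (by~\eqref{eq640c}), this expansion of $\Delta(f)$ is unique, so $\Delta(f) = 0$ is equivalent to the recursive system stated in the proposition. Moreover, since the $\Qt$-degree of $\Delta(f)$ is at most $s \le r-1 < r$, the condition $\Delta(f) \in \Qt^r S$ is equivalent to $\Delta(f) = 0$; hence $\Deltabar(\bar f) = 0 \iff \Delta(f) = 0$. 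Applied to the canonical lift of any $\bar f \in N_\lambda$, this also yields $N_\lambda = \overline{\cH(S)(\delta)}$.

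For part (2), the uniqueness of the decomposition \eqref{eq214b} guarantees that $\ev0$ is well-defined and $A[t^{\pm 1}](\delta)$-valued. Iterating the recurrence of (1) and noting that the coefficients $-2(j+1)(r-j-1)$ are nonzero for $0 \le j \le r-2$, an easy induction shows that $f_j = c_j^{-1}(-\Delta_1)^j(f_0)$ for an explicit nonzero constant $c_j$, $0 \le j \le r-1$. Hence $\Delta_1^r(f_0) = c_{r-1}(-1)^{r-1}\Delta_1(f_{r-1}) = 0$ shows $\ev0(\bar f) \in \calS_\lambda$, while $f_0 = 0$ inductively forces $f_j = 0$ for all $j$, giving injectivity. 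For surjectivity, given $a \in \calS_\lambda$, define $f_j = c_j^{-1}(-\Delta_1)^j(a) \in A[t^{\pm 1}](\delta - 2j)$; the recurrence holds by construction, and $\Delta_1(f_{r-1})$ is a scalar multiple of $\Delta_1^r(a) = 0$, so part (1) yields $\bar f = \sum_j f_j\Qtbar^j \in N_\lambda$ with $\ev0(\bar f) = a$.

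For $\fm$-linearity, decompose $\fm = \C E_{11} \boplus \fk$ and recall that each generator acts as a derivation annihilating $\Qt$. Each $E_{ab} \in \fk$ (with $a, b \in \{0, \pm 2, \dots, \pm \ell\}$) stabilises $A$ and annihilates $t$, while $E_{11} = U_1\ddu{1} - U_{-1}\ddu{-1}$ annihilates $A$ and $\Qt$ and sends $t^\nu$ to $\tfrac{\nu}{2} t^\nu$. In either case $Y \in \fm$ stabilises $A[t^{\pm 1}]$, so the derivation property gives $Y(f) = \sum_j Y(f_j)\Qt^j$ with $Y(f_j) \in A[t^{\pm 1}](\delta - 2j)$, and hence $\ev0(Y\cdot \bar f) = Y(f_0) = Y\cdot \ev0(\bar f)$. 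The main subtlety is arranging the recursion so that the boundary term in \eqref{eq122} vanishes at $k = r$, which happens precisely when $\delta = -\halfn + r$; once this forces $\Deltabar$ to be well-defined on $\Sbar{\delta}$, the rest of the argument reduces to linear-algebraic bookkeeping on the recursion.
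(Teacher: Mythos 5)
Your proposal is correct and follows essentially the same route as the paper: apply \eqref{eq122} term-by-term to the canonical $\Qt$-expansion, use the direct sum $S=\bigoplus_j A[t^{\pm 1}]\Qt^j$ to identify $\Deltabar(\bar f)=0$ with $\Delta(f)=0$ and with the stated recurrence, then invert the recurrence (the paper's normalisation is $\Delta_1^p(f_0)=c_pf_p$ with $c_p=(-2)^pp!(r-1)\cdots(r-p)$, consistent with your unspecified constants) and check $\fm$-linearity from $Y(\Qt)=0$ and the stability of $A[t^{\pm 1}]$. No gaps.
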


\begin{proof}
  (1) By~\eqref{eq122} we have:
\[
  \Delta(f)= \sum_{j=0}^{s} \Delta(f_j \Qt^j) = \Delta_1(f_j)\Qt^{s} +
  \sum_{j=0}^{s-1} \bigl\{\Delta_1(f_j) + (j+1) [2\wt(f_{j+1}) + N +2j]
  f_{j+1}\bigr\}\Qt^{j}.
\]
  It follows from $\Delta_1(f_j) \in A[t^{\pm 1}]$ that
  $\Delta(f) \in \bigoplus_{j=0}^{s} A[t^{\pm 1}] \Qt^{j}$. Since
  $\wt(f_{j+1}) = \delta -2(j+1)$ if $f_{j+1}\ne 0$, one deduces that 
  $\Delta(f)= 0$ is equivalent to the following system of equations in the
  ring $A[t,t^{-1}]$:
  \[
  \begin{cases}
    \Delta_1(f_{s}) =0
    \\
    \Delta_1(f_j) = -2(j+1)(r-(j+1)) f_{j+1} \ \; \text{if $0 \le j \le
      s-1$}.
  \end{cases}
  \]
  By definition, $\Deltabar({\bar{f}})= \overline{\Delta({f})}$ and so 
  $\Deltabar({\bar{f}})=0$ is equivalent to
  $\Delta(f) \in S(\delta - 2(r+1)) \Qt^{r}$. Recalling that
  $\Delta(f) \in \bigoplus_{j=0}^{r-1} A[t^{\pm 1}] \Qt^{j}$ and using
  $S= \bigoplus_{j \in \N} A[t^{\pm 1}] \Qt^{j}$ one deduces that
  $\Delta(f)= 0$ if and only if $\Deltabar({\bar{f}})= 0$.
  
  Finally, if $\bar{x} \in N_\lambda$ we can find $f$ as in~\eqref{eq214b} such
  that $\bar{x} = \bar{f}$. By definition, $\overline{\Delta}(\bar{f})=0$ and hence $\Delta(f) =0$.
  The equality $N_\lambda = \overline{\cH(S)(\delta)}$ follows.

   (2) By induction, the equalities $\Delta_1(f_j) =
  -2(j+1)(r-(j+1)) f_{j+1}$ for $0 \le j \le r-2$ yield $\Delta_1^p(f_0) =
  c_p f_p$, where $c_p= (-2)^p p! (r-1)\cdots(r-p)$, for $0\leq p\leq r-1$.
  It follows from~(1) that the map $\bar{f} \mapsto f_0$ with $f \in
  \cH(S)(\delta)$ takes values in $\cS_\lambda$ and is injective. This
  also implies that if $a \in \cS_\lambda$ one can obtain $f =
  \sum_{j=0}^{r-1} f_j \Qt^j \in \cH(S)(\delta)$ by setting $f_0 = a$ and 
    $f_{p} = \frac{1}{c_p} \Delta_1^p(a)$ for $1\leq p\leq r-1$. This yields
  a vector space  isomorphism $N_\lambda \isomto \cS_\lambda$.

  Recall from~\S\ref{notation} that $\fm = \C E_{11} \boplus \fk$ where
  $E_{11} = \frac{t}{2}\dt - U_{-1}\ddu{-1}$ and
  $\fk = \sum_{i,j \notin\{\pm 1\}} \C E_{ij}$. Clearly, therefore, $A[t^{\pm 1}]$ is
  $\fm$-stable.  From
  $[E_{11},\Delta_1]=0=[\fk,\Delta_1] $ it follows  that $\cS_\lambda$ is an
  $\fm$-module. Let $Y \in \fm$ and $\bar{f} \in N_\lambda$ with
  $f=\sum_{j=0}^{r-1} f_j\Qt^j$ as above. Since $Y.\Qt = Y(\Qt) =0$ one has
  $Y.\sum_{j=0}^{r-1} f_j\Qt^j = \sum_{j=0}^{r-1} (Y.f_j) \Qt^j$ with
  $Y.f_j \in A[t^{\pm 1}](\delta -2j)$.  Hence
  $\ev0(Y.f)=(Y.f)_0 = Y.f_0 = Y.\ev0(f)$ and so  the linear isomorphism
  $\ev0 : N_\lambda \isomto \cS_\lambda$ is indeed an isomorphism of $\fm$-modules.
\end{proof}

As we next show, this theorem implies that, as
$\fk$-modules, $N_\lambda$ is isomorphic to the module
$M_r =M_{n,r} = \bigl\{a \in A : \Delta_1^r(a) =0\bigr\}$  from Remark~\ref{new-module}. 
Before proving
this, note that as $\Delta_1^r(A(m)) \subset A(m-2r)$ for
all $m \in \N$, the space $M_r$ is a graded subspace of
$A$. Thus any $a \in M_r$ can be written as
$a= \sum_{m \in \N}a(m)$ with $a(m) \in M_r(m)$.  On the
other hand, by Proposition~\ref{thm221}(2) and the fact that
$\wt(t)=\half$, any $g \in \cS_\lambda$ may be written as
$g= \sum_{m\in \N} g(m)t^{2(\delta -m)}$ with
$g(m) \in A(m)$.

\begin{cor}
  \label{cor643}
The $\fk$-modules $M_r \subset A$ and $\cS_\lambda \subset A[t^{\pm 1}]$
are isomorphic via the map:
\[
\sigma \ : \ \cS_\lambda \longisomto M_r; \qquad   \sum_{m \in
  \N}a(m) t^{2(\delta - m)} \mapsto \sum_{m \in \N} a(m).
\]
In particular, 
the map $\sigma \circ \ev0 : N_\lambda \isomto M_r$ is an isomorphism of
$\fk$-modules. 
\end{cor}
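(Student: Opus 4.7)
The plan is to establish the corollary by verifying three things: that the assignment $\sigma$ is well-defined, that it is a linear bijection, and that it intertwines the $\fk$-action. The key observation is that the $\Et$-weight on $A=\C[U_0,U_{\pm 2},\dots,U_{\pm \ell}]$ agrees with the total polynomial degree (each $U_j$ has $\Et$-weight $1$), while $\wt(t)=\tfrac12$. Therefore any element $g\in A[t^{\pm 1}](\delta)$ has a unique decomposition $g = \sum_{m\ge 0} a(m)\,t^{2(\delta-m)}$ with $a(m)\in A(m)$, because the constraint $\Et(g)=\delta g$ forces the $t$-exponent accompanying each $A(m)$-component to be exactly $2(\delta-m)$. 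This already yields a linear bijection $A[t^{\pm 1}](\delta)\longisomto A$.

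Next I would check that this bijection restricts to one between $\cS_\lambda$ and $M_r$. Since $\Delta_1 = 2\sum_{j\ge 2}\ddu{j}\ddu{-j} + \partial_{U_0}^{\,2}$ involves neither $t$ nor $U_{\pm 1}$, it commutes with multiplication by any power of $t$; in particular,
\[
\Delta_1^r(g) \ =\ \sum_{m\ge 0} \Delta_1^r(a(m))\,t^{2(\delta-m)}.
\]
Because $\Delta_1^r$ lowers total degree by $2r$, the components $\Delta_1^r(a(m))\in A(m-2r)$ lie in distinct graded pieces of $A$, so $\Delta_1^r(g)=0$ in $A[t^{\pm 1}]$ if and only if $\Delta_1^r(a(m))=0$ for every $m$. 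The same argument applied to $\sum a(m)\in A$ shows that this in turn is equivalent to $\sum a(m)\in M_r$. Hence $\sigma$ is a well-defined bijection $\cS_\lambda\longisomto M_r$.

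Finally, $\fk$-equivariance: each generator $E_{ij}$ of $\fk$ has $i,j\notin\{\pm 1\}$, so $E_{ij}\in\Der_\C(A)$ acts trivially on $t$ and on $U_{\pm 1}$, and in particular preserves the total-degree grading of $A$. Therefore
\[
E_{ij}\cdot\!\Bigl(\sum_m a(m)\,t^{2(\delta-m)}\Bigr) \ =\ \sum_m E_{ij}(a(m))\,t^{2(\delta-m)},
\]
and applying $\sigma$ gives $\sum_m E_{ij}(a(m)) = E_{ij}\bigl(\sum_m a(m)\bigr) = E_{ij}\cdot\sigma\bigl(\sum_m a(m)t^{2(\delta-m)}\bigr)$, as required. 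The final assertion that $\sigma\circ\ev0\colon N_\lambda\isomto M_r$ is an $\fk$-module isomorphism then follows by composing with Proposition~\ref{thm221}(2).

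There is no serious obstacle here; the argument is essentially a bookkeeping check, and the only point that requires attention is the uniqueness of the decomposition $g=\sum a(m)t^{2(\delta-m)}$, which is enforced by homogeneity of weight $\delta$ combined with the grading of $A$ by total degree.
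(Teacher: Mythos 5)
Your proposal is correct and follows essentially the same route as the paper's proof: both rely on the observation that $\Delta_1$ commutes with multiplication by powers of $t$ and that $Y.t^\beta=0$ for $Y\in\fk$, then separate the terms $\Delta_1^r(a(m))\,t^{2(\delta-m)}$ by their distinct $t$-exponents. You spell out a bit more explicitly the uniqueness of the graded decomposition $g=\sum_m a(m)t^{2(\delta-m)}$, which the paper takes as already established from Proposition~\ref{thm221}(2) and the fact that $\wt(t)=\frac12$.
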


\begin{proof} Let $g=\sum_{m \in \N}a(m) t^{2(\delta - m)} \in \cS_\lambda$
  and recall that $\Delta_1(at^\beta) = \Delta_1(a)t^\beta$ for all
  $a \in A, \beta \in \Z$. Then,
  $\Delta_1^r(g)= \sum_m \Delta_1(a(m)) t^{2(\delta - m)} = 0$ forces
  $\Delta_1^r(a(m)) = 0$ for all $m$, hence $\sum_m a(m) \in M_r$.
  Conversely, the same computation shows that if $\sum_m a(m) \in M_r$ then
  $\sum_{m \in \N}a(m) t^{2(\delta - m)}\in \cS_\lambda$. Thus $\sigma$ is
  a linear bijection.  The $\fk$-linearity of $\sigma$ is consequence of
  $Y.t^{\beta} = 0$ for all $\beta$ and $Y \in \fk$. The second assertion
  then follows from Proposition~\ref{thm221}(2).
\end{proof}

By Lemma~\ref{lem911}, $M_r$ is   a
$\g$-module so it is natural to ask whether  $\sigma \circ \ev0$ is actually $\g$-linear in
this corollary. This is true and forms the main result of the section.

 \begin{thm}
   \label{thm921}
   The map  $\sigma\circ\ev0 : N_\lambda \to M_r$ is
   an isomorphism of $\g$-modules. Equivalently,
   \[
\sigma(\ev0(Y.\bar{f})) = \tau(Y).g \quad \text{for all $Y \in
  \g$ and $g=\sigma(\ev0(\bar{f})) \in M_r$, for $\bar{f}\in N_\lambda$}.
\]
 \end{thm}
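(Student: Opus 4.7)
The identity $\sigma(\ev0(Y.\bar f)) = \tau(Y).g$ is linear in $Y$, so by the vector-space decomposition $\g = \fr^- \oplus \fm \oplus \fr^+$ of Section~\ref{notation}, it suffices to verify it on each of the three summands separately; no Lie-bracket argument is needed.

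For the $\fm$-part, Proposition~\ref{thm221}(2) and Corollary~\ref{cor643} already give that $\ev0$ is $\fm$-linear and $\sigma$ is $\fk$-linear, so only the element $E_{11}$ spanning the complement of $\fk$ in $\fm$ is new. As a derivation on $S$, $E_{11} = \frac{t}{2}\dt - U_{-1}\ddu{-1}$ restricts on $A[t^{\pm 1}]$ to $\frac{t}{2}\dt$ (since $\ddu{-1}$ kills $A[t^{\pm 1}]$), which multiplies the component $a(m)t^{2(\delta - m)}$ of $f_0$ by $\delta - m$. On the $M_r$ side, $\psit(E_{11}) = h_1 = -(\Et_1 + d)$ by Corollary~\ref{isomorphism}, and because $\vt_d$ fixes the Cartan and $\cF(\Et_1) = \Et_1$ by Lemma~\ref{lem81}, one gets $\tau(E_{11}) = -(\Et_1 + d)$, which multiplies $a(m)$ by $-d - m = \delta - m$; the two sides match.

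For $\fr^\pm$, spanned respectively by the root vectors $E_{q1}$ and $E_{1q}$ with $q \in \{0, \pm 2, \dots, \pm \ell\}$, I expand $\bar f = \sum_j f_j \Qtbar^j$ with $f_0 = \sum_m a(m)t^{2(\delta - m)}$. Using the formul\ae\ of Remark~\ref{rem641} together with $E_{q1}(\Qt) = 0 = E_{1q}(\Qt)$ gives
\[
\ev0(E_{q1}.\bar f) \ = \ \tfrac{U_q}{2t}\dt(f_0) + \tfrac{\Ft}{t^2}\ddu{-q}(f_0), \qquad \ev0(E_{1q}.\bar f) \ = \ t^2\ddu{q}(f_0),
\]
and applying $\sigma$, which amounts to evaluating at $t = 1$ on $\cS_\lambda$, yields $U_q(\delta - \Et_1)(g) + \Ft\,\ddu{-q}(g)$ and $\ddu{q}(g)$ respectively. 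On the operator side, Corollary~\ref{isomorphism}, Lemma~\ref{chevalley1} (parts~(iii) and~(iv)), Remark~\ref{involutions}, and Lemma~\ref{lem81} give $\psit(E_{q1}) = U_{-q}$, $\vt_d(U_{-q}) = V_{-q}$, $\cF(V_{-q}) = \Ft\,\ddu{-q} - U_q(\Et_1 + d)$, and similarly $\psit(E_{1q}) = V_{-q}$, $\vt_d(V_{-q}) = U_q$, $\cF(U_q) = \ddu{q}$. Because $\delta = -d$, both pairs of expressions coincide; the case $q = 0$ when $n$ is odd is handled uniformly with the convention $U_{-0} = U_0$, up to the overall factor of $\sqrt{2}$ in the relevant root vectors.

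The main technical nuisance is the bookkeeping needed to juggle three presentations of $\g$: as derivations on the ambient ring $S$ (for the action on $N_\lambda$), as the mixed first- and second-order operators of Lemma~\ref{chevalley1} on $A$ (for the action on $R_r$ via $\psit$), and as their Fourier transforms on $A$ (for the action on $M_r$ via $\tau = \cF \circ \vt_d \circ \psit$). Corollary~\ref{isomorphism}, Remark~\ref{involutions}, Lemma~\ref{chevalley1}, and Lemma~\ref{lem81} together furnish the dictionary, after which the three local verifications above are entirely mechanical.
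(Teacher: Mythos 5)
Your proof is correct and follows essentially the same route as the paper: reduce the intertwining identity to a case-by-case check over $\fk$, $\fr^-$, $\fr^+$, using the dictionary provided by Proposition~\ref{chevalley2}, Lemma~\ref{chevalley1}, Corollary~\ref{isomorphism}, Remark~\ref{involutions}, and Lemma~\ref{lem81}. The one genuine organizational difference is that you avoid the Lie-bracket reduction (the paper checks only root vectors and relies implicitly on the fact that the intertwining relation for $[Y,Z]$ follows from those for $Y$ and $Z$, which is how it picks up $E_{11}$), and instead verify the identity directly on all of $\fm$, including $E_{11}$; both are perfectly valid and of comparable length.

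Two small slips worth flagging. First, in disposing of the $\fk$-case you cite the $\fm$-linearity of $\ev0$ and the $\fk$-linearity of $\sigma$, but you still need the observation that $\tau(Y)=Y$ for $Y=E_{ab}\in\fk$, i.e.\ $\cF(\psit(\vt(E_{ab})))=\cF(E_{ba})=E_{ab}$ via \eqref{ftransforms}; without it, $\fk$-linearity of $\sigma\circ\ev0$ gives $\sigma(\ev0(Y.\bar f))=Y.g$ rather than $\tau(Y).g$. Second, in the $\fr^-$ dictionary the correct chain is $\psit(E_{q1})=U_q$ and $\vt_d(U_q)=V_{-q}$ (from Lemma~\ref{chevalley1}(iv), $x_{-(\vepsilon_1-\vepsilon_q)}=U_{q}$, not $U_{-q}$); your two sign errors cancel, so the resulting $\tau(E_{q1})=\Ft\,\ddu{-q}-U_q(\Et_1+d)$ and the matching computation on the $N_\lambda$ side are nonetheless correct.
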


 \begin{proof} 
Throughout the proof we write
  $\bar{f} = \sum_{j=0}^{r-1} f_j \Qtbar^j$ with
  $f= \sum_{j=0}^{r-1} f_j \Qt^j$ as in \eqref{eq214b}; thus
  $\ev0(\bar{f})=f_0 \in \cS_\lambda$ by Proposition~\ref{thm221}.
  Let $Y \in \g$ and recall that the derivation $Y$
  satisfies $Y(\Qt)= 0$. Thus, $Y.f= \sum_j (Y.f_j) \Qt^j$
  and it follows from the definition of $\ev0$ that
  $\ev0{(Y.\bar{f})} = \ev0{(Y.f_0)}$.  Using this, the
  theorem is then obviously equivalent to proving:
   \begin{equation}
     \label{theclaim}
     \sigma(\ev0(Y.f_0))=  \tau(Y).\sigma(f_0) \quad \text{for all $Y \in
       \g$ and $f_0 \in \cS_\lambda$}.
   \end{equation}

   By  Corollary~\ref{cor643},
   $f_0 = \sum_{m \in \N} a(m)t^{2(\delta -m)}$, with $a(m) \in A(m)$ for
   all $m$, and   $\sigma(f_0)= \sum_m a(m)$. For simplicity we set
   $\gamma(m)= 2(\delta -m)$. Thus 
\[
Y.f_0 = \sum_m (Y.a(m)) t^{\gamma(m)} + \sum_m a(m) (Y.t^{\gamma(m)}) =
\sum_m (Y.a(m)) t^{\gamma(m)} + \sum_m \gamma(m)a(m) (Y.t)  t^{\gamma(m)-1} 
\]
while $\tau(Y).\sigma(f_0) = \sum_m \tau(Y).a(m)$.  As
usual, in order to prove that~\eqref{theclaim} holds, it
suffices to prove it when $Y$ is a root vector in $\g$,
hence a (scalar multiple) of some $y_\alpha$ from the
Chevalley system given in Proposition~\ref{chevalley2}. As
in the proof of Theorem~\ref{thm913}, we will consider the
three cases $Y\in \fk$, $Y \in \fr^-$ and $Y \in \fr^+$
separately.

Suppose first that $Y = y_\alpha \in \fk$ for
$\alpha \in \Phi_1$.  We may assume that $Y= E_{ab}$ with
$a,b \in \{0,\pm 2,\dots, \pm \ell\}$. Then $E_{ab}.t= 0$
and
$\tau(Y) = \cF(\psit(\vt(Y))) = \cF(\psit(E_{ba})) =
\cF(E_{ba})= E_{ab} =Y$,
by\eqref{ftransforms}. Thus \eqref{theclaim} is equivalent
to the $\fk$-linearity of $\sigma$, which follows from
Corollary~\ref{cor643}.

Suppose next that $Y = y_\alpha \in \fr^-$, where either
$\alpha =-(\vepsilon_1 \pm \vepsilon_b)$ or
$\alpha=-\vepsilon_1$. Hence, by
Proposition~\ref{chevalley2}, $Y = y_{\alpha}$ is either
equal to $E_{\mp b,1}$ when
$\alpha =-(\vepsilon_1 \pm \vepsilon_b)$ and
$b \in \{2,\dots, \ell\}$, or $Y = \sqrt{2}E_{0,1}$ when
$\alpha= -\vepsilon_1$. By the definition of $\vt$ and
$\psit$ we have $\psit(\vt(Y))= x_{-\alpha} \in
\fr^+$.
Using Lemma~\ref{chevalley1}, this root vector is either
$V_{-b}$ when $\alpha = -(\vepsilon_1 - \vepsilon_b)$, or
$V_b$ when $\alpha= -(\vepsilon_1 + \vepsilon_b)$ or
$\sqrt{2}{V_0}$ when $\alpha= -\vepsilon_1$.  Recall that
$\cF(V_a)= Q_a= \Ft \ddu{a} - U_{-a} I(d)$. Therefore, up to
the scalar $\sqrt{2}$, we may assume that $Y= E_{j1}$ and
$\tau(Y) = Q_{-j}$ with $j \in \{0,\pm 2, \dots,\pm \ell\}$.
Since $E_{j1}= \dfrac{U_j}{2t}\dt - U_{-1}\ddu{-j}$ and
$a(m) \in A= \C[U_0, U_{\pm 2}, \dots,U_{\pm \ell}]$, we
obtain
\[
E_{j1}.f_0 = - \sum_m U_{-1}\ddu{-j}(a(m)) t^{\gamma(m)} + \sum_m
\frac{\gamma(m)}{2} U_j a(m) t^{\gamma(m) -2}.  
\]
Recalling from \eqref{eq610} that $U_{-1} =  \dfrac{1}{U_1}(\Qt- \Ft)
=t^{-2}(\Qt- \Ft)$, it follows that  
\begin{align*}
E_{j1}.f_0 & =- \sum_m (\Qt - \Ft)\ddu{-j}(a(m)) t^{\gamma(m) -2} + \sum_m
\frac{\gamma(m)}{2} U_j a(m) t^{\gamma(m) -2} 
\\
& = \sum_{m} \Bigl(\frac{\gamma(m)}{2} U_j a(m) + \Ft\ddu{-j}(a(m)) \Bigr)
  t^{\gamma(m) -2} - \Qt \sum_m \ddu{-j}(a(m))  t^{\gamma(m) -2}.
\end{align*}
Thus, as $\gamma(m) =2(\delta -m) =-2(d+m)$, 
we  have  $$\ev0(E_{j1}.f_0) = \sum_{m} \bigl(-(d+m)U_j +
\Ft\ddu{-j} \bigr)a(m)t^{\gamma(m) -2}$$ and 
$\sigma(\ev0(E_{j1}.f_0)) = \sum_{m} \bigl(-(d+m) U_j +
\Ft\ddu{-j}\bigr)(a(m))$. 
Since $I(d)(a(m))= (m+d)a(m)$, observe that 
\begin{equation*}
Q_{-j}. \sigma(f_0)=Q_{-j}.\sum_m a(m) = \sum_m Q_{-j}.a(m)  = \sum_m \bigl(\Ft \ddu{-j}(a(m))
- (d+m) U_j a(m)\bigr)   
\end{equation*}
and we obtain $\sigma(\ev0(E_{j1}.f_0)) =Q_{-j}. \sigma(f_0)$, as required.

Finally, suppose that $Y = y_\alpha \in \fr^+$, where either
$\alpha = \vepsilon_1 \pm \vepsilon_b$ or
$\alpha=\vepsilon_1$. From Lemma~\ref{chevalley1} we see
that we may take $Y= E_{1b}$; using
Lemma~\ref{chevalley1} and the definition of $\cF$, we may
assume that $Y= E_{1j}$, whence $\tau(Y) = \ddu{j}$ with
$j \in \{0,\pm 2, \dots,\pm \ell\}$. Recall that 
$E_{1j} = U_1\ddu{j} - U_{-j}\ddu{-1} = t^2 \ddu{j} -
U_{-j}\ddu{-1}$;
from $E_{1j}(t)= 0$ and $\ddu{-1}(a(m))=0$, we get that
$E_{1j}.f_0 = \sum_m \ddu{j}(a(m)) t^{\gamma(m) +2}$.  On
the other hand we have
$\ddu{j}.\sigma(f_0)= \sum_{m} \ddu{j}(a(m))$ and it follows
that
$\sigma(\ev0(E_{1j}.f_0)) = \sum_m \ddu{j}(a(m)) =
\ddu{j}.\sigma(f_0)$, as desired.
\end{proof}

In summary, by combining Corollary~\ref{cor915} with Theorem~\ref{thm921},
we have the following relationship between the three main $\g$-modules
appearing in this paper: the module $N(\lambda)= R_r$ of regular functions
on the scheme $\{\Ft^r= 0\} \subset \C^n$, the module $M_r$ of higher
harmonic polynomials on $\C^n$, and the module $N_\lambda$ of harmonic conformal
densities of weight $-\halfn + r$ on $\C^{n+2}$.

\begin{cor}
  \label{summary}
There exist  isomorphisms of $\g$-modules:
\[
N(\lambda) \; \cong \; N_\lambda\spcheck \quad \text{or, equivalently,}
\quad R_r \cong M_r\spcheck. 
\]
Moreover:     
\begin{enumerate}[{\rm \indent (1)}]
  \item if $n$ is odd or if $n$ is even with $r<
\halfn$, then $R_r \cong L(\lambda) \cong M_r$;
\item if $n$ is even with $r \ge \halfn$, then $R_r$ has a simple socle $\socRr$
  isomorphic to $L(\mu)$, where $\mu$ is given  by
 \eqref{thm26-1},  and the quotient $Q_r=R_r/\socRr$ is isomorphic to the
  simple finite dimensional module $L(\lambda)$.
\item if $n$ is even with $r \ge \halfn$, then $M_r$ has a simple (finite
  dimensional) socle $Q_r\spcheck \cong L(\lambda)$ and the quotient
  $\socRr\spcheck = M_r/Q_r\spcheck\cong L(\mu)$.\qed
\end{enumerate}
\end{cor}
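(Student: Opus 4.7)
The plan is to assemble Corollary~\ref{summary} directly from Theorem~\ref{thm921}, Corollary~\ref{cor915}, and Theorem~\ref{thm26}, using only the standard exactness and involutivity properties of $\cO$-duality from \cite[4.10]{jantzen}. First I would combine Theorem~\ref{thm921}, which gives a $\g$-module isomorphism $N_\lambda \cong M_r$, with Corollary~\ref{cor915}, which gives a $\g$-module isomorphism $N(\lambda)\spcheck \cong M_r$. Concatenating, $N_\lambda \cong N(\lambda)\spcheck$. Since $N(\lambda)=R_r$ is an object of category $\cO$ by Lemma~\ref{lem23}, the canonical isomorphism $N(\lambda)\spcheck\spcheck \cong N(\lambda)$ from \cite[4.10]{jantzen} yields the equivalent form $R_r \cong M_r\spcheck$, establishing the displayed isomorphism.

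Next I would read off parts (1)--(3) by dualising the structure theorem for $R_r=N(\lambda)$ given in Theorem~\ref{thm26}. For part (1), the hypotheses (either $n$ odd, or $n$ even with $r<\halfn$) put us in case~(i) of Theorem~\ref{thm26}, so $R_r \cong L(\lambda)$. Applying $\cO$-duality and using $L(\lambda)\spcheck \cong L(\lambda)$ from \cite[4.10(2,3)]{jantzen} gives $M_r \cong R_r\spcheck \cong L(\lambda)$, while part (2) is simply a restatement of Theorem~\ref{thm26}(2). For part (3) I would take the short exact sequence
\[
0 \lto \socRr \lto R_r \lto Q_r \lto 0
\]
from Theorem~\ref{thm26}(2), with $\socRr\cong L(\mu)$ and $Q_r \cong L(\lambda)$, and apply the exact contravariant functor $(-)\spcheck$ to obtain
\[
0 \lto Q_r\spcheck \lto R_r\spcheck \lto \socRr\spcheck \lto 0.
\]
Using $R_r\spcheck \cong M_r$ (from the first paragraph) and $L(\omega)\spcheck \cong L(\omega)$ for $\omega \in \{\lambda,\mu\}$ identifies the submodule as $L(\lambda)$ (finite dimensional, hence the socle) and the quotient as $L(\mu)$, which is part (3).

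There is essentially no obstacle here beyond checking that the various isomorphisms and functorial properties invoked are available: exactness and involutivity of $(-)\spcheck$ on category $\cO$, and the fact that simple highest weight modules are self-dual, all of which are in \cite[\S4.10]{jantzen}. The only point demanding a moment of care is that the socle of $M_r$ in part (3) really is $Q_r\spcheck$ rather than $\socRr\spcheck$; this is forced by the fact that the socle must be finite dimensional (equivalently, $L(\lambda)$ rather than $L(\mu)$, since $\lambda \in \Psf^{++}$ under the stated hypothesis $r \ge \halfn$ and $n$ even), which is already built into the dualised exact sequence above.
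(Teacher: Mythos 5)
Your proof is correct and matches the paper's own (implicit) argument: the corollary is stated with a terminal \textrm{\qed} precisely because it is intended as a summary assembled directly from Theorem~\ref{thm921}, Corollary~\ref{cor915} and Theorem~\ref{thm26}, together with the standard exactness and involutivity of $(-)\spcheck$, exactly as you have done.
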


We conclude this section with  remarks about the module $N_\lambda$ and its
annihilator.

\begin{rem}
  \label{rem922}
  (1) Recall that $\delta= -d= -\halfn + r$. Since
  $t^{2\delta} \in \cH(S)(\delta)$, the class
  $e_\lambda=[t^{2\delta}]\in \Sbar{\delta}$ belongs to $N_\lambda$. It is
  easy to see that $e_\lambda$ is a highest weight vector in $N_\lambda$,
  with weight $\lambda =-d \vepsilon_1 =\delta \vepsilon_1$. In more
  detail:
\begin{enumerate}[\indent\rm (i)] 
\item  $E_{ab}.t^{2\delta} = 0=E_{c,0}.t^{2\delta}$ for $1 \le a < b \le \ell$ and
 $1 \le c \le \ell$. Thus,  by Proposition~\ref{chevalley2},   $\fn^+.e_\lambda=
  0$. 
\item   $E_{11}.t^{2\delta} = \frac{U_1}{2t}\dt(t^{2\delta}) = \delta
  t^{2\delta}$ and $E_{jj}.t^{2\delta} = 0$, for $2\leq j\leq\ell$; showing that
  $e_\lambda$ has weight $\lambda= \delta \vepsilon_1$.
\end{enumerate}
Therefore,   if $n$ is even with $r< \halfn$ or if $n$ is odd,  one gets
$N_\lambda= U(\g).e_\lambda$.      

Assume now that $n$ is even with $r \ge \halfn$. Then,
$F\spcheck=U(\g).e_\lambda \cong L(\lambda)$ is finite dimensional. The
quotient $N_\lambda/F\spcheck$ is isomorphic to $L(\mu)$ and, by similar
computations, it is easily seen that the class
$e_\mu \in N_\lambda/F\spcheck$ of the element
$U_2^{\delta+1}t^{-2}= U_2^{\delta +1}U_1^{-1} \in \cH(S)(\delta)$ is a
highest weight vector of  weight $\mu$. Thus,
$N_\lambda/F\spcheck = U(\g).e_\mu \cong L(\mu)$.  Through the isomorphism
$N_\lambda \isomto M_r$, the elements $e_\lambda$ and $e_\mu$ correspond,
respectively, to $1$ and $U_2^{\delta+1} $ (which, in turn,
gives the element $\xi^m$ in the proof of Theorem~\ref{thm26}).

 (2) By Corollaries~\ref{summary} and~\ref{cor211}, the
annihilator of $N_\lambda$ is equal to the primitive ideal $J_r$.  Let $\calU$ denote
the subalgebra of $\cD(B)$ generated by the elements $E_{ij} \in \g$.
  Then, $\calU$ is a factor of the enveloping algebra $U(\g)$ and
$\overline{\calU} = U(\g)/J_r$ is a primitive quotient.

When $r=1$, $\calU$ and $\overline{\calU}$ are, respectively,
complexifications of the ``off-shell higher-spin algebra'' and ``on-shell
higher-spin algebra'' as defined in \cite[\S3]{vas03},
\cite[\S3.1.1]{bek08} and \cite[\S4.1]{bek12}. In particular, on the
Minkowski space-time (i.e.~in the real case $\R^{p,q}= \R^{n-1,1}$ as in 
\S\ref{sec7}) the algebra of symmetries $\Sscr{\dalembert_{n-1}}$ is
isomorphic to the ``on-shell higher-spin algebra'' of~\cite[\S3.1.3,
Corollary~3]{bek08}.
\end{rem}

\medskip

\Appendix

Here we provide a proof of Lemma~\ref{lem912} and repeat (a minor variant
of) the statement of the result for the reader's convenience.  

\begin{lem}
 \label{lem912a}
{\rm (1)}  Set $\calI_r= \Ft^r A$. The bilinear form $\ascal{}{}$ from
Definition~\ref{pairing} satisfies 
the following properties.
\begin{enumerate}[{\rm (i)}]
\item $\ascal{A(p)}{A(q)} =0$ for  $p \ne q$ in $\N$ and
  $\ascal{a}{\phi} = \cF(a)(\phi)= \cF(\phi)(a)$ for all $a, \phi \in A(p)$.
\item $\ascal{}{}$ is  symmetric non degenerate and  $\fk$-invariant.
\item $L^{\perp\perp} = L$ for any graded subspace $L \subset A$.
\item $M_r^\perp = \calI_r$  and $\calI_r^\perp = M_r$.
\end{enumerate}

\noindent {\rm (2)} The pairing $\ascal{}{}$ induces a non degenerate
$\fk$-invariant symmetric pairing $\ascal{}{} : R_r \times M_r \lto \C$.
\end{lem}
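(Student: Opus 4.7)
The plan is to exploit the total-degree grading $A = \bigoplus_{p \ge 0} A(p)$ and the fact that the Fourier transform $\cF$ sends $A(p)$ onto the space of constant-coefficient differential operators of order exactly $p$, each of which lowers total degree by $p$. Hence for $a \in A(p)$ and $f \in A(q)$, $\cF(a)(f)$ lies in $A(q-p)$ and its value at $0$ vanishes unless $q = p$, giving the orthogonality asserted in (i). On $A(p) \times A(p)$ the pairing reduces, by bilinearity, to the monomial computation $\partial^\alpha(U^\beta)(0) = \alpha!\,\delta_{\alpha,\beta}$, which is manifestly symmetric in $\alpha,\beta$ and yields both $\cF(a)(\phi)=\cF(\phi)(a)$ and the fact that the Gram matrix of $\ascal{}{}|_{A(p)}$ in the monomial basis is diagonal with non-zero entries.

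From this (ii) is immediate: symmetry extends bilinearly to all of $A$, and non-degeneracy on $A$ reduces to non-degeneracy on each finite-dimensional $A(p)$, which the diagonal Gram matrix supplies. For $\fk$-invariance I would work from the basic adjoint relations $U_j^* = \partial_{U_j}$ and $\partial_{U_j}^* = U_j$ (immediate from the definition of $\ascal{}{}$ together with the anti-automorphism property of $\cF$), and verify skew-adjointness on a convenient spanning set of $\fk$: writing the real generators $D_{s,t}$ in the $U$-basis (equivalently, using combinations of the form $E_{ij} - \vt(E_{ij})$ with $\vt$ the Chevalley anti-involution from Remark~\ref{involutions}), the calculation reduces to the fact that $\fk$ preserves $\Ft$ as a derivation. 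Statement (iii) is then finite-dimensional linear algebra: gradedness gives $L^\perp = \bigoplus_p L(p)^\perp$ with each $L(p)^\perp$ computed inside the finite-dimensional $A(p)$, where $L(p)^{\perp\perp} = L(p)$ by non-degeneracy.

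The heart of the argument is (iv). Applying the adjoint relations to $\Delta_1 = 2\sum_{j \ge 2}\partial_{U_j}\partial_{U_{-j}} + \partial_{U_0}^2$ gives $\Delta_1^* = 2\Ft$ (as multiplication), and hence $(\Delta_1^r)^* = 2^r\Ft^r$. Viewing $\Delta_1^r$ as a linear map $A(p) \to A(p-2r)$, standard duality for non-degenerate pairings on finite-dimensional spaces yields $\ker(\Delta_1^r|_{A(p)})^\perp = \mathrm{Im}\bigl(2^r\Ft^r \cdot : A(p-2r) \to A(p)\bigr) = \Ft^r A(p-2r)$; summing over $p$ gives $M_r^\perp = \Ft^r A = \calI_r$, and then $\calI_r^\perp = M_r^{\perp\perp} = M_r$ by (iii). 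Finally (2) is pure descent: since $\calI_r \subseteq M_r^\perp$ the form kills $\calI_r \times M_r$ and descends to $R_r \times M_r$, non-degeneracy on both sides being inherited from (iv), while the $\fk$-action descends because $\fk$ preserves $\Ft$ (so acts on $R_r$) and commutes with $\Delta_1^r$ (so preserves $M_r$). The principal obstacle I anticipate is the $\fk$-invariance in (ii): the pairing uses the $U$-variable Fourier transform rather than the naive $X$-variable Fisher form, so one must carefully re-express the natural generators of $\fk$ in the $U$-basis and track signs when applying $\partial_{U_j}^* = U_j$.
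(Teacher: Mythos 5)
Parts (i), (iii), (iv) and (2) are sound, and in (iv) your route via the adjoint relation $(\Delta_1^r)^* = 2^r\Ft^r$ together with graded finite-dimensional duality $\Ker(T)^\perp = \Im(T^*)$ is somewhat cleaner than the paper's, which establishes $\calIr^\perp = M_r$ by the direct monomial computation $\partial_U^{\,\mathbf{j}}[\Delta_1^r(\phi)](0) = 2^r\ascal{\Ft^r U^{\mathbf{j}}}{\phi}$ and then deduces $M_r^\perp = \calIr$ from (iii).

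Part (ii), however, contains a genuine gap, and your own adjoint relations expose it. From $U_j^* = \ddu{j}$ and $\ddu{j}^* = U_j$ you obtain, for $E_{ij} = U_i\ddu{j} - U_{-j}\ddu{-i} \in \fk$, that $E_{ij}^* = U_j\ddu{i} - U_{-i}\ddu{-j} = E_{ji} = \vt(E_{ij})$; that is, the adjoint is the Chevalley transpose, not $-E_{ij}$. The combinations $E_{ij}-\vt(E_{ij})$ are indeed skew-adjoint, but they span only the $(-1)$-eigenspace of $\vt$ in $\fk$, a proper subspace: each Cartan element $E_{jj}$ ($j\ge 2$) is self-adjoint, not skew. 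Nor does the reduction to ``$\fk$ preserves $\Ft$'' close the argument, because $\ascal{}{}$ restricted to $A(1)$ is the \emph{standard} form $\ascal{U_j}{U_k}=\delta_{jk}$, not the polar form of $\Ft$ (which is anti-diagonal in the $U$-basis). In fact the asserted identity $\ascal{Y.a}{f}+\ascal{a}{Y.f}=0$ for $Y\in\fk$ fails: with $n=3$ one has $\cF(X_1)=\half\ddx{1}$ but $\cF(X_2)=-\half\ddx{2}$, whence $\cF(D_{12})=D_{12}$ and $\ascal{D_{12}.X_1}{X_2}=\ascal{X_1}{D_{12}.X_2}=\half$, so the sum is $1$, not $0$. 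What your adjoint computation actually proves (and what is established and used for all of $\g$ in Theorem~\ref{thm913}) is the $\vt$-\emph{contravariance} $\ascal{Y.a}{f}=\ascal{a}{\vt(Y).f}$. Note that the paper's own group-theoretic argument, via the claimed identity $\cF(g.a)=g.\cF(a)$ for $g\in K=\SO(\Ft)$, meets the same obstruction: that identity requires $g^{-1}=g^T$ in the $U$-basis, not that $g$ preserve $\Ft$.
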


\begin{proof}
  (1) (i,ii) With standard notation, let
  $U^{\mathbf{j}}, U^{\mathbf{k}}$ be monomials in the $U_j$ (where
  $\mathbf{j}, \mathbf{k}$ are multi-indices). Then,
  $\cF(U^{\mathbf{j}})= \partial_U^{\, \mathbf{j}}$ and 
  $\ascal{U^{\mathbf{j}}}{U^{\mathbf{k}}} = \partial_U^{\,
    \mathbf{j}}(U^{\mathbf{k}})= c_{\mathbf{j}} \, \delta_{\mathbf{j},
    \mathbf{k}}$ {for some $c_{\mathbf{j}} \in \N^*$}.
   This proves (i) and implies that the pairing is symmetric non
   degenerate.
   
   Let $K \cong \SO(\Ft) \subset \Aut(A)$ be the algebraic group such that
   $\Lie(K) =\fk$.  It is well known (and easy to see) that if $g \in K$,  then
   $\cF(g.a) = g.\cF(a)$ for all $a \in A$. Hence,
   $\ascal{g.a}{g.\phi} = (g.\cF(a))(g.\phi)(0) = [g.\cF(a)(\phi)](0) =
   \cF(a)(\phi)(0) = \ascal{a}{\phi}$.
   Thus, $\ascal{}{}$ is $K$-invariant and therefore $\fk$-invariant.
   
   (iii) By~(i) we have perfect pairings
   $\ascal{}{}_m: A(m) \times A(m) \to \C$ for all $m \in \N$. For a
   subspace $P \subset A(m)$, denote by $\corth{P}$ its orthogonal w.r.t
   $\ascal{}{}_m$; thus $\corth{P}= P^\perp \cap A(m)$.  As $A(m)$ is
   finite dimensional, one has $P^{\circ\circ} =P$.  It is easily seen that
   $L^\perp$ is graded and that $L^\perp(m) = \corth{L(m)}$ for all
   $m$. Let $a \in L^{\perp\perp}$ and write $a= \sum_m a(m)$ with
   $a(m) \in A(m)$. We have $\ascal{a}{p} = \ascal{a(k)}{p} = 0$ for
   all $p \in \corth{L(k)}$. Hence $a(k) \in L(k)^{\circ\circ} = L(k)$ and
   we get $a \in L$. 
   
   (iv) Since $\cF(\Ft^r)= 2^{-r}\Delta_1^r$, it is clear that
   $M_r \subset \calI_r^\perp$. Conversely, if $\phi \in \calI_r^\perp$ and
   $U^{\mathbf{j}} \in A$, one has
   $\partial_U^{\, \mathbf{j}}[\Delta_1^r(\phi)](0) =
   2^r\ascal{\Ft^rU^{\mathbf{j}}}{\phi} = 0$.
   It then follows from~(i) that $\Delta_1^r(\phi) =0$ and $\phi \in
   M_r$. Thus $\calI_r^\perp = M_r$.

   \noindent (2) Observe that $\calI_r$ and $M_r$ are $\fk$-invariant subspaces of
   $A$ (see Corollary~\ref{cor643}).  Since $R_r= A/\calI_r$, the claim therefore
   follows from~(1)(ii) and~(1)(iv). 
\end{proof}

 

\end{spacing}
\end{document}